\documentclass[12pt]{article}
\usepackage[utf8]{inputenc} 
\usepackage[T1]{fontenc}
\usepackage{authblk}
\usepackage{geometry}

\let\OLDthebibliography\thebibliography
\renewcommand\thebibliography[1]{
  \OLDthebibliography{#1}
  \setlength{\parskip}{0pt}
  \setlength{\itemsep}{0pt plus 0.6ex}
}

 \makeatletter

 \providecommand{\keywords}[1]
{
  \small	
  \textbf{\textit{Keywords---}} #1
}

\usepackage{titlesec}

\titleformat*{\section}{\bfseries}

\titleformat*{\subsection}{\bfseries}

\titleformat*{\subsubsection}{\bfseries} 

\usepackage{mathtools}
\usepackage{bm}
\usepackage{dsfont}
\usepackage{caption}
\usepackage{subcaption}
\usepackage[utf8]{inputenc}
\usepackage{graphicx} 
\usepackage{amsmath,amssymb,amsthm} 
\usepackage{mathrsfs} 
\usepackage{verbatim} 
\usepackage{bbm}
\usepackage[dvipsnames, svgnames, x11names]{xcolor}
\usepackage{marginnote} 

\usepackage[T1]{fontenc} 
\usepackage[english]{babel} 
\usepackage[plainpages=false,pdfpagelabels,colorlinks=true,linkcolor=red,urlcolor=red,citecolor=blue]{hyperref}

\def\P{\mathbb{P}}
\def\E{\mathbb{E}}
\def\H{\mathbb{H}}
\def\er{ Erd{\"o}s-R{\`e}nyi }
\usepackage{amsmath}
\DeclareMathOperator*{\argmax}{arg\,max}

\DeclareMathOperator{\co}{co}    
\usepackage{bm} 

\DeclareMathOperator{\Var}{Var}   

\newcommand{\subdiff}{\partial} 

\newcommand{\homc}{\mathrm{hom}}
\newcommand{\injc}{\mathrm{inj}}
\newcommand{\Aut}{\mathrm{Aut}}

\newcommand{\fall}[2]{(#1)_{#2}} 

\numberwithin{equation}{section}
\newtheorem{theorem}{Theorem}[section]
\newtheorem{corollary}{Corollary}[section]
\newtheorem{lemma}{Lemma}[section]

\newtheorem{test}{Test}[section]

\def\bbeta{\boldsymbol{\beta}}

\patchcmd{\@maketitle}{\LARGE}{\fontsize{22pt}{28pt}\selectfont}{}{}

\title{\textbf{Maximum entropy based testing in network models: \\ ERGMs and constrained optimization}}

\author{
Subhro Ghosh\textsuperscript{*}\textsuperscript{$\dagger$}\\
Department of Mathematics, National University of Singapore\\
\href{mailto:matghos@nus.edu.sg}{\texttt{subhrowork@gmail.com}}
\and
Rathindra Nath Karmakar\textsuperscript{*}\textsuperscript{$\ddagger$}\\
Department of Mathematics, Kyushu University\\
\href{mailto:karmakar.rathindra.735@s.kyushu-u.ac.jp}{\texttt{karmakar.rathindra.735@s.kyushu-u.ac.jp}}
\and
Samriddha Lahiry\textsuperscript{*}\textsuperscript{$\ddagger$}\\
Department of Statistics and Data Science, National University of Singapore\\
\href{mailto:slahiry@nus.edu.sg}{\texttt{slahiry@nus.edu.sg}}
}

\date{}

\begin{document}

\renewcommand\Authands{ and }

\maketitle

\begingroup
\renewcommand{\thefootnote}{\fnsymbol{footnote}}
\setcounter{footnote}{0}
\footnotetext{\textsuperscript{*}Authors are listed in alphabetical order of their surnames.}
\footnotetext{\textsuperscript{$\dagger$}{Supported in part by Singapore MOE grants R-146-000-312-114, A-8002014-00-00, A-8003802-00-00, E-146-00-0037-01, A-8000051-00-00, A-0009806-01-00 and A-0004586-00-00}}
\footnotetext{\textsuperscript{$\ddagger$}Corresponding authors.}
\endgroup

\begin{abstract}
Stochastic network models play a central role across a broad range of scientific disciplines, and questions of statistical inference arise naturally in this context. Maximum entropy ({\it abbrv.} MaxEnt) principles are well-regarded as a standard approach to such inferential questions on network data.  

In this paper, we investigate goodness-of-fit and two-sample testing procedures for statistical networks grounded in the principle of maximum entropy. Our approach is based on considering a constrained entropy-maximization problem on the space of networks, subject to a prescribed set of structural constraints (informed, for instance, by the goodness-of-fit setup). Our test statistics are based on the Lagrange multipliers associated with such constrained optimization problems, which to our knowledge is a novelty in the statistical networks literature.

We first establish consistency of the proposed tests in the classical setting where the number of vertices is fixed. We then turn to regimes of greater contemporary interest, wherein the size of the graph grows commensurately with the sample size, and develop tests in both the sparse and dense settings. In the dense setting, we analyze graphs generated from exponential random graph models ({\it abbrv.} ERGM)  as well as \er models, while in the sparse regime our theory applies to \er graphs. The scope of our results are commensurate with the development of the theory of large scale network asymptotics, varying in robustness from dense to sparse regimes.

Our techniques strongly interface with the burgeoning theory of {\it non-linear large deviations} in the space of random graphs, an approach which we believe would be of much wider applicability in statistical network problems. In a different direction, our inferential approach based on Lagrange multipliers turns out to connect to classical {\it score tests} for constrained MLE in real-valued data that are widely popular in econometrics. Our results provide a unified entropy-based inferential framework for network model assessment across a wide range of network growth regimes. More broadly, it points to a general overarching strategy whereby goodness-of-fit testing can be framed as a constrained optimization problem and the associated Lagrange multiplier can be used for devising the test.
\end{abstract}

\keywords{Statistical Networks, Goodness-of-fit tests, Two-sample tests, Exponential Random Graph Models (ERGM), Maximum Entropy Principle (MaxEnt), Lagrange Multiplier Test, Nonlinear Large Deviations, Graph Limits}

\begingroup
\hypersetup{linkcolor=RawSienna}
\tableofcontents
\endgroup

\newpage

\section{Introduction \label{sec:intro}}

Over the past two decades, the statistical modeling of network data using random graph models has received significant attention, with applications spanning diverse domains such as social networks, brain networks, and omics networks. From a statistical inference perspective, these networks have been extensively studied in terms of both estimation and hypothesis testing (cf. 
 \cite{kolaczyk2009statistical} and the references therein). Estimation focuses on inferring the underlying random network from observed data, whereas hypothesis testing aims to determine whether the given data conforms to a specified random network. In this article, we consider the setting where i.i.d. copies of a random graph of possibly growing size are provided, and the objective is to perform statistical testing under constraints based on expected motif counts of the underlying random graph model.

\subsection{Testing in random network models and applications} In line with classical statistical testing setups, hypothesis testing in random networks can be formulated in two distinct ways. The first approach pertains to goodness-of-fit testing, which seeks to determine whether a given network or a collection of networks originates from a specific random graph model with predefined parameters. In the context of random networks, goodness-of-fit testing has been widely explored in the literature; we refer the reader to \cite{xu2021stein, dan2020goodness, GOF_sbm, brune2025goodness, jin2025network, maugis2020testing, oudah2020, gao2017subgraph,li2013assessing,csiszar2012testing,Chung-Lu} for a partial list, and the references therein. The second approach concerns two-sample testing, where two independent samples of random networks are provided, and the objective is to assess whether they are generated from the same underlying random graph model. This problem has been extensively studied for various random graph models; see for example \cite{chatterjee2023two, ghoshdastidar2017networkstatistics, ghoshdastidar2020two, dotproduct2018statistical, tang2017nonparametric, agterberg2020nonparametric} among many others.

Both goodness-of-fit tests and two-sample tests are well-motivated by applications across a wide range of disciplines. In particular, goodness-of-fit testing is utilized to evaluate the suitability of protein-protein interaction (PPI) networks \cite{ospina2019assessment, elliott2018nonparametric} and to assess the fit of functional neuroimaging data \cite{Ginestet2015}. Similarly, two-sample testing naturally arises in various applications. For example, it has been employed to analyze gene regulatory networks, where it is used to study topological changes under two different breast cancer treatments \cite{zhang2009differential}; to investigate structural brain differences, where it helps compare anatomical variations between healthy individuals and schizophrenic patients \cite{bassett2008hierarchical}; and to advance computational biology, where it is applied in graph-based classification tasks \cite{shervashidze11a}

\subsubsection{Goodness-of-fit testing in random networks}In the context of goodness-of-fit testing, various approaches have been developed for different random graph models. For instance, \cite{ouadah2022motif} and \cite{maugis2020testing} proposed motif-based tests for bipartite network models and kernel-based tests for general random graphs, respectively. For inhomogeneous random graph models, a degree-based test has been introduced in \cite{oudah2020}, while \cite{dan2020goodness} develops a test based on the adjacency matrix of the random graph. In the context of the stochastic block model, \cite{GOF_sbm} also develops an adjacency matrix-based test. However, unlike \cite{dan2020goodness}, which considers multiple independent samples, \cite{GOF_sbm} focuses on a test based on a single sample. Several other methods have also been explored. For example, \cite{xu2021stein} studies goodness-of-fit testing for exponential random graph models (ERGMs) and introduces a test based on kernel Stein discrepancies. Additionally, \cite{brune2025goodness} presents a framework for testing the homogeneity of a random graph using graph functionals, which generalizes the subgraph count approach. We also refer the reader to \cite{bickel_sarkar,chatterjee2024higher,motif_fourth,networkminimaxrates,gao2017subgraph,comm_detec,gao2015rate} for related inference problems in other statistical network models.

\subsubsection{Two sample testing in random networks}On the other hand, several two-sample tests have also been developed. A two-sample version of the adjacency matrix-based test for inhomogeneous random graphs is proposed in \cite{dan2020goodness}, while \cite{ghoshdastidar2017networkstatistics} explores similar questions using network summary statistics. Other two sample tests include \cite{tang2017nonparametric}  which investigates whether two random dot product graphs share the same generating latent positions, using a test statistic based on spectral decomposition of the adjacency matrix, \cite{Ginestet2015} which constructs tests based on Fréchet means of the Laplacians, and  \cite{Li2022} which examines comparisons of population means in network data, focusing on individual links and utilizing symmetric matrices. More recently, \cite{chen2023} proposed a general procedure for hypothesis testing in network data, aimed at distinguishing the distributions of two samples of networks.\\

\subsection{Testing based on maximum entropy} In this article, we present a different test procedure based on subgraph counts. In particular, rather than directly developing a test based on raw subgraph counts, we maximize the entropy of the empirical distribution of subgraph counts while imposing specific constraints on this distribution (see the next section for details). Maximizing entropy subject to constraints is a standard approach in statistical mechanics and information theory \cite{Jaynes_maxent,Shannon1948}, grounded in Jaynes’ principle of maximum entropy \cite{jaynes1982rationale}. In the context of network models, entropy maximization under constraints has been extensively explored in the statistical physics literature (see, for instance, \cite{park2004statistical}). In this paradigm, one considers probability distribution on space of all graphs with a given set of nodes. Many random graph models arise out of such maximization and the constraints represent the so called topological properties of the network concerned (see \cite{squartini2017maximum} and the references therein).

Our method is inspired by a maximal entropy principle similar to the classical setting; however, we maximize the entropy of the empirical distribution from $n$ samples of the random graph and construct our test statistics based on the Lagrange multiplier. On the other hand, the condition imposed by the null hypothesis translates to constraints on expected motif counts (which is analogous to moment constraints on distributions) and mirrors the topological constraints described in \cite{squartini2017maximum}. \\
 
\subsection{Connection to Lagrange Multiplier tests} Maximizing the entropy of an empirical distribution  under constraints (moment constraints or otherwise) naturally leads to a Lagrange multiplier based formulation from an optimization point of view. Such an optimization procedure canonically yields an optimal Lagrange multiplier. For most statistical applications, modern day practitioners often regard this object largely as a computational device, and it is typically not investigated for its implications vis-a-vis the big picture.

In this work however, we demonstrate in our network-based setup that, under appropriate centering and scaling, this optimal Lagrange multiplier exhibits asymptotic normality under the null hypothesis. This asymptotic behavior forms the foundation of our proposed goodness-of-fit test. We further extend this framework to construct a two-sample test, utilizing the optimal Lagrange multipliers computed independently for each sample.

It turns out, in fact, that Lagrange multiplier-based tests have a long history in the classical statistical literature, particularly in the context of constrained likelihood maximization \cite{silvey1958, silvey1959}. Indeed, the well-known {\it score test} can be shown to be equivalent to the so-called Lagrange Multiplier test \cite{silvey1959}. These methods are widely used in econometrics \cite{LM_econ}, where they underpin classical procedures such as the Breusch–Pagan test for heteroscedasticity \cite{bp_test}. 

However, the Lagrange Multiplier test has predominantly been applied to random variables in Euclidean spaces, and, to the best of our knowledge, hypothesis testing procedures based on Lagrange multipliers have not been developed for network models, where the growing size of the networks introduces additional analytical challenges. While the conceptual idea of using a Lagrange multiplier remains analogous to classical settings, the investigation of its limiting distribution in our framework relies on fundamentally different techniques. In particular, our analysis draws heavily from the state-of-the-art {\it nonlinear large deviation theory}.

A related direction was explored in \cite{ghosh2023} (also cf. \cite{ghosh_abc}), where the authors studied semi-parametric maximum likelihood estimation (MLE) under constraints, where the optimal Lagrange multiplier appeared as a key analytical device. While \cite{ghosh2023} focuses largely on distributions on Euclidean spaces, it includes suggestive empirical results for constrained likelihood maximization in a network-based setup, linking the optimal Lagrange multiplier to degeneracies in the exponential random graph model (ERGM). 

In contrast, our approach maximizes entropy rather than likelihood, which is more canonical from a statistical networks perspective, and we demonstrate that the resulting optimizer is highly informative from a hypothesis testing viewpoint.\\

\textbf{Our Contributions}:

In this work, we develop a systematic procedure for hypothesis testing for random networks via an entropy maximization approach with constraints on motif counts, which serves as an analogue of moment constraints for network-structured data. In particular, we address the problems of goodness-of-fit testing and two sample testing for random networks in the setting of \emph{Exponential Random Graph} ({\it abbrv.} ERGM) models.  Our method leverages the optimal Lagrange multiplier associated with the maximum entropy principle, whose asymptotic statistical properties are established as a cornerstone of our analysis. On a technical level, we demonstrate that this optimal Lagrange multiplier can be characterized as the root of an equation involving an exponentially tilted version of the relevant empirical motif count, which allows us to perform the asymptotic analysis necessary to derive our statistical results. \\


We focus on three distinct setups:\\

\textit{Graphs with fixed number of vertices:} In this setting, our analysis is quite general and encompasses the case where the underlying graph has a fixed number of vertices, i.e., the size of the graph does not grow with the sample size. In particular, we assume that samples are generated from an arbitrary distribution over graphs on a finite vertex set. We establish the asymptotic normality of the Lagrange multiplier and construct a goodness-of-fit test based on expected motif counts. Specifically, we test whether the expected motif counts of the generating random graph model coincide with the expected motif count of a fixed distribution $\mathcal{G}_0$ on the same vertex set.\\

\textit{Dense graphs with increasing number of vertices:} 
Here we consider the ERGM model where the number of vertices is allowed to grow with the sample size. Based on the principle of entropy maximization under constraints we analyze the asymptotic behavior of the Lagrange multiplier which then yields a goodness-of-fit test based on the expected motif counts. While the study of the optimization problem relies on techniques similar to those used in examining the free energy of ERGMs \cite{chatterjee2013estimating}, understanding the behavior of the corresponding Lagrange multiplier 
requires more refined tools, drawing on the log-sum-exp approximation framework from the nonlinear large deviations literature \cite{chatterjee2016nonlinear}. Finally since the dense \er model ($G(N,p)$ with $p=O(1)$ is a special case of our framework, we obtain a goodness-of-fit test for the edge probability $p$. In this setting our results further yield a natural two-sample testing procedure.\\

\textit{Sparse graphs with increasing number of vertices:} In this case, we consider \er models in the sparse regime, where the count of the relevant motif $H$ converges to a Poisson distribution as the graph size grows. While a maximum-entropy–based test can be envisioned for more general sparse ERGM models—such as the framework in \cite{cook2024typical}—these models are not readily amenable to the techniques developed for the dense setting and thus posit significant challenges.
Accordingly, we focus on the sparse \er setup and show that the exponentially tilted empirical $H$ count satisfies a central limit theorem. Leveraging the asymptotic theory for $\mathcal{Z}$-estimators,
 we then establish the asymptotic normality of the centered and scaled Lagrange multiplier 
$\hat{\lambda}_n$. As in the dense regime, this result immediately yields a goodness-of-fit test, as well as a two-sample test, for the edge probability 
$p$.\\

 Although our analysis is centered on strictly balanced graph counts, which include important motifs such as cliques and cycles (see the notation section for a definition of strictly balanced graphs), the arguments can, in principle, be extended to general motif counts. However, we focus on a strictly balanced graph to maintain clarity of exposition. Moreover, while our study is restricted to the \er model (both sparse and dense) and ERGMs (in the dense regime), the underlying principle of entropy maximization is quite general and may be adapted to other random graph models.
 However, even in these cases, the analysis of the Lagrange multiplier is technically involved, and we leave the extensions to the more general models for future work.

\subsection{Outline of the paper}

 In Section \ref{background} we describe the ERGM model and the general hypothesis testing problem. We also describe our testing procedure based on the Lagrange multiplier which is in turn obtained from the empirical entropy maximization under constraints on motif counts. The analysis in the dense regime uses the theory of graphons and hence we also include a brief introduction to graphons in Section \ref{background}.  Section \ref{main_res} contains main results of our paper, while the proofs are deferred to the Appendix.     Finally, we conclude in Section
\ref{discussion} with a discussion on possible generalizations of our
results.

\subsection{Notations}
We will use the following notation throughout the paper. For a graph $G$ we will denote its set of vertices as $V(G)$ or simply $V$, when there is no chance of confusion. Similarly we will denote the edge set as $E(G)$ or $E$. The number of vertices and edges are denoted as $v(G)=|V(G)|$ and $e(G)=|E(G)|$ respectively. We will denote the \er random graph on $N$ vertices and edge-connection probability $p$ as $G(N,p)$.

Further, we define the density of G as \(d(G) := e(G)/v(G)\), and \(k(G) := \mathrm{max}\{d(H)\mid H\subseteq G, e(H)\geq 1\}\).
A graph is said to be balanced if \(k(G) = d(G)\), i.e., it is its densest subgraph. It is said to be strictly balanced if it is strictly denser than all other subgraphs. For example, cliques, cycles, trees are strictly balanced, whereas subgraph formed by the union of two cliques is balanced but not strictly balanced. As mentioned before we will restrict our motifs to strictly balanced subgraphs.

Let $Aut(G)$ be the group of automorphisms of the graph $G$ and we will denote the number of automorphisms as $a(G)=|Aut(G)|$.  Next we rigorously define the number of copies of a subgraph $H$ in a graph $G$. Let
\[
\homc(H,G)
:= \bigl|\{\varphi: V(H)\to V(G): \ \{u,v\}\in E(H)\Rightarrow \{\varphi(u),\varphi(v)\}\in E(G)\}\bigr|,
\]
\[
\injc(H,G)
:= \bigl|\{\varphi: V(H)\hookrightarrow V(G): \ \varphi \text{ injective and edge-preserving}\}\bigr|.
\]

We define unlabelled copy counts (of the motif $H$ in $G$)
\[
\mathcal{T}(H, G):=\frac{\injc(H,G)}{|\Aut(H)|}.
\]

We will also denote $\mathcal{T}(H, G)$ as 
 $H(G)$ and use the two notations interchangeably. Henceforth, we will refer to the number of unlabelled copies simply as "number of copies" or "counts" for brevity.

Now let \(N=v(G)\) and \(k=v(H)\). With these notations, we finally define the following notion of density of a motif, which is shown later to be asymptotically equal to the proportion of unlabelled copies of $H$ among all the possible $k$- vertex complete subgraphs of $G$ :

\[
t(H,G) := \frac{\homc(H,G)}{N^k}.
\]

We use the $O(.)$ and $o(.)$ to denote asymptotically bounded and asymptotically goes to $0$ respectively i.e we say $a_n=O(\gamma_n)$ if $\limsup|a_n/\gamma_n|\leq C$ and $a_n=o(\gamma_n)$ if $\limsup|a_n/\gamma_n|=0$. The notation $a_n\gg b_n$ will also be used to denote $b_n=o(a_n)$. Similarly $O_p(.)$ and $o_p(.)$ are reserved for the stochastic versions of the same quantities. We will use $\xrightarrow{P}$ to denote convergence in probability while $\xrightarrow{d}$ is used to denote convergence in distribution. The $\ell_1$ and $\ell_2$ norms of vectors are denoted by $\|.\|_1$ and $\|.\|$ respectively unless otherwise mentioned.



\section{Background \label{background}}

In this section, we explain the main ideas behind the construction of our statistical tests, which are rooted in the Maximum Entropy principle, and provide a brief overview of the mathematical tools used in the analysis. 

We begin in Section~\ref{subsec:gof} by describing the general testing framework. The null hypothesis naturally gives rise to a constrained optimization problem, in which the empirical entropy is maximized subject to structural constraints. The resulting Lagrange multipliers play a central role, and our test statistics are defined through their asymptotic behavior.

In Section~\ref{sec:lag_mult}, we place our method in a broader statistical context by relating it to classical hypothesis testing procedures based on Lagrange multipliers, most notably the score test. Section~\ref{ERGM_intro} then introduces the exponential random graph model (ERGM) setting and outlines the basic assumptions of the model. Finally, we review the two main analytical tools required for our proofs: graph limit theory in Section~\ref{sec:graph_limit} and large deviation principles for random graphs in Section~\ref{sec:LD_ERGM}.

\subsection{Hypothesis testing via MaxEnt principle}{\label{subsec:gof}}

Suppose $X_1,\ldots,X_n$ are i.i.d. $P\in \mathcal P$ where $\mathcal P$ is an arbitrary family of distributions.
Let $\vartheta(\cdot)$ be a statistical functional; that is, $\vartheta(P)$ is a real-valued function of $P$, defined for $P \in \mathcal P$. For example, if $\mathcal P$ is the set of all distributions on $\mathbb{R}$ with finite variance, $\vartheta(P)$ could be the mean of $P$ or the variance of $P$. The general problem of testing functional of distributions is as follows:

\begin{equation}
\H_0: \vartheta(P)\in \mathcal{C} \text{ vs } \H_1: \vartheta(P)\notin \mathcal{C}. \label{functional testing}
\end{equation}
for an arbitrary set $\mathcal C\subseteq \mathbb{R}$.

This problem reduces to testing of parameters in parametric models if the model is \textit{identifiable} i.e.  $\mathcal P =\{P_{\theta}\}_{\theta \in \Theta}$ and $P_{\theta_1}=P_{\theta_2}$ implies $\theta_1=\theta_2$. Indeed we can define $\vartheta(P_{\theta}):=\theta$ and state the hypothesis testing problem at the parameter level.

Now let $\mathfrak{G}_N$ be the set of graphs on $N$ vertices  (where $N=m_n$ may grow with the sample size $n$) and $\Theta\subseteq \mathbb{R}^d$. For $\theta\in \Theta$, we denote a parametrized family of probability measures on $\mathfrak{G}_N$ by $P_{\theta}$.
Let $\mathcal{G}_1,\mathcal{G}_2,\ldots \mathcal{G}_n$ be i.i.d. realizations random graphs on $N$ vertices with common distribution $P_{\theta}$. 
Fix $\theta_0\in \Theta$ and consider the function
\[
h:\mathfrak{G}_N\rightarrow \mathbb{R}
\]
given by
\[
G\mapsto \mathcal{T}(H,G)-\E_{G\sim \P_{\theta_0}}[\mathcal{T}(H,G)].
\]
Note that $h$ denotes the centered \(H\)-counts in the graph $G$ and observe that \begin{equation}\mathbb{E}_{G\sim P_{\theta}}[h(G)]=0\label{unbiased_count}\end{equation} if $\theta=\theta_0$. Now consider the following hypothesis testing problem:
\begin{equation}
\H_0: \mathbb{E}_{G\sim P_{\theta}}[h(G)]=0 \text{ vs } \H_1: \mathbb{E}_{G\sim P_{\theta}}[h(G)]\neq 0. \label{general_gof_statement}
\end{equation}
Indeed  defining $\vartheta(P_{\theta})=\mathbb{E}_{G\sim P_{\theta}}[h(G)]$ this is of the form \eqref{functional testing} when $\mathcal C=\{0\}$. In other words using the functional $\mathbb{E}_{G\sim \mathbb{P}_{\theta}}[h(G)]$ we want to test whether the random graph is sampled from the distribution $P_{\theta}$.

We state the problem in this form since many random graph models like the ERGM are not identifiable through a single subgraph count.
However, note that for the \er model $G(N,p)$ (in the general setup $p=p_N$ can vary as a function of $N$, but for brevity we will use $p$ when there is no chance of confusion) the distribution $\P_{\theta}$ can then be denoted as $\P_{p}$ and our hypothesis testing problem reduces to the following problem:
\begin{equation}
\H_0: p=p_0 \text{ vs } \H_1: p\neq p_0. 
\end{equation}
in other words \er models are identifiable with respect to the expected motif counts.


In the maximum-entropy under constraints paradigm one considers an arbitrary distribution $\{p(G)\}_{G\in \mathfrak{G}_N}$ and writes the following optimization problem:
\begin{equation}\max_{p(G)} -\sum_{G \in \mathfrak{G}_N}p(G)\log p(G) \quad \text{ such that } \sum_{G \in \mathfrak{G}_N} c(G)p(G)=C_*\label{max_ent_pop}\end{equation}
where the maximum is taken over all probability distribution on $\mathfrak{G}_N$ and $c(G)$ is a real valued function with the domain $\mathfrak{G}_N$. The constraint $\sum_{G \in \mathfrak{G}_N} c(G)p(G)=C_*$ is called topological constraint in \cite{squartini2017maximum}, and the authors discuss the properties of such maximizing  distributions under different topological constraints.
Our test is motivated from such maximum entropy principle; in particular to design the test we will maximize the entropy of the empirical distribution subject to the condition that the empirical version of \eqref{unbiased_count} holds. Let $w:=(w_1,\ldots,w_n)$ be the \textit{weight vector} where $w_i$ is the weight assigned to the graph $\mathcal{G}_i$. The empirical version of \eqref{max_ent_pop} is given by:
\begin{align*}
\argmax_{w\in \mathcal{W}} -\sum_{i=1}^n w_i\log w_i 
\text{ where }  \mathcal{W}:= & \left\{w:\sum_{i=1}^nw_ih(\mathcal{G}_i)=0, w_i\geq 0, \sum_{i=1}^n w_i=1\right\}.
\end{align*}

 We will denote $h(\mathcal{G}_i)$ by $h_{n,i}$ (and, whenever the number of vertices does not grow with $n$, by $h_i$). Note that this problem is feasible for $ \min h_i<0<\max h_i$. Further, as a function of the weight vector, this is a strictly concave maximization problem on a compact convex set. Hence, the maximum exists and is attained in the interior. This allows us to locate the point of maximum using Lagrange multipliers.

Take the Lagrangian to be
\[
L(w,\lambda,\alpha)=-\sum_{i=1}^n w_{i} \log w_{i}-\lambda \sum_{i=1}^n w_{i} h_{n,i}-\alpha\left(\sum_{i=1}^n w_{i}-1\right).
\]
Then the Lagrangian equations yield the following relations between the optimal variables:
\begin{align}
\sum_{i=1}^n h_{n,i} e^{-\hat{\lambda}_nh_{n,i}}=0\label{root_eqn}\\
1+\hat{\alpha}=\log \sum_{i=1}^n e^{-\hat{\lambda}_nh_{n,i}}\label{opt_alpha}\\
\hat{w}_i=e^{-(1+\hat{\alpha}+\hat{\lambda}_nh_{n,i})}\label{opt_weight}
\end{align}

We will design our test based on the asymptotic behaviour of the Lagrange multiplier $\hat{\lambda}_n$.
In particular, we will demonstrate that, under the null hypothesis, the random variable $\hat{\lambda}_n$ will converge to some constant $\lambda^{\circ}$ at a certain rate, which will lead to a natural goodness-of-fit test.

\subsection{Tests based on Lagrange multipliers}{\label{sec:lag_mult}}

In this subsection we review the classical Lagrange multiplier test associated with constrained likelihood maximization, that is widely used in the econometrics literature. Consider a simple setup where \(X_1,\dots,X_n\) are i.i.d. samples from a model with parameter \(\theta\in\mathbb{R}^p\) and log-likelihood \(\ell_n(\theta)=\sum_{i=1}^n \ell(X_i;\theta)\). Consider testing equality constraints
\begin{equation}\label{eq:H0}
H_0: r(\theta)=0,\qquad r: \mathbb{R}^p\to\mathbb{R}^q,\ q<p.
\end{equation}
The three classical test families are Wald, Likelihood Ratio (LR), and Lagrange Multiplier (LM/score/Rao) with the LM tests being attractive when the unrestricted MLE is hard but the restricted MLE under \(H_0\) is easy to compute. We briefly review the LM test below. 

 Let \(S_n(\theta)=\nabla_\theta \ell_n(\theta)\) be the score, \(I_n(\theta)=-\nabla_\theta^2 \ell_n(\theta)\) the observed information, and \(R(\theta)=\partial r(\theta)/\partial \theta\) be the \(q\times p\) Jacobian. Under \(H_0\), write \(\theta_0\) for the true parameter and \(\hat\theta_0\) for the restricted MLE. To compute the restricted MLE we consider the constrained maximization
\begin{equation*}
\max_{\theta\in\mathbb{R}^p} \ \ell_n(\theta)\quad\text{s.t.}\quad r(\theta)=0,
\end{equation*}
and the Lagrangian can be written as
\begin{equation*}
\mathcal{L}_n(\theta,\lambda)=\ell_n(\theta)+\lambda^\top r(\theta),\qquad \lambda\in\mathbb{R}^q.
\end{equation*}

At an optimum \((\hat\theta_0,\hat\lambda)\) , the first-order conditions  are
\begin{equation}\label{eq:FOC}
S_n(\hat\theta_0)+R(\hat\theta_0)^\top\hat\lambda=0,\qquad r(\hat\theta_0)=0.
\end{equation}
The distinguishing feature here is that the \emph{optimal Lagrange multiplier} \(\hat\lambda\) at the restricted optimum carries all the information needed for the test; the test statistic can be written as a (scaled) quadratic form in \(\hat\lambda\).
If regularity assumptions hold, \cite{silvey1958} showed that under $H_0$, the statistic $n^{-1/2}\hat{\lambda}$ is asymptotically normally distributed  
with variance-covariance matrix $\Big(R(\theta_0)I(\theta_0)^{-1}R(\theta_0)^\top\Big)^{-1}$, which is of rank $q$. 
Consequently 
\[
\frac{1}{n}\hat{\lambda}^\top \Big(R(\theta_0)I(\theta_0)^{-1}R(\theta_0)^\top\Big)^{-1} \hat{\lambda}
\]
is asymptotically distributed as $\chi^2$ with $q$ degrees of freedom, 
when $h(\theta_0)=0$, and \cite{silvey1958} proposed to choose as a region of acceptance of the hypothesis that $h(\theta_0)=0$ 
the set of $x$ for which
\[
\frac{1}{n}\hat{\lambda}^\top \Big(R(\theta_0)I(\theta_0)^{-1}R(\theta_0)^\top\Big)^{-1} \hat{\lambda} < \chi^2_{q,\,1-\alpha}.
\]

Since the seminal work of \cite{silvey1958}, the Lagrange Multiplier (LM) test has been extensively explored and refined. In \cite{silvey1959}, the author extended the framework to accommodate degenerate cases and established the asymptotic equivalence among the Wald, likelihood ratio (LR), and LM tests as the sample size $n$ grows large. Following this development, the LM test has become a foundational tool in econometric literature, forming the basis for a wide range of specification and diagnostic tests. In particular, building on the Lagrange Multiplier principle, \cite{bp_test} proposed a test for heteroskedasticity in regression models, \cite{godfrey1978testing1, godfrey1978testing2} developed LM tests for serial correlation in dynamic regressions, \cite{engle1982autoregressive} introduced LM-type procedures for detecting ARCH effects, and \cite{bera1984testing} constructed LM tests for assessing normality in limited dependent variable models, among others.

Our proposed tests are grounded in a similar foundational principle, yet they differ from the classical LM constructions in several important conceptual and methodological aspects, which we summarize below.
\begin{enumerate}
    \item The Lagrange Multiplier test developed in this paper is formulated for statistical networks models in which the number of nodes is allowed to grow with the sample size. The resulting asymptotic behavior in the sparse and dense regimes requires a completely different analytical approach from that of the fixed-dimensional parametric models commonly studied in econometrics.
    
    \item Our test is based on maximizing \emph{ entropy} rather than the likelihood. Maximizing the  empirical entropy corresponds to finding the root of the function 
    \[
    \sum_{i=1}^n h_{n,i} e^{-\hat{\lambda}_n h_{n,i}},
    \]
    whose behavior can be characterized using the analysis of the exponential random graph model (ERGM) free energy, as outlined in \cite{chatterjee2013estimating}. While a similar analysis could, in principle, be developed under the likelihood framework, its exact equivalence and comparative merit are not yet clear and are left for future investigation. Furthermore, since the maximum-entropy principle is natural in the analysis of network models (see \cite{squartini2017maximum}), our construction can be viewed as a direct extension of that principle.
    
    \item We construct our test based on a \emph{scalar} Lagrange multiplier. In contrast, the framework of \cite{silvey1958} suggests the possibility of tests based on a \emph{vector} of multipliers arising from hypotheses involving multiple parameters. While this generalization appears promising, it lies beyond the scope of the present work and will be explored in future research.
    \item Finally, in conjunction with \cite{ghosh2023} (see also the classical works \cite{silvey1958,silvey1959}), this article shows that the Lagrange multiplier obtained from maximizing empirical entropy (or empirical likelihood in the case of \cite{ghosh2023}) can be used effectively for statistical inference in complex models. More broadly, it points to a general overarching strategy whereby goodness-of-fit testing can be framed as a constrained optimization problem and the associated Lagrange multiplier can be used for devising the test.
\end{enumerate}

\subsection{Exponential random graph model \label{ERGM_intro}}

An \emph{Exponential Random Graph Model (ERGM)} defines a probability distribution on labeled simple graphs \(G=(V,E)\) with \(|V|=N\). Let $T_1$ be an edge and $T_2,\ldots,T_k$ be graphs with at least two edges. Fix sufficient statistics \(t(T_1,G),\dots,t(T_K,G)\) (homomorphism densities of the motifs such as edges, triangles etc.) and parameters \(\beta=(\beta_1,\dots,\beta_K)\in\mathbb{R}^K\). One considers the following probability distribution,
\begin{equation}
\label{eq:ergm}
\mathbb{P}_\beta(G)
= \frac{1}{Z_N(\beta)}\exp\!\left\{\,N^2\sum_{k=1}^K \beta_k\, t(T_k,G)\right\},
\end{equation}
where \(Z_N(\beta)\) is the normalizing constant. 
For $k=1$, we recover the \er model $G(N,p)$ where every edge is present with probability 
\[
p = p(\beta) = e^{2\beta_1}(1+e^{2\beta_1})^{-1},
\]
independent of each other. If $k \geq 2$ the probability measure ``encourages'' the presence of the corresponding subgraphs if the corresponding coefficient $\beta_k$ is positive. A canonical example is the edge-triangle model, with \(t(T_1,G)\) the edge density and \(t(T_2,G)\) the triangle density; see \cite{chatterjee2013estimating} for a detailed discussion.

ERGMs grew from exponential-family models in social network analysis \cite{frank1986markov,holland1981exponential,wasserman1994social}, and since then this model has been widely studied (cf. \cite{park2004solution,park2005solution,chatterjee2013estimating,radin2013phaseComplex,radin2013phaseERGM,yin2013critical,shalizi2013consistency,eldan2018exponential,mukherjee2023statistics,fang2024normal,fang2025conditionalcentrallimittheorems,bhamidi2008mixing,reinert2019approximating,bresler2024metastable,ganguly2024sub,winstein2025concentration,winstein2025quantitative} for a partial list). While the literature on this topic is extensive (and the above list is by no means exhaustive), we note that hypothesis testing within the ERGM framework has been relatively underexplored. In particular, while a few recent works such as \cite{xu2021stein} and \cite{reinert2019approximating} develop tests based on Stein discrepancies and \cite{mukherjee_signal} proposes tests based on sums of degrees, the wider literature on the topic is rather sparse.

Our tests are based on the treatment of ERGMs in \cite{chatterjee2013estimating}, where one uses graph limits to study the asymptotic behaviour of ERGMS. We note in passing that while this encapsulates the so-called {\it dense regime}, some recent work has also
 studied modifications which yield sparse graphs instead \cite{yin2017asymptotics,mukherjee_sparse,cook2024typical}. However these sparse ERGMs are outside the scope of the current paper and we study sparse models for the \er random graphs where the analysis proceeds via a different route.

 While analyzing the ERGMs, we consider the so called \textit{ferromagnetic regime} where the parameters $\beta_k$-s are positive for $k>1$. Let $e_k$ is the number of edges in $T_k$; we define the following functions:
 
\begin{equation}
\Phi_\beta(a) := \sum_{k=1}^K \beta_ke_k a^{e_k-1}, 
\qquad 
\varphi_\beta(a) := \frac{e^{2\Phi_\beta(a)}}{1+ e^{2\Phi_\beta(a)}}.
\end{equation}
 The parameter $\beta$ is said to lie in the \emph{subcritical regime} (see \cite{bhamidi2011mixing, chatterjee2013estimating}) if there is a unique solution $p(\beta) \in (0,1)$ to
\begin{equation} \label{eq:subcrit-eq}
\varphi_\beta(p) = p, \qquad \varphi'_\beta(p) \leq 1
\end{equation}
and the solution \(p(\beta)\) satisfies \(\varphi'_\beta(p(\beta)) < 1\).
It may be shown that \eqref{eq:subcrit-eq} is equivalent to $p$ satisfying
\begin{equation}
2\Phi_\beta(p) = \log\!\left(\frac{p}{1-p}\right).
\end{equation}

In this regime, ERGMs asymptotically resemble an \er $G(N,p)$ and is amenable to analysis. On the other hand, if the parameter $\beta$ lies outside this region (in other words, it lies in the supercritical regime), the graphs exhibit clustering and metastability, and are much harder to analyze (see \cite{winstein2025concentration,winstein2025quantitative} for details). We will restrict ourselves to the subcritical regime of a ferromagnetic ERGM and will use the tools from \cite{chatterjee2013estimating} to analyze the Lagrange multiplier and thus develop a test.

\subsection{Graph Limits \label{sec:graph_limit}}

To analyze the ERGMs in the asymptotic limit, we need to define several key quantities that appear in the variational formulations of this limit. These definitions build on the theory of graph limits and graphons, which provide a natural framework for analyzing dense graph sequences. In this section, we provide a quick introduction to some essential ingredients from this theory that are necessary to understand this connection.

Let $h: [0,1]^2 \rightarrow [0,1]$ be a symmetric measurable function, which, up to an equivalence (to be defined shortly),  we can interpret as a graphon or a graph limit. For any such function $h$, we define the following functionals:

\begin{itemize}
    \item \textbf{\(H\) density:} 
    \begin{equation}t(H,h): = \int_{[0,1]^{|V(H)|}} \prod _{(i,j)\in E(H)}h(x_i,x_j) \, dx_1 \ldots \, dx_k\label{H_density},\end{equation}.
    
    \item \textbf{Edge density:} 
    \begin{equation}e(h): = \int_{[0,1]^2} h(x,y) \, dx \, dy\label{edge_density},\end{equation}.
    
    \item \textbf{Entropy functional:} 
    \begin{equation}I(h): = \int_{[0,1]^2} \Big[ h(x,y)\log h(x,y) + (1-h(x,y))\log(1-h(x,y)) \Big] \, dx \, dy\label{entropy_func}.\end{equation}
\end{itemize}

 We denote by $W$ the space of all pre-graphons:
\[W = \{h: [0,1]^2 \rightarrow [0,1] \mid h \text{ is symmetric and measurable}\}.\]
Two pre-graphons, $h_1, h_2 \in W$, are equivalent ($h_1 \sim h_2$) if there exists a measure-preserving bijection $\sigma: [0,1] \to [0,1]$ such that $h_2(x,y) = h_1(\sigma(x),\sigma(y))$ for almost all $(x,y)$. The space of graphons is the quotient space $\tilde{W} = W/\sim$.

Since the functionals $t(H,h)$, $e(h)$, and $I(h)$ are invariant under this equivalence, the functions are well-defined on $\tilde{W}$. For an element $h\in W$, the corresponding equivalence class will be denoted by $\tilde{h}$. For a function $f$ on $W$ invariant under the above equivalence, we will use the same notation $f$ to denote its lift to the space $\tilde{W}$, so that $f(\tilde{h}) = f(h)$. For $p \in (0,1)$, we also define, by abuse of notation, the constant function $p \in \tilde{W}$ as $p(x,y) = p$ for all $x,y \in [0,1]$.

Throughout the dense regime, we will work with variational problems involving these functionals, in particular, 
\begin{equation}
\sup_{\tilde{h} \in \tilde{W}}\left(-\frac{\lambda}{\lvert Aut(H)\rvert} t(H,\tilde{h}) +  \sum_{k=1}^K \beta_k t(T_k, \tilde{h}) -\frac{1}{2} I(\tilde{h})\right)\label{var_form_1}
\end{equation}
where $t(H,\tilde{h}), t(T_k,\tilde{h})$ are the H-densities (see \eqref{H_density}) of the subgraphs $H$ and $T_k$, while $I(\tilde{h})$ is the entropy functional as defined in \eqref{entropy_func}. In fact \eqref{var_form_1} represents the free energy of the system (cf. \cite{chatterjee2013estimating} for an interpretation) and plays a central role in determining the limiting behavior of the empirical critical points. When specializing to \er
 random graphs $G(N,p)$ we will consider the setup where $p=\Theta(1)$. In this case we will have $t(H,p) = p^{|E(H)|}$ and $e(p) = p$.

\subsection{Large deviation in random graphs \label{sec:LD_ERGM}}

The large deviation principle for the \er random graph was first formulated  in the pioneering work of \cite{chatterjee2011large} where the authors extended Sanov's theorem to the \er model. Using the same result \cite{chatterjee2013estimating} showed the following result for the graphons.

Let $T:\widetilde{\mathcal W}\to\mathbb R$ be a bounded continuous function on the metric space $(\widetilde{\mathcal W},\delta_\square)$ (see \cite{chatterjee2013estimating} for exact definition of the metric). Fix $N$ and let $\mathfrak G_N$ denote the set of simple graphs on $N$ vertices. Then $T$ induces a probability mass function $p_N$ on $\mathcal G_N$ defined as
\[
p_N(G):=e^{N^2\,(T(\tilde G)-\psi_N)}
\]
where $\tilde G$ is the image of $G$ in the quotient space $\widetilde{\mathcal W}$. Noting that 
\begin{equation}
\psi_N=\frac{1}{N^2}\log\sum_{G\in\mathfrak G_N} e^{\,N^2 T(\tilde G)},
\end{equation}
the following result was proved in \cite{chatterjee2013estimating}:
\[
\psi:=\lim_{N\to\infty}\psi_N
=\sup_{\tilde h\in\widetilde{\mathcal W}}\big(T(\tilde h)-\frac{1}{2}I(\tilde h)\big).
\]

Using the above result the authors further proved that
the normalizing constant in \eqref{eq:ergm} behaves asymptotically as follows:

\begin{equation}
\lim_{N\to\infty}\frac{1}{N^2}
\log Z_N(\beta)
=\sup_{0\le u\le 1}\left(\sum_{i=1}^{K}\beta_i\,u^{e(T_i)}-\frac12 I(u)\right),
\label{ergm_lim_CD}
\end{equation}
where $I(u)=u\log u+(1-u)\log(1-u)$ and $e(T_i)$ is the number of edges in $T_i$. Here $I(u)$ can be interpreted as the entropy functional \eqref{entropy_func} for the constant function $u(x,y)=u$.

Understanding the behavior of the log normalizing constant is a crucial aspect in our analysis of the Lagrange multiplier. Although details are given in Appendix \ref{sec:proof_outline}, we note here that the solution of \eqref{root_eqn} can be seen as a critical point of the function $\sum_{i=1}^n e^{-{\lambda}h_{n,i}}$. It can be shown that the expected value of the same function is exactly the normalizing constant $Z_N(\beta)$ for a certain ERGM and hence \eqref{ergm_lim_CD} features heavily in the analysis of our solution $\hat{\lambda}_n$. However, the above argument is at the expectation level and the same argument cannot be easily generalized to analyze the empirical version, and we need more nuanced tools.

To analyze the critical point of the function $\sum_{i=1}^n e^{-\lambda h_{n,i}}$, we borrow tools from the large deviation theory of ERGMs.  While the large deviation results in the dense case follows from \cite{chatterjee2011large}, the proof in that work involves the use of Szemeredy regularity lemma, which does not supply quantitative error bounds. To bypass the difficulty, \cite{chatterjee2016nonlinear} (also see \cite{chatterjee2016introduction} for a unified treatment on the subject) developed the theory of nonlinear large deviation with quantitative error estimates. The key result in \cite{chatterjee2016nonlinear} is Theorem 1.6 where the authors approximate the log normalizing constant by a variational problem (similar to \eqref{ergm_lim_CD} but using a different analysis).

We apply this approximation to study the asymptotic behavior of the function 
 $\sum_{i=1}^n e^{-\lambda h_{n,i}}$  in the large $n$ limit. In particular, we characterize its critical point by analyzing the corresponding critical point of the limiting variational problem and exploiting the convexity properties of the approximating sequence of functions.

\section{Main Results \label{main_res}}

\subsection{ Generalities}
In this section, we present the main results of our paper.  The main goal will be to construct the goodness-of-fit and the two sample tests by analyzing the asymptotic behaviour of the Lagrange optimizer $\hat{\lambda}_n$. 

We consider two distinct setups. In the first case, the total number of vertices in the graph $N$, is set to a fixed integer  
$m$, and the results hold for general random graph models. In the second case, we allow the number of vertices ($N=m_n$) to grow with the sample size. In the latter growing sample case we first analyze the sparse regime, and subsequently the dense regime. 

In the sparse setting, we construct tests for the \er model, the general sparse ERGM models being out of the scope due to fundamental bottlenecks, as already explained earlier. In particular we consider samples from $G(m_n,p_n)$ with, $m_np_n^{k(H)}\rightarrow c$.
Next, we analyze the dense regime - here we formulate tests for the more general ferromagnetic ERGM model in the subcritical regime and derive the results for the dense \er model (samples obtained from $G(m_n,p)$ with $p=\Theta(1)$) as a special case. Finally we end this section with discussing a few key technical ingredients, related to the results stated in this section.\\

\textbf{Remarks:}

\begin{enumerate}
\item The existence and uniqueness of the Lagrange multiplier are established in
Lemma~\ref{exis_uniq}. As shown in the Appendix, this result holds 
across all regimes considered—namely, the fixed-size, sparse, and dense regimes.
Consequently, throughout this section we tacitly assume the existence and
uniqueness of the Lagrange multiplier in all stated theorems.

\item Although the general Goodness-of-fit test stated in \ref{general_gof_statement} is a two sided test, we will prove the results for one-sided tests due to technical reasons. A brief discussion in this regard is given in Section \ref{tech_ingre}.

\item It is possible to extend the results to the full sparse regime (\(p_n\rightarrow 0\) at any given rate) and also to an arbitrary fixed small graph \(H\), but these would involve additional technical and notational difficulties without adding to our conceptual understanding of the problem. Therefore, for clarity of exposition, we will restrict ourselves to a strictly balanced motif $H$ at the threshold (\(m_np_n^{k(H)}\rightarrow c\)), and refer to this as "the sparse" regime.

\end{enumerate}

\subsection{Networks of fixed size}

In this case our result is general: we assume that the graphs are sampled from any random graph model $\mathcal{G}$ with $m$ vertices. We will denote by $\mathbb{P}_{\mathcal{G}}$ the distribution of $h$ under the random graph model $\mathcal{G}$. 

In this setup, independent samples $\mathcal{G}_{1}, \ldots, \mathcal{G}_{n}$ are obtained from a random graph model $\mathcal{G}$ with a fixed number of vertices (say $m$). Let $\mathcal{H}$ be the set of values $h_i$ can assume. Note that the set $\mathcal{H}$ is finite as the number of vertices is finite.

The convergence of empirical distribution of \(H\) count then leads to the root of \eqref{root_eqn} converging to a particular value $\lambda^{\circ}$, which is captured by the following theorem.

\subsubsection{Consistency of the Lagrange Multiplier}

 To analyze the root of \eqref{root_eqn}, we first rewrite the LHS as:

\begin{equation}\frac{1}{n}\sum_{i=1}^n h_{i} e^{-\lambda h_{i}}=\sum_{a\in \mathcal{H}} a e^{-\lambda a}\frac{\lvert\{i\mid h_i=a\}\rvert}{n}\label{root_eq_form_2}.\end{equation}

Thus it is enough to analyze the root of the RHS of \eqref{root_eq_form_2}.

\begin{theorem}{\label{finite_consistency}}
Let  $\hat{\lambda}_n$ be the unique real root of the function
\[
\sum_{a\in \mathcal{H}} a e^{-\lambda a}\frac{\lvert\{i\mid h_i=a\}\rvert}{n}.
\] 
and $\lambda^{\circ}$ be the unique real root of the function 
\[\sum_{a\in \mathcal{H}}a e^{-\lambda a}\mathbb{P}_{\mathcal{G}}(h=a).\]
Then $\hat{\lambda}_n\rightarrow \lambda^{\circ}$ almost surely.
\end{theorem}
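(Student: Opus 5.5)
Write $\hat p_n(a):=|\{i\mid h_i=a\}|/n$ and $p(a):=\mathbb{P}_{\mathcal{G}}(h=a)$ for $a\in\mathcal{H}$. Define
\[
F_n(\lambda):=\sum_{a\in\mathcal{H}} a e^{-\lambda a}\hat p_n(a),\qquad F(\lambda):=\sum_{a\in\mathcal{H}} a e^{-\lambda a}p(a).
\]
The argument rests on two facts. First, $\mathcal{H}$ is finite, so the strong law of large numbers, applied to each of the finitely many indicators $\mathbf 1\{h_i=a\}$, gives $\hat p_n(a)\to p(a)$ almost surely for every $a\in\mathcal{H}$ simultaneously. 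Second, both functions are strictly decreasing, since
\[
F'(\lambda)=-\sum_a a^2e^{-\lambda a}p(a)<0
\]
as soon as $p$ puts mass on some $a\neq0$, and similarly for $F_n$. This is the standard monotone-root argument for $Z$-estimators.

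The steps are as follows. (i) Fix an $\omega$ in the probability-one event where $\hat p_n(a)\to p(a)$ for all $a$. (ii) Fix $\varepsilon>0$. Because $\lambda^\circ$ is the unique root and $F$ is strictly decreasing, $F(\lambda^\circ-\varepsilon)>0>F(\lambda^\circ+\varepsilon)$. (iii) At the two fixed points $\lambda^\circ\pm\varepsilon$ we have
\[
|F_n(\lambda)-F(\lambda)|\le\sum_{a\in\mathcal{H}}|a|e^{-\lambda a}|\hat p_n(a)-p(a)|\to0,
\]
since this is a finite sum of vanishing terms. Hence for all large $n$ we get $F_n(\lambda^\circ-\varepsilon)>0>F_n(\lambda^\circ+\varepsilon)$. (iv) $F_n$ is continuous and strictly decreasing, and $\hat\lambda_n$ is its unique root (existence and uniqueness are given by Lemma~\ref{exis_uniq}, which we assume as the paper allows). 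The intermediate value theorem therefore forces $\hat\lambda_n\in(\lambda^\circ-\varepsilon,\lambda^\circ+\varepsilon)$ for all large $n$. Since $\varepsilon$ is arbitrary, $\hat\lambda_n\to\lambda^\circ$ on this event, which proves almost sure convergence.

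The only delicate point is that $\hat\lambda_n$ must be well defined for all large $n$, which requires feasibility: $\min_i h_i<0<\max_i h_i$. The existence of $\lambda^\circ$ as a real root needs $p$ to charge both negative and positive values of $a$; otherwise $F$ would have constant sign. Given that, let $a_-<0<a_+$ be atoms of $p$. By the strong law, almost surely $\hat p_n(a_\pm)>0$ eventually, so $F_n\to\mp\infty$ as $\lambda\to\pm\infty$ and the root exists for all large $n$. I would handle this explicitly before step (iv). The monotonicity of $F_n$ itself is automatic whenever some sampled $h_i\neq0$.
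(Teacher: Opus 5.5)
Your argument is correct, but it takes a different route from the paper's. The paper upgrades the pointwise strong law to almost sure uniform convergence of the empirical estimating function on compacts, via a Glivenko--Cantelli argument for $\{a\mapsto ae^{-\lambda a}\}_{\lambda\in K}$. It then invokes Hurwitz's theorem to transfer the simple zero $\lambda^{\circ}$ of the limit to the empirical functions. You use only convergence at the two points $\lambda^{\circ}\pm\varepsilon$, strict monotonicity, and the intermediate value theorem. This is the same sign-change device the paper itself uses for Theorem~\ref{poisson_consistency_1}.

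Your route is more elementary and somewhat more robust:
\begin{enumerate}
\item It needs no uniformity and no complex analysis. Hurwitz's theorem concerns holomorphic functions converging uniformly on compact subsets of a complex domain. That does hold here because $\mathcal{H}$ is finite, but one must also note that a unique zero of a real-coefficient function in a disc centred on the real axis is real.
\item It applies to $\lambda^{\circ}$ of any sign. The paper states uniform convergence only on compacts $K\subseteq[0,\infty)$, which as written does not give an open neighbourhood of $\lambda^{\circ}$ when $\lambda^{\circ}\le 0$, including the null case $\lambda^{\circ}=0$.
\item It yields existence of $\hat\lambda_n$ for all large $n$ on the almost-sure event directly from the sign change at $\lambda^{\circ}\pm\varepsilon$. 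Your separate feasibility paragraph is therefore correct but redundant.
\end{enumerate}

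The paper's uniform-convergence framing is mainly useful as a template for the subsequent $\mathcal{Z}$-estimator CLT (Theorem~\ref{finite_an}), where the derivative of the estimating function must be controlled near $\lambda^{\circ}$. For consistency alone, your monotone argument is the cleaner one.
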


\subsubsection{Asymptotic normality of the Lagrange Multiplier}

Next we consider the asymptotic normality of the Lagrange multiplier. For notational convenience we will redefine $h_i$, centering $\mathcal{T}(H,\mathcal{G}_i)$ with an arbitrary constant $h_0$ as follows:
$$h_{i}=\mathcal{T}(H,\mathcal{G}_{i})-h_0.$$
Choosing $h_0$ as the expected motif count under a particular random graph model $\mathcal G_0$, then yields the results needed for the corresponding Goodness-of-fit test.

\begin{theorem}{\label{finite_an}}
Let $\mathcal{G}_{1}, \ldots, \mathcal{G}_{n}$ be sampled independently from a random graph model $\mathcal{G}$ with a fixed number of vertices.
If $\hat{\lambda}_n$ is the unique real root of the equation
\[
\sum_{i=1}^n h_{i} e^{-\lambda h_{i}} = 0,
\]
then the following holds: 
\[\sqrt{n}(\hat{\lambda}_n - \lambda^{\circ})\xrightarrow{{d}}\mathcal{N}\left(0, \frac{\mathbf{Var}\left[(\mathcal{T}(H,\mathcal{G})-h_0)e^{-\lambda^{\circ}(\mathcal{T}(H,\mathcal{G})-h_0)}\right]}{(\mathbb{E}\left[(\mathcal{T}(H,\mathcal{G})-h_0)^2e^{-\lambda^{\circ}(\mathcal{T}(H,\mathcal{G})-h_0)}\right])^2}\right)
\]
where  $\lambda^{\circ}$ is the unique real root of the equation \[\sum_{a\in \mathcal{H}} a e^{-\lambda a}\mathbb{P}_{\mathcal{G}}(h=a) = 0.\]
\end{theorem}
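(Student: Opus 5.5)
This is a standard Z-estimator argument in a finite-support setting. Define
\[
\Psi_n(\lambda)=\frac1n\sum_{i=1}^n h_i e^{-\lambda h_i}, \qquad \Psi(\lambda)=\mathbb{E}\big[h e^{-\lambda h}\big]=\sum_{a\in\mathcal H} a e^{-\lambda a}\,\mathbb P_{\mathcal G}(h=a),
\]
so that $\Psi_n(\hat\lambda_n)=0$ and $\Psi(\lambda^\circ)=0$. Since $\mathcal H$ is finite, every moment of $h e^{-\lambda h}$ and $h^2e^{-\lambda h}$ exists for every $\lambda$. Both functions are smooth and strictly decreasing, because
\[
\Psi_n'(\lambda)=-\frac1n\sum_i h_i^2e^{-\lambda h_i}, \qquad \Psi'(\lambda)=-\mathbb E[h^2e^{-\lambda h}]<0.
\]
The second derivative is nonzero because existence of the root $\lambda^\circ$ (as assumed, cf. Lemma~\ref{exis_uniq}) forces $h$ to be nondegenerate at $0$.

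The first step is a mean value expansion of $\Psi_n$ around $\lambda^\circ$:
\[
0=\Psi_n(\hat\lambda_n)=\Psi_n(\lambda^\circ)+\Psi_n'(\tilde\lambda_n)(\hat\lambda_n-\lambda^\circ),
\]
with $\tilde\lambda_n$ between $\hat\lambda_n$ and $\lambda^\circ$. This gives
\[
\sqrt n(\hat\lambda_n-\lambda^\circ)=-\frac{\sqrt n\,\Psi_n(\lambda^\circ)}{\Psi_n'(\tilde\lambda_n)}.
\]

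The second step handles the numerator. The variables $h_ie^{-\lambda^\circ h_i}$ are i.i.d., bounded, and have mean $\Psi(\lambda^\circ)=0$. The classical CLT therefore gives
\[
\sqrt n\,\Psi_n(\lambda^\circ)\xrightarrow{d}\mathcal N\!\left(0,\mathbf{Var}\big[h e^{-\lambda^\circ h}\big]\right),
\]
which is exactly the numerator of the stated variance once $h=\mathcal T(H,\mathcal G)-h_0$ is substituted.

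The third step, and the only one needing care, is the denominator: showing $\Psi_n'(\tilde\lambda_n)\to\Psi'(\lambda^\circ)$ in probability. Theorem~\ref{finite_consistency} gives $\hat\lambda_n\to\lambda^\circ$ almost surely, so $\tilde\lambda_n\to\lambda^\circ$ as well. Theorem~\ref{finite_consistency} is stated for the centring by the null mean, but its proof uses only finiteness of $\mathcal H$ and uniqueness of the root, so it applies verbatim to an arbitrary $h_0$.

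Next, write $\Psi_n'(\lambda)=-\sum_{a\in\mathcal H}a^2e^{-\lambda a}\hat p_n(a)$, where $\hat p_n(a)$ are the empirical frequencies. By the strong law, $\hat p_n(a)\to\mathbb P_{\mathcal G}(h=a)$ almost surely for each of the finitely many $a$. Hence $\Psi_n'\to\Psi'$ uniformly on any compact neighbourhood of $\lambda^\circ$. Combining uniform convergence with continuity of $\Psi'$ yields $\Psi_n'(\tilde\lambda_n)\to\Psi'(\lambda^\circ)=-\mathbb E[h^2e^{-\lambda^\circ h}]\neq0$ almost surely.

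Finally, Slutsky's lemma combines the three steps. The denominator converges to $-\mathbb E[h^2e^{-\lambda^\circ h}]$, so its square gives the denominator of the stated variance, which establishes the claimed limit law.
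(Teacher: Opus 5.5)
Your proof is correct and follows the paper's route: an expansion of $\Psi_n$ about $\lambda^\circ$, almost-sure consistency from Theorem~\ref{finite_consistency}, the classical CLT for the numerator, and Slutsky. Your first-order mean-value form, with uniform convergence of $\Psi_n'$ on compacts, is a slightly cleaner treatment of the denominator than the paper's second-order Taylor template from Theorem~\ref{CLT_root}. One small slip: it is uniqueness of $\lambda^\circ$, not mere existence, that rules out $h\equiv 0$ (which would make every $\lambda$ a root) and hence gives $\Psi'(\lambda^\circ)<0$.
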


\subsubsection{Goodness-of-fit test}{\label{GOF_finite}}

In this subsection we consider Goodness-of-fit tests for graphs with fixed number of vertices. Recall that $\mathcal{T}(H,G)$ is the number of $H$ counts in the graph $G$. Let
$\mathcal{G}_0$ is a given random graph model on the same set of $m$ vertices and consider the statistic:
$$h(G)= \mathcal{T}(H,G)-\E_{G\sim \P_{\mathcal{G}_0}}[\mathcal{T}(H,G)]$$
which denote the centered \(H\) counts in the graph $G$.
Now consider the following hypothesis testing problem:

\begin{equation}
\H_0: \mathbb{E}_{G\sim\mathbb{P}_{\mathcal{G}}}[h(G)]=0 \text{ vs } \H_1: \mathbb{E}_{G\sim\mathbb{P}_{\mathcal{G}}}[h(G)]> 0. 
\end{equation}

Before using Theorem \ref{finite_an} to construct a test, we note that setting 
$h_0=\E_{G\sim \P_{\mathcal{G}_0}}[\mathcal{T}(H,G)]$
results in $\lambda^{\circ}=0$ under $\H_0$. Indeed 
defining $$\mathfrak{b}(\lambda):=\sum_{a\in \mathcal{H}}a e^{-\lambda a}\mathbb{P}_{\mathcal{G}}(h=a)$$
we have
$$\mathfrak{b}(0)=\sum_{a\in \mathcal{H}}a\mathbb{P}_{\mathcal{G}}(h=a)=\mathbb{E}_{G\sim\mathbb{P}_{\mathcal{G}}}[h(G)]=0,$$
where the last equality holds under $\H_0$. Since $\lambda^{\circ}$ is defined to the unique root of $\mathfrak{b}(\lambda)$, it follows that $\lambda^{\circ}=0$. 

Further defining 
\[\sigma^2_0:=\frac{\mathbf{Var}\left[(\mathcal{T}(H,\mathcal{G})-h_0)e^{-\lambda^{\circ}(\mathcal{T}(H,\mathcal{G})-h_0)}\right]}{\left(\mathbb{E}\left[(\mathcal{T}(H,\mathcal{G})-h_0)^2e^{-\lambda^{\circ}(\mathcal{T}(H,\mathcal{G})-h_0)}\right]\right)^2},\]
we note that for $\lambda^{\circ}=0$ we have
\[\sigma^2_0=\frac{\mathbf{Var}\left[(\mathcal{T}(H,\mathcal{G})-h_0)\right]}{\left(\mathbb{E}\left[(\mathcal{T}(H,\mathcal{G})-h_0)^2\right]\right)^2}=(\mathbf{Var}\left[(\mathcal{T}(H,\mathcal{G})-h_0)\right])^{-1},\]
and it follows that 
\[\hat{\sigma}^2_0=\left(\frac{1}{n-1}\sum_{i=1}^n\left(\mathcal{T}(H,\mathcal{G}_i)-\frac{1}{n}\sum_{i=1}^n\mathcal{T}(H,\mathcal{G}_i)\right)^2\right)^{-1}\]
is a consistent estimator of $\sigma^2_0$. We consider the following test:

\begin{test} [Goodness-of-fit test for fixed number of vertices] The test is given by {\label{gof_fixed}}
\[\phi_{n}=\mathbb{I}\{\hat{\lambda}_n>\frac{\hat{\sigma}_0}{\sqrt{n}}z_{\alpha}\}\]
where $z_{\alpha}$ is the $1-\alpha$ quantile of the standard normal distribution.
\end{test}

\begin{corollary}
 The Goodness-of-fit test for fixed number of vertices i.e. Test \ref{gof_fixed} is a level $\alpha$ consistent test.
\end{corollary}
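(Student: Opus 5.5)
Two properties need proof: asymptotic size $\alpha$ under $\H_0$, and power tending to one under $\H_1$. Throughout we assume the non-degeneracy that makes $\hat\lambda_n$ well defined, namely that $h$ takes both signs with positive probability, so $\hat\lambda_n$ exists eventually almost surely (Lemma~\ref{exis_uniq}). We also assume $\mathbf{Var}[\mathcal{T}(H,\mathcal{G})]>0$.

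\emph{Level.} Under $\H_0$ we showed above that $\lambda^{\circ}=0$ and $\sigma_0^2=(\mathbf{Var}[\mathcal{T}(H,\mathcal{G})])^{-1}$. Theorem~\ref{finite_an} then gives $\sqrt{n}\,\hat\lambda_n\xrightarrow{d}\mathcal N(0,\sigma_0^2)$. Since $\mathcal{T}(H,\mathcal{G})$ is bounded (the vertex set is finite), the strong law of large numbers shows the sample variance converges almost surely to $\mathbf{Var}[\mathcal{T}(H,\mathcal{G})]>0$. Hence $\hat\sigma_0\to\sigma_0$ almost surely by the continuous mapping theorem. Slutsky's lemma yields $\sqrt{n}\,\hat\lambda_n/\hat\sigma_0\xrightarrow{d}\mathcal N(0,1)$, so
\[\P_{\H_0}(\phi_n=1)=\P\left(\sqrt{n}\,\hat\lambda_n/\hat\sigma_0>z_\alpha\right)\to\alpha.\]

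\emph{Consistency.} Under $\H_1$ we have $\mathfrak b(0)=\E[h]>0$. We locate $\lambda^{\circ}$ using monotonicity:
\[\mathfrak b'(\lambda)=-\sum_{a}a^2e^{-\lambda a}\P_{\mathcal G}(h=a)<0,\]
unless $h\equiv0$, which is excluded. So $\mathfrak b$ is strictly decreasing, and its unique root must satisfy $\lambda^{\circ}>0$. By Theorem~\ref{finite_consistency}, $\hat\lambda_n\to\lambda^{\circ}$ almost surely.

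For the threshold, the same strong law argument gives $\hat\sigma_0\to(\mathbf{Var}[\mathcal{T}(H,\mathcal{G})])^{-1/2}<\infty$ almost surely. Hence $\hat\sigma_0 z_\alpha/\sqrt n\to0$ almost surely. It follows that eventually $\hat\lambda_n>\lambda^{\circ}/2>\hat\sigma_0z_\alpha/\sqrt n$, and therefore $\P_{\H_1}(\phi_n=1)\to1$.

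\emph{Main obstacle.} The only delicate point is bookkeeping: under $\H_1$, $\hat\sigma_0$ no longer estimates the asymptotic standard deviation in Theorem~\ref{finite_an}. For consistency, however, we only need $\hat\sigma_0$ to stay bounded, and this holds because the sample variance converges to a positive limit.
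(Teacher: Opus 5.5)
Your proof is correct. The level half matches the paper's: Theorem~\ref{finite_an} with $\lambda^{\circ}=0$ plus Slutsky. You additionally justify that $\hat\sigma_0\to\sigma_0$, which the paper leaves implicit.

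For the power half you take a different and more elementary route.

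\emph{The paper's route.} It reuses Theorem~\ref{finite_an} under $\H_1$. It writes the rejection event as $\sqrt n(\hat\lambda_n-\lambda^{\circ})>-\sqrt n\,\lambda^{\circ}+\hat\sigma_0 z_\alpha$ and plays the tightness of the left side against $-\sqrt n\,\lambda^{\circ}\to-\infty$.

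\emph{Your route.} You use only the almost sure consistency of Theorem~\ref{finite_consistency} together with $\hat\sigma_0 z_\alpha/\sqrt n\to 0$. This needs no distributional limit under the alternative, and it gives the stronger conclusion that $\phi_n\to 1$ almost surely. It also supplies two facts the paper's argument uses silently:
\begin{enumerate}
\item $\lambda^{\circ}>0$ under $\H_1$, which you get from strict monotonicity of $\mathfrak b$ together with $\mathfrak b(0)=\E[h]>0$; the paper only asserts this.
\item $\hat\sigma_0$ stays bounded under $\H_1$, even though it no longer estimates the asymptotic standard deviation there. The paper's argument also needs $\hat\sigma_0=O_P(1)$ under $\H_1$ and does not mention it.
\end{enumerate}

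\emph{What the paper's route buys.} It runs both halves off the single Theorem~\ref{finite_an}. It is also the natural template for local alternatives with $\lambda^{\circ}_n\to 0$ and $\sqrt n\,\lambda^{\circ}_n\to\infty$, given a CLT that holds uniformly along such sequences; your fixed-$\lambda^{\circ}$ argument does not reach those. For the fixed alternatives in this corollary, however, your argument is all that is needed.
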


\begin{proof}
 From Theorem \ref{finite_an} it follows that the above is asymptotically a level $\alpha$ test.
Indeed, under $\H_0$ $\lambda^{\circ}=0$, and hence
$$\P_{\H_0}(\text{Reject Null})=\E_{\H_0}\left[\mathbb{I}\left\{\hat{\lambda}_n>\frac{\hat{\sigma}_0}{\sqrt{n}}z_{\alpha}\right\}\right]=\P_{\H_0}[\sqrt{n}(\hat{\lambda}_n-\lambda^{\circ})>\hat{\sigma}_0z_{\alpha}]\rightarrow \alpha.$$
Further we observe that

\[\P_{\H_1}(\text{Reject Null})=\E_{\H_1}[\phi_n]=\P[\hat{\lambda}_n>\frac{\sigma_0}{\sqrt{n}}z_{\alpha}]=\P[\sqrt{n}(\hat{\lambda}_n-\lambda^{\circ})>-\sqrt{n}\lambda^{\circ}+\hat{\sigma}_0z_{\alpha}]\]
The RHS converges to $1$ since $\lambda^{\circ}>0$ when the graphs are sampled from a distribution corresponding to the alternate hypothesis, so that the test is consistent.

\end{proof}

\subsection{Networks of growing size: the sparse regime}{\label{subsec:sparse_regime}}


We now examine the problem where the graph size $N=m_n$ increases with the sample size $n$. In this section, we consider the setup where independent samples $\mathcal{G}_{n,1}, \ldots, \mathcal{G}_{n,n}$  are obtained from the \er model $G(m_n,p_n)$, with the edge probability $p_n$ (also denoted as $p(m_n)$ or simply $p$, when there is no chance of confusion) vanishing with the graph size.

We restrict ourselves to a strictly balanced subgraph \( H \) and assume the scaling
\begin{equation}
m_n p_n^{k(H)} \to c > 0,\quad
m_n p_{0,n}^{k(H)} \to c_0 > 0, \label{growth_cond_sparse}
\end{equation}
for constants $c$ and $c_0$ with $c \geq c_0$, where $p_{0,n}$ corresponds to the null hypothesis i.e. the graphs are sampled from $G(m_n,p_{0,n})$.
The above scaling ensures that the expected number of copies of \( H \) remains of constant order and we define
\begin{equation}
\mu(m): = \frac{m^{v(H)}p^{e(H)}}{|Aut(H)|}\label{mu_eq}.
\end{equation}
which is asymptotically equal to the expected number of copies of $H$ in $G(m,p)$ as $m\rightarrow\infty$. Let $\mathcal{G}_{n,i}$ be the $i^{th}$ copy of the $n$ i.i.d. copies sampled from $G(m_n,p_n)$. For notational convenience we will redefine $h_{n,i}$, centering $\mathcal{T}(H,\mathcal{G}_{n,i})$ with an arbitrary constant $h_0$ as follows:
$$h_{n,i}=\mathcal{T}(H,\mathcal{G}_{n,i})-h_0.$$

Later while constructing the Goodness-of-fit test we will choose the centering term to be the limiting value of the expected \( H \)-count under the null model i.e.  
\[
h_0=\lim_{n\rightarrow \infty}\mathbb{E}_{G\sim\mathbb{P}_{p_{0,n}}}[\mathcal{T}(H,\mathcal{G}_{n,i})].
\]
Since the underlying random graph model is \er, it can be easily shown that under the null, \( h_0 = \lim_{n\rightarrow \infty}\mu(m_n) \) where $\mu(m)$ is defined in \eqref{mu_eq}. For general values of the parameter $p_n$ (not necessarily the null) we assume that
\begin{equation}
\lim_{n \to \infty} \frac{\mu(m_n)}{h_0} = e^{\lambda^{\circ}}\label{lambda_0_defn},
\end{equation}
for some constant \( \lambda^{\circ} \geq 0\) (the existence of the limit guaranteed by \eqref{growth_cond_sparse}, non-negativity follows from $c \geq c_0$).

In this setting, we will show a central limit theorem for exponentially tilted empirical $H$ count $\sum_{i=1}^n h_{n,i} e^{-\lambda h_{n,i}}$. The asymptotic normality of the centered and scaled Lagrange multiplier $\hat{\lambda}_n$ then follows by analyzing the root of the above function. But first, we show the consistency of the root $\hat{\lambda}_n$.

\subsubsection{Consistency  of the Lagrange Multiplier}

We show the consistency in two steps. In the first step, we establish the convergence of the root of the population version of \eqref{root_eqn}.

Classical Poisson approximation results (see, for example, \cite{barbour1982poisson}, Eq. (2.9) and the ensuing discussion) imply that the \(H\)-count (denoted as $H_m$ in this setup) in the \(G(m,p)\) graph converges in total variation to the distribution of a Poisson random variable $Z_{\mu}$ with mean \( \mu=h_0e^{\lambda^{\circ}} \). 
Since the function \(x\mapsto x e^{-\lambda x}\) is bounded on any interval \([a,\infty)\), total variation convergence implies
\begin{equation*}\mathbb{E}[(H_m-h_0)e^{-\lambda(H_m-h_0)}]=\mathbb{E}[(Z_{\mu}-h_0)e^{-\lambda(Z_{\mu}-h_0)}](1\pm o(1)).
\end{equation*}
Monotonicity of the function \(x\mapsto x e^{-\lambda x}\) can be used to show  that the root of $\mathbb{E}[(H_m-h_0)e^{-\lambda(H_m-h_0)}]$ converges to that of $\mathbb{E}[(Z_{\mu}-h_0)e^{-\lambda(Z_{\mu}-h_0)}]$. This result is established in the following theorem.

\begin{theorem}{\label{poisson_consistency_1}}
    Let $H_m$ be the number of copies of a graph $H$ in a graph drawn from an \er random graph $G(m,p(m))$ i.e. $H_m=\mathcal{T}(H,G(m,p(m)))$ where $mp(m)^{k(H)} \rightarrow c$ as $m\rightarrow \infty$. Let  $\lambda_m$ be the unique real root of the equation
    $$\mathbb{E}[(H_m-h_0)e^{-\lambda(H_m-h_0)}]=0.$$
    Further, let $Z_{\mu}$ be a Poisson$(\mu)$ random variable with $\mu=h_0e^{\lambda^{\circ}}$ and  $\lambda^{\circ}_m$ be the unique real root of the equation
    $$\mathbb{E}[(Z_{\mu}-h_0)e^{-\lambda(Z_{\mu}-h_0)}]=0.$$
 Then as $m\rightarrow\infty$,
    $$\lvert \lambda_m-\lambda^{\circ}_m\rvert\rightarrow 0.$$
\end{theorem}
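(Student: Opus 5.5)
Set $f_m(\lambda):=\mathbb{E}[(H_m-h_0)e^{-\lambda(H_m-h_0)}]$ and $g(\lambda):=\mathbb{E}[(Z_{\mu}-h_0)e^{-\lambda(Z_{\mu}-h_0)}]$. Since $\mu$ is fixed, $g$ does not depend on $m$, so $\lambda^{\circ}_m\equiv\lambda^{\circ}_*$ is a fixed number; if $\mu=\mu(m_n)$ is used instead, the same argument runs with a continuity in $\mu$ step. Both functions are strictly decreasing in $\lambda$ wherever the random variable is not a.s.\ constant, because $f_m'(\lambda)=-\mathbb{E}[(H_m-h_0)^2e^{-\lambda(H_m-h_0)}]<0$, and similarly for $g$. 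The plan is the standard argument that uniform convergence of monotone functions to a function with a strict sign change gives convergence of roots. Fix $\varepsilon>0$. Strict monotonicity of $g$ gives $\delta:=\min\{g(\lambda^{\circ}_*-\varepsilon),\,-g(\lambda^{\circ}_*+\varepsilon)\}>0$. It then suffices to show $f_m(\lambda)\to g(\lambda)$ at the two points $\lambda=\lambda^{\circ}_*\pm\varepsilon$. For $m$ large we would have $f_m(\lambda^{\circ}_*-\varepsilon)>\delta/2>0>-\delta/2>f_m(\lambda^{\circ}_*+\varepsilon)$. Monotonicity of $f_m$ and uniqueness of its root (Lemma~\ref{exis_uniq}) then force $\lambda_m\in(\lambda^{\circ}_*-\varepsilon,\lambda^{\circ}_*+\varepsilon)$.

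The key step is pointwise convergence $f_m(\lambda)\to g(\lambda)$ for each fixed $\lambda$. Write $\psi_\lambda(x)=(x-h_0)e^{-\lambda(x-h_0)}$ on $x\in\{0,1,2,\dots\}$. Then
\[
|f_m(\lambda)-g(\lambda)|\le \sum_{x\ge0}|\psi_\lambda(x)|\,\bigl|\mathbb{P}(H_m=x)-\mathbb{P}(Z_\mu=x)\bigr|.
\]
For $\lambda>0$ (the relevant range since $\lambda^{\circ}\ge0$), $\psi_\lambda$ is bounded on $[0,\infty)$. The right side is therefore at most $2\|\psi_\lambda\|_\infty\, d_{TV}(H_m,Z_\mu)$, and this tends to $0$ by the Poisson approximation of Barbour (\cite{barbour1982poisson}) for strictly balanced $H$ at the threshold $mp^{k(H)}\to c$. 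The mean of that approximation is $\mathbb{E}H_m$, which converges to $\mu=h_0e^{\lambda^{\circ}}$ by \eqref{lambda_0_defn}. A further $d_{TV}(Z_{\mathbb{E}H_m},Z_\mu)\le|\mathbb{E}H_m-\mu|\to0$ completes this step.

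The main obstacle is the case where $\lambda^{\circ}_*-\varepsilon\le 0$, in particular under the null with $\lambda^{\circ}_*=0$. There $\psi_\lambda$ grows like $x e^{|\lambda|x}$ and total variation convergence alone does not suffice. I would handle it with uniform integrability. Splitting at a truncation level $K$, the bounded part is controlled by total variation as above. For the tail, I would bound $\sup_m\mathbb{E}[H_m^2e^{2|\lambda|H_m}]<\infty$, or equivalently the factorial moments $\mathbb{E}[(H_m)_r]\le C^r$ uniformly in $m$ and $r$ in the strictly balanced regime, which gives uniformly bounded exponential moments. The bound on factorial moments would come from the standard expansion of $\mathbb{E}[(H_m)_r]$ as a sum over ordered $r$-tuples of copies of $H$. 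Disjoint tuples contribute at most $\mu(m)^r$. Overlapping tuples are $o(1)$ relative to this by strict balance, though making that uniform in $r$ needs care. If it is awkward, one can instead use Janson's inequality–type bounds on the upper tail. Alternatively, one can restrict to $\lambda\ge0$ and use the boundary value $g(0)=\mu-h_0\ge0$ together with direct convergence $f_m(0)=\mathbb{E}H_m-h_0\to\mu-h_0$, which needs only first moments. That route is cleaner when $\lambda^{\circ}_*=0$.
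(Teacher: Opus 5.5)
For $\lambda^{\circ}>0$ your argument is the paper's proof. The paper also fixes $0<\epsilon<\lambda^{\circ}/2$, uses the sign of $g$ at $\lambda^{\circ}\pm\epsilon$, and gets $f_m\to g$ there from boundedness of $\psi_\lambda$ for $\lambda>0$ plus total-variation Poisson convergence. Your step $d_{TV}(Z_{\mathbb{E}H_m},Z_\mu)\le|\mathbb{E}H_m-\mu|$ fills in a detail the paper skips.

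The paper's proof never treats $\lambda^{\circ}=0$, and you are right that this null case needs more. However, the remedies you propose do not work.

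\textbf{The exponential-moment route fails.} The uniform bound is false. On the event $\{G=K_m\}$ one has $H_m=\Theta(m^{v(H)})$, while $\log\mathbb{P}(G=K_m)=\binom{m}{2}\log p=-\Theta(m^2\log m)$. Hence $\mathbb{E}[e^{tH_m}]\to\infty$ for every $t>0$ once $v(H)\ge 3$; this is the blow-up of $f_m(\lambda)$ for $\lambda<0$ noted in Section~\ref{tech_ingre}. The factorial-moment bound $\mathbb{E}[(H_m)_r]\le C^r$ is therefore also false, since $\mathbb{E}[(1+s)^{H_m}]=\sum_r s^r\mathbb{E}[(H_m)_r]/r!$ would make exponential moments bounded. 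For the same reason no upper-tail inequality can give you this bound.

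\textbf{The first-moment route fails.} When $\lambda^{\circ}=0$ you have $\mu=h_0$, so $g(0)=0$ and $f_m(0)=\mathbb{E}H_m-h_0\to 0$ carries no sign information. In fact, for $p=(c_0/m)^{1/k(H)}$ exactly, $\mathbb{E}H_m=\frac{(m)_{v(H)}}{m^{v(H)}}\,h_0<h_0$. So $f_m(0)<0$ and $\lambda_m<0$, and working only on $\lambda\ge 0$ yields $\limsup_m\lambda_m\le 0$ but never the matching lower bound.

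\textbf{The fix.} At the left endpoint you need only a lower bound on $f_m$, not convergence. Your truncation gives this if you use the \emph{sign} of the tail rather than its size. For every real $\lambda$, $\psi_\lambda(x)\ge 0$ whenever $x\ge h_0$, so for any integer $K\ge h_0$,
\[
f_m(\lambda)\ \ge\ \mathbb{E}\bigl[\psi_\lambda(H_m)\mathbf{1}\{H_m\le K\}\bigr]\ \longrightarrow\ \mathbb{E}\bigl[\psi_\lambda(Z_\mu)\mathbf{1}\{Z_\mu\le K\}\bigr]\qquad (m\to\infty).
\]
The convergence holds by total variation, because the truncated function is bounded. Letting $K\to\infty$, dominated convergence under the Poisson law gives $\liminf_m f_m(\lambda)\ge g(\lambda)$ for every $\lambda$.
\begin{itemize}
\item At $\lambda^{\circ}-\varepsilon$, where $g>0$, this gives $f_m(\lambda^{\circ}-\varepsilon)>0$ for large $m$.
\item At $\lambda^{\circ}+\varepsilon>0$, your bounded-function argument gives $f_m(\lambda^{\circ}+\varepsilon)<0$ for large $m$.
\end{itemize}
Monotonicity of $f_m$ then yields $|\lambda_m-\lambda^{\circ}|<\varepsilon$ for every $\varepsilon>0$, including the case $\lambda^{\circ}=0$.

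Finally, the closed form $g(\lambda)=e^{\lambda h_0+\mu(e^{-\lambda}-1)}\,(\mu e^{-\lambda}-h_0)$ shows directly that $\lambda^{\circ}_m=\log(\mu/h_0)=\lambda^{\circ}$. It also supplies the signs of $g$ you need at $\lambda^{\circ}\pm\varepsilon$.
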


In the second step we establish the empirical version of the above result.

\begin{theorem}{\label{poisson_consistency_2}}
Let $\mathcal{G}_{n,1}, \ldots, \mathcal{G}_{n,n}$ be sampled independently from $G\left(m_n, (\frac{c}{m_n})^{1/k(H)}\right)$, where \(m_n\rightarrow \infty\).
If $\hat{\lambda}_n$ is the unique real root of
\[
\sum_{i=1}^n h_{n,i} e^{-\lambda h_{n,i}} = 0,
\]
where \(h_{n,i}=\mathcal{T}(H,\mathcal{G}_{n,i})-h_0\),
then $\hat{\lambda}_n \xrightarrow{P} \lambda^{\circ}$ where $\lambda^{\circ} = \log\left(\frac{c^{|V(H)|}/\lvert Aut(H)\rvert}{h_0}\right)$.
\end{theorem}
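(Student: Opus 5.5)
The plan is a standard $\mathcal{Z}$-estimator consistency argument. It exploits the fact that $\lambda\mapsto \Psi_n(\lambda):=\frac1n\sum_{i=1}^n h_{n,i}e^{-\lambda h_{n,i}}$ is strictly decreasing. Indeed, $\Psi_n'(\lambda)=-\frac1n\sum_i h_{n,i}^2e^{-\lambda h_{n,i}}<0$ unless all $h_{n,i}=0$, and by Lemma~\ref{exis_uniq} the root $\hat\lambda_n$ exists and is unique. Monotonicity lets us avoid any uniform convergence: it suffices to show that for each fixed $\varepsilon>0$,
\[
\Psi_n(\lambda^{\circ}-\varepsilon)\xrightarrow{P} \psi(\lambda^{\circ}-\varepsilon)>0,\qquad \Psi_n(\lambda^{\circ}+\varepsilon)\xrightarrow{P}\psi(\lambda^{\circ}+\varepsilon)<0 .
\]
Here $\psi(\lambda):=\mathbb{E}[(Z_\mu-h_0)e^{-\lambda(Z_\mu-h_0)}]$ with $\mu=c^{v(H)}/|\Aut(H)|$. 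On this event of probability tending to one, $\hat\lambda_n\in(\lambda^{\circ}-\varepsilon,\lambda^{\circ}+\varepsilon)$.

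\textbf{Step 1: identify the limit root.} With $p_n=(c/m_n)^{1/k(H)}$ and $H$ strictly balanced, $k(H)=e(H)/v(H)$. Hence $\mu(m_n)=m_n^{v(H)}p_n^{e(H)}/|\Aut(H)|=c^{v(H)}/|\Aut(H)|$ exactly. By the Poisson approximation quoted before Theorem~\ref{poisson_consistency_1}, $H_{m_n}\to Z_\mu$ in total variation. Next compute the root of $\psi$ directly. For Poisson variables, $\mathbb{E}[Z_\mu e^{-\lambda Z_\mu}]=\mu e^{-\lambda}\mathbb{E}[e^{-\lambda Z_\mu}]$. Therefore
\[
\psi(\lambda)=e^{\lambda h_0}\mathbb{E}[e^{-\lambda Z_\mu}]\,(\mu e^{-\lambda}-h_0),
\]
which vanishes exactly at $\lambda=\log(\mu/h_0)=\lambda^{\circ}$. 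This matches the stated formula, and is consistent with Theorem~\ref{poisson_consistency_1}. The same formula shows $\psi>0$ to the left of $\lambda^{\circ}$ and $\psi<0$ to the right, which gives the sign conditions above.

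\textbf{Step 2: pointwise weak law for the triangular array.} Fix $\lambda$. The summands $Y_{n,i}=h_{n,i}e^{-\lambda h_{n,i}}$ are i.i.d. within each row. By Chebyshev, $\Psi_n(\lambda)-\mathbb{E}Y_{n,1}\xrightarrow{P}0$ as soon as $\sup_n\mathbb{E}Y_{n,1}^2<\infty$. It then remains to show $\mathbb{E}Y_{n,1}\to\psi(\lambda)$, i.e.\ the convergence of expectations from Theorem~\ref{poisson_consistency_1}'s discussion.

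\textbf{Step 3: uniform integrability, the main obstacle.} For $\lambda\ge0$, $x\mapsto xe^{-\lambda x}$ is bounded on $[-h_0,\infty)$, so total variation convergence and boundedness suffice. For $\lambda<0$ (which is needed when $\lambda^{\circ}-\varepsilon<0$, e.g.\ under the null $\lambda^{\circ}=0$), the function grows exponentially. We then need $\sup_m\mathbb{E}[e^{t H_m}]<\infty$ for some $t>2|\lambda|$. I would get this from the upper-tail bounds for subgraph counts at the threshold in the strictly balanced case: for fixed $t$, the exponential moments of $H_m$ converge to those of the Poisson limit. One route is Janson's method of bounding factorial moments, $\mathbb{E}[(H_m)_r]\le (C\mu)^r$ uniformly in $m$ for each $r$ with summable structure, using strict balancedness so that overlapping copies contribute $o(1)$. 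These bounds give uniformly bounded second moments of $Y_{n,1}$ and convergence of $\mathbb{E}Y_{n,1}$ via uniform integrability. If full exponential moments prove delicate, a fallback is to truncate at $H_m\le M$. The contribution above $M$ is handled by factorial-moment tail bounds, and below $M$ by total variation convergence. Combining Steps 1–3 yields the displayed sign conditions, and hence $\hat\lambda_n\xrightarrow{P}\lambda^{\circ}$.
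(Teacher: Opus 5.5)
Your overall strategy is sound and differs from the paper's. The paper does three things: it proves a law of large numbers for each fixed $\lambda>0$ (via uniformly bounded fourth moments), upgrades it to uniform convergence on compact subsets of $[0,\infty)$ by a bracketing argument, and then invokes uniqueness of the root. You instead use strict monotonicity of $\Psi_n$ to reduce everything to the signs of $\Psi_n(\lambda^{\circ}\pm\varepsilon)$. A pointwise Chebyshev bound then suffices, and no uniform convergence is needed. Your Step~1 identity $\psi(\lambda)=e^{\lambda h_0}\mathbb{E}[e^{-\lambda Z_\mu}](\mu e^{-\lambda}-h_0)$ also verifies something the paper only asserts: the limiting root is exactly $\log(\mu/h_0)$, and $\psi$ changes sign there. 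When $\lambda^{\circ}>0$ you can take $\varepsilon<\lambda^{\circ}$. Both evaluation points are then positive, the summands are bounded on $[-h_0,\infty)$, and Steps~1--2 already give a complete proof.

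The gap is Step~3, which you need as soon as $\lambda^{\circ}-\varepsilon<0$, in particular at $\lambda^{\circ}=0$.

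\textbf{The exponential-moment claim is false.} Whenever $v(H)\ge 3$, the claimed uniform bound fails. $G(m,p)$ is complete with probability $p^{\binom{m}{2}}=e^{-O(m^2\log m)}$, and on that event $H_m\ge\binom{m}{v(H)}$. Hence $\mathbb{E}[e^{tH_m}]\ge\exp\bigl(t\binom{m}{v(H)}-O(m^2\log m)\bigr)\to\infty$ for every $t>0$. So for $\lambda<0$ you have neither $\sup_n\mathbb{E}Y_{n,1}^2<\infty$ nor $\mathbb{E}Y_{n,1}\to\psi(\lambda)$; the paper itself remarks that this expectation diverges for $\lambda<0$. Factorial-moment convergence $\mathbb{E}[(H_m)_r]\to\mu^r$ holds for each fixed $r$ but not uniformly in $r$, so it cannot be summed into an exponential moment. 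Your fallback, which controls the part above $M$ by tail bounds, needs exactly the same uniform control.

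\textbf{The repair: truncate on one side only.} You never need to control that part. At $\lambda_-=\lambda^{\circ}-\varepsilon<0$ you only need $\Psi_n(\lambda_-)>0$, and $x\mapsto xe^{-\lambda_- x}$ is bounded below by $-h_0$ on $[-h_0,\infty)$. Set $\varphi_M(x)=\min\{xe^{-\lambda_- x},M\}$. Then $\Psi_n(\lambda_-)\ge\frac1n\sum_i\varphi_M(h_{n,i})\xrightarrow{P}\mathbb{E}\varphi_M(Z_\mu-h_0)$, by Chebyshev and total variation convergence for bounded functions. Moreover $\mathbb{E}\varphi_M(Z_\mu-h_0)\uparrow\psi(\lambda_-)>0$ by monotone convergence, so one fixed large $M$ suffices. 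This settles $\lambda^{\circ}\ge 0$, the paper's standing assumption ($c\ge c_0$). The paper's argument on compact subsets of $[0,\infty)$ does not explicitly handle the left side at $\lambda^{\circ}=0$ either.

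\textbf{The case $\lambda^{\circ}<0$ is out of reach.} No argument extends to it without a growth condition linking $m_n$ and $n$. There $\lambda^{\circ}+\varepsilon<0$ requires an upper bound on $\Psi_n$. But if $m_n$ grows slowly enough that $n\,p^{\binom{m_n}{2}}\to\infty$, complete graphs appear in the sample with frequency about $p^{\binom{m_n}{2}}$. Their contribution alone sends $\Psi_n(\lambda^{\circ}+\varepsilon)\to+\infty$, so consistency fails.
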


\subsubsection{Asymptotic normality of the Lagrange Multiplier}

Next we establish the asymptotic normality of the tilted empirical $H$ counts $\sum_{i=1}^n h_{n,i} e^{-\lambda h_{n,i}}$ (properly scaled and centred). We will need the following lemma which is a direct consequence of the Lindeberg-Feller Central Limit Theorem.

\begin{lemma}{\label{CLT_1}}
    In the setup of Theorem \ref{poisson_consistency_2}, we have
    \[\sum_{i=1}^n \frac{h_{n,i} e^{-\lambda h_{n,i}} - \mathbb{E}[h_{n,1} e^{-\lambda h_{n,1}}]}{\sqrt{n Var(h_{n,1} e^{-\lambda h_{n,1}})}}\xrightarrow{d} \mathcal{N}(0,1)\]
    for every \(\lambda>0\).
\end{lemma}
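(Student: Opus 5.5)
Fix $\lambda>0$ and write $Y_{n,i}:=h_{n,i}e^{-\lambda h_{n,i}}$. For each $n$ the variables $Y_{n,1},\dots,Y_{n,n}$ are i.i.d., so this is a triangular array of row-wise i.i.d. variables. The plan is to verify the Lindeberg condition for the normalized array $X_{n,i}:=(Y_{n,i}-\mathbb{E}Y_{n,1})/\sqrt{n\,\mathrm{Var}(Y_{n,1})}$. Because the rows are identically distributed, Lindeberg reduces to
\[
\frac{1}{\mathrm{Var}(Y_{n,1})}\,\mathbb{E}\Bigl[(Y_{n,1}-\mathbb{E}Y_{n,1})^2\,\mathbb{I}\bigl\{|Y_{n,1}-\mathbb{E}Y_{n,1}|>\epsilon\sqrt{n\,\mathrm{Var}(Y_{n,1})}\bigr\}\bigr]\to 0
\]
for every $\epsilon>0$.

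The key observation is that $Y_{n,1}$ is \emph{uniformly bounded in $n$}. Since $\mathcal{T}(H,\cdot)\ge 0$, we have $h_{n,1}\ge -h_0$. For $\lambda>0$ the function $x\mapsto xe^{-\lambda x}$ is bounded on $[-h_0,\infty)$, with
\[
|xe^{-\lambda x}|\le M_\lambda:=\max\bigl(h_0e^{\lambda h_0},\,(e\lambda)^{-1}\bigr).
\]
This is the same boundedness fact the paper already used before Theorem \ref{poisson_consistency_1}. Hence $|Y_{n,1}-\mathbb{E}Y_{n,1}|\le 2M_\lambda$ deterministically. Lindeberg therefore holds trivially once $n\,\mathrm{Var}(Y_{n,1})\to\infty$: for $n$ large the indicator event is empty. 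A sufficient condition is $\liminf_n\mathrm{Var}(Y_{n,1})>0$.

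So the main step, and the only real obstacle, is a nondegeneracy lower bound on the variance. I would get it from the Poisson approximation used for Theorem \ref{poisson_consistency_1}. By \cite{barbour1982poisson}, $\mathcal{T}(H,\mathcal{G}_{n,1})\to Z_\mu$ in total variation, where $Z_\mu\sim\mathrm{Poisson}(\mu)$ and $\mu=h_0e^{\lambda^{\circ}}>0$. The functions $x\mapsto xe^{-\lambda x}$ and its square are bounded on the support $[-h_0,\infty)$, so total variation convergence gives
\[
\mathrm{Var}(Y_{n,1})\to\mathrm{Var}\bigl((Z_\mu-h_0)e^{-\lambda(Z_\mu-h_0)}\bigr).
\]
This limit is strictly positive. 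The map $g(k)=(k-h_0)e^{-\lambda(k-h_0)}$ is injective on at least two integers: for instance $g$ is strictly increasing on $[-h_0,h_0+1/\lambda]$, so any two nonnegative integers below $h_0+1/\lambda$ work, and $0,1$ do whenever $1<h_0+1/\lambda$. Otherwise, pick two integers with distinct $g$-values; some such pair always exists since $g(h_0)=0$ while $g(k)\ne 0$ for $k\ne h_0$. A Poisson law with positive mean charges every nonnegative integer, so $g(Z_\mu)$ is nonconstant and its variance is positive.

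With $\mathrm{Var}(Y_{n,1})\ge\sigma^2/2>0$ for large $n$, we get $n\,\mathrm{Var}(Y_{n,1})\to\infty$. The Lindeberg–Feller theorem then gives the stated convergence to $\mathcal{N}(0,1)$. The only care needed is that centering by $h_0$ keeps the support bounded below, and that $\lambda>0$ is used for boundedness, matching the restriction in the statement.
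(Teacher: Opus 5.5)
Your proof is correct and follows essentially the same route as the paper: both rest on the boundedness of $x\mapsto xe^{-\lambda x}$ on $[-h_0,\infty)$ for $\lambda>0$ and on total-variation Poisson convergence of the $H$-count. The only cosmetic difference is that the paper checks the Lyapunov condition (with $\delta=2$) rather than Lindeberg directly; your explicit proof that the limiting variance $\mathrm{Var}\bigl((Z_\mu-h_0)e^{-\lambda(Z_\mu-h_0)}\bigr)$ is strictly positive fills in a step the paper uses only implicitly. On that step, the monotonicity interval in the integer variable $k$ should read $[0,h_0+1/\lambda]$ rather than $[-h_0,h_0+1/\lambda]$, and a cleaner justification is simply $g(0)=-h_0e^{\lambda h_0}<0<g(k)$ for any integer $k>h_0$.
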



Finally we are ready to state the asymptotic normality of the root. Since the function $\sum_{i=1}^n h_{n,i} e^{-\lambda h_{n,i}}$ exhibits asymptotic normality, using standard $\mathcal{Z}$ estimator techniques we establish the asymptotic normality of the root $\hat{\lambda}_n$.

\begin{theorem}{\label{CLT_root}}
Let $\mathcal{G}_{n,1}, \ldots, \mathcal{G}_{n,n}$ be sampled independently from $G\left(m_n, (\frac{c}{m_n})^{1/k(H)}\right)$, where \(m_n\gg n^{\frac{1}{2(2-k(H))}}\) is a sequence of natural numbers.
Let $\hat{\lambda}_n$ is the unique real root of
\[
\sum_{i=1}^n h_{n,i} e^{-\lambda h_{n,i}} = 0,
\]
where  \(h_{n,i}=\mathcal{T}(H,\mathcal{G}_{n,i})-h_0\). Further let \(\lambda^{\circ} = \log\left(\frac{\mu}{h_0}\right)\) , and $\mu=c^{|V(H)|}/\lvert Aut(H)\rvert$,
then the following holds: 
\[\sqrt{n}(\hat{\lambda}_n - \lambda^{\circ})\xrightarrow{d}\mathcal{N}\left(0, \frac{\mathbf{Var}\left[(Z_{\mu}-h_0)e^{-\lambda^{\circ}(Z_{\mu}-h_0)}\right]}{(\mathbb{E}\left[(Z_{\mu}-h_0)^2e^{-\lambda^{\circ}(Z_{\mu}-h_0)}\right])^2}\right).
\]
\end{theorem}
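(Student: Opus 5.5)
We treat $\hat{\lambda}_n$ as a $\mathcal{Z}$-estimator and linearize the estimating equation around $\lambda^{\circ}$. Write
\[
\Psi_n(\lambda):=\frac1n\sum_{i=1}^n h_{n,i}e^{-\lambda h_{n,i}},\qquad \psi_n(\lambda):=\mathbb{E}[h_{n,1}e^{-\lambda h_{n,1}}],\qquad \psi(\lambda):=\mathbb{E}[(Z_\mu-h_0)e^{-\lambda(Z_\mu-h_0)}].
\]
Since $\Psi_n(\hat\lambda_n)=0$, the mean value theorem gives a $\tilde\lambda_n$ between $\hat\lambda_n$ and $\lambda^{\circ}$ with
\[
\sqrt n(\hat\lambda_n-\lambda^{\circ}) = \frac{\sqrt n\,\Psi_n(\lambda^{\circ})}{-\Psi_n'(\tilde\lambda_n)},\qquad -\Psi_n'(\lambda)=\frac1n\sum_{i=1}^n h_{n,i}^2e^{-\lambda h_{n,i}}.
\]
The proof then has three parts: a CLT for the numerator, consistency of the denominator, and control of the centering term $\sqrt n\,\psi_n(\lambda^{\circ})$.

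\textbf{Step 1 (numerator).} Split $\sqrt n\Psi_n(\lambda^{\circ})=\sqrt n(\Psi_n(\lambda^{\circ})-\psi_n(\lambda^{\circ}))+\sqrt n\,\psi_n(\lambda^{\circ})$. By Lemma \ref{CLT_1} at $\lambda=\lambda^{\circ}$, the first term divided by $\sqrt{Var(h_{n,1}e^{-\lambda^{\circ}h_{n,1}})}$ converges to $\mathcal N(0,1)$. The variance converges to $\mathbf{Var}[(Z_\mu-h_0)e^{-\lambda^{\circ}(Z_\mu-h_0)}]$ because $H_m\to Z_\mu$ in total variation and $x\mapsto x^2e^{-2\lambda x}$ is bounded on $[-h_0,\infty)$ (for $\lambda>0$; the case $\lambda^{\circ}=0$ needs uniform integrability, which follows from moment bounds on subgraph counts). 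Slutsky then gives the numerator the claimed variance, provided the bias term vanishes.

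\textbf{Step 2 (bias, the main obstacle).} Note that $\psi(\lambda^{\circ})=0$ follows from $\mu=h_0e^{\lambda^{\circ}}$, since $\mathbb{E}[Z_\mu e^{-\lambda Z_\mu}]=\mu e^{-\lambda}\mathbb{E}[e^{-\lambda Z_\mu}]$, which forces $\mu e^{-\lambda^{\circ}}=h_0$. Hence
\[
|\psi_n(\lambda^{\circ})|=|\psi_n(\lambda^{\circ})-\psi(\lambda^{\circ})|\le C\,d_{TV}(H_{m_n},Z_{\mu})+C\,|\mu(m_n)-\mu|,
\]
where the second term accounts for the mean mismatch between $\mathrm{Poi}(\mu(m_n))$ and $\mathrm{Poi}(\mu)$; in the stated setup $p_n=(c/m_n)^{1/k(H)}$, so this term is essentially exact. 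For the first term I would invoke the quantitative Stein--Chen bound of \cite{barbour1982poisson} for strictly balanced $H$. At the threshold this bound is dominated by overlapping pairs of copies and decays polynomially in $m_n$, like $m_n^{-(2-k(H))}$ or a comparable rate. The growth condition $m_n\gg n^{1/(2(2-k(H)))}$ is then exactly what makes $\sqrt n\, d_{TV}\to0$. Matching the precise exponent from the Barbour bound to this hypothesis is the delicate part; if the generic bound falls short, I would refine it by computing the overlap sums $\sum_{\alpha\sim\beta}$ directly.

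\textbf{Step 3 (denominator).} By Theorem \ref{poisson_consistency_2}, $\tilde\lambda_n\xrightarrow{P}\lambda^{\circ}$. Restrict to a compact neighbourhood $K$ of $\lambda^{\circ}$, and for $\lambda\in K$ use the bound
\[
\frac{d}{d\lambda}\bigl(h^2e^{-\lambda h}\bigr)\le |h|^3e^{-\lambda h}\le C(1+|h|^3),
\]
which holds since $h\ge-h_0$. This bound gives equicontinuity of $-\Psi_n'$ on $K$. Combined with a weak LLN for triangular arrays (Chebyshev, with uniformly bounded second moments of $h_{n,1}^2e^{-\lambda h_{n,1}}$ from Poisson convergence of moments), it yields $-\Psi_n'(\tilde\lambda_n)\xrightarrow{P}\mathbb{E}[(Z_\mu-h_0)^2e^{-\lambda^{\circ}(Z_\mu-h_0)}]$. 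This limit is strictly positive because $Z_\mu$ is nondegenerate. Slutsky's theorem then gives the stated limit $\mathcal N(0,\sigma^2)$ with the displayed ratio as variance.
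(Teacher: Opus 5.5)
Your architecture is the paper's: linearize $\Psi_n$ at $\lambda^\circ$, use Lemma~\ref{CLT_1} for the centred numerator, an LLN plus consistency for the denominator, and a Poisson-approximation rate plus the growth condition to kill $\sqrt n\,\mathbb{E}\Psi_n(\lambda^\circ)$. Your first-order mean-value form versus the paper's second-order Taylor expansion is cosmetic. However, two steps fail as written, and the paper is equally loose at both.

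First, Step~2. The Stein--Chen bound compares $H_{m_n}$ with a Poisson variable of mean $\mathbb{E}H_{m_n}=(m_n)_{v(H)}p_n^{e(H)}/|\mathrm{Aut}(H)|=\mu\bigl(1-\binom{v(H)}{2}m_n^{-1}+O(m_n^{-2})\bigr)$. It is $m_n^{v(H)}p_n^{e(H)}/|\mathrm{Aut}(H)|$, not the mean, that equals $\mu$ exactly, so the mean mismatch is not negligible. At $\mu=h_0e^{\lambda^\circ}$ we have $\partial_\mu\mathbb{E}[(Z_\mu-h_0)e^{-\lambda^\circ(Z_\mu-h_0)}]=e^{\lambda^\circ(h_0-1)+h_0-\mu}\neq0$. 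Hence the mismatch alone produces a bias of exact order $m_n^{-1}$, and you need $\sqrt n/m_n\to0$. Nor are the overlap terms of order $m_n^{-(2-k(H))}$. Pairs of copies sharing a subgraph $K$ contribute order $m_n^{-(v(K)-e(K)/k(H))}$; for a shared edge this is $m_n^{v(H)-2}p_n^{e(H)-1}\asymp m_n^{1/k(H)-2}$, the bound the paper quotes. These rates fit the hypothesis $m_n\gg n^{1/(2(2-k(H)))}$ when $k(H)=1$ (triangles, cycles), but not in general. For $H$ a single edge ($k(H)=\tfrac12$) with $m_n=n^{2/5}$, the hypothesis holds, yet $|\sqrt n\,\psi_n(\lambda^\circ)|\asymp n^{1/10}\to\infty$. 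So the exponent matching you defer cannot be done for every strictly balanced $H$. You must either restrict to $k(H)=1$ or assume that $\sqrt n$ times the true rate tends to zero.

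Second, Step~3 in the case $\lambda^\circ=0$, which is the null case used in Test~\ref{gof_sparse}. Every neighbourhood $K$ of $0$ contains some $\lambda<0$. There $|h|^3e^{-\lambda h}=|h|^3e^{|\lambda|h}$ is not $O(1+|h|^3)$. Worse, for $v(H)\ge3$ even $\mathbb{E}[h_{n,1}^2e^{-\lambda h_{n,1}}]\to\infty$: the complete graph alone contributes $\exp\{-\binom{m_n}{2}\log(1/p_n)+|\lambda|\Theta(m_n^{v(H)})\}$. So no expectation-based LLN or equicontinuity argument on $K$ is available. Work with the sample instead:
\begin{itemize}
\item Since $h_{n,i}\ge-h_0$, one checks $\Psi_n(-t)\ge\Psi_n(0)+te^{-th_0}\frac1n\sum_ih_{n,i}^2$ for $t>0$. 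Monotonicity of $\Psi_n$ and tightness of $\sqrt n\,\Psi_n(0)$ then give $\hat\lambda_n\ge-O_P(n^{-1/2})$.
\item Bounded moments of $H_{m_n}$ give $\max_i|h_{n,i}|=O_P(n^{1/4})$.
\item Hence on $\{\hat\lambda_n<0\}$ one has $\max_i|\tilde\lambda_nh_{n,i}|\to0$ and $-\Psi_n'(\tilde\lambda_n)=(1+o_P(1))\frac1n\sum_ih_{n,i}^2$.
\item On $\{\hat\lambda_n\ge0\}$, your argument with $K=[0,\delta]$ applies.
\end{itemize}
The paper's control of $\ddot\Psi_n(\tilde\lambda_n)$ has the same hole.
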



\subsubsection{Goodness-of-fit test}

In this subsection we consider Goodness-of-fit tests for sparse Erdos-Renyi random graph, in particular we assume that the sample $\mathcal{G}_{n,1}, \ldots, \mathcal{G}_{n,n}$ is generated from $G\left(m_n, (\frac{c}{m_n})^{1/k(H)}\right)$ and we want to test between the following hypotheses:

\begin{equation}
\H_0: c=c_0 \text{ vs } \H_1: c>c_0. 
\end{equation}
 Let 
\begin{equation}\sigma^2_0=\frac{\mathbf{Var}\left[(Z_{\mu_0}-h_0)e^{-\lambda^{\circ}_{c_0}(Z_{\mu_0}-h_0)}\right]}{\left(\mathbb{E}\left[(Z_{\mu_0}-h_0)^2e^{-\lambda^{\circ}_{c_0}(Z_{\mu_0}-h_0)}\right]\right)^2}.\label{sigma_sparse_form}\end{equation}
Define $\mu=c^{v(H)}/\lvert Aut(H)\rvert$, $\mu_0=c_0^{v(H)}/\lvert Aut(H)\rvert$, $\lambda^{\circ}_{c_0} = \log\left(\frac{\mu_0}{h_0}\right)$, and $\lambda^{\circ}_c = \log\left(\frac{\mu}{h_0}\right)$. 



Further we note that $h_0$ can be chosen to be $\mu_0$ in which case $\lambda^{\circ}_{c_0}=0$ and $\sigma^2_0=1/\mu_0$ (by plugging in $\lambda^{\circ}_{c_0}=0$ in \eqref{sigma_sparse_form}).
We are now ready to construct the test:

\begin{test}[Goodness-of-fit test in the sparse regime]{\label{gof_sparse}} The test is given by 
\[\phi_{n}=\mathbb{I}\{\hat{\lambda}_n>\frac{1}{\sqrt{\mu_0n}}z_{\alpha}\}\]
where $z_{\alpha}$ is the $1-\alpha$ quantile of the normal distribution.  
\end{test}

\begin{corollary}
    The Goodness-of-fit test in the sparse regime i.e. Test \ref{gof_sparse} is a level $\alpha$ consistent test.
\end{corollary}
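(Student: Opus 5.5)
Our plan follows the proof of the corollary for Test~\ref{gof_fixed}, with Theorem~\ref{CLT_root} in place of Theorem~\ref{finite_an}. We take $h_0=\mu_0$ throughout, and we assume the growth condition on $m_n$ in Theorem~\ref{CLT_root} so that the theorem applies under both hypotheses.

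\emph{Level.} Under $\H_0$ we have $c=c_0$, so $\mu=\mu_0=h_0$ and $\lambda^{\circ}=\lambda^{\circ}_{c_0}=\log(\mu_0/h_0)=0$. Plugging $\lambda^{\circ}=0$ into the variance in Theorem~\ref{CLT_root} gives
\[
\sigma_0^2=\frac{\mathbf{Var}(Z_{\mu_0})}{\bigl(\E[(Z_{\mu_0}-\mu_0)^2]\bigr)^2}=\frac{\mu_0}{\mu_0^2}=\frac{1}{\mu_0}.
\]
Hence $\sqrt{n}\,\hat{\lambda}_n\xrightarrow{d}\mathcal{N}(0,1/\mu_0)$. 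Therefore
\[
\P_{\H_0}(\phi_n=1)=\P_{\H_0}\bigl(\sqrt{\mu_0 n}\,\hat{\lambda}_n>z_\alpha\bigr)\to 1-\Phi(z_\alpha)=\alpha .
\]
The limit holds because the normal limit is continuous, so convergence in distribution gives convergence at the fixed point $z_\alpha$. Unlike the fixed-size case, no variance estimator is needed, since $\sigma_0$ is known exactly.

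\emph{Consistency.} Under $\H_1$ we have $c>c_0$, so $\mu=c^{v(H)}/|\Aut(H)|>\mu_0$ and $\lambda^{\circ}_c=\log(\mu/\mu_0)>0$. We write
\[
\P_{\H_1}(\phi_n=1)=\P\Bigl(\sqrt{n}(\hat{\lambda}_n-\lambda^{\circ}_c)>\tfrac{z_\alpha}{\sqrt{\mu_0}}-\sqrt{n}\,\lambda^{\circ}_c\Bigr).
\]
By Theorem~\ref{CLT_root}, applied with the parameter $c$ and the same centering $h_0=\mu_0$, the left-hand quantity is tight, i.e.\ $O_p(1)$. The threshold on the right tends to $-\infty$. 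So the probability tends to $1$.

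For this step, consistency alone also suffices. Theorem~\ref{poisson_consistency_2} gives $\hat{\lambda}_n\xrightarrow{P}\lambda^{\circ}_c>0$, while the threshold $z_\alpha/\sqrt{\mu_0 n}\to0$, which again yields power tending to one.

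\emph{Main obstacle.} The only delicate point is checking that Theorems~\ref{poisson_consistency_2} and~\ref{CLT_root} are genuinely stated for arbitrary $h_0$, here $h_0=\mu_0\neq\mu$ under the alternative. They are: both are stated with a general centering constant and with $\lambda^{\circ}=\log(\mu/h_0)$. We must also note that the uniqueness of the root (Lemma~\ref{exis_uniq}) holds with probability tending to one, which is needed for $\hat{\lambda}_n$ to be well defined.
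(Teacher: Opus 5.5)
Your proposal is correct and follows essentially the same argument as the paper. You take $h_0=\mu_0$ so that $\lambda^{\circ}_{c_0}=0$ and $\sigma_0^2=1/\mu_0$, read off the asymptotic level $\alpha$ from Theorem~\ref{CLT_root}, and for power rewrite the rejection event as $\sqrt{n}(\hat{\lambda}_n-\lambda^{\circ}_c)>z_\alpha/\sqrt{\mu_0}-\sqrt{n}\,\lambda^{\circ}_c$ with $\lambda^{\circ}_c>0$. Two additions go slightly beyond the paper. Your side remark that consistency alone (Theorem~\ref{poisson_consistency_2}) already gives power tending to one is a valid shortcut that avoids the growth condition on $m_n$ for that part. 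Your explicit note that this growth condition is needed for the level part makes precise an assumption the paper leaves implicit.
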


\begin{proof}
 From Theorem \ref{CLT_root} it follows that the above is a level $\alpha$ test.
Indeed, under $\H_0$ $\lambda^{\circ}_{c_0}=0$, and hence
$$\P_{\H_0}(\text{Reject Null})=\E_{c_0}\left[\mathbb{I}\left\{\hat{\lambda}_n>\frac{1}{\sqrt{\mu_0n}}z_{\alpha}\right\}\right]=\P_{c_0}\left[\frac{\sqrt{n}(\hat{\lambda}_n-\lambda_{c_0}^{\circ})}{1/\sqrt{\mu_0}}>z_{\alpha}\right]\rightarrow \alpha$$
Further we observe that

\begin{align*}\P_{\H_1}(\text{Reject Null})&=\E_{c}[\phi_n]=\P[\hat{\lambda}_n>\frac{1}{\sqrt{\mu_0n}}z_{\alpha}]\\
&=\P[\sqrt{n}(\hat{\lambda}_n-\lambda^{\circ}_c)>-\sqrt{n}\lambda^{\circ}_c+\frac{1}{\sqrt{\mu_0}}z_{\alpha}].\end{align*}
The RHS converges to $1$ as $\lambda_c^{\circ}>0$ when $c>c_0$ (corresponding to the alternate hypothesis), so the test is consistent.
\end{proof}

\subsubsection{Two sample test}

Next, we consider the two-sample setting in which we observe $n_1$ i.i.d.\ graphs drawn from 
$G\!\left(m_{n_1}, (\frac{c_1}{m_{n_1}})^{1/k(H)}\right)$ and $n_2$ i.i.d.\ graphs drawn from 
$G\!\left(m_{n_2}, (\frac{c_2}{m_{n_2}})^{1/k(H)}\right)$. Our goal is to test the null hypothesis 
$c_1=c_2$, where $c_1$ and $c_2$ are some unknown positive reals with $c_j>c_0$ for $j\in \{1,2\}$ and some known $c_0>0$.  When $m_{n_1}=m_{n_2}$, this reduces to testing equality of edge probabilities; in the 
general case, where $m_{n_1}\neq m_{n_2}$, it corresponds to testing whether the two graph sequences 
have asymptotically identical subgraph counts. 

The two-sample test is obtained through a careful and indirect application of
Theorem~\ref{CLT_root}. Rather than reformulating the null hypothesis as a
constrained optimization problem, we exploit the asymptotic theory developed
earlier in a different manner. Specifically, we construct two auxiliary
hypothesis testing problems, whose corresponding test statistics can be
appropriately combined to yield the desired two-sample test statistic.

Set
\[
h_0 = \frac{c_0^{|V(H)|}}{\mathrm{Aut}(H)} .
\]
For the first sample, we consider the auxiliary hypothesis testing problem
\[
\H_0: c = c_0 \quad \text{vs.} \quad \H_1: c = c_1 .
\]
Define
\[
h_{n_1,i} = h_{\mathcal{G}_{n_1,i}} - h_0 ,
\]
and let $\hat{\lambda}_{n_1}$ denote the solution to
\[
\sum_{i=1}^{n_1} h_{n_1,i} e^{-\lambda h_{n_1,i}} = 0 .
\]
Let
\[
\mu_1 = \frac{c_1^{|V(H)|}}{\lvert \mathrm{Aut}(H) \rvert},
\qquad
\lambda^{\circ}_{c_1} = \log\left( \frac{\mu_1}{h_0} \right).
\]
Then, by Theorem~\ref{CLT_root}, we have
\begin{equation}
\sqrt{n_1}\bigl(\hat{\lambda}_{n_1} - \lambda_{c_1}^{\circ}\bigr)
\xrightarrow{d}
\mathcal{N}\!\left(
0,
\frac{\mathbf{Var}\!\left[(Z_{\mu_1}-h_0)e^{-\lambda^{\circ}(Z_{\mu_1}-h_0)}\right]}
{\bigl(\mathbb{E}\!\left[(Z_{\mu_1}-h_0)^2 e^{-\lambda^{\circ}(Z_{\mu_1}-h_0)}\right]\bigr)^2}
\right).
\label{lambda_1_conv}
\end{equation}

Similarly, for the second sample we consider the auxiliary hypothesis testing
problem
\[
\H_0: c = c_0 \quad \text{vs.} \quad \H_1: c = c_2 ,
\]
which yields
\begin{equation}
\sqrt{n_2}\bigl(\hat{\lambda}_{n_2} - \lambda_{c_2}^{\circ}\bigr)
\xrightarrow{d}
\mathcal{N}\!\left(
0,
\frac{\mathbf{Var}\!\left[(Z_{\mu_2}-h_0)e^{-\lambda^{\circ}(Z_{\mu_2}-h_0)}\right]}
{\bigl(\mathbb{E}\!\left[(Z_{\mu_2}-h_0)^2 e^{-\lambda^{\circ}(Z_{\mu_2}-h_0)}\right]\bigr)^2}
\right).
\label{lambda_2_conv}
\end{equation}

When $c_1 = c_2$, we have $\mu_1 = \mu_2$, and consequently
$\lambda^{\circ}_{c_1} = \lambda^{\circ}_{c_2}$. It therefore follows from
\eqref{lambda_1_conv} and \eqref{lambda_2_conv} that the difference
$\hat{\lambda}_{n_1} - \hat{\lambda}_{n_2}$ serves as a natural test statistic
for testing the null hypothesis $H_0: c_1 = c_2$.

To formally construct a two-sample test, one must establish the asymptotic
normality of $\hat{\lambda}_{n_1} - \hat{\lambda}_{n_2}$ under appropriate
centering and scaling. While it may appear that this follows directly from
combining two instances of Theorem~\ref{CLT_root}, the possibility of unequal
sample sizes $n_1 \neq n_2$ necessitates additional technical arguments. The
following theorem makes this precise.

\begin{theorem}[Two sample asymptotics]{\label{two_sample_asymptotics}}

Let the graphs $\mathcal{G}_{{n_1},1}, \ldots, \mathcal{G}_{{n_1},{n_1}}$ be sampled independently from $G\left(m_{n_1}, (\frac{c_1}{m_{n_1}})^{1/k(H)}\right)$ and $\tilde{\mathcal{G}}_{n_2,1}, \ldots, \tilde{\mathcal{G}}_{n_2,n_2}$ be sampled independently from $G\left(m_{n_2}, (\frac{c_2}{m_{n_2}})^{1/k(H)}\right)$ where \(m_{n_i}\gg{n_i}^{\frac{1}{2(2-k(H))}}\) are sequences of natural numbers for $i \in \{1,2\}$ and $n_1/n_2 \to \rho \in (0, \infty)$.

Let $\hat{\lambda}_{n_1}$ be the unique real root of
\[
\sum_{i=1}^{n_1} h_{n_1,i} e^{-\lambda h_{n_1,i}} = 0,
\] and $\hat{\lambda}_{n_2}$ be the unique real root of
\[
\sum_{i=1}^{n_2} h_{n_2,i} e^{-\lambda h_{n_2,i}} = 0,
\]
where  \(h_{n_1,i}=\mathcal{T}(H,\mathcal{G}_{n_1,i})-h_0\) and  \(h_{n_2,i}=\mathcal{T}(H,\mathcal{G}_{n_2,i})-h_0\). 

Now let 
 \[\sigma^2_{n_1,n_2}=\frac{\mathbf{Var}\left[(Z_{\mu_1}-h_0)e^{-\lambda_{c_1}^{\circ}(Z_{\mu_1}-h_0)}\right]}{n_1\left(\mathbb{E}\left[(Z_{\mu_1}-h_0)^2e^{-\lambda_{c_1}^{\circ}(Z_{\mu_1}-h_0)}\right]\right)^2}+\frac{\mathbf{Var}\left[(Z_{\mu_2}-h_0)e^{-\lambda_{c_2}^{\circ}(Z_{\mu_2}-h_0)}\right]}{n_2\left(\mathbb{E}\left[(Z_{\mu_2}-h_0)^2e^{-\lambda_{c_2}^{\circ}(Z_{\mu_2}-h_0)}\right]\right)^2}
\]
and
 \(\lambda^{\circ}_{c_j}= \log\left(\frac{\mu_j}{h_0}\right)\) , and $\mu_j=c_j^{v(H)}/\lvert Aut(H)\rvert$ for $j\in\{1,2\}$.

Then the following holds: 
\[\frac{(\hat{\lambda}_{n_1} - \hat{\lambda}_{n_2})-(\lambda^{\circ}_{c_1}-\lambda^{\circ}_{c_2})}{\sigma_{n_1,n_2}}\xrightarrow{{d}} \mathcal{N}(0,1).\]
\end{theorem}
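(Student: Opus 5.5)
The plan is to reduce everything to two applications of Theorem~\ref{CLT_root} and to use independence of the two samples. Write $\tau_j^2$ for the asymptotic variance appearing in \eqref{lambda_1_conv} and \eqref{lambda_2_conv}, so that $\sigma^2_{n_1,n_2}=\tau_1^2/n_1+\tau_2^2/n_2$. Set
\[
U_{n_1}:=\sqrt{n_1}\bigl(\hat{\lambda}_{n_1}-\lambda^{\circ}_{c_1}\bigr),\qquad V_{n_2}:=\sqrt{n_2}\bigl(\hat{\lambda}_{n_2}-\lambda^{\circ}_{c_2}\bigr).
\]
Both hypotheses of Theorem~\ref{CLT_root} hold separately: $m_{n_j}\gg n_j^{1/(2(2-k(H)))}$, and the common centering is $h_0$. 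Hence $U_{n_1}\xrightarrow{d}\mathcal N(0,\tau_1^2)$ and $V_{n_2}\xrightarrow{d}\mathcal N(0,\tau_2^2)$ as $n_1,n_2\to\infty$. The two samples are independent, and $\hat\lambda_{n_1}$ and $\hat\lambda_{n_2}$ are measurable functions of their respective samples. Therefore $U_{n_1}$ and $V_{n_2}$ are independent, and the pair $(U_{n_1},V_{n_2})$ converges jointly to $\mathcal N(0,\mathrm{diag}(\tau_1^2,\tau_2^2))$. To see this, the joint characteristic function factorises into the product of the marginal ones, each of which converges.

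Next, write the statistic as a weighted combination:
\[
\frac{(\hat{\lambda}_{n_1}-\hat{\lambda}_{n_2})-(\lambda^{\circ}_{c_1}-\lambda^{\circ}_{c_2})}{\sigma_{n_1,n_2}}
= a_{n}U_{n_1}-b_{n}V_{n_2},
\]
\[
a_n:=\frac{n_1^{-1/2}}{\sigma_{n_1,n_2}},\qquad b_n:=\frac{n_2^{-1/2}}{\sigma_{n_1,n_2}}.
\]
Multiplying the numerator and denominator by $\sqrt{n_1}$ gives
\[
a_n=\bigl(\tau_1^2+\tau_2^2\, n_1/n_2\bigr)^{-1/2}\to(\tau_1^2+\rho\tau_2^2)^{-1/2}=:a,
\]
\[
b_n=\sqrt{n_1/n_2}\,a_n\to\sqrt{\rho}\,a=:b.
\]
This is exactly where the condition $n_1/n_2\to\rho\in(0,\infty)$ is used: it keeps both deterministic weights convergent to finite limits, and keeps them bounded away from $0$. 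Since $a_n\to a$ and $b_n\to b$ deterministically, the continuous mapping theorem together with Slutsky's lemma, applied to the jointly convergent vector $(U_{n_1},V_{n_2})$, gives $a_nU_{n_1}-b_nV_{n_2}\xrightarrow{d}\mathcal N(0,a^2\tau_1^2+b^2\tau_2^2)$. The limiting variance is
\[
a^2\tau_1^2+b^2\tau_2^2=\frac{\tau_1^2+\rho\tau_2^2}{\tau_1^2+\rho\tau_2^2}=1,
\]
which proves the theorem.

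The main technical point to check is that $\tau_1^2,\tau_2^2$ are finite and strictly positive, so that $\sigma_{n_1,n_2}>0$ and the weights are well defined. Finiteness holds because Poisson variables have all exponential moments. For positivity, the denominator $\mathbb E[(Z_\mu-h_0)^2e^{-\lambda^\circ(Z_\mu-h_0)}]$ is positive because $Z_\mu$ is non-degenerate. The numerator variance is positive because $x\mapsto (x-h_0)e^{-\lambda^\circ(x-h_0)}$ is non-constant on the support of $Z_\mu$, which contains at least two distinct integers. A secondary point is that the joint limit is taken along $n_1,n_2\to\infty$ together, with $n_1/n_2\to\rho$. Theorem~\ref{CLT_root} applies along any such subsequence, because it depends only on each index tending to infinity.
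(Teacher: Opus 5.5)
Your proposal is correct and follows essentially the same route as the paper. Both apply Theorem~\ref{CLT_root} to each sample, use independence to factorize the characteristic functions, and use $n_1/n_2\to\rho$ to control the deterministic weights; the paper packages this as Lemma~\ref{two_sample_asymptotics_lemma}, applying L\'evy's continuity theorem to $U_n-r_nV_n$, whereas you use joint convergence plus Slutsky. Your explicit check that $\tau_1^2,\tau_2^2\in(0,\infty)$ is a small improvement over the paper's presentation, as is your identification of the rate-ratio limit as $\sqrt{\rho}$ rather than $\rho$.
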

Suppose we want to test between the following hypotheses
\begin{equation*}
\H_0: c_1=c_2 \text{ vs } \H_1: c_1 \neq c_2. 
\end{equation*}
Under the null hypothesis, $\lambda^{\circ}_{c_1}=\lambda^{\circ}_{c_2}=\lambda^{\circ}$ (say). 
Further the variance simplifies to 
\[\sigma^2_{n_1,n_2}=\left(\frac{1}{n_1}+\frac{1}{n_2}\right)\frac{\mathbf{Var}\left[(Z_{\mu}-h_0)e^{-\lambda^{\circ}(Z_{\mu}-h_0)}\right]}{\left(\mathbb{E}\left[(Z_{\mu}-h_0)^2e^{-\lambda^{\circ}(Z_{\mu}-h_0)}\right]\right)^2}
\]
with $\mu=h_0e^{\lambda^{\circ}}$. Since $\hat{\lambda}_{c_j}\xrightarrow{P}\lambda^{\circ}$ for $j=\{1,2\}$ (see Theorem \ref{poisson_consistency_2}), $\hat{\lambda}=\frac{\hat{\lambda}_{n_1}+\hat{\lambda}_{n_2}}{2}$ is a consistent estimator of $\lambda^{\circ}$ and thus 
$\hat{\mu}=h_0e^{\hat{\lambda}^{\circ}}$ is a consistent estimator of $\mu$.

Let $Z_{\hat{\mu}}$ be a Poisson random variable with the random parameter $\hat{\mu}$. Further we denote the conditional expectation and conditional variance with respect to the random variable $Z_{\hat{\mu}}$ by  $\mathbb{E}\left[.\vert \hat{\lambda}\right]$ and $\mathbf{Var}\left[.\vert \hat{\lambda}\right]$ respectively.

The following lemma shows that $\mathbf{Var}\left[(Z_{\mu}-h_0)e^{-\lambda^{\circ}(Z_{\mu}-h_0)}\right]$ and $\mathbb{E}\left[(Z_{\mu}-h_0)^2e^{-\lambda^{\circ}(Z_{\mu}-h_0)}\right]$ can be consistently estimated.

\begin{lemma}{\label{lambda_cont}}
   \begin{align*}
   \mathbb{E}\left[(Z_{\hat{\mu}}-h_0)^2e^{-\hat{\lambda}(Z_{\hat{\mu}}-h_0)}\vert \hat{\lambda}\right]&\xrightarrow{P} \mathbb{E}\left[(Z_{\mu}-h_0)^2e^{-\lambda^{\circ}(Z_{\mu}-h_0)}\right]\\
       \mathbf{Var}\left[(Z_{\hat{\mu}}-h_0)e^{-\hat{\lambda}(Z_{\hat{\mu}}-h_0)}\vert \hat{\lambda}\right]&\xrightarrow{P}\mathbf{Var}\left[(Z_{\mu}-h_0)e^{-\lambda^{\circ}(Z_{\mu}-h_0)}\right]\\
   \end{align*} 
\end{lemma}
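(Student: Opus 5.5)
The plan is a continuous mapping argument. Consider the deterministic functions
\[
F_2(\lambda,\nu):=\mathbb{E}\left[(Z_{\nu}-h_0)^2e^{-\lambda(Z_{\nu}-h_0)}\right],\qquad F_1(\lambda,\nu):=\mathbb{E}\left[(Z_{\nu}-h_0)e^{-\lambda(Z_{\nu}-h_0)}\right],
\]
\[
G(\lambda,\nu):=\mathbb{E}\left[(Z_{\nu}-h_0)^2e^{-2\lambda(Z_{\nu}-h_0)}\right]-F_1(\lambda,\nu)^2,
\]
where $Z_\nu\sim$ Poisson$(\nu)$. Since $Z_{\hat\mu}$ is Poisson with a parameter that is a function of $\hat\lambda$, the conditional quantities in the statement are exactly $F_2(\hat\lambda,\hat\mu)$ and $G(\hat\lambda,\hat\mu)$, with $\hat\mu=h_0e^{\hat\lambda}$. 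So it suffices to prove two things: that $(\hat\lambda,\hat\mu)\xrightarrow{P}(\lambda^{\circ},\mu)$, and that $F_2$ and $G$ are continuous at $(\lambda^{\circ},\mu)$. The conclusion then follows from the continuous mapping theorem.

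\textbf{Step 1: consistency of $(\hat\lambda,\hat\mu)$.} By Theorem \ref{poisson_consistency_2}, applied to each sample under the null $c_1=c_2$, we have $\hat\lambda_{n_j}\xrightarrow{P}\lambda^{\circ}$ for $j=1,2$. Hence $\hat\lambda\xrightarrow{P}\lambda^{\circ}$, and since $x\mapsto h_0e^x$ is continuous, $\hat\mu\xrightarrow{P}h_0e^{\lambda^{\circ}}=\mu$.

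\textbf{Step 2: continuity of $F_2$, $F_1$ and $G$.} Each is a series of the form
\[
\sum_{k\ge0} g_\lambda(k)\,e^{-\nu}\frac{\nu^k}{k!},
\]
where $g_\lambda(k)$ is a polynomial in $k$ times $e^{-a\lambda(k-h_0)}$, with $a\in\{1,2\}$. Each term is jointly continuous in $(\lambda,\nu)$. Fix a compact neighbourhood $K=[\lambda^{\circ}-\delta,\lambda^{\circ}+\delta]\times[\mu/2,2\mu]$. On $K$, $\lambda$ may be negative, so the factor $e^{-a\lambda k}$ can grow; it is bounded by $e^{2(|\lambda^{\circ}|+\delta)k}$. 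The summands are therefore dominated by
\[
C\,k^2 e^{2(|\lambda^{\circ}|+\delta)k}\frac{(2\mu)^k}{k!},
\]
which is summable because Poisson variables have finite exponential moments of every order. Weierstrass' M-test then gives uniform convergence on $K$, so the series is continuous on $K$.

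\textbf{Step 3: conclusion.} On the event $(\hat\lambda,\hat\mu)\in K$, whose probability tends to one by Step 1, the continuous mapping theorem gives $F_2(\hat\lambda,\hat\mu)\xrightarrow{P}F_2(\lambda^{\circ},\mu)$. Likewise $G(\hat\lambda,\hat\mu)\xrightarrow{P}G(\lambda^{\circ},\mu)$, since $G$ is continuous as a difference of a continuous function and the square of another.

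The main point needing care is the domination in Step 2, where the exponential tilt must be controlled uniformly in $\lambda$ near $\lambda^{\circ}$. The Poisson moment generating function $\mathbb{E}[e^{tZ_\nu}]=e^{\nu(e^t-1)}$, which is finite for every $t$, handles this. Everything else is routine.
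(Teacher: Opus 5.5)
Your proof is correct and has the same skeleton as the paper's. Both identify the conditional quantities with a deterministic function evaluated at the estimator, establish continuity of that function, and conclude by the continuous mapping theorem from $\hat\lambda\xrightarrow{P}\lambda^{\circ}$, which the paper, like you, takes from Theorem~\ref{poisson_consistency_2}.

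The difference is in how continuity is obtained. The paper works with one-variable functions $f(\lambda)$, $g(\lambda)$, with $\mu(\lambda)=h_0e^{\lambda}$ substituted. It computes them in closed form by differentiating the Poisson Laplace transform $\mathbb{E}[e^{-\lambda Z}]=\exp\{\mu(e^{-\lambda}-1)\}$, and reads continuity off the formulas. In fact, since $\mu(\lambda)e^{-\lambda}=h_0$, the first one collapses to $f(\lambda)=h_0\exp\{h_0(1+\lambda-e^{\lambda})\}$. You instead treat $(\lambda,\nu)$ as free variables and get joint continuity from a locally uniform domination of the Poisson series via the M-test.

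What each route buys:
\begin{itemize}
\item The paper's computation gives explicit, directly computable expressions for the plug-in variance estimator $\hat\sigma^2_{n_1,n_2}$.
\item Your argument needs no moment-generating-function computations. It would carry over unchanged to other tilt functions, or to count distributions with locally uniformly bounded exponential moments and continuously varying probability mass function, where no closed form is available.
\end{itemize}

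One cosmetic fix: the dominating sequence should be $C(1+k)^2e^{2(|\lambda^{\circ}|+\delta)k}(2\mu)^k/k!$. The $k=0$ term and the factor $e^{a\lambda h_0}$, bounded on your compact set, are absorbed into $C$; summability is unaffected.
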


Thus defining $\hat{\sigma}^2_{n_1,n_2}$ by:
$$\hat{\sigma}^2_{n_1,n_2}:=\left(\frac{1}{n_1}+\frac{1}{n_2}\right)\frac{\mathbf{Var}\left[(Z_{\hat{\mu}}-h_0)e^{-\hat{\lambda}(Z_{\hat{\mu}}-h_0)}\vert \hat{\lambda}\right]}{\left(\mathbb{E}\left[(Z_{\hat{\mu}}-h_0)^2e^{-\hat{\lambda}(Z_{\hat{\mu}}-h_0)}\vert \hat{\lambda}\right]\right)^2}$$
 we observe that $\hat{\sigma}^2_{n_1,n_2}$ is a consistent estimator of $\sigma^2_{n_1,n_2}$ and consequently by Slutsky's theorem
\[\frac{\hat{\lambda}_{n_1} - \hat{\lambda}_{n_2}}{\hat{\sigma}_{n_1,n_2}}\xrightarrow{d} \mathcal{N}(0,1)\]
under $H_0$. Thus we can consider the following test

\begin{test}[Two sample test in the sparse regime]{\label{two_samp_sparse}}The test is given by
 \[\phi_{n}=\mathbb{I}\{|\hat{\lambda}_{n_1}-\hat{\lambda}_{n_2}|>\hat{\sigma}_{n_1,n_2}z_{\alpha/2}\}\]
where $z_{\alpha}$ is the $1-\alpha$ quantile of the normal distribution.   
\end{test}
 From Theorem \ref{two_sample_asymptotics} it follows that the above is a level $\alpha$ test. The consistency of this test can be shown as in the Goodness-of-fit case and we omit the details.

 \begin{corollary}
      The two sample test in the sparse regime i.e. Test \ref{two_samp_sparse} is a level $\alpha$ consistent test.
 \end{corollary}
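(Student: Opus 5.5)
We need to show two things for Test~\ref{two_samp_sparse}: its asymptotic size under $\H_0: c_1=c_2$ equals $\alpha$, and its power tends to $1$ under $\H_1: c_1\neq c_2$.

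\emph{Level.} Under $\H_0$ we have $\lambda^{\circ}_{c_1}=\lambda^{\circ}_{c_2}=\lambda^{\circ}$. Theorem~\ref{two_sample_asymptotics} then gives $(\hat{\lambda}_{n_1}-\hat{\lambda}_{n_2})/\sigma_{n_1,n_2}\xrightarrow{d}\mathcal{N}(0,1)$.

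We next show $\hat{\sigma}_{n_1,n_2}/\sigma_{n_1,n_2}\xrightarrow{P}1$. By Theorem~\ref{poisson_consistency_2}, $\hat{\lambda}\xrightarrow{P}\lambda^{\circ}$. Lemma~\ref{lambda_cont} then makes the ratio of conditional variance to squared conditional second moment converge in probability to the population ratio. The common factor $(1/n_1+1/n_2)$ cancels, and the denominator $\mathbb{E}[(Z_\mu-h_0)^2e^{-\lambda^{\circ}(Z_\mu-h_0)}]$ is strictly positive because $Z_\mu$ is non-degenerate.

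Slutsky's theorem now gives $(\hat{\lambda}_{n_1}-\hat{\lambda}_{n_2})/\hat{\sigma}_{n_1,n_2}\xrightarrow{d}\mathcal{N}(0,1)$, so $\P_{\H_0}(|\cdot|>z_{\alpha/2})\to\alpha$.

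\emph{Consistency.} Assume $c_1\neq c_2$. Then $\mu_1\neq\mu_2$, so $\delta:=\lambda^{\circ}_{c_1}-\lambda^{\circ}_{c_2}=\log(\mu_1/\mu_2)\neq 0$. Write
\[
\frac{\hat{\lambda}_{n_1}-\hat{\lambda}_{n_2}}{\hat{\sigma}_{n_1,n_2}}=\frac{\sigma_{n_1,n_2}}{\hat{\sigma}_{n_1,n_2}}\left(\frac{(\hat{\lambda}_{n_1}-\hat{\lambda}_{n_2})-\delta}{\sigma_{n_1,n_2}}+\frac{\delta}{\sigma_{n_1,n_2}}\right).
\]
By Theorem~\ref{two_sample_asymptotics} the first term in the bracket is $O_p(1)$. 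Since $n_1/n_2\to\rho\in(0,\infty)$ and the limiting variance constants are finite, $\sigma_{n_1,n_2}\asymp n_1^{-1/2}$. Hence $|\delta|/\sigma_{n_1,n_2}\to\infty$.

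\emph{Main obstacle.} Under $\H_1$ we can no longer use $\hat{\sigma}_{n_1,n_2}/\sigma_{n_1,n_2}\to 1$. The averaged estimator $\hat{\lambda}=(\hat{\lambda}_{n_1}+\hat{\lambda}_{n_2})/2$ converges to $(\lambda^{\circ}_{c_1}+\lambda^{\circ}_{c_2})/2$, not to either $\lambda^{\circ}_{c_j}$.

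It suffices instead to show that $\sqrt{n_1}\,\hat{\sigma}_{n_1,n_2}$ is $O_p(1)$. The plan is to apply the continuity argument behind Lemma~\ref{lambda_cont} at the limit point $\bar{\lambda}=(\lambda^{\circ}_{c_1}+\lambda^{\circ}_{c_2})/2$, with $\bar{\mu}=h_0e^{\bar{\lambda}}$. This shows that the plug-in conditional variance and second moment converge in probability to the finite values $\mathbf{Var}[(Z_{\bar\mu}-h_0)e^{-\bar\lambda(Z_{\bar\mu}-h_0)}]$ and $\mathbb{E}[(Z_{\bar\mu}-h_0)^2e^{-\bar\lambda(Z_{\bar\mu}-h_0)}]$. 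The second of these is strictly positive.

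The required continuity rests on two facts. The maps $(\lambda,\mu)\mapsto\mathbb{E}[(Z_\mu-h_0)^k e^{-\lambda(Z_\mu-h_0)}]$ are continuous, since Poisson moment generating functions are finite everywhere and the dominated convergence theorem applies. Also, $\bar{\lambda}$ is finite because $\hat{\lambda}_{n_j}\xrightarrow{P}\lambda^{\circ}_{c_j}$ by Theorem~\ref{poisson_consistency_2}.

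Therefore $\hat{\sigma}_{n_1,n_2}=O_p(n_1^{-1/2})$. The first term above stays bounded in probability while $|\delta|/\hat{\sigma}_{n_1,n_2}\xrightarrow{P}\infty$. Hence $\P_{\H_1}\big(|\hat{\lambda}_{n_1}-\hat{\lambda}_{n_2}|>\hat{\sigma}_{n_1,n_2}z_{\alpha/2}\big)\to 1$.
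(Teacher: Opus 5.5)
Your proposal is correct and follows the paper's route. The level comes from Theorem~\ref{two_sample_asymptotics} combined with Lemma~\ref{lambda_cont} and Slutsky's theorem, and consistency comes from the divergence of $(\lambda^{\circ}_{c_1}-\lambda^{\circ}_{c_2})/\sigma_{n_1,n_2}$ as in the goodness-of-fit corollaries; the paper says only this and omits the details. You also observe that under the alternative $\hat{\lambda}$ converges to $(\lambda^{\circ}_{c_1}+\lambda^{\circ}_{c_2})/2$, so one only gets $\hat{\sigma}_{n_1,n_2}=O_p(n_1^{-1/2})$ rather than $\hat{\sigma}_{n_1,n_2}/\sigma_{n_1,n_2}\xrightarrow{P}1$; this correctly fills in a point the paper glosses over, and it suffices because $\hat{\lambda}_{n_1}-\hat{\lambda}_{n_2}\xrightarrow{P}\lambda^{\circ}_{c_1}-\lambda^{\circ}_{c_2}\neq 0$.
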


\subsection{Networks of growing size: the Dense Regime}

In this setup we consider $n$ i.i.d. copies of random graphs sampled from the Exponential Random Graph Model (cf. \eqref{eq:ergm}) with probability distribution $\P_{\beta}(.)$ with

\begin{equation}
\mathbb{P}_\beta(G)
= \frac{1}{Z_N(\beta)}\exp\!\left\{\,N^2\sum_{k=1}^K \beta_k\, t(T_k,G)\right\}.
\end{equation}

We particularly focus on the ERGM model in the ferromagnetic (where the parameters $\beta_k$-s are positive for $k>1$) and the subcritical regime (see \eqref{eq:subcrit-eq}).
In contrast to the sparse setting, the dense regime presents considerably greater analytical challenges. A key technical reason for this is that the asymptotic normality of the exponentially tilted empirical $H$ count is not available in this regime. 

Instead, we interpret $\hat{\lambda}_n$ as a critical point of the function $\frac{1}{n}\sum_{i=1}^n e^{-\frac{\lambda}{m_n^{v(H)-2}} h_{n,i}}$ (note the different scaling). We consider a ferromagnetic ERGM in the subcritical regime and show that the logarithm of the optimization objective (described above)—after appropriate scaling— converges to the optimum of a convex functional defined over the space of {\it graphons}. In particular this log objective function can be interpreted as the free energy of a tilted exponential random graph models (ERGMs) and the convergence follows from the results established in \cite{chatterjee2013estimating}.

The study of the corresponding Lagrange multiplier 
$\hat{\lambda}_n$ requires a more delicate analysis, which relies on the log-sum-exp approximation techniques from the nonlinear large deviations literature \cite{chatterjee2016nonlinear}. We note that the results for dense \er graphs emerge as a natural special case of the ERGM framework discussed here, and complement the results for sparse \er graphs in Section \ref{subsec:sparse_regime}.

Recall that the $i^{th}$ centred motif count is given by:
$$h_{n,i}=\mathcal{T}(H,\mathcal{G}_{n,i})-h_0.$$
We set the centering term to be the expected \( H \)-count under the null model,  
\[
\mathbb{E}_{G\sim\mathbb{P}_{\beta_0}}[\mathcal{T}(H,G)]=:h_0
\]
where \(\mathbb{P}_{\beta_0}\) is an ERGM with parameter vector \(\beta_0\in\mathbb{R}^K\). In this dense setting, $h_0$ is of order $m_n^{v(H)}$, while large deviations are governed by an $m_n^2$ scale. This motivates the exponential tilt
\begin{equation}
F_n(\lambda) = \frac{1}{n}\sum_{i=1}^n \exp\left\{-\frac{\lambda}{m_n^{v(H)-2}} h_{n,i}\right\},\label{ergm_emp}
\end{equation}
so that $\frac{1}{m_n^2}\log F_n(\lambda)$ has a nondegenerate limit. 

\subsubsection{Consistency of the Lagrange Multiplier}

As in the sparse \er setup we first consider the consider the convergence of the critical point of a population version of \eqref{ergm_emp}.
\begin{theorem}
\label{normal_consistency_1}
Let $H_m$ be the number of copies of a graph $H$ in a graph drawn from a ferromagnetic ERGM $\P_{\bbeta}$ on $m$ vertices, where $\bbeta$ is in the subcritical regime. Let $\lambda_m$ be the unique critical point of the function
$\E[e^{-\frac{\lambda}{m^{v(H)-2}}(H_m-h_0)}]$ . We define the function
\begin{align}
\mathfrak{g}(\lambda) := &\frac{\lambda h_0}{m^{v(H)}} + \sup_{\tilde{h}\in\tilde{W}}\left(-\frac{\lambda}{\lvert \text{Aut}(H)\rvert} t(H,h) + \sum_{k=1}^K \beta_k t(T_k, h) - \frac{1}{2}I(h) \right)\nonumber\\
&- \sup_{\tilde{h}\in\tilde{W}}\left( \sum_{k=1}^K \beta_k t(T_k, h) - \frac{1}{2}I(h) \right)\label{var_form_lambda}.
\end{align}
Then the function $\mathfrak{g}(\lambda)$ is strictly convex and has a unique critical point. Further, if we let $\lambda^{\circ}$ be the unique critical point of this function, then as $m\to\infty$, we have $|\lambda_m - \lambda^{\circ}| \to 0$.
\end{theorem}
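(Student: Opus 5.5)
The plan is to identify the population function with a ratio of ERGM partition functions, pass to the limit using the free-energy formula of \cite{chatterjee2013estimating}, and then transfer critical points through convexity. Since $\mathcal{T}(H,G)=\injc(H,G)/|\Aut(H)|$ and $\injc(H,G)=m^{v(H)}t(H,G)+O(m^{v(H)-1})$, we have
\[
\E\big[e^{-\frac{\lambda}{m^{v(H)-2}}(H_m-h_0)}\big]=e^{\lambda h_0/m^{v(H)-2}}\,\frac{Z_m(\beta,\lambda)}{Z_m(\beta)}\,e^{O(|\lambda| m)}.
\]
Here $Z_m(\beta,\lambda)$ is the normalizing constant of the ERGM with an extra term $-\frac{\lambda}{|\Aut(H)|}t(H,G)$ in the exponent. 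Define
\[
g_m(\lambda):=\frac{1}{m^2}\log\E\big[e^{-\frac{\lambda}{m^{v(H)-2}}(H_m-h_0)}\big].
\]
Its critical points coincide with those of the expectation itself, so $\lambda_m$ is the unique critical point of $g_m$. Moreover, $g_m$ is a scaled cumulant generating function of $H_m$, so it is convex, and it is strictly convex because $H_m$ is nondegenerate. Applying the free-energy limit of \cite{chatterjee2013estimating} (Section \ref{sec:LD_ERGM}) with $T(\tilde h)=-\frac{\lambda}{|\Aut(H)|}t(H,\tilde h)+\sum_k\beta_k t(T_k,\tilde h)$, which is bounded and $\delta_\square$-continuous, gives the pointwise limit $g_m(\lambda)-\mathfrak g(\lambda)\to 0$ for each fixed $\lambda$. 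Note that $h_0/m^{v(H)}$ converges, to $p(\beta_0)^{e(H)}/|\Aut(H)|$ by subcriticality of $\beta_0$, so $\mathfrak g$ may be read with this limit.

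Next I would establish the claimed properties of $\mathfrak g$. Convexity follows because $\mathfrak g$ is a pointwise limit of convex functions, or directly because it is a supremum of affine functions of $\lambda$ plus a linear term. For strict convexity and a unique critical point, I would use the ferromagnetic, subcritical structure. For $\lambda$ in a neighbourhood where the tilted parameters remain ferromagnetic, i.e. $\lambda\le 0$ or small, the supremum is attained at constants, as in \cite{chatterjee2013estimating}. This reduces $\mathfrak g$ to the one-dimensional problem
\[
\frac{\lambda h_0}{m^{v(H)}}+\sup_{u\in[0,1]}\Big(-\tfrac{\lambda}{|\Aut(H)|}u^{e(H)}+\sum_k\beta_k u^{e_k}-\tfrac12 I(u)\Big)-\text{const}.
\]
In the region where the maximizer $u^*(\lambda)$ is unique and nondegenerate, the envelope theorem gives
\[
\mathfrak g'(\lambda)=\frac{h_0}{m^{v(H)}}-\frac{u^*(\lambda)^{e(H)}}{|\Aut(H)|}.
\]
This derivative is strictly decreasing in $u^*$, and $u^*$ is strictly decreasing in $\lambda$, which gives strict convexity. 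Coercivity, meaning $\mathfrak g(\lambda)\to\infty$ as $|\lambda|\to\infty$, comes from comparing $u^*$ near $0$ and near $1$ against the fixed level $h_0/m^{v(H)}\in(0,1/|\Aut(H)|)$. Coercivity together with strict convexity yields a unique minimizer $\lambda^\circ$.

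Finally I would transfer critical points by the standard convexity argument. Pointwise convergence of convex functions on $\mathbb R$ upgrades to uniform convergence on compacts. Fix $\varepsilon>0$; strict convexity of $\mathfrak g$ gives $\delta>0$ with $\mathfrak g(\lambda^\circ\pm\varepsilon)>\mathfrak g(\lambda^\circ)+\delta$. For large $m$, uniform convergence then gives $g_m(\lambda^\circ\pm\varepsilon)>g_m(\lambda^\circ)$. By convexity of $g_m$, its minimizer $\lambda_m$ lies in $(\lambda^\circ-\varepsilon,\lambda^\circ+\varepsilon)$, so $|\lambda_m-\lambda^\circ|\to0$.

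I expect the main obstacle to be the strict convexity of $\mathfrak g$ for positive $\lambda$. There the tilt penalizes $H$, the effective model is no longer ferromagnetic, and the supremum over graphons need not be attained at a constant. My first attempt would be to show that only the behaviour near $\lambda^\circ$ matters, since the transfer argument needs strict convexity only locally plus convexity globally. I would locate $\lambda^\circ$ in a small interval around $0$ (where $h_0$ matches the null, and the alternatives considered are local), and there use the implicit function theorem on the subcritical fixed-point equation $\varphi(u)=u$ with $\varphi'<1$ to get a smooth, unique, constant maximizer. If this fails, I would fall back on Legendre duality: $\mathfrak g$ minus its linear term is the Legendre transform of a rate function for $t(H,\cdot)$, and strict convexity corresponds to differentiability of that rate function, which could be argued from the large deviation principle of \cite{chatterjee2011large}.
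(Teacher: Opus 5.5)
Your skeleton matches the paper's: ratio of partition functions, the free-energy limit of \cite{chatterjee2013estimating}, convexity of $\mathfrak g$, then transfer of critical points. Your transfer step is written more carefully than the paper's. The gap is the strict convexity of $\mathfrak g$, which the theorem asserts on all of $\mathbb{R}$. Your reduction to constant graphons is justified (via ferromagnetism) only for $\lambda\le 0$, and for $\lambda>0$ you have no argument.

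That region cannot be avoided. Set $\kappa:=\lim_m h_0/m^{v(H)}$. At $\lambda=0$ the unique maximizer is the constant $p(\bbeta)$, so Danskin gives $\mathfrak g'(0)=\kappa-p(\bbeta)^{e(H)}/|\Aut(H)|$. This is negative as soon as the sampling model has larger $H$-density than the null, and then $\lambda^{\circ}>0$, exactly where your proof is silent. Plan A (placing $\lambda^{\circ}$ near $0$ via local alternatives) adds a hypothesis that is not in the statement. Plan B is not a concrete route either. Write $G(\lambda):=\sup_{\tilde h}g(\lambda,\tilde h)$ with $g(\lambda,\tilde h)=-\frac{\lambda}{|\Aut(H)|}t(H,h)+\sum_k\beta_k t(T_k,h)-\frac12 I(h)$. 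Then $G$ is the conjugate of a generally non-convex constrained rate function. Its strict convexity amounts to differentiability of the convex envelope of that rate function, which the large deviation principle does not provide.

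The paper closes this with an argument that needs no information about the shape of the maximizer. If $G$ were not strictly convex, it would be affine on some $[\lambda_1,\lambda_2]$. Take a maximizer $w^*$ at an interior point $\bar\lambda$; one exists by compactness of $\tilde W$ and upper semicontinuity of $g(\bar\lambda,\cdot)$. The affine function $\lambda\mapsto g(\lambda,w^*)$ lies below $G$ and touches it at $\bar\lambda$, so it coincides with $G$ on $[\lambda_1,\lambda_2]$. In other words, $w^*$ is a maximizer for every $\lambda$ in that interval.

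Every maximizer is bounded away from $0$ and $1$ and satisfies, a.e., the Euler--Lagrange equation from \cite{chatterjee2013estimating}:
$\log\frac{w^*}{1-w^*}=2\bigl(-\frac{\lambda}{|\Aut(H)|}\Delta_H w^*+\sum_k\beta_k\Delta_{T_k}w^*\bigr)$.
This needs no sign condition on the coefficients. Using it at two distinct values of $\lambda$ forces $\Delta_H w^*=0$ a.e. That is impossible, since $w^*\ge\delta>0$ gives $\Delta_H w^*\ge e(H)\delta^{e(H)-1}$. This is the graphon version of your own scalar observation that an interior $u$ cannot satisfy the first-order condition for two different $\lambda$. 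Lifting it through the Euler--Lagrange equation removes any need for the maximizer to be constant or unique.

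Relatedly, your coercivity argument compares $u^*(\lambda)$ with $\kappa$ also as $\lambda\to+\infty$, where no constant maximizer is available. Instead, test $G$ against a fixed constant graphon $\delta$ with $\delta^{e(H)}/|\Aut(H)|<\kappa$. This gives $\mathfrak g(\lambda)\ge\lambda\bigl(\kappa-\delta^{e(H)}/|\Aut(H)|\bigr)-C$, and a constant near $1$ handles $\lambda\to-\infty$. Keep this step: the paper asserts that $\mathfrak g$ has a critical point but only proves uniqueness.
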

Next we establish the empirical version of the above theorem.

\begin{theorem}{\label{normal_consistency_2}}
Let $\mathcal{G}_{n,1}, \dots, \mathcal{G}_{n,n}$ be sampled independently from a ferromagnetic ERGM $\P_{\bbeta}$ (with $\bbeta$ in the subcritical regime) on $m_n$ vertices, where $\binom{m_n}{2}=o(\log n)$ and let $\hat{\lambda}_n$ be the unique critical point of $\frac{1}{n}\sum_{i=1}^n e^{-\frac{\lambda}{m_n^{v(H)-2}} h_{n,i}}$. Let $\lambda^{\circ}$ be the unique critical point of the function $\mathfrak{g}(\lambda)$ defined in \eqref{var_form_lambda}. Then as $n\to\infty$, $|\hat{\lambda}_n - \lambda^{\circ}| \xrightarrow{P} 0$.
\end{theorem}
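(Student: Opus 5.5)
The approach is to show that, under the growth condition $\binom{m_n}{2}=o(\log n)$, the empirical exponential moment $F_n(\lambda)$ in \eqref{ergm_emp} agrees with its population counterpart up to a multiplicative factor $1+o_P(1)$, uniformly in $\lambda$. The result then transfers from Theorem~\ref{normal_consistency_1} through a convexity argument for minimizers.

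\textbf{Step 1 (uniform control of the empirical law).} Let $\hat p_n(G)=\frac1n|\{i:\mathcal G_{n,i}=G\}|$ for $G\in\mathfrak G_{m_n}$, and write $p_n(G)=\mathbb P_{\bbeta}(G)$. There are $2^{\binom{m_n}{2}}=n^{o(1)}$ possible graphs, so the sample sees each of them many times. Homomorphism densities lie in $[0,1]$, so from \eqref{eq:ergm} we get
\[
p_n(G)\ge \exp\Big(-m_n^2\sum_k|\beta_k|\Big)\Big/2^{\binom{m_n}{2}}\exp\Big(m_n^2\sum_k|\beta_k|\Big)=e^{-O(m_n^2)}=n^{-o(1)}.
\]
Hence $np_n(G)\ge n^{1-o(1)}$ for every $G$. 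A multiplicative Chernoff bound for each $G$, combined with a union bound over the $n^{o(1)}$ graphs, gives
\[
\max_G\Big|\frac{\hat p_n(G)}{p_n(G)}-1\Big|=O_P\Big(\sqrt{\log n/n^{1-o(1)}}\Big)=o_P(1).
\]

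\textbf{Step 2 (transfer to the tilted sums).} Since $F_n(\lambda)=\sum_G \hat p_n(G)\,e^{-\lambda(\mathcal T(H,G)-h_0)/m_n^{v(H)-2}}$ is a positive combination of the weights $\hat p_n(G)$, Step 1 gives
\[
F_n(\lambda)=(1+o_P(1))\,\mathbb E\big[e^{-\lambda(H_{m_n}-h_0)/m_n^{v(H)-2}}\big],
\]
with the $o_P(1)$ term independent of $\lambda$. The tilt plays no role here, which is the point of working with ratios. Set $\mathcal F_n(\lambda)=m_n^{-2}\log F_n(\lambda)$ and $\mathcal F(\lambda)=m_n^{-2}\log\mathbb E[\cdots]$. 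Then $\sup_\lambda|\mathcal F_n-\mathcal F|=o_P(1)$. This step works even when $m_n$ grows slowly, because dividing by $m_n^2\ge1$ only helps.

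\textbf{Step 3 (limit and argmin).} The population expectation equals a ratio of partition functions of a tilted ERGM, which contains $H$ with coefficient $-\lambda/|\Aut(H)|$ up to $o(1)$ corrections coming from the difference between $\injc$ and $\homc$. The argument behind Theorem~\ref{normal_consistency_1}, via \eqref{ergm_lim_CD} and the Chatterjee--Diaconis variational formula, gives $\mathcal F(\lambda)\to\mathfrak g(\lambda)$ pointwise. Consequently $\mathcal F_n(\lambda)\to\mathfrak g(\lambda)$ in probability for each fixed $\lambda$.

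Both $\mathcal F_n$ and $\mathfrak g$ are convex: $\mathcal F_n$ is a log-sum-exp of affine functions of $\lambda$, and $\mathfrak g$ is strictly convex by Theorem~\ref{normal_consistency_1}. Critical points of $F_n$ coincide with minimizers of $\mathcal F_n$. Fix $\varepsilon>0$. Strict convexity of $\mathfrak g$ gives $\delta>0$ with $\mathfrak g(\lambda^\circ\pm\varepsilon)>\mathfrak g(\lambda^\circ)+\delta$. Applying the pointwise convergence at the three points $\lambda^\circ,\lambda^\circ\pm\varepsilon$, with probability tending to one we have $\mathcal F_n(\lambda^\circ\pm\varepsilon)>\mathcal F_n(\lambda^\circ)$. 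Convexity of $\mathcal F_n$ then forces its unique minimizer $\hat\lambda_n$ into $(\lambda^\circ-\varepsilon,\lambda^\circ+\varepsilon)$. This proves $|\hat\lambda_n-\lambda^\circ|\xrightarrow{P}0$.

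\textbf{Main obstacle.} I expect the main difficulty to be Step 3's identification of the population limit. One has to check that the combinatorial corrections between $\mathcal T(H,\cdot)$ and $m_n^{v(H)}t(H,\cdot)/|\Aut(H)|$, as well as the centering term $\lambda h_0/m_n^{v(H)}$, are $o(m_n^2)$ in the exponent. This must be verified through the graphon continuity of $t(H,\cdot)$, which is already the content of Theorem~\ref{normal_consistency_1}. The probabilistic part, Step 1, is elementary thanks to the very slow growth of $m_n$.
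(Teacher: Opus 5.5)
Your proof is correct, and it takes a different and more economical route than the paper. The paper writes $F_n$ as a partition function $\sum_x e^{f(x)}$ with energy $f(x)=-\lambda m_n^{2-v(H)}H(x)+\log(N(x)/n)$ and runs the Chatterjee--Dembo log-sum-exp approximation (Theorem 1.6 of \cite{chatterjee2016nonlinear}) on it. That requires several further ingredients:
\begin{itemize}
\item first- and second-derivative bounds on a multilinear extension of $\log(N(x)/n)$ (Lemmas \ref{weak_law_der} and \ref{weak_law_2der});
\item a gradient-complexity net and the smoothness estimate of Lemma \ref{smoothness};
\item a separate passage from the finite-dimensional variational problem $L_m$ to the graphon problem via Lemma \ref{density}.
\end{itemize}

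You use the same probabilistic input the paper does: uniform multiplicative control of $N(G)/(n\,p_{\bbeta}(G))$ over all $2^{\binom{m_n}{2}}=n^{o(1)}$ graphs, which is Lemma \ref{weak_law}, obtained in your case by Chernoff rather than Chebyshev. You then observe that, since $F_n$ is a positive combination of the weights $\hat p_n(G)$, this already gives $F_n(\lambda)=(1+o_P(1))$ times its population mean, uniformly in $\lambda$. Everything therefore reduces to the population limit established in the proof of Theorem \ref{normal_consistency_1} through the Chatterjee--Diaconis free-energy formula. Your three-point convexity argument also makes the argmin step explicit, whereas the paper only asserts it.

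The paper's detour does give a quantitative comparison between the log-partition function and $L_m$ (Lemma \ref{lem:partition_func_conv_gen}). But the final identification of $\lim L_m$ is again non-quantitative and rests on the same uniform law, so under the stated hypothesis $\binom{m_n}{2}=o(\log n)$ your reduction loses nothing. The same tilt-independence observation would also handle the empirical side of the squared-$H$ tilt used for Theorem \ref{sharp_rates}.

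Two points should be stated explicitly:
\begin{itemize}
\item $m_n\to\infty$ is needed; it is implicit in the statement but not written.
\item $\mathfrak g$ must be read with $\lambda h_0/m^{v(H)}$ replaced by its limit $\lambda p_0^{e(H)}/|\Aut(H)|$, which you already flagged.
\end{itemize}
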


\subsubsection{Sharp rates}

Next we establish sharper results for the above convergence which can be used to construct consistent tests. The analysis of the $\lambda^{\circ}=0$ case  and $\lambda^{\circ}<0$ are quite different and hence we state the results in two separate theorems.
\begin{theorem}
{\label{CLT_root_2}}
Suppose $\lambda^{\circ} = 0$ . Then, under the assumptions of Theorem \ref{normal_consistency_2}, we have
    $$ m_n^2(\hat{\lambda}_n-\lambda^{\circ})\xrightarrow{P} 0.$$

\end{theorem}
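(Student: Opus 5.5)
The plan is to split
\[
m_n^2\hat\lambda_n \;=\; m_n^2(\hat\lambda_n-\lambda_{m_n}) \;+\; m_n^2\lambda_{m_n},
\]
where $\lambda_{m_n}$ is the population critical point from Theorem \ref{normal_consistency_1} (taken with $m=m_n$), and to show that each term is $o_P(1)$.

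Write $x_i:=h_{n,i}/m_n^{v(H)-2}$, so that $|x_i|\le Cm_n^2$. Critical points of $F_n$ are roots of
\[
\psi_n(\lambda):=\frac1n\sum_{i=1}^n x_ie^{-\lambda x_i},
\]
and $\lambda_{m_n}$ is the root of $\psi(\lambda):=\E[x_1e^{-\lambda x_1}]$. Both $\psi_n$ and $\psi$ are strictly decreasing, with
\[
\psi_n'(\lambda)=-\frac1n\sum_{i=1}^n x_i^2e^{-\lambda x_i},\qquad \psi'(\lambda)=-\E\bigl[x_1^2e^{-\lambda x_1}\bigr].
\]
The basic input is the fluctuation order of motif counts in a subcritical ferromagnetic ERGM. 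I will use $\mathbf{Var}(H_{m})\asymp m^{2v(H)-2}$ together with sub-Gaussian concentration of $H_m$ on the $m^{v(H)-1}$ scale, and I will quote these from the concentration literature for subcritical ERGMs cited in Section \ref{ERGM_intro}. They give $\mathbf{Var}(x_1)\asymp m_n^2$, and $\E[x_1^2e^{-\lambda x_1}]\asymp m_n^2$ uniformly for $|\lambda|\le \delta/m_n$.

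\textbf{Step 1 (population term).} Expand the population equation by the mean value theorem:
\[
0=\psi(\lambda_{m_n})=\E[x_1]-\lambda_{m_n}\E\bigl[x_1^2e^{-\xi x_1}\bigr],
\]
with $\xi$ between $0$ and $\lambda_{m_n}$. Once $\lambda_{m_n}=O(1/m_n)$ is known, the expectation factor is $\asymp m_n^2$. To get that a priori bound I will use monotonicity of $\psi$ together with the sign of $\psi(\pm\delta/m_n)$, evaluated using the same concentration. This gives
\[
m_n^2\lambda_{m_n}\asymp \E[x_1]=\frac{\E H_{m_n}-h_0}{m_n^{v(H)-2}}.
\]
The hypothesis $\lambda^{\circ}=0$ is exactly $\mathfrak g'(0)=0$. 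That says that $h_0/m_n^{v(H)}$ and $\E H_{m_n}/m_n^{v(H)}$ have the same limit, $t(H,p(\bbeta))/|\Aut(H)|$.

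\textbf{Main obstacle.} The difficulty is upgrading this first-order agreement to $|\E H_{m_n}-h_0|=o(m_n^{v(H)-2})$. First I would expand both means. I would use the quantitative log-partition approximation of \cite{chatterjee2016nonlinear} (via derivatives in $\lambda$ of the tilted free energy) to write each as $m_n^{v(H)}t(H,p)/|\Aut(H)|$ plus lower-order corrections. I would then check that equality of the leading variational term forces $p(\bbeta)=p(\bbeta_0)$. In the subcritical regime the corrections are then determined by $p$ and the motif structure. If the $m^{v(H)-1}$-order corrections do not cancel, the argument must instead absorb the discrepancy using the slow growth $m_n^2=o(\log n)$. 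I expect this to be the hardest step.

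\textbf{Step 2 (empirical term).} Apply the same expansion to the empirical equation:
\[
0=\psi_n(\hat\lambda_n)=\psi_n(\lambda_{m_n})+(\hat\lambda_n-\lambda_{m_n})\,\psi_n'(\eta),
\]
with $\eta$ between $\hat\lambda_n$ and $\lambda_{m_n}$. The numerator $\psi_n(\lambda_{m_n})$ is an average of i.i.d. mean-zero terms with variance $O(m_n^2)$, using boundedness of the tilt from Step 1. Hence $\psi_n(\lambda_{m_n})=O_P(m_n/\sqrt n)$.

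For the denominator I first localize $\hat\lambda_n$ to $|\hat\lambda_n|\le\delta/m_n$ with probability tending to one. I do this by checking the signs of $\psi_n(\pm\delta/m_n)$: by Chebyshev these concentrate around $\psi(\pm\delta/m_n)$, whose signs are known from Step 1. On that event, the law of large numbers in triangular-array form, combined with $|x_i|\le Cm_n^2$, gives $-\psi_n'(\eta)\ge c\,m_n^2$. Therefore
\[
m_n^2|\hat\lambda_n-\lambda_{m_n}|=O_P\!\left(\frac{m_n}{\sqrt n}\right)=o_P(1),
\]
since $m_n^2=o(\log n)$. Combining this with Step 1 proves the theorem.
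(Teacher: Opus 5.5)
Your Step~2 is sound, but Step~1 has a genuine gap, and the fallback you suggest cannot close it.

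The hypothesis $\lambda^{\circ}=0$ only says $\mathfrak g'(0)=0$. That is, $h_0/m_n^{v(H)}$ and $\mathbb{E}H_{m_n}/m_n^{v(H)}$ have the same limit. It says nothing at the scale $m_n^{v(H)-2}$ you need. Your claim that, once $p(\bbeta)=p(\bbeta_0)$, ``the corrections are then determined by $p$ and the motif structure'' is the false step. Finite-size corrections to ERGM mean motif counts are of order $m_n^{v(H)-1}$, and they depend on $\bbeta$ itself, not only on $p(\bbeta)$. Nothing in $\lambda^{\circ}=0$ rules out $|\mathbb{E}H_{m_n}-h_0|\gg m_n^{v(H)-2}$. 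In that case your own Step~1 formula gives $m_n^2|\lambda_{m_n}|\asymp|\mathbb{E}H_{m_n}-h_0|/m_n^{v(H)-2}\to\infty$. Your Step~2 gives $m_n^2(\hat\lambda_n-\lambda_{m_n})=o_P(1)$, so the conclusion itself would fail.

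The mechanism is visible already for $H$ an edge. Take Erd\"os--R\'enyi samples and centering $\binom{m_n}{2}p+m_n$. Then $\lambda^{\circ}=0$, but $m_n^2\lambda_{m_n}\asymp -m_n$. Absorbing the discrepancy through $\binom{m_n}{2}=o(\log n)$ is also impossible. $\lambda_{m_n}$ is deterministic and depends on $m_n$ only, so letting $n$ grow cannot shrink it.

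The missing ingredient is exact centering. The paper's proof explicitly uses $\mathbb{E}[h_{n,i}]=0$. It works under the null, where $h_0=\mathbb{E}_{\bbeta}[\mathcal{T}(H,G)]$ exactly, which is also how the theorem is used for Test~\ref{gof_dense}. With this, $\psi(0)=\mathbb{E}[x_1]=0$, so $\lambda_{m_n}=0$ by strict monotonicity and Step~1 disappears entirely.

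Your Step~2 is then the whole proof, and it follows the paper's route:
\begin{itemize}
\item a Taylor expansion at $0$;
\item a numerator of size $O_P(m_n/\sqrt n)$, which the paper obtains from sub-Gaussian concentration \cite{ganguly2024sub};
\item a denominator bounded below by $c\,m_n^2$, which the paper obtains from the CLT of \cite{winstein2025quantitative}.
\end{itemize}
Your localization of $\hat\lambda_n$ to $[-\delta/m_n,\delta/m_n]$ via the signs of $\psi_n(\pm\delta/m_n)$ is cleaner than the paper's lower bound.

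One fix is needed in the denominator. With $|\eta|\le\delta/m_n$, the bound $|x_i|\le Cm_n^2$ only yields $e^{-\eta x_i}\ge e^{-C\delta m_n}$, which is useless. Use $e^{-\eta x_i}\ge e^{-\delta|x_i|/m_n}$ instead. This removes the random $\eta$ and leaves an i.i.d.\ average. Its mean is $\asymp m_n^2$ by the variance lower bound together with sub-Gaussian tails.
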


\begin{theorem}{\label{sharp_rates}}
Suppose $\lambda^{\circ}<0$ is such that maximizer is unique. Further define \(p_0: = \lim_{m_n\to\infty}\left(\frac{\lvert Aut(H) \rvert h_0}{m_n^{v(H)}}\right)^{\frac{1}{e(H)}}\). Then under the assumptions of the Theorem \ref{CLT_root_2}, we have
    $$ m_n^2(\hat{\lambda}_n-\lambda^{\circ})\xrightarrow{P} \frac{1}{\frac{(u^*)^{e(H)}}{\lvert Aut(H)\rvert}-\frac{p_0^{e(H)}}{\lvert Aut(H)\rvert}}$$
    where $u^*$ is the unique maximizer of 
\[
\sup_{u\in[0,1]}\left(-\lambda^{\circ} \frac{u^{e(H)}}{\lvert\text{Aut}(H)\rvert} + \sum_{k=1}^K \beta_k u^{e(T_k)} - \frac{1}{2}I(u)\right).
\]

\end{theorem}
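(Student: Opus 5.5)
Let $\ell_n(\lambda):=\frac{1}{m_n^2}\log F_n(\lambda)$, with $F_n$ as in \eqref{ergm_emp}. Its limit is
\[
\ell(\lambda)=\mathfrak{g}(\lambda)-\lambda\Big(\tfrac{h_0}{m_n^{v(H)}}-\tfrac{p_0^{e(H)}}{|\Aut(H)|}\Big).
\]
In the subcritical ferromagnetic regime the suprema in \eqref{var_form_lambda} reduce to suprema over constants $u\in[0,1]$, in the spirit of \eqref{ergm_lim_CD}, so I will work with
\[
\mathfrak{g}(\lambda)=\frac{\lambda p_0^{e(H)}}{|\Aut(H)|}+\psi_{\bbeta}(\lambda)-\psi_{\bbeta}(0),\qquad
\psi_{\bbeta}(\lambda)=\sup_{u\in[0,1]}\Big(-\lambda\frac{u^{e(H)}}{|\Aut(H)|}+\sum_k\beta_k u^{e(T_k)}-\tfrac12 I(u)\Big).
\]

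The plan is a $\mathcal{Z}$-estimator expansion at the level of the derivative of $\ell_n$. Since $\hat\lambda_n$ is the critical point of the convex function $\ell_n$, we have $\ell_n'(\hat\lambda_n)=0$. A mean value expansion around $\lambda^\circ$ gives
\[
0=\ell_n'(\lambda^\circ)+\ell_n''(\bar\lambda_n)(\hat\lambda_n-\lambda^\circ),
\]
with $\bar\lambda_n$ between $\hat\lambda_n$ and $\lambda^\circ$. Multiplying by $m_n^2$,
\[
m_n^2(\hat\lambda_n-\lambda^\circ)=-\frac{m_n^2\ell_n'(\lambda^\circ)}{\ell_n''(\bar\lambda_n)}.
\]
The claimed limit $1/\big(\frac{(u^*)^{e(H)}}{|\Aut(H)|}-\frac{p_0^{e(H)}}{|\Aut(H)|}\big)$ suggests the following identification. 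The denominator-type quantity $\frac{(u^*)^{e(H)}-p_0^{e(H)}}{|\Aut(H)|}$ is $-\ell_n'$ at leading order, evaluated via Danskin's envelope theorem: $\psi_{\bbeta}'(\lambda^\circ)=-\frac{(u^*)^{e(H)}}{|\Aut(H)|}$, which holds because the maximizer $u^*$ is unique (this is exactly where that hypothesis enters). The numerator $1$ then comes from the order-$m_n^{-2}$ correction. So the first step is to write
\[
\ell_n'(\lambda)=-\frac{1}{m_n^{v(H)}}\,\frac{\sum_i h_{n,i}e^{-\lambda h_{n,i}/m_n^{v(H)-2}}}{\sum_i e^{-\lambda h_{n,i}/m_n^{v(H)-2}}},
\]
which is minus a tilted mean of the normalized counts. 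I would then isolate the scale at which this tilted mean differs from its limit. Concretely, I would redo the expansion with the roles arranged so that the $O(1)$ quantity $\frac{(u^*)^{e(H)}-p_0^{e(H)}}{|\Aut(H)|}$ multiplies $(\hat\lambda_n-\lambda^\circ)$, while the $O(m_n^{-2})$ residual comes from the $\frac{\lambda}{m_n^2}$ terms, e.g. $\frac{1}{m_n^2}\partial_\lambda\log$ of the normalizer.

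The key steps are as follows. (i) Use Theorem~\ref{normal_consistency_2}, with the log-sum-exp / nonlinear large deviation approximation of \cite{chatterjee2016nonlinear} (Theorem 1.6), to obtain quantitative bounds $|\ell_n(\lambda)-\ell(\lambda)|\le \epsilon_n$ uniformly on a compact neighbourhood of $\lambda^\circ$. The condition $\binom{m_n}{2}=o(\log n)$ makes the empirical average over $n$ samples essentially equal to the full sum over all $2^{\binom{m_n}{2}}$ graphs weighted by $\P_{\bbeta}$, so the empirical error is negligible relative to the variational one. (ii) Transfer convergence of the convex functions $\ell_n$ to convergence of their derivatives, using convexity and the standard inequality $\ell_n'(\lambda)\le(\ell_n(\lambda+\delta)-\ell_n(\lambda))/\delta$ together with the differentiability of $\psi_{\bbeta}$ at $\lambda^\circ$. (iii) Identify the precise $m_n^{-2}$ correction: expand $\psi$ to second order around the unique maximizer $u^*$, using a Laplace-type argument justified because $\lambda^\circ<0$ keeps the tilted model ferromagnetic and subcritical with a nondegenerate maximum. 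From this, extract that $m_n^2\ell_n'(\lambda^\circ)\to -1$ after the deterministic centring, which yields the stated limit. (iv) Conclude by Slutsky.

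The main obstacle is step (iii). The error bounds in \cite{chatterjee2016nonlinear} are only $o(1)$ on the $\frac{1}{m_n^2}\log$ scale, but we need precision of order $m_n^{-2}$ in the derivative. I would first try to bypass second-order control entirely. Because $\lambda^\circ<0$ with a unique maximizer, the tilted measure concentrates near the constant graphon $u^*$, whereas under $\lambda^\circ=0$ (Theorem~\ref{CLT_root_2}) the tilt is trivial. The hope is therefore an exact identity: the first-order condition gives $\frac{(u^*)^{e(H)}-p_0^{e(H)}}{|\Aut(H)|}\cdot(\hat\lambda_n-\lambda^\circ)$ equal to a term of size exactly $1/m_n^2$ plus $o_P(m_n^{-2})$, with the latter controlled through the concentration of the tilted ERGM around $u^*$.
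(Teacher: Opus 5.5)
Your step (iii) is a target, not an argument, and it cannot be completed. Nothing in (i)--(ii), and nothing in a Laplace expansion about $u^*$, produces $m_n^2\ell_n'(\lambda^\circ)\to-1$. The nonlinear large deviation bounds are only $o(1)$ on the $\frac{1}{m_n^2}\log$ scale, so they say nothing about $m_n^2\ell_n'(\lambda^\circ)$.

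The rearrangement before it does not correspond to any identity either. Write $\langle\cdot\rangle_\lambda$ and $\mathrm{Var}_\lambda$ for mean and variance under the empirical measure reweighting sample $i$ by $e^{-\lambda h_{n,i}/m_n^{v(H)-2}}$. In $0=\ell_n'(\lambda^\circ)+\ell_n''(\bar\lambda_n)(\hat\lambda_n-\lambda^\circ)$ the coefficient of $\hat\lambda_n-\lambda^\circ$ is forced to be $\ell_n''(\bar\lambda_n)=m_n^2\,\mathrm{Var}_{\bar\lambda_n}(h/m_n^{v(H)})$. That is a tilted variance at the CLT scale, not the first-moment quantity $\frac{(u^*)^{e(H)}-p_0^{e(H)}}{|\Aut(H)|}$.

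More seriously, that first-moment quantity is zero. By your own Danskin computation it equals $-\mathfrak g'(\lambda^\circ)$, and $\lambda^\circ$ is by definition the critical point of $\mathfrak g$, so $u^*=p_0$. This is just the MaxEnt constraint in the limit: the optimal tilt makes the tilted $H$-density equal to the null one. The identity you hope for would therefore read $0=m_n^{-2}+o_P(m_n^{-2})$.

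The paper does not attempt any $m_n^{-2}$-order expansion. It writes $m_n^2(\hat\lambda_n-\lambda^\circ)$ as a tilted first moment of $h/m_n^{v(H)}$ over a tilted second moment. The first moment comes from Danskin in $\lambda$ plus convergence of derivatives of convex functions, which is your step (ii). For the second moment it adds $\frac{\alpha}{m_n^{2v(H)-2}}h^2$ to the exponent, redoes the log-sum-exp approximation, and differentiates in $\alpha$ at $\alpha=0$; this returns the square of the first moment. The constant is then first moment over its square. That auxiliary $\alpha$-tilt, which replaces second-order precision by a second Danskin computation, is the device your plan lacks.

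Be aware, however, that $u^*=p_0$ applies verbatim to the paper's route. Both large-deviation-scale limits there are $0$, so the argument yields $0/0$, and the right-hand side of the theorem is $1/0$ as written. Exactly, $m_n^2(\hat\lambda_n-\lambda^\circ)=\langle h/m_n^{v(H)}\rangle_{\lambda^\circ}\big/\mathrm{Var}_{\bar\lambda_n}(h/m_n^{v(H)})$. Its behaviour is therefore governed by the CLT-scale tilted variance and by finite-size corrections to the tilted mean. An example of the latter is the $O(1/m_n)$ gap between $\mathcal T(H,G)/m_n^{v(H)}$ and $t(H,G)/|\Aut(H)|$. Neither your plan nor the large-deviation machinery controls these. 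No completion of step (iii) can produce the displayed value, and the statement itself needs revisiting.
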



\subsubsection{Goodness-of-fit test}

In this section we consider Goodness-of-fit tests for a ferromagnetic ERGM model in subcritical regime, analogous to our investigations in the sparse regime. In particular, we assume that the sample $\mathcal{G}_{n,1}, \ldots, \mathcal{G}_{n,n}$ is generated from 
$\P_{\bbeta}$ with $\bbeta$ in the subcritical regime (see Section \ref{ERGM_intro} for details). Now consider the following hypothesis testing problem:

\begin{equation}
\H_0: \mathbb{E}_{G\sim\mathbb{P}_{\bbeta}}[h(G)]=0 \text{ vs } \H_1: \mathbb{E}_{G\sim\mathbb{P}_{\bbeta}}[h(G)]< 0 
\end{equation}
where
$$h(G)= \mathcal{T}(H,G)-\E_{G\sim \P_{{\bbeta_0}}}[\mathcal{T}(H,G)]$$
denote the centered \(H\) counts in the graph $G$. We note that if $\bbeta=\bbeta_0$, then $\mathbb{E}_{G\sim\mathbb{P}_{\bbeta}}[h(G)]=0$, while the converse may not be true. As a consequence, we note that this hypothesis testing problem cannot be reduced to the simpler problem of testing parameter values as in the \er case.

We can construct the test as follows.

\begin{test}[Goodness-of-fit test in the dense regime]{\label{gof_dense}}Let $c_n\rightarrow \infty$ such that $c_n=o(m^2_n)$. We consider the following test 
\[\phi_{n}=\mathbb{I}\{\hat{\lambda}_n<-c^{-1}_n\}\]
\end{test}

\begin{corollary}
The Goodness-of-fit test in the dense regime  i.e. Test \ref{gof_dense} is a consistent test.
\end{corollary}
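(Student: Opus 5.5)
Under $\H_0$ we will show that $\lambda^{\circ}=0$ and use Theorem \ref{CLT_root_2} to control the size of the test. Under $\H_1$ we will show that $\lambda^{\circ}<0$ and use Theorem \ref{normal_consistency_2} to get power tending to one. The test has no calibrated level $\alpha$; it only needs the rejection probability to vanish under the null and to tend to one under the alternative. The threshold $-c_n^{-1}$ is placed between the two regimes: it is much larger in magnitude than $m_n^{-2}$, because $c_n=o(m_n^2)$, and much smaller than any fixed negative constant, because $c_n\to\infty$.

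\emph{Step 1 (identify $\lambda^{\circ}$).} Write $\mathfrak{g}$ as in \eqref{var_form_lambda}. It is strictly convex, and, as the envelope of affine functions in $\lambda$, it is differentiable wherever the supremum is uniquely attained. In the subcritical ferromagnetic regime the supremum at $\lambda=0$ is attained at the constant graphon $u_\beta=p(\beta)$ by \cite{chatterjee2013estimating}. This gives
\[
\mathfrak{g}'(0)=\lim \frac{h_0}{m^{v(H)}}-\frac{u_\beta^{e(H)}}{|\Aut(H)|}.
\]
The quantity $u_\beta^{e(H)}/|\Aut(H)|$ is the limit of $\E_{\bbeta}[\mathcal{T}(H,G)]/m^{v(H)}$. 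Hence $\mathfrak{g}'(0)$ equals $-\lim m^{-v(H)}\E_{\bbeta}[h(G)]$, up to the normalizing factor.

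Under $\H_0$ this derivative vanishes, so $\lambda^{\circ}=0$ by strict convexity and uniqueness of the critical point. Under $\H_1$ we need $\E_{\bbeta}[h(G)]<0$ at the leading order $m^{v(H)}$. In that case $\mathfrak{g}'(0)>0$, and strict convexity forces $\lambda^{\circ}<0$. I will assume the alternative is understood at this scale, i.e.\ $\limsup m^{-v(H)}\E_{\bbeta}[h(G)]<0$. A deficit of lower order is invisible to a $\lambda^{\circ}$ defined through graphon limits, and the test cannot detect it. This identification step is the main obstacle. It requires differentiating the graphon variational problem at $\lambda=0$ and justifying that the maximizer is the constant graphon, which uses the subcritical assumption and the ferromagnetic assumption ($\beta_k>0$ for $k>1$, via \cite{chatterjee2013estimating}). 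If differentiability is delicate, I would instead use one-sided derivatives of the convex function $\mathfrak{g}$, which still determine the sign of $\lambda^{\circ}$.

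\emph{Step 2 (size).} Under $\H_0$, Theorem \ref{CLT_root_2} gives $m_n^2\hat{\lambda}_n\xrightarrow{P}0$. Then
\[
\P_{\H_0}\bigl(\hat{\lambda}_n<-c_n^{-1}\bigr)=\P\bigl(m_n^2\hat{\lambda}_n<-m_n^2/c_n\bigr)\le \P\bigl(|m_n^2\hat{\lambda}_n|>1\bigr)
\]
for large $n$, since $m_n^2/c_n\to\infty$. The right-hand side tends to $0$.

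\emph{Step 3 (power).} Under $\H_1$, Theorem \ref{normal_consistency_2} gives $\hat{\lambda}_n\xrightarrow{P}\lambda^{\circ}<0$. Since $c_n^{-1}\to0$, for large $n$ we have $-c_n^{-1}>\lambda^{\circ}/2$. Therefore
\[
\P\bigl(\hat{\lambda}_n<-c_n^{-1}\bigr)\ge\P\bigl(\hat{\lambda}_n<\lambda^{\circ}/2\bigr)\to1.
\]
Theorem \ref{sharp_rates} is not needed for this step, although it shows that $\hat{\lambda}_n$ stays well separated from $0$. This completes the proof of consistency.
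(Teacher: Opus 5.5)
Your proof is correct, but it takes a different route from the paper in two respects.

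\textbf{Size step.} This is the same as the paper's. Both arguments use Theorem~\ref{CLT_root_2} together with $m_n^2/c_n\to\infty$.

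\textbf{Identifying $\lambda^{\circ}$.} You prove that $\lambda^{\circ}=0$ under $\H_0$ and $\lambda^{\circ}<0$ under $\H_1$, using Danskin at $\lambda=0$, where the subcritical ferromagnetic maximizer is the unique constant $u_\beta$. This gives $\mathfrak{g}'(0)=(p_0^{e(H)}-u_\beta^{e(H)})/|\Aut(H)|$. The paper never proves these sign facts. Its proof simply says ``for $\lambda_0<0$ (under $H_1$)'', and it asserts the correspondence only in its remarks on one- versus two-sided tests. Your caveat that the alternative must be a deficit at order $m_n^{v(H)}$ is exactly what is tacitly needed for $\lambda^{\circ}<0$. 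The one fact you assert without proof, $\E_{\bbeta}[\mathcal{T}(H,G)]/m^{v(H)}\to u_\beta^{e(H)}/|\Aut(H)|$, is standard. It follows from cut-metric concentration of subcritical ERGMs around the unique maximizer \cite{chatterjee2013estimating} and boundedness of $t(H,\cdot)$.

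\textbf{Power step.} The paper invokes Theorem~\ref{sharp_rates} to place $\hat\lambda_n$ within $O(m_n^{-2})$ of $\lambda^{\circ}$. You use only the consistency in Theorem~\ref{normal_consistency_2}. Since the threshold $-c_n^{-1}$ tends to $0$ while $\lambda^{\circ}<0$ is fixed, consistency is all that is needed.

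\textbf{What your route buys.} It avoids the extra uniqueness hypothesis of Theorem~\ref{sharp_rates}. More importantly, apply your own Danskin computation at $\lambda^{\circ}$ instead of at $0$. Since $\lambda^{\circ}$ is the critical point and the maximizer there is assumed unique, this gives $0=\mathfrak{g}'(\lambda^{\circ})=(p_0^{e(H)}-(u^*)^{e(H)})/|\Aut(H)|$, so $u^*=p_0$. Hence the limiting constant in Theorem~\ref{sharp_rates} has a vanishing denominator, and so does the bound in the paper's power step. Your argument does not depend on that statement at all. The paper's route would add a quantitative $m_n^{-2}$-scale location of $\hat\lambda_n$ under the alternative, but consistency of the test does not require it.
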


\begin{proof}
By Theorem \ref{CLT_root_2} under $H_0$, $\hat{\lambda}_n>-\frac{1}{2m^2_n}$ with probability converging to $1$ and hence $$\E_{H_0}[\phi_n]\rightarrow 0.$$
On the other hand for $\lambda_0<0$ (under $H_1$),
$$\P(\hat{\lambda}_n<-c_n^{-1})\geq \P\left(\hat{\lambda}_n<\lambda_0+\frac{1}{m^2_n}\left(\frac{1}{2}+\frac{1}{\frac{u^{e(H)}}{\lvert Aut(H)\rvert}-\frac{p_0^{e(H)}}{\lvert Aut(H)\rvert}}\right)\right).$$
The RHS converges to $1$ by Theorem \ref{sharp_rates}, so that the test is consistent. 
 For the \er model $G(m_n,p)$ the distribution $\P_{\bbeta}$ can then be denoted as $\P_{p}$ and our hypothesis testing problem reduces to testing whether $p=p_0$(null hypothesis) or $p<p_0$.
 \end{proof}

\subsubsection{Two sample test}

If we have two samples from two \er models, analogous to the sparse regime we can test whether they come from an \er with same parameter based on the Lagrange multipliers obtained from the two samples. While a theory of two sample testing for motif density can be shown to hold for the ERGM model as well, this does not automatically translate into two sample testing of parameters (see the discussion before Test \ref{gof_dense}), and is thus less interpretable. Therefore for two sample testing, we focus here only on the dense \er graphs.

\begin{theorem}{\label{dense_two_sample}}
Assume \(p,\tilde{p}\in (0,1-\epsilon)\) where \(\epsilon\in(0,1)\) is fixed. Let $\mathcal{G}_{n,1}, \ldots, \mathcal{G}_{n,n}$ be sampled independently from $G\left(m_n, p\right)$, $\tilde{\mathcal{G}}_{n,1}, \ldots, \tilde{\mathcal{G}}_{n,n}$ be sampled independently from $G\left(m_n, \tilde{p}\right)$, and independent of the first sample, where \(\binom{m_n}{2}=o(\log n)\) is a sequence of natural numbers.
Let $\hat{\lambda}_n$ be the unique critical point of
\[
\frac{1}{n}\sum_{i=1}^n e^{-\frac{\lambda}{m_n^{v(H)-2}} (\mathcal{T}(H,\mathcal{G}_{n,i})-{m_n \choose v(H)}(1-\epsilon)^{e(H)})} 
\]
and let $\tilde{\lambda}_n$ be the unique critical point of
\[
\frac{1}{n}\sum_{i=1}^n e^{-\frac{\lambda}{m_n^{v(H)-2}} (\mathcal{T}(H,\tilde{\mathcal{G}}_{n,i})-{m_n\choose v(H)}(1-\epsilon)^{e(H)})} 
\]
Then as $n\rightarrow\infty$,
    \[m_n^2(\hat{\lambda}_n-\tilde{\lambda}_n)\xrightarrow{P} 
    \begin{cases}
        0 & \text{if } p=\tilde{p}\\
        +\infty & \text{if } p>\tilde{p}\\
        -\infty & \text{if } p<\tilde{p}
    \end{cases}
    \]
\end{theorem}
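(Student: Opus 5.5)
The plan is to treat $G(m_n,p)$ as the special case $K=1$ of the ferromagnetic ERGM, which is trivially subcritical. This lets us invoke Theorems \ref{normal_consistency_2}, \ref{CLT_root_2} and \ref{sharp_rates} separately for each sample, with the common centering $h_0={m_n\choose v(H)}(1-\epsilon)^{e(H)}$. The corresponding null edge density is $p_0=1-\epsilon$, since $|\Aut(H)|h_0/m_n^{v(H)}\to (1-\epsilon)^{e(H)}$. Because $p,\tilde p<1-\epsilon$, neither sample sits at $\lambda^\circ=0$.

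\textbf{Identifying the limits.} For the \er case the variational problem in \eqref{var_form_lambda} reduces to constant graphons (by the argument behind \eqref{ergm_lim_CD}). Writing $\beta_1=\frac12\log\frac{p}{1-p}$, we get
\[
\mathfrak g_p(\lambda)=\frac{\lambda(1-\epsilon)^{e(H)}}{|\Aut(H)|}+\sup_{u\in[0,1]}\Bigl(-\frac{\lambda u^{e(H)}}{|\Aut(H)|}+\beta_1 u-\tfrac12 I(u)\Bigr)-\mathrm{const}.
\]
By the envelope theorem, the critical point $\lambda^\circ(p)$ is characterized by $u^*(\lambda^\circ)=1-\epsilon$, where $u^*$ is the maximizer. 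The stationarity condition $\frac{e(H)\lambda u^{e(H)-1}}{|\Aut(H)|}=\beta_1-\frac12\log\frac{u}{1-u}$ then gives, at $u=1-\epsilon$,
\[
\lambda^\circ(p)=\frac{|\Aut(H)|}{e(H)(1-\epsilon)^{e(H)-1}}\Bigl(\tfrac12\log\tfrac{p}{1-p}-\tfrac12\log\tfrac{1-\epsilon}{\epsilon}\Bigr).
\]
This is strictly negative for $p<1-\epsilon$ and strictly increasing in $p$. Uniqueness of the maximizer holds because $u\mapsto$ objective is strictly concave for $\lambda<0$: the term $-\lambda u^{e(H)}$ is convex, but we can instead reparametrize and check that the first-order equation has a unique root, which I would verify directly. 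So Theorem \ref{sharp_rates} applies to both samples.

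\textbf{Case $p=\tilde p$.} The two samples have the same $\lambda^\circ$ and the same $u^*=1-\epsilon=p_0$. Note that the limiting constant in Theorem \ref{sharp_rates} then has a vanishing denominator. This is the main obstacle: one has to look into the proof of Theorem \ref{sharp_rates} to extract the actual expansion of $m_n^2(\hat\lambda_n-\lambda^\circ)$, and show that it is a deterministic quantity $a_n$ depending only on $(p,m_n,\epsilon)$ plus an $o_P(1)$ error. The deterministic part cancels in the difference, leaving $m_n^2(\hat\lambda_n-\tilde\lambda_n)=o_P(1)$. I would prove this by redoing the log-sum-exp argument of \cite{chatterjee2016nonlinear} for the \emph{difference} of the two empirical objectives. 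The two objectives are identically distributed, so their empirical log-sum-exp functions share the same deterministic approximation with quantitative error $o(m_n^2)$ uniformly on compacts, using $\binom{m_n}{2}=o(\log n)$ exactly as in Theorem \ref{normal_consistency_2}. Convexity of the approximants then transfers closeness of the functions to closeness of their critical points at the $m_n^{-2}$ scale.

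\textbf{Case $p\neq\tilde p$.} Here we only need consistency. By Theorem \ref{normal_consistency_2}, $\hat\lambda_n\to\lambda^\circ(p)$ and $\tilde\lambda_n\to\lambda^\circ(\tilde p)$ in probability. Strict monotonicity of $\lambda^\circ(\cdot)$ gives $\hat\lambda_n-\tilde\lambda_n\to\lambda^\circ(p)-\lambda^\circ(\tilde p)$, which is a nonzero constant with the sign of $p-\tilde p$. Multiplying by $m_n^2\to\infty$ yields $\pm\infty$ in probability, as claimed.
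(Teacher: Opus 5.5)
Your handling of $p\neq\tilde p$ is sound and leaner than the paper's. Consistency (Theorem~\ref{normal_consistency_2}) together with $\lambda^\circ(p)\neq\lambda^\circ(\tilde p)$ already gives $m_n^2(\hat\lambda_n-\tilde\lambda_n)\to\pm\infty$, whereas the paper routes all three cases through Theorem~\ref{sharp_rates} and subtracts the two limits. Your remark about the vanishing denominator is also correct, and it undermines the paper's own argument for $p=\tilde p$. Since $\mathfrak g'(\lambda^\circ)=0$ forces $(u^*)^{e(H)}=p_0^{e(H)}$, the numerator and denominator limits in the proof of Theorem~\ref{sharp_rates} are both $0$, so subtracting two ``limits'' of the form $1/0$ proves nothing.

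The genuine gap is in your repair of that case: the step ``convexity transfers closeness of the functions to closeness of their critical points at the $m_n^{-2}$ scale'' fails quantitatively. Nonlinear large deviation bounds control $\frac{1}{m_n^2}\log F_n(\lambda)$ only up to $o(1)$; Lemma~\ref{lem:partition_func_conv_gen} gives $O(m_n^{-1/5})$ up to logarithms. Meanwhile the curvature $\frac{d^2}{d\lambda^2}\frac{1}{m_n^2}\log F_n(\lambda)$ is the tilted empirical variance of $h_{n,i}/m_n^{v(H)-1}$, which is of order one. Adding $\delta_n\lambda$ to such a function changes it by $O(\delta_n)$ on compacts yet moves its minimizer by order $\delta_n$. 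So an $o(1)$ sup-norm error only localizes critical points to $o(1)$, which just reproduces consistency. To reach $o(m_n^{-2})$ you need the derivatives of the two objectives to agree to $o(m_n^{-2})$ near the root, and no variational approximation provides that. The variational limit is also the wrong reference point: its critical point is $\lambda^\circ$, and nothing makes the finite-$m_n$ bias $o(m_n^{-2})$.

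The missing idea is to centre both estimators at the exact finite-$m_n$ population root and to exploit the super-exponential sample size.
\begin{itemize}
\item Let $\Psi(\lambda)=\mathbb{E}_p[h\,e^{-\lambda h/m_n^{v(H)-2}}]$ under $G(m_n,p)$, with root $\lambda_{m_n}$. This root is deterministic, bounded by Theorem~\ref{normal_consistency_1}, and common to both samples when $p=\tilde p$.
\item Since $\min_G\mathbb{P}_p(G)\ge\min(p,1-p)^{\binom{m_n}{2}}=n^{-o(1)}$, rerun the Chebyshev-plus-union-bound proof of Lemma~\ref{weak_law} with threshold $n^{-1/4}$ in place of $\epsilon/m_n$. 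This gives $N(G)/n=\mathbb{P}_p(G)(1+\delta_G)$ with $\sup_G|\delta_G|\le n^{-1/4}$ with high probability.
\item Hence, on compacts, $|\Psi_n-\Psi|\le n^{-1/4}\mathbb{E}_p[|h|e^{-\lambda h/m_n^{v(H)-2}}]\le n^{-1/4}e^{Cm_n^2}$. Also $|\Psi'|\ge e^{-Cm_n^2}$, for instance from the empty graph.
\item Because $\log n\gg m_n^2$, $\Psi_n$ has the sign of $\Psi$ at $\lambda_{m_n}\pm m_n^{-3}$. So $|\hat\lambda_n-\lambda_{m_n}|\le m_n^{-3}$, likewise for $\tilde\lambda_n$, and therefore $m_n^2|\hat\lambda_n-\tilde\lambda_n|\le 2/m_n$.
\end{itemize}

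Two smaller points.
\begin{itemize}
\item The scalar objective $-\lambda u^{e(H)}/|\Aut(H)|+\beta_1u-\frac12I(u)$ is not concave for $\lambda<0$ and can have two global maximizers (the edge--$H$ phase transition). Your explicit formula for $\lambda^\circ(p)$ therefore presupposes uniqueness. Strict monotonicity survives anyway: maximizer sets are strictly increasing in $\beta_1$ and weakly increasing in $-\lambda$ (exchange argument plus the interior first-order condition), and $\lambda^\circ$ is exactly where $p_0^{e(H)}$ lies in the convex hull of $\{(u^*)^{e(H)}\}$.
\item $|\Aut(H)|h_0/m_n^{v(H)}\to|\Aut(H)|(1-\epsilon)^{e(H)}/v(H)!$, which equals $(1-\epsilon)^{e(H)}$ only for cliques. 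The identification $p_0=1-\epsilon$, which the paper also makes, therefore needs $H$ complete or a corrected hypothesis on $p,\tilde p$.
\end{itemize}
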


We can easily design a two sample test based on the above theorem:

\begin{test}[Two sample test in the dense regime]{\label{two_sample_dense}}
The two sample test is given by: $\phi=\mathbb{I}(|\hat{\lambda}_n-\tilde{\lambda}_n|>c m^{-2}_n)$.
\end{test}

\begin{corollary}
    The two sample test in the dense regime i.e. Test \ref{two_sample_dense} is a consistent test.
\end{corollary}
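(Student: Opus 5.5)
The plan is to read off both error probabilities directly from Theorem~\ref{dense_two_sample}, in the same way the goodness-of-fit corollaries were deduced from their limit theorems. Throughout, $c>0$ is the fixed constant in Test~\ref{two_sample_dense}. We work under the standing assumptions of Theorem~\ref{dense_two_sample}: $p,\tilde{p}\in(0,1-\epsilon)$, $\binom{m_n}{2}=o(\log n)$, and $\hat{\lambda}_n,\tilde{\lambda}_n$ are the critical points computed with the common centering ${m_n\choose v(H)}(1-\epsilon)^{e(H)}$. The first step is to rewrite the rejection event in the scale of that theorem:
\[
\{|\hat{\lambda}_n-\tilde{\lambda}_n|>c\,m_n^{-2}\}=\{|m_n^2(\hat{\lambda}_n-\tilde{\lambda}_n)|>c\}.
\]

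Under the null hypothesis $p=\tilde{p}$, Theorem~\ref{dense_two_sample} gives $m_n^2(\hat{\lambda}_n-\tilde{\lambda}_n)\xrightarrow{P}0$. By the definition of convergence in probability,
\[
\P\bigl(|m_n^2(\hat{\lambda}_n-\tilde{\lambda}_n)|>c\bigr)\to 0 .
\]
So the type I error vanishes asymptotically, and in particular it is eventually below any prescribed $\alpha$.

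Under the alternative $p\neq\tilde{p}$, we split into two cases. If $p>\tilde{p}$, Theorem~\ref{dense_two_sample} gives $m_n^2(\hat{\lambda}_n-\tilde{\lambda}_n)\xrightarrow{P}+\infty$, meaning that for every $M$ the probability that this quantity exceeds $M$ tends to $1$. Taking $M=c$ gives $\P(\text{reject})\to1$. If $p<\tilde{p}$, the theorem gives divergence to $-\infty$, so $\P\bigl(m_n^2(\hat{\lambda}_n-\tilde{\lambda}_n)<-c\bigr)\to1$, and again the power tends to one. Together these show that the test is consistent.

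There is essentially no analytic obstacle, since all the work is contained in Theorem~\ref{dense_two_sample}. The only point needing care is the interpretation of ``$\xrightarrow{P}\pm\infty$''. We will use it in the standard sense that $\P(\pm X_n>M)\to1$ for each fixed $M$. If the theorem's proof only yields a divergent deterministic centering plus an $o_P(1)$ error, the same conclusion still follows by a one-line argument.
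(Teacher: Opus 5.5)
Your proposal is correct and is exactly the deduction the paper intends; the paper gives no separate proof and treats the corollary as immediate from Theorem~\ref{dense_two_sample}. You rewrite the rejection region as $\{|m_n^2(\hat{\lambda}_n-\tilde{\lambda}_n)|>c\}$, read off vanishing type I error from the null case and power tending to one from the two divergent cases, mirroring the dense goodness-of-fit corollary; all substantive content sits in Theorem~\ref{dense_two_sample}, which you may take as given.
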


\subsection{A few key technical ingredients \label{tech_ingre}}

\subsubsection{Uniqueness of roots }

In this section we discuss the existence and uniqueness of the Lagrange multiplier $\hat{\lambda}_n$ that is crucially used in Theorems \ref{finite_consistency}-\ref{dense_two_sample}. We note that in the sparse and fixed number of vertex case the Lagrange multiplier is scaled differently than in the dense regime.

\begin{lemma}{\label{exis_uniq}}
Let $h_{n,i}$ denote the centered motif counts defined in
Theorems~\ref{finite_consistency}--\ref{dense_two_sample}.
In the networks of fixed size and the sparse regime, let $\hat{\lambda}_n$ be a real root of the estimating equation
\[
\sum_{i=1}^n h_{n,i} e^{-\lambda h_{n,i}} = 0
\]
whenever such a root exists.
In the dense regime, let $\hat{\lambda}_n$ be a critical point of the objective function
\[
\frac{1}{n} \sum_{i=1}^n 
\exp\!\left(-\frac{\lambda}{m_n^{v(H)-2}}\, h_{n,i}\right),
\]
whenever such a critical point exists.
Then in both the cases $\hat{\lambda}_n$ exists and is unique.

\end{lemma}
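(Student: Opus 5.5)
Both cases reduce to one convex function of one variable. Define
\[
\Psi_n(\lambda):=\frac{1}{n}\sum_{i=1}^n e^{-\lambda h_{n,i}} .
\]
We have $\Psi_n'(\lambda)=-\frac1n\sum_i h_{n,i}e^{-\lambda h_{n,i}}$. So a real root of the estimating equation $\sum_i h_{n,i}e^{-\lambda h_{n,i}}=0$ is exactly a critical point of $\Psi_n$.

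In the dense regime the objective is $\Psi_n(\lambda/m_n^{v(H)-2})$. This is a positive rescaling of the argument, so its critical points are in bijection with those of $\Psi_n$. It therefore suffices to show that $\Psi_n$ has exactly one critical point.

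\emph{Uniqueness.} We have
\[
\Psi_n''(\lambda)=\frac1n\sum_i h_{n,i}^2e^{-\lambda h_{n,i}}\ge 0 ,
\]
with strict inequality as soon as some $h_{n,i}\neq 0$. In that case $\Psi_n'$ is strictly increasing, so it has at most one zero. If instead all $h_{n,i}=0$, every $\lambda$ is a root. This degenerate event must be excluded, and it is contained in the event below.

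\emph{Existence.} Suppose $\min_i h_{n,i}<0<\max_i h_{n,i}$. As $\lambda\to+\infty$, the term with $h_{n,i}<0$ makes $\Psi_n(\lambda)\to\infty$. As $\lambda\to-\infty$, the term with $h_{n,i}>0$ does the same. A continuous, strictly convex function that is coercive in both directions attains a minimum, and the minimum is the unique zero of $\Psi_n'$. This is the same feasibility condition noted in Section~\ref{subsec:gof} for the entropy problem to be nondegenerate. It also matches the interior maximizer $\hat w$ given by \eqref{opt_weight}.

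\emph{Main obstacle.} The difficulty is that the sign condition holds only with probability tending to one, not deterministically. The statement should therefore be read as: $\hat\lambda_n$ exists and is unique on an event whose probability tends to $1$. I will verify that
\[
\P\bigl(\text{some } h_{n,i}<0\bigr)\to1 \quad\text{and}\quad \P\bigl(\text{some } h_{n,i}>0\bigr)\to1
\]
in each regime, using independence in the form $1-\P(\text{all } h_{n,i}\ge0)=1-q_n^{\,n}$.

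\textbf{Fixed size.} Here $q_n=\P(h\ge0)$ does not depend on $n$. It is $<1$ whenever $h$ is not almost surely nonnegative. In the paper's framework this holds because the root $\lambda^\circ$ of the population equation is assumed to exist, which forces $h$ to take both signs with positive probability. The symmetric statement handles the other sign.

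\textbf{Sparse regime.} The counts converge in total variation to a Poisson variable $Z_\mu$, as used for Theorem~\ref{poisson_consistency_1}. Hence $\P(h_{n,1}<0)\to\P(Z_\mu<h_0)\ge e^{-\mu}>0$, using $h_0>0$. Similarly $\P(h_{n,1}>0)\to\P(Z_\mu>h_0)>0$, since Poisson variables are unbounded. In both cases $q_n$ is bounded away from $1$, so $q_n^{\,n}\to0$.

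\textbf{Dense regime.} Here $h_0$ is of order $m_n^{v(H)}$. One needs $\P(\mathcal T(H,G)<h_0)$ and $\P(\mathcal T(H,G)>h_0)$ to be at least $e^{-Cm_n^2}$. Every graph has probability at least $e^{-Cm_n^2}$ under a bounded-parameter ERGM. So it is enough that some graph has count below $h_0$ and some graph has count above $h_0$; the empty and complete graphs work whenever $0<h_0<\binom{m_n}{v(H)}$. This holds for $p_0\in(0,1)$, and in the two-sample theorem it is guaranteed by the $(1-\epsilon)^{e(H)}$ centering.

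The hypothesis $\binom{m_n}{2}=o(\log n)$ then gives
\[
n\,e^{-Cm_n^2}\to\infty ,
\]
so $q_n^{\,n}\le \exp\bigl(-n e^{-Cm_n^2}\bigr)\to0$. The only genuinely delicate point is checking the size of $h_0$ relative to the extreme counts. I expect that to be where this dense-regime step would need care.
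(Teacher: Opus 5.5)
Your proof is correct, and its skeleton is the paper's. Uniqueness comes from strict monotonicity of $\lambda\mapsto\sum_i h_{n,i}e^{-\lambda h_{n,i}}$, which is your strict convexity of $\Psi_n$. Existence comes from the straddling event $\{\min_i h_{n,i}<0<\max_i h_{n,i}\}$: the paper uses the intermediate value theorem where you use coercivity, which is equivalent. Then a regime-by-regime check shows that this event has probability tending to one. Your fixed-size and sparse arguments are essentially the paper's.

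The genuine difference is in the dense regime. The paper appeals to the CLT for subgraph counts and the full support of the Gaussian limit. You instead use the crude bound $\P_{\bbeta}(G)\ge e^{-Cm_n^2}$ for every graph, together with $\binom{m_n}{2}=o(\log n)$. Yours is the stronger argument. The CLT keeps both $\P(h_{n,1}>0)$ and $\P(h_{n,1}<0)$ bounded away from zero only when $|h_0-\E\,\mathcal T(H,G)|$ is of the order of the standard deviation $m_n^{v(H)-1}$, i.e.\ essentially under the null. Under an alternative with $\lambda^{\circ}<0$ (Theorem~\ref{sharp_rates}), or with the two-sample centering $\binom{m_n}{v(H)}(1-\epsilon)^{e(H)}$, the gap is (generically) of order $m_n^{v(H)}$, and one sign then has probability only $e^{-\Theta(m_n^2)}$. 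It is exactly the condition $n\gg e^{Cm_n^2}$ that rescues this case, and the paper's CLT step never uses it.

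One small correction to your last step. The largest possible count is the one in $K_{m_n}$, namely $(m_n)_{v(H)}/\lvert\Aut(H)\rvert$ (falling factorial). This exceeds $\binom{m_n}{v(H)}$ unless $H$ is a clique. So the right condition is $0<h_0<(m_n)_{v(H)}/\lvert\Aut(H)\rvert$. Your condition with $\binom{m_n}{v(H)}$ is sufficient, but the claim that it holds for all $p_0\in(0,1)$ fails for non-clique $H$ with $p_0$ near $1$.

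With the correct threshold, the point you flagged as delicate is automatic. The centering $h_0=\E_{\bbeta_0}[\mathcal T(H,G)]$ is an average with strictly positive weights over all graphs on $m_n$ vertices, so it lies strictly between the empty-graph count $0$ and the $K_{m_n}$ count. The two-sample centering also satisfies it, since $\lvert\Aut(H)\rvert\le v(H)!$.
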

\subsubsection{One sided vs two sided tests}

We note that the general Goodness-of-fit testing as discussed in \ref{subsec:gof}, in particular the statement in \eqref{general_gof_statement} is a two-sided hypothesis testing problem i.e. the alternate hypothesis considers a significant difference in either direction (greater than or less than) from a null hypothesis. On the other hand the tests we consider in this article are one-sided hypothesis tests only 
\begin{itemize}
    \item In the sparse regime $
\H_0: c=c_0 \text{ vs } \H_1: c>c_0$
\item In the dense regime: $\H_0: \mathbb{E}_{G\sim\mathbb{P}_{\bbeta}}[h(G)]=0 \text{ vs } \H_1: \mathbb{E}_{G\sim\mathbb{P}_{\bbeta}}[h(G)]< 0 $
\end{itemize}
In the sparse regime, the condition $c \geq c_0$ is equivalent to
$\lambda^{\circ} \geq 0$.
For $\lambda^{\circ} < 0$, the function
\[
x \longmapsto (x - h_0)\, e^{-\lambda (x - h_0)}
\]
is unbounded, and consequently Poisson convergence of subgraph counts does not
imply $L_1$ convergence of this function.
In fact, one can show that in Theorem~\ref{poisson_consistency_1},
\[
\mathbb{E}\!\left[(H_m - h_0)e^{-\lambda (H_m - h_0)}\right]
\to \infty \qquad \text{as } m \to \infty,
\]
so that the asymptotic behaviour of $\lambda_m$ and $\hat{\lambda}_n$ cannot be
characterized using the techniques developed in this paper.
For this reason, we restrict attention to a one-sided test in the sparse regime.

In the dense regime, the condition
\[
\mathbb{E}_{G\sim\mathbb{P}_{\beta}}[h(G)] \leq 0
\]
corresponds to $\lambda^{\circ} \leq 0$ in Theorem \ref{normal_consistency_1}.
When $\lambda^{\circ} > 0$, the supremum in \eqref{var_form_lambda} need not be
attained by a constant graphon.
Since our derivation of sharp rates (Theorems~\ref{CLT_root_2}
and~\ref{sharp_rates}) crucially relies on the structure of this supremum, this does not allow us to construct a valid test in this setting using the available techniques.
Accordingly, the two-sided testing problem lies outside the scope of the present
work and is left for future investigation.

\section{Discussion \label{discussion}}

In this paper, we construct tests for statistical network models based on the principle of constrained entropy maximization. Although we analyze the distributions of the optimal Lagrange multipliers (LM)—and the associated tests—for particular random graph families, the construction itself is general and, in principle, applies to any random graph model. Thus, this work provides a first step toward a general hypothesis-testing framework for random graphs, and we anticipate that LM-type tests can be used for many other models, much as LM tests are widely used in the econometrics literature. We conclude with several open questions and extensions.

In the sparse regime, we develop our tests for \er random graphs. It would be interesting to extend these ideas to sparse ERGMs, such as the model in \cite{cook2024typical}, where the Hamiltonian is a multivariate function of motif densities rather than the linear form considered here. A crucial ingredient in our analysis is the behavior of dense ferromagnetic ERGMs in the subcritical regime, where the free energy is approximated by a variational problem whose optimizer corresponds to \er{} graphs. This picture ceases to hold in sparse ERGMs: typical samples need not resemble an \er{} model (or a mixture) but rather an \er{} model with planted substructure. Extending our results to accommodate such sparsity-induced structure is compelling but poses significant challenges.

Another important direction is to move beyond the subcritical regime of ferromagnetic ERGMs. The uniqueness of the variational optimizer is central to our analysis of the optimal Lagrange multiplier, and as per the state of the art in the random graph literature this is so far available in the subcritical regime. Recent developments in the supercritical regime have been explored in \cite{winstein2025quantitative}; extending our testing results to that setting will require nontrivial technical advances.

We also note that, while under the null the \er model is fully identified—in the sense that expected motif counts and edge probability are in one-to-one correspondence—the same need not be true for general ERGMs. Consequently, our current tests determine whether the parameter vector 
$\bbeta$ lies in a specified subset, rather than testing individual coefficients for zero. We conjecture that formulating null hypotheses that constrain several motif counts simultaneously could yield tests for coefficient nullity in ERGMs. In that case, the Lagrange multiplier becomes a vector, and a natural test statistic would be a quadratic form in $\lambda$ in analogy with the classical Lagrange Multiplier tests (cf. Section \ref{sec:lag_mult}).

Finally, we note that in the sparse setting we obtain distributional limits for the centered and scaled Lagrange multiplier. In the dense case, such limit results are beyond the scope of this paper; instead, we establish a separation rate that distinguishes the competing hypotheses. We conjecture that asymptotic distributions can be derived in the dense setting as well, which would further strengthen our results.

\section*{Acknowledgements}
The authors thank Persi Diaconis, Vilas Winstein and Clarence Chew for helpful discussions, and Daren Wei and Huanchen Bao for their support. SG was supported in part by the Singapore MOE grants R-146-000-312-114, A-8002014-00-00, A-8003802-00-00, E-146-00-0037-01 and A-8000051-00-00. Most of this work was completed when R.N.K. was a research assistant at the National University of Singapore, supported in part by the Singapore MOE grants A-8000051-00-00, R-146-000-312-114, A-0009806-01-00 and A-0004586-00-00.


\bibliographystyle{plain}
\bibliography{ERGM}

\appendix

We begin by describing the layout of the appendices. Appendix \ref{sec:notn_details} provides a detailed description of the notations introduced in Section \ref{sec:intro}. Section \ref{sec:proof_outline} outlines the key ideas of the proofs, while the proofs of the main theorems are presented in Appendix \ref{sec_proof}. The proofs of Theorems \ref{normal_consistency_2} and \ref{sharp_rates} are lengthy; therefore, we provide only proof sketches in Appendix \ref{sec_proof} and defer the complete proofs to Appendix \ref{sec:3.8,3.10}. The technical lemmas are collected in Appendix \ref{sec:proof_lemmas}. Finally in Appendix \ref{sec:root_unicity}, we discuss the existence and uniqueness of the root of the equation \eqref{root_eqn} and give a proof of Lemma \ref{exis_uniq}.

\section{Notational Preliminaries \label{sec:notn_details}}

Before we discuss the proofs of the main theorems, we recall some graph theoretic concepts from the Notations subsection and elaborate them in this section.

For a graph $G$ we denote its set of vertices as $V(G)$ (or simply $V$) and the edge set as $E(G)$ or $E$. The number of vertices and edges are denoted as $v(G)=|V(G)|$ and $e(G)=|E(G)|$ respectively. Further, we define the density of G as \(d(G) := e(G)/v(G)\), and \(k(G) := \mathrm{max}\{d(H)\mid H\subseteq G, e(H)\geq 1\}\). A graph is said to be balanced if \(k(G) = d(G)\), i.e., it is its densest subgraph. It is said to be strictly balanced if it is strictly denser than all other subgraphs.
Let $Aut(G)$ be the group of automorphisms of the graph $G$ and we will denote the number of automorphisms as $a(G)=|Aut(G)|$. We will denote the \er random graph on $N$ vertices and edge-connection probability $p$ as $G(N,p)$. 

Here, we introduce a few other graph-theoretic concepts. This includes: \\

Homomorphisms and injective embeddings
\[
\homc(H,G)
:= \bigl|\{\varphi: V(H)\to V(G): \ \{u,v\}\in E(H)\Rightarrow \{\varphi(u),\varphi(v)\}\in E(G)\}\bigr|.
\]
\[
\injc(H,G)
:= \bigl|\{\varphi: V(H)\hookrightarrow V(G): \ \varphi \text{ injective and edge-preserving}\}\bigr|.
\]

Labeled and unlabeled copy counts (of the motif $H$ in $G$)
\[
\text{(\emph{labelled} copies)}\quad \injc(H,G), 
\qquad
\text{(\emph{unlabelled} copies)}\quad \frac{\injc(H,G)}{|\Aut(H)|}.
\]

We will denote the number of unlabelled copies of \(H\) in \(G\) as 
$\mathcal{T}(H, G)$ or $H(G)$ interchangeably.

Now let \(N=|V(G)|\) and \(k=|V(H)|\). With these notations, we are ready to define the following notions of density:

\[
t(H,G) := \frac{\homc(H,G)}{N^k},
\qquad
t_{\mathrm{inj}}(H,G) := \frac{\injc(H,G)}{\fall{N}{k}},
\quad \text{where } \fall{N}{k} := N(N-1)\cdots(N-k+1).
\]

Every homomorphism is either injective or has a collision (two distinct vertices of \(H\) mapped to the same vertex of \(G\)).
The number of collision patterns depends only on \(H\), and for each pattern the number of maps is $O(N^{k-1})$.
Hence $\homc(H,G) = \injc(H,G) + O_H(N^{k-1})$, so
\[
\frac{\homc(H,G)}{N^k} = \frac{\injc(H,G)}{N^k} + O_H\!\left(\frac{1}{N}\right).
\]
Finally, $\fall{N}{k} = N^k\bigl(1+O_k(1/N)\bigr)$, so
\[
\left|\frac{\injc(H,G)}{N^k} - \frac{\injc(H,G)}{\fall{N}{k}}\right|
= \injc(H,G)\,\Bigl|\frac{1}{N^k}-\frac{1}{\fall{N}{k}}\Bigr|
= O_H\!\left(\frac{1}{N}\right),
\]
which yields $t(H,G) - t_{\mathrm{inj}}(H,G) = O_H(1/N)$. For later use, we will also record the following equivalent version of this statement: \(\frac{t(H,G)}{\lvert Aut(H)\rvert}  = \frac{H(G)}{(N)_k} + O_H(1/N)\).
\\

To apply the large deviation theory it is important to view the \(t(H,G)\) as a function on \([0,1]^{{N\choose 2}}\). With that in mind, for every function \(f\) of \(G\), we will also consider its extension to \([0,1]^{{N\choose 2}}\) by defining \(f(x)\) to be the expectation of \(f(\mathbf{G})\) where \(\mathbf{G}\sim G(N,x)\) (every edge \(e\) is sampled independently of others with probability \(x_e\)). For instance, \(t(H,x):=\mathbb{E}_{\mathbf{G}\sim G(N,x)}[t(H,\mathbf{G})]\).

Note that this generalizes the notation \(t(H,G)\) as every graph \(G\) on \(m\) vertices can be represented as an element of \([0,1]^{{m\choose 2}}\) by fixing a labelling of the edges of \(K_m\) by \(\{1,\ldots {m\choose 2}\}\) and defining the adjacency vector \(x\in [0,1]^{{m\choose 2}}\) as \(x_i = 1\) if the edge labelled by \(i\) is present in \(G\) (and \(0\) otherwise) when \(G\) is viewed as a subgraph of \(K_m\). Then \(t(H,G) = t(H,x)\). 

\section{Proof Outline \label{sec:proof_outline}}

In this section, we provide a brief overview of the proof techniques used to establish the main results of the paper. As discussed earlier, our analysis proceeds under three distinct regimes: a fixed number of vertices, the sparse regime, and the dense regime. Accordingly, we divide the proof outline into three parts.

\subsection{Proofs for a fixed number of vertices}

In the fixed number of vertices setting, the results concern consistency and asymptotic normality of the Lagrange multiplier $\hat{\lambda}_n$. The consistency is established in Theorem \ref{finite_consistency}, while the asymptotic normality is shown in Theorem \ref{finite_an}.

\begin{itemize}
    \item \textbf{Theorem~\ref{finite_consistency}.} Using standard empirical process argument we show that the empirical mean of the tilted motif count
    (see~\eqref{root_eq_form_2}) converges uniformly over compact sets to its population counterpart. Specifically, we have

    \[
\sup\limits_{\lambda\in K}^{} \left\lvert \sum a e^{-\lambda a}\frac{\lvert\{i\mid h_i=a\}\rvert}{n} - \sum a e^{-\lambda a}\mathbb{P}_{\mathcal{G}}(h=a)\right\rvert \rightarrow 0
\]
almost surely for compact set $K$.
    Uniqueness of the root then yields consistency of the estimator.\\
    
    \item \textbf{Theorem~\ref{finite_an}.} We establish the asymptotic normality of the function given in ~\eqref{root_eq_form_2} after appropriate centering and scaling. Standard
    $\mathcal{Z}$-estimation theory then implies asymptotic normality of the corresponding root. The details are omitted as the proof follows in the same vein as the proof of Theorem \ref{CLT_root}.
\end{itemize}

\subsection{Proofs for the sparse regime}

In the sparse regime, proving consistency is more nuanced and is carried out in two steps. As a first step, in Theorem \ref{poisson_consistency_1} we show that root of the population version of~\eqref{root_eqn} converges to a constant $\lambda^{\circ}$. In the second step we establish  Theorem~\ref{poisson_consistency_2} - we combine a strong law of large numbers with
    Theorem~\ref{poisson_consistency_1}, to identify the limit of the empirical root of~\eqref{root_eqn}.
    
    Next, in Theorem \ref{CLT_root} we derive a central
    limit theorem for the exponentially tilted motif counts (LHS of \eqref{root_eqn})  and invoke $\mathcal{Z}$-estimation
    theory to obtain asymptotic normality of the root.  Finally in Theorem \ref{two_sample_asymptotics} we establish asymptotic normality for the difference of roots obtained from independent samples, which leads naturally to a two-sample test.
\begin{itemize}
    \item \textbf{Theorem~\ref{poisson_consistency_1}.} In this theorem we show that the root of the population version of~\eqref{root_eqn} converges to the root of the Poisson limit. Using classical Poisson convergence results for
    motif counts, we show that the expectation of the exponentially tilted motif count converges to
    the corresponding expectation under a Poisson limit. 
    \begin{equation}\mathbb{E}[(H_m-h_0)e^{-\lambda(H_m-h_0)}]=\mathbb{E}[(Z_{\mu}-h_0)e^{-\lambda(Z_{\mu}-h_0)}](1\pm o(1))\label{approx_poisson}.
\end{equation}
    Monotonicity of the function $xe^{-\lambda x}$ then ensures that $|\lambda_{m}-\lambda^{\circ}|\rightarrow 0$.\\
    
    \item \textbf{Theorem~\ref{poisson_consistency_2}.}
    This theorem proves consistency of the root of \eqref{root_eqn}. Using strong law of large numbers we have
    \[
\frac{\sum_{i=1}^n h_{n,i} e^{-\lambda h_{n,i}}}{n} - \mathbb{E}\left[h_{n,1} e^{-\lambda h_{n,1}}\right] \xrightarrow{\text{a.s.}} 0.
\]
Using standard arguments from empirical process theory and \eqref{approx_poisson}, we show that
\[
\sup_{\lambda \in K} \left\lvert\frac{1}{n} \sum_{i=1}^n h_{n,i} e^{-\lambda h_{n,i}} - \mathbb{E}\left[(Z_{\mu}-h_0)e^{-\lambda(Z_{\mu}-h_0)}\right]\right\rvert\to 0 \quad \text{a.s.}
\]
for compact set $K \subset [0, \infty)$. Uniqueness of root then ensures 
\(
 \hat{\lambda}_n \to \lambda^{\circ}\)in probability.\\
    
    \item \textbf{Theorem~\ref{CLT_root}.} In this theorem we prove asymptotic normality of the root of \eqref{root_eqn}. Let $\Psi_n(\lambda) = \frac{1}{n}\sum_{i=1}^n h_{n,i} e^{-\lambda h_{n,i}}$. Using a Taylor expansion about \(\lambda^{\circ}\), one shows that
\begin{equation}
\sqrt{n} (\hat{\lambda}_n - \lambda^{\circ}) = 
\frac{- \sqrt{n} (\Psi_n(\lambda^{\circ}) - \mathbb{E}[\Psi_n(\lambda^{\circ})])}{\dot{\Psi}_n(\lambda^{\circ}) + \frac{1}{2} (\hat{\lambda}_n - \lambda^{\circ}) \ddot{\Psi}_n(\tilde{\lambda}_n)} + o_p(1)
\label{Z_Taylor_short}
\end{equation}
where $\tilde{\lambda}_n$ is a point between $\hat{\lambda}_n$ and $\lambda_0$.
Further by arguments similar to those in Theorem \ref{poisson_consistency_2} we have
\[\dot{\Psi}_n(\lambda^{\circ}) \to \mathbb{E}\left[-(Z_{\mu}-h_0)^2e^{-\lambda^{\circ}(Z_{\mu}-h_0)}\right],\]
\[\ddot{\Psi}_n(\lambda^{\circ}) \to \mathbb{E}\left[(Z_{\mu}-h_0)^3e^{-\lambda^{\circ}(Z_{\mu}-h_0)}\right].\]

Then the consistency of \(\hat{\lambda}_n\) ensures that
the denominator in the RHS of \eqref{Z_Taylor_short} converges in probability: \[\dot{\Psi}_n(\lambda^{\circ}) + \frac{1}{2} (\hat{\lambda}_n - \lambda^{\circ}) \ddot{\Psi}_n(\tilde{\lambda}_n)\xrightarrow{P}\mathbb{E}\left[-(Z_{\mu}-h_0)^2e^{-\lambda^{\circ}(Z_{\mu}-h_0)}\right].\]
Next using Lemma \ref{CLT_1}, we show that 
\[\sqrt{n} (\Psi_n(\lambda^{\circ}) - \mathbb{E}[\Psi_n(\lambda^{\circ})])\xrightarrow{d}\mathcal{N}\left(0,\mathbf{Var}\left[(Z_{\mu}-h_0)e^{-\lambda(Z_{\mu}-h_0)}\right]\right).\]
Finally using Slutsky's theorem, we get
\[
\sqrt{n}(\hat{\lambda}_n - \lambda^{\circ}) \xrightarrow{d} \mathcal{N}\left(0, \frac{\mathbf{Var}\left[(Z_{\mu}-h_0)e^{-\lambda^{\circ}(Z_{\mu}-h_0)}\right]}{\mathbb{E}\left[(Z_{\mu}-h_0)^2e^{-\lambda^{\circ}(Z_{\mu}-h_0)}\right]^2}\right).
\]\\

    \item \textbf{Theorem~\ref{two_sample_asymptotics}.} Here, we establish asymptotic normality for the difference of roots obtained from independent samples. Using Theorem~\ref{CLT_root} we obtain,
   
\[
\sqrt{n_1}(\hat{\lambda}_{n_1} - \lambda^{\circ}_{c_1}) \xrightarrow{d} N(0, \sigma_1^2), 
\qquad 
\sqrt{n_2}(\hat{\lambda}_{n_2} - \lambda^{\circ}_{c_2}) \xrightarrow{d} N(0, \sigma_2^2),
\]
with $$\sigma^2_i=\frac{\mathbf{Var}\left[(Z_{\mu_i}-h_0)e^{-\lambda_{c_i}^{\circ}(Z_{\mu_i}-h_0)}\right]}{\mathbb{E}\left[(Z_{\mu_i}-h_0)^2e^{-\lambda_{c_i}^{\circ}(Z_{\mu_i}-h_0)}\right]^2}, \quad i\in\{1,2\}.$$
Using the fact that the samples are independent, we show that 
\[
T_n := \frac{(\hat{\lambda}_{n_1} - \hat{\lambda}_{n_2})-(\lambda^{\circ}_{c_1}-\lambda^{\circ}_{c_2})}{\sqrt{V_n^\ast}}\xrightarrow{d}\mathcal{N}(0,1),
\quad \text{ with }
V_n^\ast := \frac{\sigma_1^2}{n_1} + \frac{\sigma_2^2}{n_2}.
\]\\
We note that the last statement does not follow immediately from  Theorem~\ref{CLT_root} and a nuanced analysis is needed to prove this theorem as the sample sizes $n_1$ and $n_2$ are different. 
\end{itemize}

\subsection{Proofs for the dense regime}

In the dense regime, the general theory corresponds to a dense exponential random graph model (ERGM), with the dense \er model as a special case. The
optimal Lagrange multiplier is interpreted as a critical point of the objective
function~\eqref{ergm_emp}, and our analysis is based on the asymptotic behavior of this critical
point. 

In the spirit of
    Theorem~\ref{poisson_consistency_1}, as a first step, we show that the critical point of the population version
    of~\eqref{ergm_emp} converges to the solution of the variational problem~\eqref{var_form_lambda} in Theorem~\ref{normal_consistency_1}.  In the second step, in Theorem~\ref{normal_consistency_2}, we use log-sum-exp approximation techniques from
    the nonlinear large deviation literature to show that the critical point of
    \eqref{ergm_emp} converges to the same variational limit.

    Rates of convergence of the critical point $\hat{\lambda}_n$ are established in Theorem~\ref{CLT_root_2} and Theorem~\ref{sharp_rates}.
   In Theorem~\ref{CLT_root_2} we consider the case when the limiting parameter satisfies
    $\lambda^{\circ}=0$; we use Taylor expansion and concentration inequalities to show that the
    empirical critical point converges to zero at rate $o(m_n^{-2})$.
    In Theorem~\ref{sharp_rates}.we consider the case $\lambda^{\circ}<0$; a Taylor expansion around
    $\lambda^{\circ}$ combined with nonlinear large deviation techniques shows that
    $m_n^2(\hat{\lambda}_n-\lambda^{\circ})$ converges to a nondegenerate constant.
    Finally in Theorem~\ref{dense_two_sample} we combine Theorems~\ref{CLT_root_2} and
    \ref{sharp_rates} to construct a two-sample test for dense \er models.

\begin{itemize}
    \item \textbf{Theorem~\ref{normal_consistency_1}.} Here, we show that the critical point of the population version
    of~\eqref{ergm_emp} converges to the solution of the variational problem~\eqref{var_form_lambda}.
The proof is based on the large deviation principle for general ERGMs developed in \cite{chatterjee2013estimating}. The expectation $\E[e^{-\frac{\lambda}{m^{v(H)-2}}(H_m-h_0)}]$ can be approximated as the partition function of a new ERGM whose Hamiltonian is the sum of the original ERGM Hamiltonian and a perturbation term related to the $H$-count. In particular, we have:

\begin{align*}
&\E\left[e^{-\frac{\lambda}{m^{v(H)-2}}(H_m-h_0)}\right]\\ = &e^{\frac{\lambda h_0}{m^{v(H)-2}}}\times\frac{\sum \exp\{m^2(-\lambda t(H,G)/\lvert Aut(H)\rvert + O(\lambda/m) + \sum \beta_kt(T_k,G))\}}{\sum \exp\{m^2\sum \beta_kt(T_k,G)\}}
\end{align*}

From Theorem 3.1 of \cite{chatterjee2013estimating}, the normalized log-partition function of an ERGM converges to the supremum of a free energy functional over the space of graphons. Applying this result to our perturbed model, we obtain,
\begin{align}
&\frac{1}{m^2}\ln \left(\E_{\bbeta}\left[e^{-\frac{\lambda}{m^{v(H)-2}}(H_m-h_0)}\right]\right)\nonumber\\
\to & \frac{\lambda h_0}{m^{v(H)}} + \sup_{\tilde{h}\in\tilde{W}}\left(-\frac{\lambda}{|Aut(H)|} t(H,h) + \sum_{k=1}^K \beta_k t(T_k, h) - \frac{1}{2}I(h) \right)\nonumber\\
&- \sup_{\tilde{h}\in\tilde{W}}\left(\sum_{k=1}^K \beta_k t(T_k, h) - \frac{1}{2}I(h) \right)\nonumber\nonumber\\
=& \mathfrak{g}(\lambda)\label{var_formula}
\end{align}
for all $\lambda \in \mathbb{R}$.
Since $x\mapsto \ln(x)$ is strictly increasing, we need to show the convergence of critical points in the display above. The functions in the sequence above are all strictly convex in $\lambda$ by direct differentiation. The limiting function is convex being a sum of an affine part and a supremum over convex functions. Now the convergence of critical points will follow once we show that the limiting function is strictly convex. \\
    
    \item \textbf{Theorem~\ref{normal_consistency_2}.} 
 In this theorem our goal is to show the convergence in probability of the empirical critical point $\hat{\lambda}_n$ to the theoretical one $\lambda^{\circ}$ in the dense ERGM regime.

The proof hinges on the uniform convergence of the normalized log-partition function:
\[
\frac{1}{m_n^2}\log\left(\frac{1}{n}\sum_{i=1}^n e^{-\frac{\lambda}{m_n^{v(H)-2}} h_{n,i}}\right)
\]
to its limit. The core idea is to apply the log-sum-exp approximation framework from \cite{chatterjee2016nonlinear}.

Define $N(G)$  to be the count of a particular graph $G$ in the sample. Let $x$ be the adjacency vector of a graph and  $G_x$ be the graph corresponding to the adjacency vector $x$. We further define
$$N(x)=N(G_x), \quad H(x)=\mathcal{T}(G,H_x).$$ The energy function $f(x)$ is defined by 
$f(x)= \log\frac{N(x)}{n} - \frac{\lambda}{m_n^{v(H)-2}} H(x)$.

We rewrite the empirical objective function as a log-partition function over the space of all graphs on $m_n$ vertices:
\begin{equation}
\frac{1}{m_n^2}\log\left(\sum_{G} \frac{N(G)}{n} e^{-\frac{\lambda}{m_n^{v(H)-2}} (\mathcal{T}(H,G)-h_0)}\right) = \frac{\lambda h_0}{m_n^{v(H)}} + \frac{1}{m_n^2} \log \left(\sum_{x \in \{0,1\}^{\binom{m_n}{2}}} e^{f(x)}\right),
\label{f_rep}\end{equation}
 and then verify some smoothness conditions on the function $f$, under which we show that the following holds:

\begin{align} \frac{1}{m_n^2} \log \left(\sum_{x \in \{0,1\}^{\binom{m_n}{2}}} e^{f(x)}\right)=&\sup_{\tilde{h}\in\tilde{W}}\left(-\frac{\lambda}{|Aut(H)|} t(H,h) + \sum_{k=1}^K \beta_k t(T_k, h) - \frac{1}{2}I(h) \right)\nonumber\\
&- \sup_{\tilde{h}\in\tilde{W}}\left(\sum_{k=1}^K \beta_k t(T_k, h) - \frac{1}{2}I(h) \right)+o(1).\label{log_sum_exp_approx}\end{align}
Combining \eqref{f_rep} with \eqref{log_sum_exp_approx} we obtain 

\[
\frac{1}{m_n^2}\log\left(\frac{1}{n}\sum_{i=1}^n e^{-\frac{\lambda}{m_n^{v(H)-2}} h_{n,i}}\right)\rightarrow \mathfrak{g}(\lambda).
\]
Using convexity arguments as in Theorem \ref{normal_consistency_1}, we obtain convergence of the critical point $\hat{\lambda}_n$ to $\lambda^{\circ}$.\\

    \item \textbf{Theorem~\ref{CLT_root_2}.} 
In this theorem we consider the case when the limiting parameter satisfies
    $\lambda^{\circ}=0$ and show that the
    empirical critical point converges to zero at rate $o(m_n^{-2})$. We perform a Taylor expansion of the empirical critical point equation about $\lambda^{\circ}$ to obtain:
\begin{align*}
 m_n^2(\hat{\lambda}_n - \lambda^{\circ}) = \frac{\sum\limits_{i=1}^{n}\frac{h_{n,i}}{m_n^{v(H)}}e^{-\frac{\lambda^{\circ}}{m_n^{v(H)-2}}h_{n,i}}}{\sum\limits_{i=1}^{n}\frac{h_{n,i}^2}{m_n^{2v(H)}}e^{-\frac{\tilde{\lambda}_n}{m_n^{v(H)-2}}h_{n,i}} }
\end{align*}
For $\lambda^{\circ}=0$, we get
\begin{equation}
m_n^2 \,\hat{\lambda}_n
=
\frac{\frac{1}{n}\displaystyle \sum_{i=1}^{n} \frac{h_{n,i}}{m_n^{v(H)}}}
     {\frac{1}{n}\displaystyle \sum\limits_{i=1}^{n}\frac{h_{n,i}^2}{m_n^{2v(H)}}e^{-\frac{\tilde{\lambda}_n}{m_n^{v(H)-2}}h_{n,i}} }=\frac{\frac{1}{\sqrt{n}}\displaystyle \sum_{i=1}^{n} \frac{h_{n,i}}{m_n^{v(H)-1}}}{\frac1n\displaystyle \sum\limits_{i=1}^{n}\frac{h_{n,i}^2}{m_n^{2v(H)}}e^{-\frac{\tilde{\lambda}_n}{m_n^{v(H)-2}}h_{n,i}}}\frac{1}{\sqrt{n}{m_n}}.\label{lambda_0_asymp_1}
\end{equation}
We define
\[
    Z_n := \frac{1}{\sqrt{n}} \sum_{i=1}^{n} \frac{h_{n,i}}{m_n^{v(H)-1}}
\]
and show that
\begin{equation}
    \frac{m_n}{\sqrt[4]{n}} Z_n \xrightarrow{P} 0 .
    \label{num_conv_1_0}
\end{equation}
using concentration inequalities from \cite{ganguly2024sub} in the subcritical regime of the ferromagnetic ERGM.
 In the next step we show that 
 \begin{equation}V_n:= \sqrt[4]{n}\times m_n\times\frac{1}{n} \sum_{i=1}^{n} \frac{h_{n,i}^2}{m_n^{2v(H)}}e^{-\frac{\tilde{\lambda}_n}{m_n^{v(H)-2}}h_{n,i}} = \Omega_P\left(\frac{1}{m_n}\right).\label{denom_bound_0}\end{equation}
using a central limit theorem for subgraph counts in the subcritical regime of a ferromagnetic ERGM from \cite{winstein2025quantitative}.

Finally, we plug-in \eqref{num_conv_1_0} and \eqref{denom_bound_0} in \eqref{lambda_0_asymp_1} to conclude
that $m_n^2\hat{\lambda}_n\xrightarrow{P} 0$.\\

    \item \textbf{Theorem~\ref{sharp_rates}.} This theorem sharpens the consistency result of Theorem \ref{normal_consistency_2} by establishing the precise asymptotic behavior of $m_n^2(\hat{\lambda}_n-\lambda^{\circ})$. When $\lambda^{\circ}<0$, a Taylor expansion around
    $\lambda^{\circ}$ combined with nonlinear large deviation techniques shows that
    $m_n^2(\hat{\lambda}_n-\lambda^{\circ})$ converges to a nondegenerate constant.
First we use Taylor expansion of the empirical critical point equation about $\lambda^{\circ}$ to show that
\begin{align*}
 m_n^2(\hat{\lambda}_n - \lambda^{\circ}) = \frac{\sum\limits_{i=1}^{n}\frac{h_{n,i}}{m_n^{v(H)}}e^{-\frac{\lambda^{\circ}}{m_n^{v(H)-2}}h_{n,i}}}{\sum\limits_{i=1}^{n}\frac{h_{n,i}^2}{m_n^{2v(H)}}e^{-\frac{\tilde{\lambda}_n}{m_n^{v(H)-2}}h_{n,i}} }.
\end{align*}

To analyze the asymptotic behavior of the numerator and denominator separately, we employ two distinct variational formulations based on the log-sum-exp approximation.

For the numerator, we analyze the derivative of the normalized log-partition function. By Danskin's Theorem \cite{danskin1966theory} and the uniform convergence established in the proof of Theorem \ref{normal_consistency_2}, we have:
\begin{equation}
\frac{\sum\limits_{i=1}^{n}\frac{-h_{n,i}}{m_n^{v(H)}}e^{-\frac{\lambda^{\circ}}{m_n^{v(H)-2}}h_{n,i}}}{\sum\limits_{i=1}^{n}e^{-\frac{\lambda^{\circ}}{m_n^{v(H)-2}}h_{n,i}}} \xrightarrow{P} \frac{d}{d\lambda} \mathfrak{g}(\lambda) \bigg|_{\lambda=\lambda^{\circ}} = \frac{p_0^{e(H)}}{|Aut(H)|} - \frac{(u^*)^{e(H)}}{|Aut(H)|}
\label{num_conv}\end{equation}
where $u^*$ is the unique constant graphon that maximizes the variational problem defining $\mathfrak{g}(\lambda^\circ)$ from \eqref{var_form_lambda}.

For the denominator, we introduce a crucial auxiliary function by adding a squared $H$-count term:
\begin{align*}
\frac{1}{m_n^2} \log \sum_{G} \exp\left(-\frac{\lambda}{m_n^{v(H)-2}} h(G) + \log \frac{N(G)}{n} + \frac{\alpha}{m_n^{2v(H)-2}} h(G)^2\right)
\end{align*}
where $h(G) = \mathcal{T}(H,G) - h_0$. Once again using log-sum-exp approximation as in Theorem \ref{normal_consistency_2} we establish that the auxiliary function converges uniformly to its limiting variational form:
    \begin{align*}
     \sup_{\tilde{h}\in\tilde{W}}\bigg(&-\left(\frac{\lambda^{\circ}}{|Aut(H)|}+\frac{2\alpha p_0^{e(H)}}{|Aut(H)|^2}\right) t(H,h) + \sum_{k=1}^K \beta_k t(T_k, h) \\
     &+ \frac{\alpha}{|Aut(H)|^2} t(H,h)^2 - \frac{1}{2} I(h)\bigg) + \frac{\lambda^{\circ} p_0^{e(H)}}{\lvert Aut(H)\rvert} + \frac{\alpha p_0^{2e(H)}}{\lvert Aut(H)\rvert^2} \\&- \sup_{\tilde{h}\in\tilde{W}}\bigg( \sum_{k=1}^K \beta_k t(T_k, h) 
      - \frac{1}{2} I(h)\bigg).
    \end{align*}

Using Danskin's theorem \cite{danskin1966theory}, we compute the derivative of this limiting function with respect to $\alpha$ at $\alpha=0$, which captures the limiting behavior of the denominator. Since the supremum in the limit is uniquely attained at the constant graphon $u^*$, we obtain:
\begin{equation}
\frac{\sum\limits_{i=1}^{n}\frac{h_{n,i}^2}{m_n^{2v(H)}} e^{-\frac{\lambda^{\circ}}{m_n^{v(H)-2}} h_{n,i}}}{\sum\limits_{i=1}^{n} e^{-\frac{\lambda^{\circ}}{m_n^{v(H)-2}} h_{n,i}}}\xrightarrow{P} \left(\frac{(u^*)^{e(H)}}{|Aut(H)|} - \frac{p_0^{e(H)}}{|Aut(H)|}\right)^2
\label{denom_conv}\end{equation}

Combining the limits of the numerator and denominator yields the stated convergence:
\begin{equation}
m_n^2(\hat{\lambda}_n - \lambda^{\circ}) \xrightarrow{P} \frac{\frac{(u^*)^{e(H)}}{|Aut(H)|} - \frac{p_0^{e(H)}}{|Aut(H)|}}{\left(\frac{(u^*)^{e(H)}}{|Aut(H)|} - \frac{p_0^{e(H)}}{|Aut(H)|}\right)^2} = \frac{1}{\frac{(u^*)^{e(H)}}{|Aut(H)|} - \frac{p_0^{e(H)}}{|Aut(H)|}}.\label{sharp_rates_statement}
\end{equation}

    
    \item \textbf{Theorem~\ref{dense_two_sample}.} We combine Theorems~\ref{CLT_root_2} and
    \ref{sharp_rates} to construct a two-sample test for dense \er models.
\end{itemize}

\section{Proofs of the main Theorems \label{sec_proof}}

\begin{proof}[Proof of Theorem \ref{finite_consistency}]
Recall that $\hat{\lambda}_n$ is the unique zero of
\[
\sum a e^{-\lambda a}\frac{\lvert\{i\mid h_i=a\}\rvert}{n}
\]
where the sum ranges over all possible values of the statistic $h$ (and the number is finite in this case, as $N$ is fixed). By strong law of large numbers the above display converges, for each $\lambda$, to
\[
\sum a e^{-\lambda a}\mathbb{P}_{\mathcal{G}}(h=a)
\]
almost surely. 

Observe that for any compact subset $K\subseteq [0,\infty)$, the collection of functions $\{a\mapsto a e^{-\lambda a}\}_{\lambda\in K}$ is trivially a pointwise compact class and hence $\mathbb{P}_{\mathcal{G}}(h=\cdot)$ Glivenko-Cantelli (\cite{Vaart1998} Ch.19). Hence,
\[
\sup\limits_{\lambda\in K}^{} \left\lvert \sum a e^{-\lambda a}\frac{\lvert\{i\mid h_i=a\}\rvert}{n} - \sum a e^{-\lambda a}\mathbb{P}_{\mathcal{G}}(h=a)\right\rvert \rightarrow 0
\]
almost surely. Now $[0,\infty)=\cup_{n=1}^\infty [0,n]$ is a countable union. Hence,
\[
\sup\limits_{\lambda\in [0,n)}^{} \left\lvert \sum a e^{-\lambda a}\frac{\lvert\{i\mid h_i=a\}\rvert}{n} - \sum a e^{-\lambda a}\mathbb{P}_{\mathcal{G}}(h=a)\right\rvert \rightarrow 0 \quad \forall n\in\mathbb{N}
\]
almost surely. In other words, the convergence is uniform over compacts, almost surely.
We note that the root of the equation \[\sum a e^{-\lambda a}\mathbb{P}_{\mathcal{G}}(h=a)=0
\]
exists and is unique by Lemma \ref{existence_1} and \ref{uniqueness_1}.
Then by Hurwitz's theorem, $\hat{\lambda}_n\rightarrow \lambda^{\circ}$ almost surely, where $\lambda^{\circ}$ is the unique zero of $\sum a e^{-\lambda a}\mathbb{P}_{\mathcal{G}}(h=a)$.
\end{proof}

\begin{proof}[Proof of Theorem \ref{finite_an}]
The proof follows by the standard techniques of  proving CLT for $\mathcal{Z}$ estimators. In particular we perform a Taylor expansion of the function $\frac{1}{n}\sum_{i=1}^n h_{n,i} e^{-\lambda h_{n,i}}$ around $\lambda^{\circ}$ and use the facts that $\hat{\lambda}_n$ is the root of the function and that $\hat{\lambda}_n$ converges to $\lambda^{\circ}$ almost surely. The steps of the proof identical to that of Theorem \ref{CLT_root} (the bounds are indeed much simpler for the finite graph) and hence we omit the details.
\end{proof}

\begin{proof}[Proof of Theorem \ref{poisson_consistency_1}]



Let \( 0 < \epsilon < \lambda^{\circ}/2\). Since \( \lambda^{\circ} \) is a root of the equation  
\[
\mathbb{E}\left[(Z_{\mu} - h_0) e^{-\lambda(Z_{\mu} - h_0)}\right] = 0,
\]  
it follows that $\mathbb{E}[(Z_{\mu}-h_0)e^{-\lambda(Z_{\mu}-h_0)}] > 0$ (resp, $<0$) at $\lambda=\lambda^{\circ} - \epsilon$ (resp, $\lambda^{\circ} + \epsilon)$. Therefore, it suffices to show that for every fixed $\lambda>0$, we have 

\begin{equation}\mathbb{E}[(H_m-h_0)e^{-\lambda(H_m-h_0)}]=\mathbb{E}[(Z_{\mu}-h_0)e^{-\lambda(Z_{\mu}-h_0)}](1\pm o(1))
\label{triangle_poisson_asymp}
\end{equation}

Indeed if \eqref{triangle_poisson_asymp} is true, the two functions will be of the same sign for large enough $m$ implying that the LHS is positive at $\lambda^{\circ}-\epsilon$ and negative at $\lambda^{\circ}+\epsilon$ . This will imply that $|\lambda_m-\lambda^{\circ}|<\epsilon$.
To show \eqref{triangle_poisson_asymp}, first note that for $\lambda>0$, the function $x\mapsto (x-h_0)e^{-\lambda(x-h_0)}$ is bounded. Hence, by Poisson convergence of \(H\) counts in the regime $mp^{k(H)}\rightarrow c>0$, we have that
$$\mathbb{E}[(H_m-h_0)e^{-\lambda(H_m-h_0)}]=\mathbb{E}[(Z_{\mu}-h_0)e^{-\lambda(Z_{\mu}-h_0)}]\pm o(1)$$
Noting that in this regime, we also have $\mathbb{E}[(Z_{\mu}-h_0)e^{-\lambda(Z_{\mu}-h_0)}]=\Theta(1)$ for fixed $\lambda$, \eqref{triangle_poisson_asymp} holds.

\end{proof}

\begin{proof}[Proof of Theorem \ref{poisson_consistency_2}]
It is  straightforward to see that, $\forall \lambda > 0$, $\mathbb{E} \left[ (h_{n,1} e^{-\lambda h_{n,1}})^4 \right]$ is uniformly bounded in $n$. With this property, we get a strong law of large numbers for the triangular array \(\{h_{n,i}\}\):

\[
\frac{\sum_{i=1}^n h_{n,i} e^{-\lambda h_{n,i}}}{n} - \mathbb{E}\left[h_{n,1} e^{-\lambda h_{n,1}}\right] \xrightarrow{\text{a.s.}} 0.
\]
Recall that $\mu=c^{v(H)}/\lvert Aut(H)\rvert$. By repeating the same calculation as in Theorem \ref{poisson_consistency_1} with \(m\) replaced by \(m_n\) everywhere, 
\[
\mathbb{E}\left[h_{n,1} e^{-\lambda h_{n,1}}\right] - \mathbb{E}\left[(Z_{\mu}-h_0)e^{-\lambda(Z_{\mu}-h_0)}\right]\to 0
\]

\[
\implies \frac{\sum_{i=1}^n h_{n,i} e^{-\lambda h_{n,i}}}{n} - \mathbb{E}\left[(Z_{\mu}-h_0)e^{-\lambda(Z_{\mu}-h_0)}\right]\to 0.
\]
Now if $K \subset [0, \infty)$ is compact, then $\{a \mapsto a e^{-\lambda a} : \lambda \in K\}$ has finite bracketing number with brackets of the same form. Hence
\[
\sup_{\lambda \in K} \left\lvert\frac{1}{n} \sum_{i=1}^n h_{n,i} e^{-\lambda h_{n,i}} - \mathbb{E}\left[(Z_{\mu}-h_0)e^{-\lambda(Z_{\mu}-h_0)}\right]\right\rvert\to 0 \quad \text{a.s.}
\]
Since $[0, \infty)$ is countably compact we have
\[
\frac{1}{n} \sum_{i=1}^n h_{n,i} e^{-\lambda h_{n,i}} \to \mathbb{E}\left[(Z_{\mu}-h_0)e^{-\lambda(Z_{\mu}-h_0)}\right] \quad \text{uniformly on compacts almost surely.}
\]
\[
\implies \hat{\lambda}_n \to \lambda^{\circ} \quad \text{almost surely, and hence in probability.}
\]

\end{proof}

\begin{proof}[Proof of Lemma \ref{CLT_1}]
    Pick a \(\delta>0\) (any \(\delta\) will do, but one can choose \(\delta = 2\) for concreteness). We verify the Lindeberg condition: it suffices to show that
    \[\sum_{i=1}^n\mathbb{E}\left[\left(\frac{h_{n,i} e^{-\lambda h_{n,i}} - \mathbb{E}[h_{n,i} e^{-\lambda h_{n,i}}]}{\sqrt{n Var(h_{n,i} e^{-\lambda h_{n,i}})}}\right)^{2+\delta}\right] = \frac{1}{n^{\frac{\delta}{2}}}\times \mathbb{E}\left[\left(\frac{h_{n,1} e^{-\lambda h_{n,1}} - \mathbb{E}[h_{n,1} e^{-\lambda h_{n,1}}]}{\sqrt{Var(h_{n,1} e^{-\lambda h_{n,1}})}}\right)^{2+\delta}\right]\rightarrow 0.\]

    Indeed, for every \(\lambda>0\), the function \(x\mapsto xe^{-\lambda x}\) is bounded on the interval \([-h_0,\infty)\). And hence the expectation converges, by the Poisson convergence of \(h_{n,1}\) in total variation norm. Hence, the right hand side goes to \(0\).
\end{proof}

\begin{proof}[Proof of Theorem \ref{CLT_root}]
   We will modify the standard proof of the asymptotic normality of \(\mathcal{Z}\)-estimators to suit our needs. To that end, define
\[
\Psi_n(\lambda) = \frac{1}{n}\sum_{i=1}^n h_{n,i} e^{-\lambda h_{n,i}}  , 
\quad \Psi(\lambda) = \mathbb{E}\left[(Z_{\mu}-h_0)e^{-\lambda(Z_{\mu}-h_0)}\right]
\]
Now noting that \(\hat{\lambda}_n\) is a zero of \(\Psi_n(.)\) and converges in probability to \(\lambda^{\circ} = \log(\frac{\mu}{h_0})\), we expand \(\Psi_n(\hat{\lambda}_n)\) in a Taylor series about \(\lambda^{\circ}\). Then
\[
0 = \Psi_n(\hat{\lambda}_n) = \Psi_n(\lambda^{\circ}) + (\hat{\lambda}_n - \lambda^{\circ}) \dot{\Psi}_n(\lambda^{\circ}) 
+ \frac{1}{2} (\hat{\lambda}_n - \lambda^{\circ})^2 \ddot{\Psi}_n(\tilde{\lambda}_n),
\]
where $\tilde{\lambda}_n$ is a point between $\hat{\lambda}_n$ and $\lambda_0$. This can be rewritten as
\begin{equation}
\sqrt{n} (\hat{\lambda}_n - \lambda^{\circ}) = 
\frac{- \sqrt{n} (\Psi_n(\lambda^{\circ}) - \mathbb{E}[\Psi_n(\lambda^{\circ})])}{\dot{\Psi}_n(\lambda^{\circ}) + \frac{1}{2} (\hat{\lambda}_n - \lambda^{\circ}) \ddot{\Psi}_n(\tilde{\lambda}_n)} + \frac{-\sqrt{n}\mathbb{E}[\Psi_n(\lambda^{\circ})]}{\dot{\Psi}_n(\lambda^{\circ}) + \frac{1}{2} (\hat{\lambda}_n - \lambda^{\circ}) \ddot{\Psi}_n(\tilde{\lambda}_n)}.
\label{Z_Taylor}
\end{equation}

By arguments similar to those in Theorem \ref{poisson_consistency_2} (in particular, the property that for every positive \(\lambda\), the fourth moment is bounded uniformly in \(n\)), it follows that the following hold almost surely,
\[\dot{\Psi}_n(\lambda^{\circ}) \to \mathbb{E}\left[-(Z_{\mu}-h_0)^2e^{-\lambda^{\circ}(Z_{\mu}-h_0)}\right]\]
\[\ddot{\Psi}_n(\lambda^{\circ}) \to \mathbb{E}\left[(Z_{\mu}-h_0)^3e^{-\lambda^{\circ}(Z_{\mu}-h_0)}\right]\]

Then the consistency of \(\hat{\lambda}_n\) ensures that
the denominators in \eqref{Z_Taylor} converge almost surely to \\ \(\mathbb{E}\left[-(Z_{\mu}-h_0)^2e^{-\lambda^{\circ}(Z_{\mu}-h_0)}\right]\) . For the numerators, observe that by arguing as we did for expectation, the variance of \(h_{n,1}e^{-\lambda^{\circ}h_{n,1}}\) converges to that of \(-(Z_{\mu}-h_0)^2e^{-\lambda(Z_{\mu}-h_0)}\). Hence, the first numerator converges, by Lemma \ref{CLT_1}, to 
\[\mathcal{N}\left(0,\mathbf{Var}\left[(Z_{\mu}-h_0)e^{-\lambda(Z_{\mu}-h_0)}\right]\right)\]
For the second numerator, observe that 
\[\mathbb{E}[\Psi_n(\lambda^{\circ})] = \mathbb{E}[(Z_{\mu}-h_0)e^{-\lambda^{\circ}(Z_{\mu}-h_0)}] + O(C(\lambda^{\circ})m_n^{v(H)-2}p^{e(H)-1})\] with \(p_n = (\frac{c}{m_n})^{1/k(H)}\) and some constant $C(\lambda^{\circ})$. This follows by the standard bound on the total variation distance of the \(H\) counts from a Poisson random variable with the same mean. Now the first term is \(0\) by definition and the error is \(o(\frac{c(\lambda^{\circ})}{\sqrt{n}}\)) (for some constant $c(\lambda^{\circ})$) by assumption. Hence the second numerator goes to \(0\) in the limit. \\
Combining the above using Slutsky's theorem, we get
\[
\sqrt{n}(\hat{\lambda}_n - \lambda^{\circ}) \xrightarrow{d} \mathcal{N}\left(0, \frac{\mathbf{Var}\left[(Z_{\mu}-h_0)e^{-\lambda^{\circ}(Z_{\mu}-h_0)}\right]}{\mathbb{E}\left[(Z_{\mu}-h_0)^2e^{-\lambda^{\circ}(Z_{\mu}-h_0)}\right]^2}\right)
\]

\end{proof}

\begin{proof}[Proof of Theorem \ref{two_sample_asymptotics}]
The proof hinges on Theorem \ref{CLT_root}. However since the sample sizes are different we cannot apply the theorem directly. Instead we apply the following lemma whose proof is given in the appendix.

\begin{lemma}{\label{two_sample_asymptotics_lemma}}
Let $\hat{\theta}_1$ and $\hat{\theta}_2$ be estimators of real parameters $\theta_1$ and $\theta_2$ based on two \emph{independent} samples of sizes $n_1$ and $n_2$, respectively. Suppose there exist scaling sequences $a_{n_1} \to \infty$, $b_{n_2} \to \infty$ and finite positive constants $\sigma_1^2, \sigma_2^2$ such that
\[
a_{n_1}(\hat{\theta}_1 - \theta_1) \xrightarrow{d} N(0, \sigma_1^2), 
\qquad 
b_{n_2}(\hat{\theta}_2 - \theta_2) \xrightarrow{d} N(0, \sigma_2^2).
\]
Assume that the rate ratio satisfies $r_n := a_{n_1}/b_{n_2} \to \rho \in (0, \infty)$ and define the following statistic


\[
T_n := \frac{(\hat{\theta}_1 - \hat{\theta}_2)-(\theta_1-\theta_2)}{\sqrt{V_n^\ast}},
\qquad 
V_n^\ast := \frac{\sigma_1^2}{a_{n_1}^2} + \frac{\sigma_2^2}{b_{n_2}^2}.
\]
Then it follows that 
\[
T_n \xrightarrow{d} N(0,1).
\]
\end{lemma}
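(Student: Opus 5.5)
The plan is to use independence to reduce everything to a statement about characteristic functions of the two standardized pieces. Write
\[
T_n=\frac{\hat\theta_1-\theta_1}{\sqrt{V_n^\ast}}-\frac{\hat\theta_2-\theta_2}{\sqrt{V_n^\ast}}
= w_{1,n}\,\frac{a_{n_1}(\hat\theta_1-\theta_1)}{\sigma_1}-w_{2,n}\,\frac{b_{n_2}(\hat\theta_2-\theta_2)}{\sigma_2},
\]
with deterministic weights $w_{1,n}=\sigma_1/(a_{n_1}\sqrt{V_n^\ast})$ and $w_{2,n}=\sigma_2/(b_{n_2}\sqrt{V_n^\ast})$. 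By construction $w_{1,n}^2+w_{2,n}^2=1$.

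\emph{Weights.} Multiply numerator and denominator by $a_{n_1}$. This gives $w_{1,n}^2=\sigma_1^2/(\sigma_1^2+\sigma_2^2 r_n^2)$. Since $r_n\to\rho\in(0,\infty)$, it follows that
\[
w_{1,n}\to w_1:=\frac{\sigma_1}{\sqrt{\sigma_1^2+\rho^2\sigma_2^2}},\qquad w_{2,n}\to w_2:=\frac{\rho\sigma_2}{\sqrt{\sigma_1^2+\rho^2\sigma_2^2}}.
\]
These limits satisfy $w_1^2+w_2^2=1$ and both lie in $(0,1)$. This is the only place the hypothesis on $\rho$ is used. Even without it, one could pass to subsequences along which $w_{1,n}$ converges in $[0,1]$, and the argument below would still apply.

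\emph{Joint convergence.} Set $U_n=a_{n_1}(\hat\theta_1-\theta_1)/\sigma_1$ and $W_n=b_{n_2}(\hat\theta_2-\theta_2)/\sigma_2$. These are independent, and each converges in distribution to $N(0,1)$. Independence lets the joint characteristic function factor as $\varphi_{U_n}(s)\varphi_{W_n}(t)$, which converges pointwise to $e^{-(s^2+t^2)/2}$. By L\'evy's continuity theorem, $(U_n,W_n)\xrightarrow{d}(U,W)$ with $U,W$ i.i.d.\ $N(0,1)$.

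\emph{Conclusion.} The linear map $(u,w)\mapsto w_{1,n}u-w_{2,n}w$ has coefficients converging to $(w_1,-w_2)$. By Slutsky's theorem (or the continuous mapping theorem applied to $(w_{1,n},w_{2,n},U_n,W_n)$), $T_n\xrightarrow{d}w_1U-w_2W\sim N(0,w_1^2+w_2^2)=N(0,1)$. The same conclusion can be reached directly: the characteristic function $\varphi_{U_n}(w_{1,n}t)\varphi_{W_n}(-w_{2,n}t)$ converges by the uniform convergence of characteristic functions on compacts.

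\emph{Main obstacle.} The only subtle point is that the two scalings differ, so $T_n$ is not just a fixed linear combination of the two limits. Normalizing by $\sqrt{V_n^\ast}$ produces weights whose squares always sum to one, and this removes the difficulty. For Theorem~\ref{two_sample_asymptotics}, I would take $a_{n_1}=\sqrt{n_1}$, $b_{n_2}=\sqrt{n_2}$, and the convergences from Theorem~\ref{CLT_root}. Then $r_n=\sqrt{n_1/n_2}\to\sqrt\rho$, and the lemma applies.
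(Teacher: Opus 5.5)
Your proof is correct and uses the same ingredients as the paper's: independence to factor the characteristic function, L\'evy's continuity theorem, and a deterministic rescaling. The difference is the order of operations.

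The paper works with the unnormalized combination $a_{n_1}\bigl[(\hat\theta_1-\hat\theta_2)-(\theta_1-\theta_2)\bigr]=U_n-r_nV_n$, where $U_n=a_{n_1}(\hat\theta_1-\theta_1)$ and $V_n=b_{n_2}(\hat\theta_2-\theta_2)$. It shows $\varphi_{U_n}(t)\varphi_{V_n}(-r_nt)\to\exp\bigl[-\tfrac12(\sigma_1^2+\rho^2\sigma_2^2)t^2\bigr]$, and only then divides by $a_{n_1}\sqrt{V_n^\ast}=\sqrt{\sigma_1^2+r_n^2\sigma_2^2}$. Along that route, $r_n\to\rho<\infty$ is genuinely needed to get a finite limit.

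You standardize first, so that $w_{1,n}^2+w_{2,n}^2=1$ identically. That is why the hypothesis on $\rho$ is dispensable, as you note. Your direct characteristic-function remark shows this most cleanly. Since $|w_{i,n}|\le 1$, uniform convergence of $\varphi_{U_n}$ and $\varphi_{W_n}$ on $[-|t|,|t|]$ gives $\varphi_{U_n}(w_{1,n}t)\varphi_{W_n}(-w_{2,n}t)-e^{-(w_{1,n}^2+w_{2,n}^2)t^2/2}\to 0$. The subtracted term is exactly $e^{-t^2/2}$, so the weights need not converge at all and no subsequence argument is required.

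Your application to Theorem~\ref{two_sample_asymptotics} is also slightly more careful than the paper's. With $a_{n_1}=\sqrt{n_1}$ and $b_{n_2}=\sqrt{n_2}$, the rate ratio tends to $\sqrt{\rho}$ rather than $\rho$.
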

The proof of the theorem now follows by setting $a_{n_1}=\sqrt{n_1}$, $b_{n_2}=\sqrt{n_2}$ and observing that
\[
\sqrt{n_i}(\hat{\lambda}_{n_i} - \lambda_i^{\circ}) \xrightarrow{d} \mathcal{N}\left(0, \sigma^2_i\right)
\]
with $$\sigma^2_i=\frac{\mathbf{Var}\left[(Z_{\mu_i}-h_0)e^{-\lambda_i^{\circ}(Z_{\mu_i}-h_0)}\right]}{\mathbb{E}\left[(Z_{\mu_i}-h_0)^2e^{-\lambda_i^{\circ}(Z_{\mu_i}-h_0)}\right]^2}$$
for $i \in \{1,2\}$.

\end{proof}
\begin{proof}[Proof of Theorem \ref{normal_consistency_1}]
The core of the argument relies on the large deviation principle for general ERGMs developed in \cite{chatterjee2013estimating}. The expectation $\E[e^{-\frac{\lambda}{m^{v(H)-2}}(H_m-h_0)}]$ can be viewed as the partition function of a new ERGM whose Hamiltonian is the sum of the original ERGM Hamiltonian and a perturbation term related to the $H$-count.

\begin{align*}
\E\left[e^{-\frac{\lambda}{m^{v(H)-2}}(H_m-h_0)}\right] &= 
\frac{1}{Z_m(\beta)}\sum e^{-\frac{\lambda}{m^{v(H)-2}}(H_m(G)-h_0) + m^2\sum \beta_kt(T_k,G)}\\
&= e^{\frac{\lambda h_0}{m^{v(H)-2}}}\times\frac{\sum e^{-\frac{\lambda}{m^{v(H)-2}}H_m(G) + m^2\sum \beta_kt(T_k,G)}}{\sum e^{m^2\sum \beta_kt(T_k,G)}}\\
&= e^{\frac{\lambda h_0}{m^{v(H)-2}}}\times\frac{\sum \exp\{m^2(-\lambda t(H,G)/\lvert Aut(H)\rvert + O(\lambda/m) + \sum \beta_kt(T_k,G))\}}{\sum \exp\{m^2\sum \beta_kt(T_k,G)\}}
\end{align*}

From Theorem 3.1 of \cite{chatterjee2013estimating}, the normalized log-partition function of an ERGM converges to the supremum of a free energy functional over the space of graphons. Applying this result to our perturbed model, we get that
\begin{align}
&\frac{1}{m^2}\ln \left(\E_{\bbeta}\left[e^{-\frac{\lambda}{m^{v(H)-2}}(H_m-h_0)}\right]\right)\nonumber\\
\to & \frac{\lambda h_0}{m^{v(H)}} + \sup_{\tilde{h}\in\tilde{W}}\left(-\frac{\lambda}{|Aut(H)|} t(H,h) + \sum_{k=1}^K \beta_k t(T_k, h) - \frac{1}{2}I(h) \right) - \sup_{\tilde{h}\in\tilde{W}}\left(\sum_{k=1}^K \beta_k t(T_k, h) - \frac{1}{2}I(h) \right)\nonumber\\
=& \mathfrak{g}(\lambda)\label{var_formula}
\end{align}
for all $\lambda \in \mathbb{R}$.
Since $x\mapsto \ln(x)$ is strictly increasing, we need to show the convergence of critical points in the display above. The functions in the sequence above are all strictly convex in $\lambda$ by direct differentiation. The limiting function is convex being a sum of an affine part and a supremum over convex functions. Now the convergence of critical points will follow once we show that the limiting function is strictly convex. This is equivalent to showing that the function
\[
G(\lambda) := \sup_{\tilde{h}\in\tilde{W}}\left(-\frac{\lambda}{|Aut(H)|} t(H,h) + \sum_{k=1}^K \beta_k t(T_k, h) - \frac{1}{2}I(h) \right)
\]
is strictly convex in $\lambda$. Suppose, for the sake of contradiction, that $G(\lambda)$ is not strictly convex. Then there exist $\lambda_1 < \lambda_2$ and $\mu \in (0,1)$ such that
\[
\mu G(\lambda_1) + (1-\mu) G(\lambda_2) = G(\mu \lambda_1 + (1-\mu)\lambda_2).
\]
This implies that there must exist a single graphon $\tilde{w}^* \in \tilde{W}$ that simultaneously achieves the supremum for both \(\lambda_1\) and \(\lambda_2\).

Let $w^*$ be any representative of the class $\tilde{w}^*$. According to the Euler-Lagrange equations derived in Theorem 6.3 of \cite{chatterjee2013estimating}, any maximizing graphon for the ERGM with Hamiltonian defined by statistics $H, T_1, \dots, T_K$ and parameters $(-\frac{\lambda}{|Aut(H)|}, \beta_1, \dots, \beta_K)$ must satisfy:
\[
w^*(x,y) = \frac{\exp\left(2 \left(-\frac{\lambda}{|Aut(H)|}\Delta_H w^*(x,y) + \sum_{k=1}^K \beta_k \Delta_{T_k} w^*(x,y)\right)\right)}{1 + \exp\left(2 \left(-\frac{\lambda}{|Aut(H)|}\Delta_H w^*(x,y) + \sum_{k=1}^K \beta_k \Delta_{T_k} w^*(x,y)\right)\right)},
\]
where $\Delta_F w(x,y)$ is the operator giving the change in density of a graph $F$ when an edge is added at $(x,y)$:
$$
\Delta_F w(x,y) := \sum_{(r,s)\in E(F)} \int_{[0,1]^{|V(F)\setminus\{r,s\}|}} \prod_{\substack{(r',s')\in E(F) \\ (r',s')\neq(r,s)}} w(z_{r'}, z_{s'}) \prod_{\substack{v\in V(F) \\ v \neq r,s}} dz_v
$$
This equation must hold for our fixed $w^*$ for all $\lambda \in [\lambda_1, \lambda_2]$. However, since $H$ is a non-empty graph, the term $\Delta_H w^*(x,y)$ is not identically zero (as established in \cite{chatterjee2013estimating}). This means we can rearrange the equation to solve for $\lambda$:
\[
\lambda = \frac{|Aut(H)|}{\Delta_H w^*(x,y)} \left( \sum_{k=1}^K \beta_k \Delta_{T_k} w^*(x,y) - \frac{1}{2}\text{logit}(w^*(x,y)) \right).
\]
Since the right-hand side is a fixed value determined by $w^*$ and the $\beta_k$, $\lambda$ must be uniquely determined. This contradicts the assumption that the same $w^*$ is a maximizer for two distinct $\lambda$ values. Therefore, the function $G(\lambda)$ must be strictly convex.

\end{proof}

\begin{proof}[Sketch of proof of Theorem \ref{normal_consistency_2}]
 Our goal is to show the convergence in probability of the empirical critical point $\hat{\lambda}_n$ to the theoretical one $\lambda^{\circ}$ in the dense ERGM regime.

The proof hinges on the uniform convergence of the normalized log-partition function:
\[
\frac{1}{m_n^2}\log\left(\frac{1}{n}\sum_{i=1}^n e^{-\frac{\lambda}{m_n^{v(H)-2}} h_{n,i}}\right)
\]
to its limit, which, from Theorem \ref{normal_consistency_1}, is:
\[
\frac{\lambda h_0}{m_n^{v(H)}} + \sup_{\tilde{h}\in\tilde{W}}\left(-\frac{\lambda}{|Aut(H)|} t(H,h) + \sum_{k=1}^K \beta_k t(T_k, h) - \frac{1}{2}I(h) \right) - \sup_{\tilde{h}\in\tilde{W}}\left(\sum_{k=1}^K \beta_k t(T_k, w) - \frac{1}{2}I(h) \right).
\]

The core idea is to apply the log-sum-exp approximation framework from \cite{chatterjee2016nonlinear}. We rewrite the empirical objective function as a log-partition function over the space of all graphs on $m_n$ vertices:
\[
\frac{1}{m_n^2}\log\left(\sum_{G} \frac{N(G)}{n} e^{-\frac{\lambda}{m_n^{v(H)-2}} (H(G)-h_0)}\right) = \frac{\lambda h_0}{m_n^{v(H)}} + \frac{1}{m_n^2} \log \left(\sum_{x \in \{0,1\}^{\binom{m_n}{2}}} e^{f(x)}\right),
\]
where $x$ is the adjacency vector of a graph, $N(G)$ is its count in the sample, and the energy function is $f(x) = \log\frac{N(x)}{n} - \frac{\lambda}{m_n^{v(H)-2}} H(x)$. We decompose this energy function into two parts: 
$$f(x) = f_H(x) + \log\frac{N(x)}{n},\quad \quad f_H(x) = -\frac{\lambda}{m_n^{v(H)-2}}H(x)$$.

\begin{enumerate}
    \item \textbf{H-count Part Control:} The term $f_H(x) = -\frac{\lambda}{m_n^{v(H)-2}}H(x)$ is the standard $H$-count interaction from our maximum entropy problem. As shown in \cite{chatterjee2016nonlinear}, the function and its derivatives are well-behaved after appropriate scaling:
    \[
    \left\|f_H\right\|_\infty = O(m_n^2|\lambda|)\quad, \quad
    \left\|\frac{\partial f_H}{\partial x_e}\right\|_\infty = O(|\lambda|) \quad \text{and} \quad \left\|\frac{\partial^2 f_H}{\partial x_e\partial x_{e'}}\right\|_\infty = O\left(\frac{|\lambda|}{m_n}\right).
    \]

    \item \textbf{ERGM Part Control:} The term $\log\frac{N(x)}{n}$ depends on the empirical sample drawn from the ERGM $\P_{\bbeta}$. The key insight is that for a sufficiently large number of samples $n$ (specifically, when $\binom{m_n}{2} = o(\log n)$), the empirical distribution of graphs will be very close to the true probability distribution $\P_{\bbeta}$. Therefore, with high probability, $\log\frac{N(x)}{n}$ behaves like the log-likelihood of the ERGM itself:
    \[
    \log\frac{N(x)}{n} \approx m_n^2\sum_{k=1}^K \beta_k t(T_k,x) - \log Z_{m_n}(\bbeta),
    \]
    where $t(T_k,x)$ is the homomorphism density of the $k$-th sufficient statistic. The term $\log Z_{m_n}(\bbeta)$ is a constant with respect to $x$ and does not affect the derivatives. The Hamiltonian part, $m_n^2\sum \beta_k t(T_k,x)$, and its derivatives are well-controlled, so that we have:
    \[
    \left\|f_{ERGM}\right\|_\infty = O(m_n^2) \quad, \quad \left\|\frac{\partial f_{ERGM}}{\partial x_e}\right\|_\infty = O(1) \quad \text{and} \quad \left\|\frac{\partial^2 f_{ERGM}}{\partial x_e\partial x_{e'}}\right\|_\infty = O\left(\frac{1}{m_n}\right).
    \]
    where $f_{ERGM} = m_n^2\sum_{k=1}^K \beta_k t(T_k,x) - \log Z_{m_n}(\bbeta)$
    
    \item \textbf{Application of General Result:} The total energy function is $f(x) = f_H(x) + \log\frac{N(x)}{n}$. Combining the bounds, the overall energy function satisfies $\|f\|_\infty = O_P(m_n^2)$, $\|\partial f / \partial x_e\|_\infty = O_P(1)$ and $\|\partial^2 f / (\partial x_e \partial x_{e'})\|_\infty = O_P(1/m_n)$. These are precisely the conditions required by the general log-sum-exp approximation theorem (Theorem 1.6 in \cite{chatterjee2016nonlinear}). The theorem shows that the error terms in the approximation are negligible after normalization by $m_n^2$. This establishes the uniform convergence of the empirical objective function to the desired variational formula $\mathfrak{g}(\lambda)$. Since $\mathfrak{g}(\lambda)$ was shown to be strictly convex in the proof of Theorem \ref{normal_consistency_1}, the convergence of the empirical critical point $\hat{\lambda}_n$ to the unique critical point $\lambda^{\circ}$ of $\mathfrak{g}(\lambda)$ is guaranteed.
\end{enumerate}
\end{proof}

\begin{proof}[Proof of Theorem \ref{CLT_root_2}]
We begin by performing a Taylor expansion of the empirical critical point equation about $\lambda^{\circ}$:
\begin{align*}
&0 = \sum\limits_{i=1}^{n} h_{n,i} e^{-\frac{\lambda^{\circ}}{m_n^{v(H)-2}}h_{n,i}} + (\hat{\lambda}_n - \lambda^{\circ})\sum\limits_{i=1}^{n} \frac{-h_{n,i}^2}{m_n^{v(H)-2}}e^{-\frac{\tilde{\lambda}_n}{m_n^{v(H)-2}}h_{n,i}} \\
&\implies m_n^2(\hat{\lambda}_n - \lambda^{\circ}) = \frac{\sum\limits_{i=1}^{n}\frac{h_{n,i}}{m_n^{v(H)}}e^{-\frac{\lambda^{\circ}}{m_n^{v(H)-2}}h_{n,i}}}{\sum\limits_{i=1}^{n}\frac{h_{n,i}^2}{m_n^{2v(H)}}e^{-\frac{\tilde{\lambda}_n}{m_n^{v(H)-2}}h_{n,i}} }
\end{align*}

For $\lambda^{\circ}=0$, we get
\begin{equation}
m_n^2 \,\hat{\lambda}_n
=
\frac{\frac{1}{n}\displaystyle \sum_{i=1}^{n} \frac{h_{n,i}}{m_n^{v(H)}}}
     {\frac{1}{n}\displaystyle \sum\limits_{i=1}^{n}\frac{h_{n,i}^2}{m_n^{2v(H)}}e^{-\frac{\tilde{\lambda}_n}{m_n^{v(H)-2}}h_{n,i}} }=\frac{\frac{1}{\sqrt{n}}\displaystyle \sum_{i=1}^{n} \frac{h_{n,i}}{m_n^{v(H)-1}}}{\frac1n\displaystyle \sum\limits_{i=1}^{n}\frac{h_{n,i}^2}{m_n^{2v(H)}}e^{-\frac{\tilde{\lambda}_n}{m_n^{v(H)-2}}h_{n,i}}}\frac{1}{\sqrt{n}{m_n}}.\label{lambda_0_asymp}
\end{equation}

Define

\[
    Z_n := \frac{1}{\sqrt{n}} \sum_{i=1}^{n} \frac{h_{n,i}}{m_n^{v(H)-1}} .
\]

We show that
\begin{equation}
    \frac{m_n}{\sqrt[4]{n}} Z_n \xrightarrow{P} 0 .
    \label{num_conv_1}
\end{equation}
It suffices to show that the summands 
\(\tfrac{h_{n,i}}{m_n^{v(H)-1}}\) are subgaussian with mean zero and a variance proxy bounded by a constant independent of \(n\).  
Equivalently, there exists \(\sigma^2>0\) such that for all \(s\in\mathbb{R}\),

\[
    \mathbb{E}\!\left[e^{s h_{n,i}/m_n^{v(H)-1}}\right] \leq e^{\sigma^2 s^2/2},
\]
uniformly over \(i,n\).  
This ensures that \(Z_n\) itself is subgaussian with a variance proxy not depending on \(n\), hence tight.  
Consequently,

\[
    \frac{m_n}{\sqrt[4]{n}} Z_n \xrightarrow{P} 0.
\]

    To this end, we apply \textbf{Theorem 1} from \cite{ganguly2024sub} in the subcritical regime of the ferromagnetic ERGM. Recall from earlier that the partial derivatives of \(\frac{H(x)}{m_n^{v(H)-2}}\) are bounded in the \(l^{\infty}\) norm by a constant. Hence, the lipschitz vector of \(H(x)\) is bounded in \(l^{\infty}\) norm by \(m_n^{v(H)-2}\). By \(\textbf{Theorem 1}\) of \cite{ganguly2024sub}, we get the bound
    \[
    \mathbb{P}(\left\lvert H(x) - \mathbb{E}[H(x)]\right\rvert\geq t)\leq 2e^{-\frac{ct^2}{{m_n\choose 2}m_n^{v(H)-2}\cdot m_n^{v(H)-2}}}
    \]
    for all \(t\in\mathbb{R}\). Replacing \(t\) by \(m_n^{v(H)-1}t\), we get
    \(\mathbb{P}\left(\left\lvert\frac{H(x)}{m_n^{v(H)-1}}-\mathbb{E}\left[\frac{H(x)}{m_n^{v(H)-1}}\right]\right\rvert\geq t\right)\leq 2e^{-ct^2}\), which shows the constant variance proxy.\\
    
 In the next step we show that 
 \begin{equation}V_n:= \sqrt[4]{n}\times m_n\times\frac{1}{n} \sum_{i=1}^{n} \frac{h_{n,i}^2}{m_n^{2v(H)}}e^{-\frac{\tilde{\lambda}_n}{m_n^{v(H)-2}}h_{n,i}} = \Omega_P\left(\frac{1}{m_n}\right).\label{denom_bound}\end{equation}
We will need the empirical mean and second moment.
\[
\overline{h_n}:=\frac{1}{n}\sum_{i=1}^n\frac{h_{n,i}}{m_n^{v(H)}},\qquad
\overline{h_n^2}:=\frac{1}{n}\sum_{i=1}^n\frac{h_{n,i}^2}{m_n^{2v(H)}}.
\]
We will also use the empirical variance
\[
\widehat{\mathrm{Var}}(h_n):=\overline{h_n^2}-\overline{h_n}^2.
\]
For any \(\delta>0\), we have the following chain of inequalities:

\begin{align*}
  &\frac{1}{n}\sum_i \frac{h_{n,i}^2}{m_n^{2v(H)}} e^{-\frac{\tilde{\lambda}_n}{m_n^{v(H)-2}} h_{n,i}}\\
  \ge&
     e^{-\frac{\tilde{\lambda}_n}{m_n^{v(H)-2}}\left(\overline{h_n} - t\sqrt{\overline{h_n^2} - \overline{h_n}^2}\right)} \delta^2 \frac{\# \left\{ \left|\frac{h_{n,i}}{m_n^{v(H)}}\right| \ge \delta, h_{n,i} \ge \overline{h_n} - t \sqrt{\overline{h_n^2} - \overline{h_n}^2} \right\}}{n} \\
    \ge& e^{-\frac{\tilde{\lambda}_n}{m_n^{v(H)-2}}\left(\overline{h_n} - t\sqrt{\overline{h_n^2} - \overline{h_n}^2}\right)} \delta^2 \left(1 - \left( \frac{\# \left\{ \left|\frac{h_{n,i}}{m_n^{v(H)}}\right| < \delta \right\}}{n} + \frac{\# \left\{h_{n,i} < \overline{h_n} - t \sqrt{\overline{h_n^2} - \overline{h_n}^2}\right\}}{n}\right) \right) \\
    \ge & e^{-\frac{\tilde{\lambda}_n}{m_n^{v(H)-2}}\left(\overline{h_n} - t\sqrt{\overline{h_n^2} - \overline{h_n}^2}\right)} \delta^2 \left(1 - \left( \mathbb{P}\left(\left|\frac{h_{n,i}}{m_n^{v(H)}}\right|<\delta\right) + o_p(1) + 1/t^2 \right) \right) \\
    = & e^{-\frac{\tilde{\lambda}_n}{m_n^{v(H)-2}}\left(\overline{h_n} - t\sqrt{\overline{h_n^2} - \overline{h_n}^2}\right)} \delta^2 \left( \mathbb{P}\left(\left|\frac{h_{n,i}}{m_n^{v(H)}}\right| \ge \delta\right) - 1/t^2 - o_p(1) \right)
\end{align*}
where the first inequality follows since $\tilde{\lambda}_n<0$. Now from the central limit theorem for subgraph counts in the subcritical regime of a ferromagnetic ERGM (\textbf{Corollary 1.2} of \cite{winstein2025quantitative}), we get that 
$\mathbb{P}\left(\left|\frac{h_{n,i}}{m_n^{v(H)}}\right|\geq \frac{1}{m_n}\right)\rightarrow \mathbb{P}\left(\left|\mathcal{N}(0,c))\right|\geq 1\right) $
The limit is positive. Denote it by \(l\). Choose \(t>\frac{1}{\sqrt{l}}\).
Then for \(\delta=\frac{1}{m_n}\), the RHS is lower bounded, in probability, by
$$e^{-\frac{\tilde{\lambda}_n}{m_n^{v(H)-2}}\left(\overline{h_n} - t\sqrt{\overline{h_n^2} - \overline{h_n}^2}\right)} \frac{1}{m_n^2} \left( l - 1/t^2 - o_p(1) \right)$$

Now each $h_{n,i}$ has moments at most polynomial in $m_n$ (by the same subgaussian tail argument).  Using $\mathbb{E}[h_{n,i}]=0$ and $\mathbb{E}[h_{n,i}^2] = O(m_n^{2v(H)-2})$  we get that $\overline{h_n}$ converges to $0$ in probability for $n\gg m_n^{2v(H)-2}$. So,we have \(\overline{h_n}-t\sqrt{\overline{h_n^2}-\overline{h_n}^2} = O_p\left(t\sqrt{m_n^{2v(H)-2}}\right) = O_p(m_n^{v(H)-1})\) for fixed \(t\). Hence, the RHS is lower bounded by $e^{-O(\lvert\tilde{\lambda}_n\rvert m_n)}\frac{l-1/t^2}{m_n^2}$ in probability. Now,
\[\sqrt[4]{n}\times m_n\times  e^{-O(\lvert\tilde{\lambda}_n\rvert m_n)}\frac{l-1/t^2}{m_n^2} = \Omega_P\left(\frac{1}{m_n}\right)\]
where we have used the facts \(\tilde{\lambda}_n\xrightarrow{P}0\) and \(n\gg e^{{m_n\choose 2}}\).

Finally, we plug-in \eqref{num_conv_1} and \eqref{denom_bound} in \eqref{lambda_0_asymp} to conclude
that $m_n^2\hat{\lambda}_n\xrightarrow{P} 0$.

\end{proof}

\begin{proof}[Sketch of proof of Theorem \ref{sharp_rates}]
Before proving Theorem \ref{sharp_rates}, we give a brief outline of the proof below. This theorem sharpens the consistency result of Theorem \ref{normal_consistency_2} by establishing the precise asymptotic behavior of $m_n^2(\hat{\lambda}_n-\lambda^{\circ})$.

We begin by performing a Taylor expansion of the empirical critical point equation about $\lambda^{\circ}$:
\begin{align*}
&0 = \sum\limits_{i=1}^{n} h_{n,i} e^{-\frac{\lambda^{\circ}}{m_n^{v(H)-2}}h_{n,i}} + (\hat{\lambda}_n - \lambda^{\circ})\sum\limits_{i=1}^{n} \frac{-h_{n,i}^2}{m_n^{v(H)-2}}e^{-\frac{\tilde{\lambda}_n}{m_n^{v(H)-2}}h_{n,i}}\\
&\implies m_n^2(\hat{\lambda}_n - \lambda^{\circ}) = \frac{\sum\limits_{i=1}^{n}\frac{h_{n,i}}{m_n^{v(H)}}e^{-\frac{\lambda^{\circ}}{m_n^{v(H)-2}}h_{n,i}}}{\sum\limits_{i=1}^{n}\frac{h_{n,i}^2}{m_n^{2v(H)}}e^{-\frac{\tilde{\lambda}_n}{m_n^{v(H)-2}}h_{n,i}} }.
\end{align*}

To analyze the asymptotic behavior of the numerator and denominator separately, we employ two distinct variational formulations based on the log-sum-exp approximation.

For the numerator, we analyze the derivative of the normalized log-partition function. By Danskin's Theorem \cite{danskin1966theory} and the uniform convergence established in the proof of Theorem \ref{normal_consistency_2}, we have:
\begin{equation}
\frac{\sum\limits_{i=1}^{n}\frac{-h_{n,i}}{m_n^{v(H)}}e^{-\frac{\lambda^{\circ}}{m_n^{v(H)-2}}h_{n,i}}}{\sum\limits_{i=1}^{n}e^{-\frac{\lambda^{\circ}}{m_n^{v(H)-2}}h_{n,i}}} \xrightarrow{P} \frac{d}{d\lambda} \mathfrak{g}(\lambda) \bigg|_{\lambda=\lambda^{\circ}} = \frac{p_0^{e(H)}}{|Aut(H)|} - \frac{(u^*)^{e(H)}}{|Aut(H)|}
\label{num_conv}\end{equation}
where $u^*$ is the unique constant graphon that maximizes the variational problem defining $\mathfrak{g}(\lambda^\circ)$ from \eqref{var_form_lambda}.

For the denominator, we introduce a crucial auxiliary function by adding a squared $H$-count term:
\begin{align*}
\frac{1}{m_n^2} \log \sum_{G} \exp\left(-\frac{\lambda}{m_n^{v(H)-2}} h(G) + \log \frac{N(G)}{n} + \frac{\alpha}{m_n^{2v(H)-2}} h(G)^2\right)
\end{align*}
where $h(G) = H(G) - h_0$. We analyze this function by:
\begin{enumerate}
    \item Reparameterizing the exponent to isolate the terms involving the statistics $H(G)$ and $H(G)^2$, analogous to the proof of Theorem \ref{normal_consistency_1}. The term $\log \frac{N(G)}{n}$ is now approximated by the ERGM Hamiltonian $\sum_k \beta_k T_k(G)$.

    \item Showing this function is differentiable at $\alpha=0$ by analyzing the behavior of the squared $H$ statistic $T_H^2(x) = (H(x))^2$. As this is a combinatorial property, the derivative bounds remain valid:
    \[
    \left|\frac{\partial}{\partial x_{ij}}\left(\frac{\alpha}{m_n^{2v(H)-2}}T_H^2(x)\right)\right|\le O(|\alpha|), \quad
    \left|\frac{\partial^2}{\partial x_{ij}\partial x_{kl}}\left(\frac{\alpha}{m_n^{2v(H)-2}}T_H^2(x)\right)\right|\le \frac{O(|\alpha|)}{m_n}.
    \]
    
    \item Establishing that the auxiliary function converges uniformly to its limiting variational form:
    \begin{align*}
     \sup_{\tilde{h}\in\tilde{W}}\bigg(&-\left(\frac{\lambda^{\circ}}{|Aut(H)|}+\frac{2\alpha p_0^{e(H)}}{|Aut(H)|^2}\right) t(H,h) + \sum_{k=1}^K \beta_k t(T_k, h) \\
     &+ \frac{\alpha}{|Aut(H)|^2} t(H,h)^2 - \frac{1}{2} I(h)\bigg) + \frac{\lambda^{\circ} p_0^{e(H)}}{\lvert Aut(H)\rvert} + \frac{\alpha p_0^{2e(H)}}{\lvert Aut(H)\rvert^2} \\&- \sup_{\tilde{h}\in\tilde{W}}\bigg( \sum_{k=1}^K \beta_k t(T_k, h) 
      - \frac{1}{2} I(h)\bigg).
    \end{align*}
\end{enumerate}

Using Danskin's theorem \cite{danskin1966theory}, we compute the derivative of this limiting function with respect to $\alpha$ at $\alpha=0$, which captures the limiting behavior of the denominator. Since the supremum in the limit is uniquely attained at the constant graphon $u^*$, we obtain:
\begin{equation}
\frac{\sum\limits_{i=1}^{n}\frac{h_{n,i}^2}{m_n^{2v(H)}} e^{-\frac{\lambda^{\circ}}{m_n^{v(H)-2}} h_{n,i}}}{\sum\limits_{i=1}^{n} e^{-\frac{\lambda^{\circ}}{m_n^{v(H)-2}} h_{n,i}}}\xrightarrow{P} \left(\frac{(u^*)^{e(H)}}{|Aut(H)|} - \frac{p_0^{e(H)}}{|Aut(H)|}\right)^2
\label{denom_conv}\end{equation}

Combining the limits of the numerator and denominator yields the stated convergence:
\begin{equation}
m_n^2(\hat{\lambda}_n - \lambda^{\circ}) \xrightarrow{P} \frac{\frac{(u^*)^{e(H)}}{|Aut(H)|} - \frac{p_0^{e(H)}}{|Aut(H)|}}{\left(\frac{(u^*)^{e(H)}}{|Aut(H)|} - \frac{p_0^{e(H)}}{|Aut(H)|}\right)^2} = \frac{1}{\frac{(u^*)^{e(H)}}{|Aut(H)|} - \frac{p_0^{e(H)}}{|Aut(H)|}}.\label{sharp_rates_statement}
\end{equation}

\end{proof}

\begin{proof}[Proof of Theorem \ref{dense_two_sample}]
    Take \(p_0\) to be \((1-\epsilon)\), so that \(p, \tilde{p} < p_0\).
We apply Theorem \ref{sharp_rates} to the pair \((p,p_0)\) to
obtain
\begin{equation}
\label{limit_1}
m_n^2(\hat{\lambda}_n-\lambda^{\circ}(p))\xrightarrow{P} \frac{1}{\frac{(u^*(p))^{e(H)}}{\lvert Aut(H)\rvert}-\frac{p_0^{e(H)}}{\lvert Aut(H)\rvert}}
\end{equation}

and to the pair, \((\tilde{p},p_0)\) to obtain
\begin{equation}
\label{limit_2}
m_n^2(\tilde{\lambda}_n-\lambda^{\circ}(\tilde{p}))\xrightarrow{P} \frac{1}{\frac{(u^*(\tilde{p}))^{e(H)}}{\lvert Aut(H)\rvert}-\frac{p_0^{e(H)}}{\lvert Aut(H)\rvert}}
\end{equation}
where \(\lambda^{\circ}(p) = \lim \hat{\lambda}_n\) and \(\lambda^{\circ}(\tilde{p})=\lim \tilde{\lambda}_n\). It can be shown from the computations of Theorem \ref{sharp_rates} that 
\begin{itemize}
\item \(p = \tilde{p} \implies \lambda^{\circ}(p) = \lambda^{\circ}(\tilde{p})\) and \(u^*(p) = u^*(\tilde{p})\) 
\item \(p < \tilde{p} \implies \lambda^{\circ}(p) < \lambda^{\circ}(\tilde{p})\) and \(u^*(p) < u^*(\tilde{p})\)
\item \(p > \tilde{p} \implies \lambda^{\circ}(p) > \lambda^{\circ}(\tilde{p})\) and \(u^*({p}) > u^*({\tilde{p}})\)
\end{itemize}

Hence subtracting \eqref{limit_2} from \eqref{limit_1} gives the result.

\end{proof}

\section{Proofs of Theorems \ref{normal_consistency_2} and \ref{sharp_rates} \label{sec:3.8,3.10}}

\begin{proof}[Proof of Theorem \ref{normal_consistency_2}]
For a graph \(G\) on \(m\) vertices (with \(m=m_n\)), let
\[
x=(x_{ij})_{1\le i<j\le m}\in\{0,1\}^{\binom{m}{2}}
\]
be its edge–indicator vector. We use $F(x)$ to denote the count of a generic subgraph $F$ in the graph represented by $x$. The quantity we wish to analyze is:
\[
\frac{1}{n}\sum_{i=1}^n e^{-\frac{\lambda}{m_n^{v(H)-2}} h_{n,i}}
\]
where $h_{n,i} = H(\mathcal{G}_{n,i}) - h_0$. This can be rewritten as a log-partition function over all graphs:
\[
\sum_{x\in\{0,1\}^{\binom{m}{2}}} \frac{N(x)}{n} \exp\left(-\frac{\lambda}{m_n^{v(H)-2}} (H(x)-h_0)\right) = \exp\left(\frac{\lambda h_0}{m_n^{v(H)-2}}\right) \sum_{x\in\{0,1\}^{\binom{m}{2}}}\exp\!\Bigl(f(x)\Bigr),
\]
with the energy function defined as
\begin{equation}
\label{eq:f_def_gen}
f(x) = -\frac{\lambda}{m^{v(H)-2}} H(x) + \log\Bigl(\frac{N(x)}{n}\Bigr).
\end{equation}

\subsubsection*{Combination of H-Count and ERGM Bounds}

Under the assumption that the samples are drawn from an ERGM with parameters $\bbeta$, and provided that $\binom{m_n}{2} = o(\log n)$, the term $\log(N(x)/n)$ behaves, with high probability, like the log-likelihood of the ERGM. That is,
\[
\log\left(\frac{N(x)}{n}\right) \approx m_n^2\sum_{k=1}^K \beta_k t(T_k,x) - \log Z_m(\bbeta).
\]
In particular we will show that not only the functions are approximated by each other, but the same holds for the first and second derivatives. 
The term $\log Z_m(\bbeta)$ is a constant and can be ignored for derivative calculations. Let us denote the right hand side of the above display by $f_{ERGM}$. The function $f(x)$ in \eqref{eq:f_def_gen} thus decomposes as
\[
f(x) = f_H(x) + \log\left(\frac{N(x)}{n}\right),
\]
with
\(
f_H(x)=-\frac{\lambda}{m^{v(H)-2}}H(x) 
\). For the \(H\)-count part \(f_H(x)\), standard combinatorial arguments from \cite{chatterjee2016nonlinear} (Lemma 5.1) provide the bounds after appropriate scaling:
\[
\left\|f_H\right\|_\infty \leq m_n^2 \lvert \lambda\rvert \quad, \quad \left\|\frac{\partial f_H}{\partial x_e}\right\|_\infty \le C_H|\lambda|, \quad \text{and} \quad \left\|\frac{\partial^2 f_H}{\partial x_e\partial x_{e'}}\right\|_\infty \le  \frac{C_H'|\lambda|}{m_n}.
\]
The ERGM part, $f_{ERGM}(x)$, is a linear combination of subgraph counts. It is therefore also bounded similarly:
\[
\left\|f_{ERGM}\right\|_\infty \le m_n^2\sum_{k=1}^K |\beta_k| \left\|t(T_k,x)\right\|_\infty \le m_n^2\sum_{k=1}^K |\beta_k| = m_n^2\|\bbeta\|_1,
\]
\[
\left\|\frac{\partial f_{ERGM}}{\partial x_e}\right\|_\infty \le m_n^2\sum_{k=1}^K |\beta_k| \left\|\frac{\partial t(T_k,x)}{\partial x_e}\right\|_\infty \le \sum_{k=1}^K |\beta_k| C_k = O(\|\bbeta\|_1),
\]
\[
\left\|\frac{\partial^2 f_{ERGM}}{\partial x_e \partial x_{e'}}\right\|_\infty \le m_n^2\sum_{k=1}^K |\beta_k| \left\|\frac{\partial^2 t(T_k,x)}{\partial x_e \partial x_{e'}}\right\|_\infty \le \frac{1}{m_n}\sum_{k=1}^K |\beta_k| C'_k = O\left(\frac{\|\bbeta\|_1}{m_n}\right).
\]
Along with \textbf{Lemmas} \ref{weak_law}, \ref{weak_law_der} and \ref{weak_law_2der}, these imply

\[
\left\|\log\left(\frac{N(x)}{n}\right)\right\|_\infty \le O(m_n^2\|\bbeta\|_1) + o_P(1),
\]
\[
\left\|\frac{\partial}{\partial x_e}\log\left(\frac{N(x)}{n}\right)\right\|_\infty \le O(\|\bbeta\|_1) + o_P(1),
\]
\[
\left\|\frac{\partial^2}{\partial x_e \partial x_{e'}}\log\left(\frac{N(x)}{n}\right)\right\|_\infty \le  \frac{O(\|\bbeta\|_1)+o_P(1)}{m_n}.
\]

Since these combine additively, the overall bounds for the total energy function \(f(x)\) are:
\[
\left\|f\right\|_\infty \le m_n^2|\lambda| + O(m_n^2\|\bbeta\|_1) + o_P(1)
\]
\[
\left\|\frac{\partial f}{\partial x_e}\right\|_\infty \le C_H|\lambda| + O(\|\bbeta\|_1) + o_P(1)
\]
and
\[
\left\|\frac{\partial^2 f}{\partial x_e\partial x_{e'}}\right\|_\infty \le \frac{C_H'|\lambda| + O(\|\bbeta\|_1)+ o_P(1)}{m_n} .
\]
Here, the $o_P(1)$ terms account for the approximation of $\log(N(x)/n)$ by the ERGM log-likelihood, which holds uniformly for all $x$ under the condition $\binom{m_n}{2}=o(\log n)$ by adapting the lemmas in the appendix.

The next step is to bound the complexity of the gradient, $\nabla f$. Consider the function
\[g(x) = f_H(x) + f_{ERGM}(x)\]

Following the argument for Theorem 1.7 in \cite{chatterjee2016nonlinear}, the complexity of the gradient for a general ERGM Hamiltonian is well-controlled. The gradient vector $\nabla g(x)$ is a linear combination of the gradient vectors for each of the statistics $H, T_1, \dots, T_k$. As shown in Lemma 5.2 of \cite{chatterjee2016nonlinear}, each of these gradient maps has low complexity. The complexity of their linear combination can be bounded, leading to a bound on the size of the $\epsilon$-net for the image of $\nabla g$:
\[
\log |\mathcal{D}(\epsilon)| \le \frac{C'\,m\,B^4}{\epsilon^4}\log\frac{C'\,B^4}{\epsilon^4},
\]
where $C'$ is a constant and $B$ is an $O(1)$ term that depends on $|\lambda|$ and the norm of the parameter vector $\|\bbeta\|_1$:
\[
B = 1 + C_H|\lambda| + \sum_{k=1}^K |\beta_k| C_k.
\]

\textbf{Remark on boundedness of \(\lambda\): } Let $\hat{\lambda}_n$ denote the (unique) solution, and suppose $\hat{\lambda}_n \xrightarrow{P} \lambda^{\circ}$ by the consistency result established above. Choose $M > |\lambda^{\circ}| + 1$. Then for any $\varepsilon>0$, we have, for sufficiently large \(n\),
\[
P\!\left(|\hat{\lambda}_n| > M\right)
\;\le\;
P\!\left(|\hat{\lambda}_n - \lambda^{\circ}| > 1\right)
\;<\; \varepsilon,
\]
so $\{\hat{\lambda}_n\}_{n}$ is bounded in probability and, with probability tending to one, $\hat{\lambda}_n \in [-M,M]$. Consequently, in all preceding arguments, $\lvert\lambda\rvert$ can be bounded by \(M\) with high probability.
\vspace{0.5cm}

By \textbf{Lemma} \ref{weak_law_der}, the same \(\epsilon\)-net works for the image of \(\nabla f\) if \(\epsilon \) is not as small as \(o_p(\frac{1}{m_n})\). 

\subsubsection*{Application of the Log-Sum-Exp Approximation and Final Error Bounds}

With the derivative and complexity bounds for the energy function $f(x)$ established, we can now apply the general log-sum-exp approximation theorem (Theorem 1.6 in \cite{chatterjee2016nonlinear}). The theorem implies that:
\begin{align}
\log \sum_{x\in\{0,1\}^{\binom{m}{2}}}\exp(f(x))
&\le \sup_{x\in[0,1]^{\binom{m}{2}}}\Bigl(f(x)-I(x)\Bigr) \nonumber\\[1mm]
&\quad + \frac{1}{4}\Bigl(\sum_{e}b_e^2\Bigr)^{1/2}\epsilon
+3n_v\epsilon + \log |\mathcal{D}(\epsilon)| + S, \label{eq:upper_bound_final}
\end{align}
and
\begin{align}
\log \sum_{x\in\{0,1\}^{\binom{m}{2}}}\exp(f(x))
&\ge \sup_{x\in[0,1]^{\binom{m}{2}}}\Bigl(f(x)-I(x)\Bigr) - \frac{1}{2}\sum_{e} c_{ee}, \label{eq:lower_bound_final}
\end{align}
where:
\begin{itemize}
  \item \(n_v=\binom{m}{2}\sim \frac{m^2}{2}\) is the number of variables.
  \item \(b_e\) is the bound on the first derivative for coordinate \(e\), so that \(b_e\le B\) where $B$ is a constant depending on $\lambda$ and $\bbeta$.
  \item \(c_{ee}\) is the bound on the diagonal second derivative, which for our combined energy function is bounded by \(O(B/m)\) with high probability.
  \item \(S\) is the smoothness term which, after normalization by \(n_v\), is \(O(m^{-1/2})\) (see Lemma \ref{smoothness}).
\end{itemize}
Following the standard procedure for applying this theorem, we choose
\[
\epsilon = \Bigl(\frac{B^3\log m_n}{m_n}\Bigr)^{1/5}.
\]
After dividing by \(m^2\), the error terms in the upper bound \eqref{eq:upper_bound_final} converge to zero. The lower bound error term, $\frac{1}{m^2}\sum c_{ee} = \frac{1}{m^2} O(n_v \cdot B/m) = O(B/m)$, also vanishes. This leads to the following convergence result, stated as a lemma.

\begin{lemma}[Convergence of Partition Function for the Empirical ERGM]
\label{lem:partition_func_conv_gen}
Assume that \(m=m_n\to\infty\) and that
\[
\binom{m_n}{2}=o(\log n).
\]
Let the sample graphs be drawn from an ERGM with parameters $\bbeta = (\beta_1, \dots, \beta_K)$. Then there exist constants \(c,C>0\) (depending on the choice of statistics $H, T_1, \dots, T_K$) such that, with probability tending to 1 as \(n\to\infty\),
\[
-c\,B\, m^{-1} \le \frac{\log Z_m(\lambda)}{m^2} - L_m(\lambda) \le C\,B^{8/5} m^{-1/5} (\log m)^{1/5}\Bigl(1+\frac{\log B}{\log m}\Bigr) + C\,B^2 m^{-1/2},
\]
where
\[
Z_m(\lambda) =\sum_{G}\exp\!\Biggl(-\frac{\lambda}{m^{v(H)-2}}H(G) + \log\Bigl(\frac{N(G)}{n}\Bigr)\Biggr),
\]
\[
L_m(\lambda) = \sup_{x\in[0,1]^{\binom{m}{2}}}\Biggl\{\frac{1}{m^2}\left(-\frac{\lambda}{m^{v(H)-2}}H(x)+\log\Bigl(\frac{N(x)}{n}\Bigr)\right)-\frac{I(x)}{m^2}\Biggr\},
\]
and the constant $B$ is given by
\[
B = 1+\frac{|\lambda|}{\lvert Aut(H)\rvert}+\sum_{k=1}^K |\beta_k|.
\]
\end{lemma}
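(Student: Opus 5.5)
The plan is to apply Theorem 1.6 of \cite{chatterjee2016nonlinear} directly to the energy function $f(x)=-\frac{\lambda}{m^{v(H)-2}}H(x)+\log(N(x)/n)$, working on the high-probability event on which the derivative bounds above hold. Everything is arranged so that the two-sided bounds \eqref{eq:upper_bound_final}--\eqref{eq:lower_bound_final} become the claimed inequalities after division by $m^2$. Strictly speaking, $\log(N(x)/n)$ is only defined on the hypercube $\{0,1\}^{\binom m2}$. I would therefore work with its multilinear extension, i.e.\ the expectation under independent edges with marginals $x$, as described in the notational preliminaries, and use Lemmas \ref{weak_law}, \ref{weak_law_der}, \ref{weak_law_2der} to transfer the bounds on $f_{ERGM}$ and its first two derivatives to this extension up to $o_P(1)$ (respectively $o_P(1)/m$) errors. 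I would also restrict to the event $\{N(x)\geq 1 \ \forall x\}$. This event has probability tending to one because $\binom{m_n}{2}=o(\log n)$: each of the $2^{\binom m2}$ graphs has ERGM probability at least $e^{-Cm^2}$, so $n e^{-Cm^2}\to\infty$ uniformly, and a union bound applies.

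First, the lower bound. Theorem 1.6 gives $\log\sum e^{f}\geq \sup_x(f-I)-\frac12\sum_e c_{ee}$. Here $c_{ee}\le C B/m$ from the second-derivative bound, and there are $\binom m2$ coordinates, so the error is at most $CBm$. Dividing by $m^2$ yields $-cBm^{-1}$; the $\log n_v$-type terms are absent in the lower bound.

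Second, the upper bound. I would collect the terms one by one. Since $b_e\le B$, the term $\frac14(\sum_e b_e^2)^{1/2}\epsilon\le CBm\epsilon$. The term $3n_v\epsilon\le Cm^2\epsilon$. The net term satisfies $\log|\mathcal D(\epsilon)|\le C m B^4\epsilon^{-4}\log(CB^4/\epsilon^4)$ by the complexity bound above. Lemma \ref{smoothness} gives $S/m^2\le CB^2m^{-1/2}$. I would then substitute $\epsilon=(B^3\log m/m)^{1/5}$ into $m^2\epsilon$ and $mB^4\epsilon^{-4}\log(\cdot)$. The first gives $B^{3/5}m^{-1/5}(\log m)^{1/5}$ after normalization. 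The second gives $B^{4-12/5}m^{-1/5}(\log m)^{-4/5}\log(\cdot)$, and since $\log(B^4/\epsilon^4)\lesssim \log m+\log B$, this is $\lesssim B^{8/5}m^{-1/5}(\log m)^{1/5}(1+\log B/\log m)$. Using $B\ge1$, this dominates the first term, and the $CBm\epsilon/m^2$ term is of lower order. Together these give the stated right-hand side.

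The main obstacle is the justification that the gradient $\nabla f$, rather than $\nabla g$, admits the same $\epsilon$-net, and that the second-derivative bound holds for the extension of $\log(N(x)/n)$ uniformly in $x$. The nets for $\nabla g$ come from Lemma 5.2 of \cite{chatterjee2016nonlinear}. The perturbation $\nabla(f-g)$ is uniformly $o_P(1)$ coordinatewise by Lemma \ref{weak_law_der}, but the net is taken in $\ell^2$ over $\binom m2$ coordinates. So I would need the stronger statement that the coordinatewise error is $o_P(1/m)$, so that the $\ell^2$ error is $o_P(1)$ and can be absorbed by enlarging $\epsilon$ to $2\epsilon$. I would try to get this from the quantitative version of the weak law, since each empirical frequency has relative error $O_P((n e^{-Cm^2})^{-1/2})$, which is superpolynomially small under $\binom m2=o(\log n)$. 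Finally, on the intersection of these events, $B$ is treated as deterministic (using the remark on boundedness of $\lambda$), which yields the lemma with probability tending to 1.
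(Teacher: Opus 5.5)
Your proposal is correct and follows the paper's proof. The shared ingredients are:
\begin{itemize}
\item Theorem 1.6 of \cite{chatterjee2016nonlinear} applied to the multilinear extension of $f=f_H+\log(N(x)/n)$, with derivative bounds transferred from $f_{ERGM}$ via Lemmas \ref{weak_law}--\ref{weak_law_2der};
\item the gradient net from Lemma 5.2 of \cite{chatterjee2016nonlinear} plus a perturbation, and Lemma \ref{smoothness} for the smoothness term;
\item the lower bound $-\frac12\sum_e c_{ee}$ and the choice $\epsilon=(B^3\log m/m)^{1/5}$.
\end{itemize}

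Your worry about the net is milder than you think. Lemma \ref{weak_law_der} already gives coordinatewise error below $\epsilon'/m$, not merely $o_P(1)$. Moreover, the net condition in \cite{chatterjee2016nonlinear} is the normalized one, $\sum_e (f_e(x)-d_e)^2\le n_v\epsilon^2$. So the paper can absorb the perturbation directly, since $\epsilon\gg 1/m$.

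Your quantitative, superpolynomially small weak-law error is still worth keeping. It is what actually keeps $c_{ee'}$ of order $B/m^2$ for non-adjacent pairs, which Lemma \ref{smoothness} relies on and which the $o_P(1/m)$ bound of Lemma \ref{weak_law_2der} alone does not guarantee.
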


\textbf{The limit of $L_m$}

\medskip

To find the limit of $L_m$, we decompose the term $\frac{1}{m^2}\log\left(\frac{N(x)}{n}\right)$.
For any graph $x \in \{0,1\}^{\binom{m}{2}}$, which represents an adjacency vector, we can write the log of the empirical frequency as the sum of the log-probability under the $\mathbb{P}_{\beta}$ model and a deviation term:
\[
\frac{1}{m^2}\log\left(\frac{N(x)}{n}\right) = \frac{1}{m^2}\log p_{\beta}(x) + \frac{1}{m^2}\log\left(\frac{N(x)}{n}\right) - \frac{1}{m^2}\log p_{\beta}(x)
\]

Substituting this decomposition back into the expression for $L_m$, we obtain:
\begin{align*}
L_m =& \sup_{x\in[0,1]^{\binom{m}{2}}}\Biggl\{-\frac{\lambda}{m^{v(H)}}H(x) - \frac{I(x)}{m^2} + \frac{1}{m^2}\log p_{\beta}(x)  \\
& \qquad\qquad\qquad + \frac{1}{m^2}\log\left(\frac{N(x)}{n}\right) - \frac{1}{m^2}\log p_{\beta}(x)
 \Biggr\}\\
\le & \sup_{x\in[0,1]^{\binom{m}{2}}}\Biggl\{-\frac{\lambda}{m^{v(H)}}H(x) - \frac{I(x)}{m^2} + \frac{1}{m^2}\log p_{\beta}(x)\Biggr\} \\
& \qquad + \sup_{x\in[0,1]^{\binom{m}{2}}} \left\{ \frac{1}{m^2}\log\left(\frac{N(x)}{n}\right) - \frac{1}{m^2}\log p_{\beta}(x)
 \right\}.
\end{align*}
By the uniform weak law of large numbers (Lemma \ref{weak_law}), the ratio inside the logarithm converges to 1 in probability, uniformly over all graphs $x$. Therefore, the error term vanishes as $n \to \infty$:
\[
\frac{1}{m^2}\log\left(\frac{N(x)}{n}\right) - \frac{1}{m^2}\log p_{\beta}(x)
= o_P(1).
\]
A similar lower bound can be established. This means that the limit of $L_m$ is determined by the limit of the main variational term
$$\sup_{x\in[0,1]^{\binom{m}{2}}}\Biggl\{-\frac{\lambda}{m^{v(H)}}H(x) - \frac{I(x)}{m^2} + \frac{1}{m^2}\log p_{\beta}(x)\Biggr\}.$$
Now we take limits as \(m\rightarrow\infty\):
\begin{align*}
&\lim_{m\to\infty} \Biggl[  \sup_{x\in[0,1]^{\binom{m}{2}}}\Biggl\{-\frac{\lambda}{m^{v(H)}}H(x) + \frac{1}{m^2}\log p_{\beta}(x) - \frac{I(x)}{m^2}\Biggr\} \Biggr] \\
&= \lim_{m\to\infty} \Biggl[  \sup_{x\in[0,1]^{\binom{m}{2}}}\Biggl\{-\frac{\lambda}{m^{v(H)}}H(x) + \sum_{k=1}^{K}\beta_k t(T_k,x) - \frac{I(x)}{m^2}\Biggr\}  - \frac{1}{m^2}\log Z_m(\beta)\Biggr] \\
&= \sup_{x\in\cup_{m}[0,1]^{{m\choose 2}}}\Biggl\{-\frac{\lambda}{m^{v(H)}}H(x) + \sum_{k=1}^{K}\beta_k t(T_k,x) - \frac{I(x)}{m^2}\Biggr\} - \sup_{\tilde{h}\in\tilde{W}}\left( \sum_{k=1}^K \beta_k t(T_k, h)-\frac{1}{2} I(h)\right)\\
&= \sup_{x\in\cup_{m}[0,1]^{{m\choose 2}}}\Biggl\{-\frac{\lambda}{\lvert Aut(H)\rvert}t(H,x) + \sum_{k=1}^{K}\beta_k t(T_k,x) - \frac{I(x)}{m^2}\Biggr\} - \sup_{\tilde{h}\in\tilde{W}}\left( \sum_{k=1}^K \beta_k t(T_k, h)-\frac{1}{2} I(h)\right)\\
&= \sup_{\tilde{h}\in\tilde{W}}\left(-\frac{\lambda}{\lvert Aut(H) \rvert} t(H,h) + \sum_{k=1}^K \beta_k t(T_k, h)-\frac{1}{2} I(h)\right) - \sup_{\tilde{h}\in\tilde{W}}\left( \sum_{k=1}^K \beta_k t(T_k, h)-\frac{1}{2} I(h)\right).
\end{align*}
where we have used the result \(\frac{t(H,G)}{\lvert Aut(H)\rvert}  = \frac{H(G)}{(m)_{v(H)}} + O_H(1/m)\) to replace \(\frac{H(x)}{m^{v(H)}}\) by \(\frac{t(H,x)}{\lvert Aut(H)\rvert}\), and the last step follows by the following lemma whose proof is given in the appendix

\begin{lemma}{\label{density}}
Fix finite simple graphs $T_1,\dots,T_K$ and $\beta_1,\dots,\beta_K\in\mathbb{R}$.
Let
\[
F(h):=\sum_{k=1}^K \beta_k\,t(T_k,h)\;-\;\frac{1}{2}I(h)
\]
for \(h\in W\).
Let $S\subset W$ be the class of step functions on \([0,1]^2\) taking values in \([0,1]\).
Then
\[
\sup_{h\in W} F(h)\;=\;\sup_{h\in S} F(h),
\]

\end{lemma}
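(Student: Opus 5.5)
Since $S\subset W$, the inequality $\sup_{S}F\le\sup_{W}F$ is trivial, so it suffices to show that for every $h\in W$ and every $\varepsilon>0$ there is a step function $h_\varepsilon\in S$ with $F(h_\varepsilon)\ge F(h)-\varepsilon$. We may restrict to $h$ with $I(h)$ finite; this is automatic because $x\log x+(1-x)\log(1-x)$ is bounded on $[0,1]$, so $-\tfrac12 I(h)\in[0,\tfrac12\log 2]$ for every $h\in W$. The plan is to approximate $h$ by its conditional expectations on the dyadic partition: let $\mathcal{P}_k$ be the partition of $[0,1]$ into $2^k$ intervals of equal length, and set $h_k:=\E[h\mid \mathcal{P}_k\times\mathcal{P}_k]$, i.e. $h_k$ is the average of $h$ over each product cell. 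Each $h_k$ is symmetric, takes values in $[0,1]$, and is a step function, so $h_k\in S$.

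The first step handles the subgraph densities. By the martingale convergence theorem (or Lebesgue differentiation), $h_k\to h$ almost everywhere and in $L^1([0,1]^2)$. For any finite simple graph $T$, the map $h\mapsto t(T,h)$ is Lipschitz in $L^1$ on functions bounded by $1$: by the standard telescoping argument, replacing one edge factor at a time gives $|t(T,h)-t(T,g)|\le e(T)\|h-g\|_1$. Hence $t(T_k,h_k)\to t(T_k,h)$ for every $k$, and so $\sum_k\beta_k t(T_k,h_k)\to\sum_k\beta_k t(T_k,h)$. Alternatively one could use cut-norm convergence, but $L^1$ convergence is simpler here.

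The second step handles the entropy term, and it is the only point needing care, since $I$ is not $L^1$-continuous in an obvious quantitative way near $0$ and $1$. Two arguments are available. The cleaner one uses convexity: $\phi(x)=x\log x+(1-x)\log(1-x)$ is convex on $[0,1]$, so Jensen's inequality for conditional expectation gives $\phi(h_k)\le \E[\phi(h)\mid\mathcal{P}_k\times\mathcal{P}_k]$ pointwise. Integrating yields $I(h_k)\le I(h)$, and therefore $-\tfrac12 I(h_k)\ge -\tfrac12 I(h)$ with no limit required. As a backup, $\phi$ is bounded and continuous on $[0,1]$, so the almost-everywhere convergence $h_k\to h$ together with dominated convergence gives $I(h_k)\to I(h)$ directly.

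Combining the two steps, $\liminf_k F(h_k)\ge F(h)$, which gives the reverse inequality and hence $\sup_W F=\sup_S F$. I expect no genuine obstacle. The only things to verify are that the averaging preserves symmetry and the range $[0,1]$, and that the telescoping bound for $t(T,\cdot)$ is correctly stated for $[0,1]$-valued functions.
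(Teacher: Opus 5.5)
Your argument is correct and is essentially the paper's proof: approximate $h$ by its conditional expectation onto a product partition, use Jensen's inequality to get $I(h_{\mathcal A})\le I(h)$, and pass to the limit in the homomorphism densities. The only difference is in how the partitions are chosen and in which topology the densities converge. The paper invokes the Frieze--Kannan weak regularity lemma and cut-metric continuity of $t(T,\cdot)$. You instead use dyadic partitions, martingale convergence in $L^1$, and the telescoping bound $|t(T,h)-t(T,g)|\le e(T)\|h-g\|_1$, which is a more elementary substitute and fully sufficient here.
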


Note: The space \(S\) can be identified with the space \(\cup_m [0,1]^{m\choose 2}\) as follows: represent an element of \([0,1]^{m\choose 2}\) as the adjacency vector of a weighted graph, and then consider the adjacency matrix. The adjacency matrix can be represented as a step function on \([0,1]^2\) in a natural way. The other direction is also straightforward.

This shows that $\lim_{m\to\infty} L_m$ exists in probability and is equal to the desired variational formula, which in turn implies that
\[
\frac{1}{m_n^2}\log\left(\frac{1}{n}\sum_{i=1}^n e^{-\frac{\lambda}{m_n^{v(H)-2}} h_{n,i}}\right)
\]
converges in probability to $\mathfrak{g}(\lambda)$. The convergence of the root follows from an argument similar to the proof of Theorem \ref{normal_consistency_1}.

\end{proof}

\begin{proof}[Proof of Theorem \ref{sharp_rates}]

Expanding as before about $\lambda^{\circ}$, we get

\begin{align*}
&0 = \sum\limits_{i=1}^{n} h_{n,i} e^{-\frac{\lambda^{\circ}}{m_n^{v(H)-2}}h_{n,i}} + (\hat{\lambda}_n - \lambda^{\circ})\sum\limits_{i=1}^{n} \frac{-h_{n,i}^2}{m_n^{v(H)-2}}e^{-\frac{\tilde{\lambda}_n}{m_n^{v(H)-2}}h_{n,i}}\\
&\implies m_n^2(\hat{\lambda}_n - \lambda^{\circ}) = \frac{\sum\limits_{i=1}^{n}\frac{h_{n,i}}{m_n^{v(H)}}e^{-\frac{\lambda^{\circ}}{m_n^{v(H)-2}}h_{n,i}}}{\sum\limits_{i=1}^{n}\frac{h_{n,i}^2}{m_n^{2v(H)}}e^{-\frac{\tilde{\lambda}_n}{m_n^{v(H)-2}}h_{n,i}} }.
\end{align*}
As mentioned in the proof outline we will deal with the numerator and denominator of the RHS of the above display separately. For the numerator we will prove \eqref{num_conv}, while for the denominator we will establish \eqref{denom_conv}. Since the statement of the theorem follows from combining \eqref{num_conv} and \eqref{denom_conv}, the rest of the proof will be devoted to proving \eqref{num_conv} and \eqref{denom_conv}. From the proof of Theorem \ref{normal_consistency_2} we have:
\begin{equation}\frac{1}{m_n^2}\log(\frac{1}{n}\sum_{i=1}^n e^{-\frac{\lambda}{m_n^{v(H)-2}} h_{n,i}})\xrightarrow{P}\mathfrak{g}(\lambda)\label{scaled_part_conv_1}\end{equation}
where 
\begin{align*} \mathfrak{g}(\lambda) = &  \underbrace{\frac{p_0^{e(H)}}{\lvert Aut(H)\rvert}}_{C_1}\lambda  + \underbrace{\sup_{\tilde{h}\in\tilde{W}}\left(-\frac{\lambda}{\lvert Aut(H)\rvert} t(H,h) + \sum_{k=1}^K \beta_k t(T_k, h)-\frac{1}{2} I (h)\right)}_{G(\lambda)}\\
&- \sup_{\tilde{h}\in\tilde{W}}\left(\sum_{k=1}^K \beta_k t(T_k, h) -\frac{1}{2} I (h)\right).
\end{align*}

Define the following quantities:
\begin{align*}
    g(\lambda, \tilde{h}) &= -\frac{\lambda}{\lvert Aut(H)\rvert} t(H,h) + \sum_{k=1}^K \beta_k u^{e(T_k)} -\frac{1}{2} I (h)\\
    G(\lambda) &= \sup_{\tilde{h}\in\tilde{W}} g(\lambda, \tilde{h})\\
    H^*(\lambda) & = \argmax_{\tilde{h}\in\tilde{W}} g(\lambda, \tilde{h})
\end{align*}

(Here, \(\mathrm{arg max}\) is the set of all \(\tilde{h}\in \tilde{W}\) where the supremum is attained. The set is non-empty by the compactness of \(\tilde{W}\)). Let \(\overline{\co}\) denote the closed convex hull. Then by Danskin's Theorem \cite{danskin1966theory}, the subdifferential of $G$ at $\lambda^{\circ}$ is:
    $$ \subdiff G(\lambda^{\circ}) = \overline{\co} \left\{ \nabla_\lambda g(\lambda^{\circ}, \tilde{h}) \mid \tilde{h} \in H^*(\lambda^{\circ}) \right\} = \overline{\co} \left\{ -\frac{t(H,h)}{\lvert Aut(H)\rvert} \mid \tilde{h} \in H^*(\lambda^{\circ}) \right\} $$
        
We are given that the supremum is attained at a unique point $u^* \in [0,1]\subseteq \tilde{W}$ at $\lambda^{\circ}$. Thus, $H^*(\lambda^{\circ}) = \{u^*\}$ and 
    $$ \subdiff G(\lambda^{\circ}) = \overline{\co} \left\{ -\frac{t(H,u^*)}{\lvert Aut(H)\rvert} \right\} = \left\{ -\frac{t(H,u^*)}{\lvert Aut(H)\rvert} \right\}.$$
Since $\subdiff G(\lambda^{\circ})$ is a singleton, $G$ is differentiable at $\lambda^{\circ}$ with $G'(\lambda^{\circ}) = -\frac{t(H,u^*)}{\lvert Aut(H)\rvert}$. Thus we have  $\mathfrak{g}(\lambda) = C_1 \lambda + G(\lambda) + \text{const}$. is differentiable at $\lambda^{\circ}$ with derivative:
    $$ \mathfrak{g}'(\lambda^{\circ}) = C_1 + G'(\lambda^{\circ}) = \frac{p_0^{e(H)}}{\lvert Aut(H)\rvert} - \frac{t(H,u^*)}{\lvert Aut(H)\rvert} = \frac{p_0^{e(H)}}{\lvert Aut(H)\rvert} - \frac{(u^*)^{e(H)}}{\lvert Aut(H)\rvert}$$

Further, observe that

\begin{itemize}
\item The left hand side of (\ref{scaled_part_conv_1}) is a sequence of convex functions.
\item The right hand side of (\ref{scaled_part_conv_1}), being a pointwise limit of convex functions, is convex.
\item The left hand side is a sequence of differentiable functions.
\item We have shown that the right hand side is differentiable at $\lambda^{\circ}$ as the supremum is attained at a unique point.
\end{itemize}

Take any subsequence of the left hand side. Then it has a further subsequence that converges almost surely. Using the above conditions on the subsequence, we get that the subsequential derivatives at $\lambda^{\circ}$ converge almost surely to the derivative of the right hand side. Hence we get that every subsequence of derivatives of the left hand side has a further subsequence converging almost surely. Therefore, we have the convergence in probability of derivatives at $\lambda^{\circ}$ :

$$\frac{\sum\limits_{i=1}^{n}\frac{-h_{n,i}}{m_n^{v(H)}}e^{-\frac{\lambda^{\circ}}{m_n^{v(H)-2}}h_{n,i}}}{\sum\limits_{i=1}^{n}e^{-\frac{\lambda^{\circ}}{m_n^{v(H)-2}}h_{n,i}}}\xrightarrow{P} \frac{p_0^{e(H)}}{\lvert Aut(H)\rvert} - \frac{(u^*)^{e(H)}}{\lvert Aut(H)\rvert}$$

Thus the convergence outlined in \eqref{num_conv} is established. Next to analyze convergence in  \eqref{denom_conv}, we require the limit of 
$$
\frac{\sum\limits_{i=1}^{n}\frac{h_{n,i}^2}{m_n^{2v(H)}}e^{-\frac{\tilde{\lambda}_n}{m_n^{v(H)-2}}h_{n,i}}}{\sum\limits_{i=1}^{n}e^{-\frac{\tilde{\lambda}_n}{m_n^{v(H)-2}}h_{n,i}}}
$$

To that end, we consider 

\begin{align*}
& \frac{1}{m_n^2} \log \sum_{G} e^{-\frac{\lambda}{m_n^{v(H)-2}} \left\{ H(G) - m_n^{v(H)} \frac{p_0^{e(H)}}{\lvert Aut(H)\rvert}\right\}  + \log \frac{N(G)}{n} + \frac{\alpha}{m_n^{2v(H)-2}} \left\{ H(G) - m_n^{v(H)}\frac{p_0^{e(H)}}{\lvert Aut(H)\rvert} \right\}^2} \\
=& \frac{1}{m_n^2} \log \sum_{G} e^{m_n^2 \left[ -\frac{\lambda}{m_n^{v(H)}} \left\{ H(G) - m_n^{v(H)} \frac{p_0^{e(H)}}{\lvert Aut(H)\rvert} \right\} + \frac{1}{m_n^2} \log \frac{N(G)}{n} + \frac{\alpha}{m_n^{2v(H)}} \left\{ H(G) - m_n^{v(H)} \frac{p_0^{e(H)}}{\lvert Aut(H)\rvert} \right\}^2 \right]} \\
=& \frac{1}{m_n^2} \log \sum_{G} e^{m_n^2 \left[ \left\{-\frac{\lambda}{\lvert Aut(H)\rvert} - \frac{2\alpha p_0^{e(H)}}{\lvert Aut(H)\rvert^2}\right\} \frac{H(G)}{m_n^{v(H)}/\lvert Aut(H)\rvert} + \frac{1}{m_n^2} \log \frac{N(G)}{n} + \frac{\alpha}{\lvert Aut(H)\rvert^2} \left( \frac{H(G)}{m_n^{v(H)}/\lvert Aut(H)\rvert} \right)^2 \right]}  \\
&+\frac{\lambda p_0^{e(H)}}{\lvert Aut(H)\rvert} + \frac{\alpha p_0^{2e(H)}}{\lvert Aut(H)\rvert^2}
\end{align*}

As before, we denote this by
\begin{equation}
\label{eq:f_def2}
\frac{1}{m_n^2} \log \sum_{G} e^{f(G)} + \frac{\lambda p_0^{e(H)}}{\lvert Aut(H)\rvert} + \frac{\alpha p_0^{2e(H)}}{\lvert Aut(H)\rvert^2}
\end{equation}

where 
\begin{align}f(x)=&m_n^2 \left[ \left\{-\frac{\lambda}{\lvert Aut(H)\rvert} - \frac{2\alpha p_0^{e(H)}}{\lvert Aut(H)\rvert^2}\right\} \frac{H(x)}{m_n^{v(H)}/\lvert Aut(H)\rvert} + \frac{1}{m_n^2} \log \frac{N(x)}{n}\right.\nonumber\\
&\left.+ \frac{\alpha}{\lvert Aut(H)\rvert^2} \left( \frac{H(x)}{m_n^{v(H)}/\lvert Aut(H)\rvert} \right)^2 \right]
\label{energy_function}\end{align}

\subsection*{Squared \(H\) Statistic \(T_H(x)\)}
Consider
\[
T_H(x)=\Bigl(H(x)\Bigr)^2.
\]
Its first derivative is
\[
\frac{\partial T_H(x)}{\partial x_{e}} = 2\, H(x) \frac{\partial H(x)}{\partial x_{e}},
\]
and using \(H(x)\le \binom{m}{v(H)}=O(m^{v(H)})\) and \(\Bigl|\frac{\partial H(x)}{\partial x_{e}}\Bigr|\le C\,m^{v(H)-2}\) we obtain
\[
\Bigl|\frac{\partial T_H(x)}{\partial x_{e}}\Bigr|\le 2C\, O(m^{2v(H)-2}).
\]
After multiplying by the normalization factor \(\alpha/m^{2v(H)-2}\), we have
\[
\Bigl|\frac{\partial}{\partial x_{e}}\Bigl(\frac{\alpha}{m^{2v(H)-2}}T_H(x)\Bigr)\Bigr|\le O(|\alpha|).
\]
Similarly, the second derivative of \(T_H(x)\) is bounded by \(O(m^{2v(H)-3})\), so that after the factor \(\alpha/m^{2v(H)-2}\) we obtain
\[
\Bigl|\frac{\partial^2}{\partial x_{e}\partial x_{e'}}\Bigl(\frac{\alpha}{m^{2v(H)-2}}T_H(x)\Bigr)\Bigr|\le \frac{C''|\alpha|}{m_n},
\]
for some constant \(C''\).

\subsubsection*{Combination of Edge, \(H\), and Squared \(H\) Count Bounds}

The energy function in \eqref{energy_function} now decomposes as
\[
f(x)=\log\Bigl(\frac{N(x)}{n}\Bigr)+f_H(x)+f_S(x),
\]
where
\[
f_H(x)=-\frac{\lambda+\frac{2\alpha p_0^{e(H)}}{\lvert Aut(H)\rvert}}{m^{v(H)-2}}\,H(x),\quad
f_S(x)=\frac{\alpha}{m^{2v(H)-2}}\,T_3(x),\quad 
T_3(x)=\Bigl(H(x)\Bigr)^2.
\]

For the \(H\) part \(f_H(x)\), standard combinatorial arguments (as in \cite{chatterjee2016nonlinear}) yield the bounds
\[
\|H\| \le C\, m^{v(H)},\quad
\Bigl\|\frac{\partial H}{\partial x_e}\Bigr\|\le C\, m^{v(H)-2},\quad
\Bigl\|\frac{\partial^2 H}{\partial x_e\partial x_{e'}}\Bigr\|\le Cm^{v(H)-3},
\]
for edges \(e,e'\) . It follows that
\[
\left\|f_H\right\|_\infty \leq m_n^2 \lvert \lambda+\frac{2\alpha p_0^{e(H)}}{\lvert Aut(H)\rvert}\rvert
\]
\[
\left\|\frac{\partial f_H}{\partial x_e}\right\| \le \frac{|\lambda+\frac{2\alpha p_0^{e(H)}}{\lvert Aut(H)\rvert}|}{m^{v(H)-2}}\,C\,m^{v(H)-2} = C\,|\lambda + \frac{2\alpha p_0^{e(H)}}{\lvert Aut(H)\rvert}|,
\]
and
\[
\left\|\frac{\partial^2 f_H}{\partial x_e\partial x_{e'}}\right\| \le \frac{|\lambda + \frac{2\alpha p_0^{e(H)}}{\lvert Aut(H)\rvert}|}{m^{v(H)-2}}\cdot Cm^{v(H)-3} = \frac{C\,|\lambda + \frac{2\alpha p_0^{e(H)}}{\lvert Aut(H)\rvert}|}{m_n}.
\]
For \(\log\left(\frac{N(x)}{n}\right)\), as shown previously,
\[
\left\|\log\left(\frac{N(x)}{n}\right)\right\|_\infty \le O(m_n^2\|\bbeta\|_1) + o_P(1),
\]
\[
\left\|\frac{\partial}{\partial x_e}\log\left(\frac{N(x)}{n}\right)\right\|_\infty \le O(\|\bbeta\|_1) + o_P(1),
\]
\[
\left\|\frac{\partial^2}{\partial x_e \partial x_{e'}}\log\left(\frac{N(x)}{n}\right)\right\|_\infty \le  \frac{O(\|\bbeta\|_1)+o_P(1)}{m_n}.
\]

Thus, its contribution to the gradient is essentially constant and (up to an \(o_P(1)\) error) adds a fixed vector whose covering number is one.
Combining these with the bounds for the squared \(H\) term \(f_S(x)\), we get
\[
\left\|f\right\|_\infty \le  m_n^2|\lambda+\frac{2\alpha p_0^{e(H)}}{\lvert Aut(H)\rvert}| + O(m_n^2\|\bbeta\|_1) + m_n^2|\alpha|+ o_P(1),
\]
the derivative satisfies
\[
\left|\frac{\partial f}{\partial x_e}\right| \le O(\|\bbeta\|_1) + C\,|\lambda + \frac{2\alpha p_0^{e(H)}}{\lvert Aut(H)\rvert}| + O(|\alpha|) + o_P(1),
\]
and the second derivative is bounded by
\[
\left|\frac{\partial^2 f}{\partial x_e\partial x_{e'}}\right| \le \frac{O(\|\bbeta\|_1)+C\,|\lambda + \frac{2\alpha p_0^{e(H)}}{\lvert Aut(H)\rvert}|+C''|\alpha|+ o_P(1)}{m_n} .
\]
As before, the next step is to bound the complexity of the gradient, $\nabla f$.  Following the previous line of arguments one obtains that there exists a constant \(C'>0\) such that the net satisfies
\[
\log |\mathcal{D}(\epsilon)| \le \frac{C'B^4\,m\,}{\epsilon^4}\log\frac{C'B^4}{\epsilon^4},
\]
with
\[
B=1+|\frac{\lambda}{\lvert Aut(H)\rvert}+\frac{2\alpha p_0^{e(H)}}{\lvert Aut(H)\rvert^2}|+\frac{|\alpha|}{\lvert Aut(H)\rvert}+\sum_{k=1}^K |\beta_k| C_k.
\]

Applying the log-sum approximation  (Theorem 1.6 in \cite{chatterjee2016nonlinear}), we get the following result.

\begin{align*}&\frac{1}{m_n^2}\log\left(\frac{1}{n}\sum_{i=1}^n e^{-\frac{\lambda}{m_n^{v(H)-2}} h_{n,i} + \frac{\alpha}{m_n^{2v(H)-2}}h_{n,i}^2}\right)\\
\xrightarrow{P}&\sup_{\tilde{h}\in\tilde{W}}\bigg(-\left(\frac{\lambda}{|Aut(H)|}+\frac{2\alpha p_0^{e(H)}}{|Aut(H)|^2}\right) t(H,h) + \sum_{k=1}^K \beta_k t(T_k, h) \\
     &+ \frac{\alpha}{|Aut(H)|^2} t(H,h)^2 - \frac{1}{2} I(h)\bigg) + \frac{\lambda p_0^{e(H)}}{\lvert Aut(H)\rvert} + \frac{\alpha p_0^{2e(H)}}{\lvert Aut(H)\rvert^2} \\&- \sup_{\tilde{h}\in\tilde{W}}\bigg( \sum_{k=1}^K \beta_k t(T_k, h) 
      - \frac{1}{2} I(h)\bigg)\\
\end{align*}
uniformly in \(\lambda\) (see proof of Theorem \ref{normal_consistency_2} for details). Since we also have \(\tilde{\lambda}_n\xrightarrow{P} \lambda^{\circ}\), we get

\begin{align}
&\frac{1}{m_n^2}\log\left(\frac{1}{n}\sum_{i=1}^n 
e^{-\frac{\tilde{\lambda}_n}{m_n} h_{n,i} + \frac{\alpha}{m_n^{2v(H)-2}}h_{n,i}^2}\right)\\
\xrightarrow{P}&\sup_{\tilde{h}\in\tilde{W}}\bigg(-\left(\frac{\lambda^{\circ}}{|Aut(H)|}
+\frac{2\alpha p_0^{e(H)}}{|Aut(H)|^2}\right) t(H,h) 
+ \sum_{k=1}^K \beta_k t(T_k, h) \nonumber \\
&+ \frac{\alpha}{|Aut(H)|^2} t(H,h)^2 - \frac{1}{2} I(h)\bigg) 
+ \frac{\lambda^{\circ} p_0^{e(H)}}{\lvert Aut(H)\rvert} 
+ \frac{\alpha p_0^{2e(H)}}{\lvert Aut(H)\rvert^2} \nonumber \\
&- \sup_{\tilde{h}\in\tilde{W}}\bigg( \sum_{k=1}^K \beta_k t(T_k, h) 
- \frac{1}{2} I(h)\bigg)\label{scaled_part_conv_2}
\end{align}

Let $F(\alpha)$ be defined as follows:

\begin{align*}
 F(\alpha) &= \underbrace{-\sup_{\tilde{h}\in\tilde{W}}\bigg( \sum_{k=1}^K \beta_k t(T_k, h) 
      - \frac{1}{2} I(h)\bigg)+\frac{\lambda^{\circ} p_0^{e(H)}}{\lvert Aut(H)\rvert}}_{\text{Const w.r.t } \alpha} + \frac{\alpha p_0^{2e(H)}}{\lvert Aut(H)\rvert^2} \\
&\quad + \sup_{\tilde{h}\in\tilde{W}}\underbrace{\left(-\left(\frac{\lambda^{\circ}}{\lvert Aut(H)\rvert}+\frac{2\alpha p_0^{e(H)}}{\lvert Aut(H)\rvert^2}\right) t(H,h) + \sum_{k=1}^K \beta_k t(T_k, h) + \frac{\alpha}{\lvert Aut(H)\rvert^2} t(H,h)^2-\frac{1}{2} I (h)\right)}_{g(\alpha, \tilde{h})}
\end{align*}

We note that $F$ is convex in $\alpha$, being the pointwise limit of convex functions . Alternatively, $g(\alpha, \tilde{h})$ is linear in $\alpha$, hence $\sup_{\tilde{h}} g(\alpha, \tilde{h})$ is convex. $F(\alpha)$ is the sum of this convex function and a linear term $\frac{\alpha p_0^{2e(H)}}{\lvert Aut(H)\rvert^2}$ (plus constants), hence convex. We analyze its differentiability at $\alpha=0$. Define 

\begin{align*}
G(\alpha):&= \sup_{\tilde{h}\in\tilde{W}} g(\alpha, \tilde{h})\\
H^*(0) &= \argmax_{\tilde{h}\in\tilde{W}} g(0, \tilde{h})
\end{align*}
and we note that
\begin{align*}
\nabla_\alpha g(\alpha, \tilde{h})&= -\frac{2p_0^{e(H)}}{\lvert Aut(H)\rvert^2} t(H,h) + \frac{1}{\lvert Aut(H)\rvert^2} t(H,h)^2\\
g(0, \tilde{h}) &= -\frac{\lambda^{\circ}}{\lvert Aut(H)\rvert} t(H,h) + \sum_{k=1}^K \beta_k t(T_k, h)  - \frac{1}{2} I(h),
\end{align*}
   Further we are given that this supremum is attained at a unique point $u^* \in \tilde{W}$, which implies that  $H^*(0) = \{u^*\}$. By Danskin's Theorem \cite{danskin1966theory}, $G(\alpha)$ is differentiable at $\alpha=0$, and its derivative is:
    $$ G'(0) = \nabla_\alpha g(0, \tilde{h}^*) = -\frac{2p_0^{e(H)}}{\lvert Aut(H)\rvert^2} t(H,u^*) + \frac{1}{\lvert Aut(H)\rvert^2} t(H,u^*)^2.$$
    Since $F(\alpha) = \text{Const} + \frac{\alpha p_0^{2e(H)}}{\lvert Aut(H)\rvert^2} + G(\alpha)$, its derivative at $\alpha=0$ is given by:
    \begin{align*} F'(0) &= \frac{p_0^{2e(H)}}{\lvert Aut(H)\rvert^2} + G'(0)\\
    =&\frac{p_0^{2e(H)}}{\lvert Aut(H)\rvert^2} - \frac{2p_0^{e(H)}}{\lvert Aut(H)\rvert^2} t(H,u^*) + \frac{1}{\lvert Aut(H)\rvert^2}t(H,u^*)^2 \\ =&\frac{p_0^{2e(H)}}{\lvert Aut(H)\rvert^2} - \frac{2p_0^{e(H)}}{\lvert Aut(H)\rvert^2} u^{*{e(H)}} + \frac{1}{\lvert Aut(H)\rvert^2} u^{*{2e(H)}} \end{align*}

Further, observe that

\begin{itemize}
\item The left hand side of (\ref{scaled_part_conv_2}) is a sequence of convex functions  of \(\alpha\).
\item The right hand side of (\ref{scaled_part_conv_2}), being a pointwise limit of convex functions, is convex.
\item The left hand side of (\ref{scaled_part_conv_2}) is a sequence of differentiable functions of \(\alpha\).
\item From \cite{chatterjee2013estimating}, we know that the right hand side is differentiable at $0$ as the supremum is attained at a unique point.
\end{itemize}

By passing to subsequences as before, we get

\begin{align*}\frac{\sum\limits_{i=1}^{n}\frac{h_{n,i}^2}{m_n^{2v(H)}} e^{-\frac{\lambda}{m_n^{v(H)-2}} h_{n,i} + \frac{\alpha}{m_n^{2v(H)-2}}h_{n,i}^2}}{\sum\limits_{i=1}^{n} e^{-\frac{\lambda}{m_n^{v(H)-2}} h_{n,i} + \frac{\alpha}{m_n^{2v(H)-2}}h_{n,i}^2}}&\xrightarrow{P} \frac{p_0^{2e(H)}}{\lvert Aut(H)\rvert^2} - \frac{2p_0^{e(H)}u^{*{e(H)}}}{\lvert Aut(H)\rvert^2} + \frac{u^{*{2e(H)}}}{\lvert Aut(H)\rvert^2}\\
&= \left(\frac{u^{*{e(H)}}}{\lvert Aut(H)\rvert} - \frac{p_0^{e(H)}}{\lvert Aut(H)\rvert}\right)^2
\end{align*}

\end{proof}

\section{Proof of technical lemmas \label{sec:proof_lemmas}}

\begin{lemma}[Uniform Weak Law of Large Numbers]{\label{weak_law}}
Let \(\mathcal{G}_{n,1},\dots, \mathcal{G}_{n,n}\) be independent random graphs sampled from \(\mathbb{P}_{\beta}\) model, where \(m_n\to\infty\) as \(n\to\infty\). For any graph \(G\) on \(m_n\) vertices, we have
\[
\mathbb{E}\bigl[\mathds{1}(\mathcal{G}_{n,i} = G)\bigr] = p_{\beta}(G).
\]
Define
\[
X_{i,G} := \frac{\mathds{1}(\mathcal{G}_{n,i}=G)}{p_{\beta}(G)}.
\]
Then, if
\[
{m_n\choose 2} = o(\log n),
\]
for every \(\epsilon>0\) we have
\[
\mathbb{P}\Biggl(\sup_{G\in \mathcal{G}_{m_n}} \Bigl|\frac{1}{n}\sum_{i=1}^{n} X_{i,G} - 1\Bigr| > \frac{\epsilon}{m_n}\Biggr)
\rightarrow 0
\]

\end{lemma}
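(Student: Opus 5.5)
The plan is a union bound over all graphs on $m_n$ vertices, with a second-moment (Chebyshev) estimate for each fixed graph. The point is that both the number of graphs and the inverse probabilities $1/p_{\beta}(G)$ are at most $e^{O(m_n^2)}$. Under $\binom{m_n}{2}=o(\log n)$ this is $n^{o(1)}$, which is negligible against the factor $1/n$ that Chebyshev provides.

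\emph{Step 1: a uniform lower bound on $p_{\beta}(G)$.} For every simple graph $G$ on $m=m_n$ vertices, $0\le t(T_k,G)\le 1$, so
\[
\Bigl|m^2\sum_{k=1}^K \beta_k t(T_k,G)\Bigr|\le m^2\|\bbeta\|_1 .
\]
Hence the partition function satisfies $Z_m(\bbeta)\le 2^{\binom{m}{2}}e^{m^2\|\bbeta\|_1}$, and therefore
\[
p_{\beta}(G)\ge 2^{-\binom{m}{2}}e^{-2m^2\|\bbeta\|_1}\ge e^{-C m^2}
\]
for a constant $C$ depending only on $\bbeta$. We use absolute values here because $\beta_1$ may be negative even in the ferromagnetic regime. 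I expect this step to be the main point needing care: it is the only place where the structure of the model enters, and it must hold uniformly over all $G$, including graphs whose probability is extremely small.

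\emph{Step 2: Chebyshev for a fixed $G$.} The variables $X_{i,G}$, $i=1,\dots,n$, are i.i.d. with mean $1$ and variance $(1-p_{\beta}(G))/p_{\beta}(G)\le e^{Cm^2}$. Chebyshev's inequality gives
\[
\mathbb{P}\Bigl(\Bigl|\frac1n\sum_{i=1}^n X_{i,G}-1\Bigr|>\frac{\epsilon}{m}\Bigr)\le \frac{m^2}{\epsilon^2 n\,p_{\beta}(G)}\le \frac{m^2e^{Cm^2}}{\epsilon^2 n}.
\]
If a stronger rate were needed, Bernstein's inequality could replace Chebyshev; the bound above already suffices here.

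\emph{Step 3: union bound and conclusion.} There are $|\mathcal{G}_{m}|=2^{\binom{m}{2}}\le e^{m^2}$ graphs on $m$ vertices. Summing the Step 2 bound over them, the probability in the statement is at most
\[
\frac{m^2e^{(C+1)m^2}}{\epsilon^2 n}.
\]
Since $m_n^2\le 3\binom{m_n}{2}=o(\log n)$, for large $n$ we have $(C+1)m_n^2+2\log m_n\le \tfrac12\log n$. The bound is then at most $\epsilon^{-2}n^{-1/2}\to0$, which proves the lemma.
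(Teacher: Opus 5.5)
Your proof is correct and follows the same route as the paper: Chebyshev's inequality for each fixed $G$ with variance $1/p_{\beta}(G)-1$, then a union bound over the $2^{\binom{m_n}{2}}$ graphs. Your Step 1 bound $p_{\beta}(G)\ge 2^{-\binom{m}{2}}e^{-2m^2\|\bbeta\|_1}$ makes explicit something the paper leaves implicit when it asserts that $2^{\binom{m_n}{2}}\frac{m_n^2}{n\epsilon^2}\bigl(1/\min_G p_{\beta}(G)-1\bigr)\to 0$. That assertion needs $1/\min_G p_{\beta}(G)=e^{O(m_n^2)}=n^{o(1)}$, which is exactly what your Step 1 supplies.
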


\begin{proof}
For each fixed graph \(G\) on \(m_n\) vertices, note that
\[
\mathbb{E}[X_{i,G}] = \frac{\mathbb{E}\bigl[\mathds{1}(\mathcal{G}_{n,i}=G)\bigr]}{p_{\beta}(G)} = 1.
\]
Thus, for each \(G\), the random variables \(\{X_{i,G}\}_{i=1}^n\) are i.i.d. with mean 1.

Moreover, the variance of the indicator is
\[
\operatorname{Var}\bigl(\mathds{1}(\mathcal{G}_{n,i}=G)\bigr) = p_{\beta}(G)\Bigl(1-p_{\beta}(G)\Bigr),
\]
so that
\[
\operatorname{Var}(X_{i,G}) = \frac{1}{p_{\beta}(G)} - 1.
\]
By Chebyshev's inequality, for any fixed \(G\) and \(\epsilon>0\)
\[
\mathbb{P}\Biggl(\Bigl|\frac{1}{n}\sum_{i=1}^{n} X_{i,G} - 1\Bigr| > \frac{\epsilon}{m_n}\Biggr)
\le \frac{m_n^2}{n\epsilon^2}\operatorname{Var}(X_{1,G}).
\]
A union bound over the (at most) \(2^{\binom{m_n}{2}}\) graphs on \(m_n\) vertices then yields
\[
\mathbb{P}\Biggl(\sup_{G\in \mathcal{G}_{m_n}} \Bigl|\frac{1}{n}\sum_{i=1}^{n} X_{i,G} - 1\Bigr| > \frac{\epsilon}{m_n}\Biggr)
\le 2^{\binom{m_n}{2}} \cdot \frac{m_n^2}{n\epsilon^2}\left(\frac{1}{\min_{G}(p_{\beta}(G))}-1\right).
\]
Under the assumption \({m_n\choose 2}=o(\log n)\), the right-hand side converges to zero as \(n\to\infty\). This completes the proof.
\end{proof}

\begin{lemma}[Uniform Convergence of the Log-Derivative]{\label{weak_law_der}}
Let \(\mathcal{G}_{n,1},\dots, \mathcal{G}_{n,n}\) be independent random graphs sampled from the \(\mathbb{P}_{\beta}\) model, with \(m_n\to\infty\) in such a way that
\[
\binom{m_n}{2} = o(\log n).
\]
For each graph \(G\) on \(m_n\) vertices, let 
\[
N(G)=\sum_{i=1}^{n} \mathds{1}\bigl(\mathcal{G}_{n,i}=G\bigr)
\]
be the number of occurrences of \(G\) in the sample. Extend the function 
\[
\log\Bigl(\frac{N(G)}{n}\Bigr)
\]
to \([0,1]^{\binom{m_n}{2}}\) multilinearly and denote the extension by \(\log\Bigl(\frac{N(x)}{n}\Bigr)\).

Then, for every \(\epsilon>0\) we have
\[
\mathbb{P}\!\Biggl(\sup_{x\in[0,1]^{\binom{m_n}{2}}}\; \sup_{1\le i<j\le m_n}\; \Bigl|\frac{\partial}{\partial x_{ij}}\log\Bigl(\frac{N(x)}{n}\Bigr)-\frac{\partial}{\partial x_{ij}}\log p_{\beta}(x)\Bigr| > \frac{\epsilon}{m_n}\Biggr)
\rightarrow 0
\]

\end{lemma}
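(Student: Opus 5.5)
The plan is to reduce the derivative statement to the uniform (sup-over-graphs) control already provided by Lemma \ref{weak_law}. The reduction uses the structure of multilinear extensions. Throughout, I read $\log p_{\beta}(x)$ as the multilinear extension of $G\mapsto \log p_{\beta}(G)$, in keeping with the convention of Appendix \ref{sec:notn_details}. Write $d=\binom{m_n}{2}$, and for $G\in\{0,1\}^d$ set
\[
D(G):=\log\Bigl(\frac{N(G)}{n}\Bigr)-\log p_{\beta}(G)=\log\Bigl(\frac1n\sum_{i=1}^n X_{i,G}\Bigr),
\]
with $X_{i,G}$ as in Lemma \ref{weak_law}. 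Since both extensions are linear in the underlying function on the cube, the difference of the two partial derivatives is the partial derivative of the multilinear extension $D(x)$ of $D$.

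\textbf{Step 1 (derivative of a multilinear extension).} For any $\phi:\{0,1\}^d\to\mathbb{R}$ with multilinear extension $\phi(x)=\mathbb{E}_{\mathbf{G}\sim G(m_n,x)}[\phi(\mathbf{G})]$, differentiating in the single coordinate $x_e$ gives
\[
\frac{\partial \phi}{\partial x_e}(x)=\mathbb{E}_{\mathbf{G}\sim G(m_n,x)}\bigl[\phi(\mathbf{G}^{e,1})-\phi(\mathbf{G}^{e,0})\bigr].
\]
Here $\mathbf{G}^{e,1}$ and $\mathbf{G}^{e,0}$ denote $\mathbf{G}$ with edge $e$ forced present or absent. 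This holds because $\phi(x)$ is affine in $x_e$, with the remaining coordinates averaged against the product Bernoulli measure. It follows that
\[
\sup_{x\in[0,1]^d}\sup_e\Bigl|\frac{\partial \phi}{\partial x_e}(x)\Bigr|\le 2\sup_{G}|\phi(G)|,
\]
uniformly in $x$ and $e$, including the boundary of the cube.

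\textbf{Step 2 (apply Lemma \ref{weak_law}).} Fix $\epsilon>0$ and let $A_n$ be the event $\sup_G\bigl|\frac1n\sum_i X_{i,G}-1\bigr|\le \frac{\epsilon}{4m_n}$. Applying Lemma \ref{weak_law} with $\epsilon/4$ gives $\P(A_n)\to1$. On $A_n$, every ratio $\frac{N(G)}{np_\beta(G)}$ lies in $[1-\frac{\epsilon}{4m_n},1+\frac{\epsilon}{4m_n}]$. In particular $N(G)>0$ for every $G$, so $\log(N(G)/n)$ is finite and the extension is well defined. Using $|\log(1+u)|\le 2|u|$ for $|u|\le 1/2$, valid for $n$ large, we get $\sup_G|D(G)|\le \frac{\epsilon}{2m_n}$ on $A_n$. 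Step 1 then gives
\[
\sup_x\sup_{i<j}\Bigl|\frac{\partial D}{\partial x_{ij}}(x)\Bigr|\le\frac{\epsilon}{m_n}
\]
on $A_n$, and the claimed probability is at most $\P(A_n^c)\to0$.

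\textbf{Main obstacle.} Almost everything rests on Lemma \ref{weak_law} actually delivering the rate $\epsilon/m_n$ under $\binom{m_n}{2}=o(\log n)$. The delicate quantity in its union bound is $1/\min_G p_\beta(G)$. I would make this explicit. For a ferromagnetic ERGM, $|N^2\sum_k\beta_k t(T_k,G)|\le m_n^2\|\bbeta\|_1$, and $Z_{m_n}(\bbeta)\le 2^{d}e^{m_n^2\|\bbeta\|_1}$. Hence $\min_G p_\beta(G)\ge 2^{-d}e^{-2m_n^2\|\bbeta\|_1}$. The union-bound expression is then at most
\[
\frac{m_n^2}{n\epsilon^2}\,4^{d}e^{2m_n^2\|\bbeta\|_1}=\exp\bigl(O(m_n^2)-\log n\bigr)\to0,
\]
precisely because $m_n^2=O(d)=o(\log n)$. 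I would state this bound inside the proof so that the dependence on $\bbeta$ is transparent. Everything else is the linear-algebraic identity of Step 1.
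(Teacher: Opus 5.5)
Your proposal is correct and follows essentially the same route as the paper: write the partial derivative of the multilinear extension as the finite difference $\eta_n(x+e)-\eta_n(x-e)$ of the error $\eta_n=\log(N/n)-\log p_\beta$, and control it uniformly via Lemma \ref{weak_law} together with $|\log(1+u)|\le 2|u|$. Your version is tidier than the paper's. The averaging identity in your Step 1 replaces the paper's redundant second union bound and its unexplained ``by multilinearity'' passage from the cube's vertices to all of $[0,1]^{\binom{m_n}{2}}$. You also explicitly check $N(G)>0$ on the good event, and the bound $\min_G p_\beta(G)\ge 2^{-\binom{m_n}{2}}e^{-2m_n^2\|\bbeta\|_1}$, both of which the paper's argument needs but never states.
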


\begin{proof}
By the uniform weak law Lemma \ref{weak_law}, for every graph \(G\) on \(m_n\) vertices we have
\[
\frac{N(G)}{n} = p_{\beta}(G)\Bigl(1+\delta_n(G)\Bigr),
\]
with \(m_n\delta_n(G) \to 0\) in probability uniformly over \(G\). Taking logarithms, we write
\[
\log\frac{N(G)}{n} = \log p_{\beta}(G) + \eta_n(G),
\]
where \(\eta_n(G)=\log(1+\delta_n(G))\) and \(m_n\eta_n(G)\to 0\) uniformly in probability over all graphs \(G\).

By multilinear extension, for any \(x\in[0,1]^{\binom{m_n}{2}}\) the extended function \(\log\Bigl(\frac{N(x)}{n}\Bigr)\) agrees with the above on the vertices (i.e., on the set of all graphs). For an edge \(e=ij\), define the finite-difference derivative
\[
D_e(x) := \frac{\partial}{\partial x_{ij}}\log\Bigl(\frac{N(x)}{n}\Bigr)
=\log\Bigl(\frac{N(x+e)}{n}\Bigr) - \log\Bigl(\frac{N(x-e)}{n}\Bigr),
\]
where \(x+e\) denotes that the \(e^{\text{th}}\) coordinate is set to 1 and \(x-e\) that it is set to 0.
Then we have
\[
\begin{aligned}
\log\Bigl(\frac{N(x+e)}{n}\Bigr) - \log\Bigl(\frac{N(x-e)}{n}\Bigr)
& = \log p_{\beta}(x+e) - \log p_{\beta}(x-e) + \Delta_e(x),
\end{aligned}
\]
where we have defined
\[
\Delta_e(x) = \eta_n(x+e)-\eta_n(x-e).
\]
Note that \(p_{\beta}(x)\) is also multilinear, so that its derivative is also given by a finite-difference.\\
Since the set of vertices (i.e., graphs on \(m_n\) vertices) is finite with cardinality at most \(2^{\binom{m_n}{2}}\) and there are \(\binom{m_n}{2}\) possible edges, a union bound shows that there exists a function \(\delta(n)\to 0\) such that
\[
\mathbb{P}\Biggl(\max_{x\in\{0,1\}^{\binom{m_n}{2}}}\; \max_{e}\; |\Delta_e(x)| > \frac{\epsilon}{m_n}\Biggr)
\le 2^{\binom{m_n}{2}}\binom{m_n}{2}\,\delta(n).
\]
By the multilinearity of the extension, the same bound applies when taking the supremum over all \(x\in[0,1]^{\binom{m_n}{2}}\). Hence,
\[
\mathbb{P}\!\Biggl(\sup_{x\in[0,1]^{\binom{m_n}{2}}}\; \sup_{1\le i<j\le m_n}\; \Bigl|D_e(x)-\frac{\partial}{\partial x_{ij}}\log p_{\beta}(x)| > \frac{\epsilon}{m_n}\Biggr)
\le 2^{\binom{m_n}{2}}\binom{m_n}{2}\,\delta(n).
\]
Under the assumption \(\binom{m_n}{2} = o(\log n)\), the right-hand side converges to zero as \(n\to\infty\). This completes the proof.
\end{proof}

\begin{lemma}[Uniform Convergence of the Discrete Second Derivative]{\label{weak_law_2der}}
Let \(\mathcal{G}_{n,1}, \dots, \mathcal{G}_{n,n}\) be independent random graphs sampled from the \(\mathbb{P}_{\beta}\) model with \(m_n\to\infty\) in such a way that
\[
\binom{m_n}{2} = o(\log n).
\]
For each graph \(G\) on \(m_n\) vertices, define
\[
N(G)=\sum_{i=1}^{n} \mathds{1}\bigl(\mathcal{G}_{n,i}=G\bigr)
\]
to be the number of occurrences of \(G\) in the sample, and extend the function
\[
\log\Bigl(\frac{N(G)}{n}\Bigr)
\]
to \([0,1]^{\binom{m_n}{2}}\) by multilinear interpolation, denoting the extension by \(\log\Bigl(\frac{N(x)}{n}\Bigr)\).

For any two distinct edges \(e\) and \(e'\) (viewed as coordinates), define the discrete second derivative by
\begin{align*}
\frac{\partial^2}{\partial x_{e}\partial x_{e'}} \log\Bigl(\frac{N(x)}{n}\Bigr) :=& \log\Bigl(\frac{N(x+e+e')}{n}\Bigr) - \log\Bigl(\frac{N(x+e-e')}{n}\Bigr) - \log\Bigl(\frac{N(x-e+e')}{n}\Bigr)\\
&+ \log\Bigl(\frac{N(x-e-e')}{n}\Bigr),
\end{align*}
where for a given \(x\in [0,1]^{\binom{m_n}{2}}\), the notation \(x\pm e \pm e'\) indicates that the coordinates corresponding to edges \(e\) and \(e'\) are set to the indicated values (either 0 or 1), while the remaining coordinates remain as in \(x\).

Then, for every \(\epsilon>0\) we have
\[
\mathbb{P}\!\Biggl(\sup_{x\in [0,1]^{\binom{m_n}{2}}}\; \sup_{\substack{e,e'\\e\neq e'}} \left|\frac{\partial^2}{\partial x_{e}\partial x_{e'}} \log\Bigl(\frac{N(x)}{n}\Bigr) - \frac{\partial^2}{\partial x_{e}\partial x_{e'}} \log p_{\beta}(x)\right | > \frac{\epsilon}{m_n}\Biggr)
\rightarrow 0
\]

\end{lemma}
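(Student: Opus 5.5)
The plan is to reduce Lemma~\ref{weak_law_2der} to the same uniform multiplicative control already established in the proofs of Lemmas~\ref{weak_law} and~\ref{weak_law_der}. By Lemma~\ref{weak_law}, on an event $\mathcal{E}_n$ whose probability tends to one, every graph $G$ on $m_n$ vertices satisfies
\[
\frac{N(G)}{n}=p_{\beta}(G)\bigl(1+\delta_n(G)\bigr),\qquad \max_G m_n\lvert\delta_n(G)\rvert\le \epsilon',
\]
with $\epsilon'$ as small as we like. On $\mathcal{E}_n$, $N(G)\ge 1$ for every $G$, so all logarithms are finite. We then write $\log\frac{N(G)}{n}=\log p_\beta(G)+\eta_n(G)$ with $\eta_n(G)=\log(1+\delta_n(G))$. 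Since $\lvert\log(1+u)\rvert\le 2\lvert u\rvert$ for $\lvert u\rvert\le 1/2$, this gives $\max_G m_n\lvert\eta_n(G)\rvert\le 2\epsilon'$ on $\mathcal{E}_n$.

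Next, we fix distinct edges $e\neq e'$. Both $\log\frac{N(x)}{n}$ and $\log p_\beta(x)$ are understood through their multilinear extensions, and the mixed second derivative of a multilinear function is the four-term finite difference in the statement. This difference does not depend on the coordinates $x_e,x_{e'}$, and it is multilinear in the remaining coordinates. Subtracting the two second differences leaves
\[
\Delta_{e,e'}(x)=\eta_n(x+e+e')-\eta_n(x+e-e')-\eta_n(x-e+e')+\eta_n(x-e-e').
\]
This is again a multilinear function of the other coordinates. It is therefore a convex combination of its values at vertices of the cube, so its supremum over $[0,1]^{\binom{m_n}{2}}$ is attained at some $x\in\{0,1\}^{\binom{m_n}{2}}$. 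At a vertex, each of the four terms is $\eta_n$ evaluated at an actual graph. Hence, on $\mathcal{E}_n$,
\[
\sup_x\sup_{e\ne e'}\lvert\Delta_{e,e'}(x)\rvert\le 4\max_G\lvert\eta_n(G)\rvert\le \frac{8\epsilon'}{m_n}.
\]
Choosing $\epsilon'=\epsilon/8$ gives the claim with probability at least $\mathbb{P}(\mathcal{E}_n)\to1$. No extra union bound over pairs $(e,e')$ is needed, because the control is uniform over $G$.

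The main obstacle is step one: making the uniform bound of Lemma~\ref{weak_law} genuinely available. The Chebyshev union bound there contains the factor $1/\min_G p_\beta(G)$, which for a subcritical ERGM is at least $e^{c m_n^2}$. We must therefore check that $2^{\binom{m_n}{2}}e^{c m_n^2}m_n^2/n\to0$ under $\binom{m_n}{2}=o(\log n)$. We expect this to hold, since $\log n$ dominates $m_n^2$ times any fixed constant. We will verify it explicitly, using $\log p_\beta(G)\ge -\log Z_{m_n}(\beta)-O(m_n^2\lVert\beta\rVert_1)=-O(m_n^2)$ together with \eqref{ergm_lim_CD}. Everything after this step is deterministic.
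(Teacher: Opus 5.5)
Your proposal is correct and follows essentially the same route as the paper: multiplicative control from Lemma~\ref{weak_law}, passing to $\eta_n=\log(1+\delta_n)$, bounding the four-term difference of $\eta_n$ at actual graphs, and extending to the whole cube by multilinearity. Two points you make explicit are left implicit in the paper. First, the uniform bound $4\max_G\lvert\eta_n(G)\rvert$ makes the paper's extra union bound over pairs $(e,e')$ unnecessary. Second, the factor $1/\min_G p_\beta(G)\le e^{O(m_n^2)}$ in Lemma~\ref{weak_law} is harmless because $m_n^2=o(\log n)$.
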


\begin{proof}
For vertices of the hypercube, that is, for \(x\in\{0,1\}^{\binom{m_n}{2}}\) corresponding to graphs, the uniform weak law (see previous lemmas) shows that
\[
\frac{N(G)}{n} = p_{\beta}(G)\Bigl(1+\delta_n(G)\Bigr),
\]
with error terms \(m_n\delta_n(G)\to 0\) uniformly in probability over all graphs \(G\). Consequently, taking logarithms, we have
\[
\log\frac{N(G)}{n} = \log p_{\beta}(G) + \eta_n(G),
\]
where \(\eta_n(G)=\log\bigl(1+\delta_n(G)\bigr)\) satisfies \(m_n\eta_n(G)\to 0\) uniformly in probability.

Therefore, for \(x\in\{0,1\}^{\binom{m_n}{2}}\) we have
\begin{align*}
\frac{\partial^2}{\partial x_e\partial x_{e'}} \log\Bigl(\frac{N(x)}{n}\Bigr)
= &\log\left(\frac{p_{\beta}(x + e + e')p_{\beta}(x - e - e')}{p_{\beta}(x + e - e')p_{\beta}(x - e + e')}\right) + \eta_n(x+e+e') - \eta_n(x+e-e')\\
&- \eta_n(x-e+e') + \eta_n(x-e-e').
\end{align*}
By the uniform convergence of \(m_n\eta_n(\cdot)\) to 0 in probability, for every \(\epsilon>0\) there exists \(\delta(n)\to 0\) such that
\begin{align*}
&\mathbb{P}\Biggl(\max_{x\in \{0,1\}^{\binom{m_n}{2}}}\, \max_{\substack{e,e'\\e\neq e'}} \left|\eta_n(x+e+e') - \eta_n(x+e-e') - \eta_n(x-e+e') + \eta_n(x-e-e')\right| > \frac{\epsilon}{m_n}\Biggr)\\
\le &2^{\binom{m_n}{2}}\,\binom{m_n}{2}^2\, \delta(n).
\end{align*}

Since the set of vertices \(\{0,1\}^{\binom{m_n}{2}}\) has cardinality at most \(2^{\binom{m_n}{2}}\) and there are at most \(\binom{m_n}{2}^2\) pairs \((e,e')\), a union bound over these finite sets yields the stated probability bound.

Finally, by the multilinear extension the same bound applies when taking the supremum over all \(x\in [0,1]^{\binom{m_n}{2}}\). Under the assumption \(\binom{m_n}{2}=o(\log n)\), the factor \(2^{\binom{m_n}{2}}\binom{m_n}{2}^2\) grows slower than any positive power of \(n\), ensuring that the right-hand side tends to zero as \(n\to\infty\).

This completes the proof.
\end{proof}
 
\begin{lemma}[Controlling the smoothness term]\label{smoothness}
The smoothness term $\mathcal{S}$ in formula \eqref{eq:upper_bound_final}, after normalization by $n_v$, is $O(m^{-1/2})$.
\end{lemma}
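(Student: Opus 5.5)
Recall how Theorem 1.6 of \cite{chatterjee2016nonlinear} defines the smoothness term. If $f$ has $\|\partial f/\partial x_e\|_\infty\le b_e$ and $\|\partial^2 f/\partial x_e\partial x_{e'}\|_\infty\le c_{ee'}$, then
\[
\mathcal{S}=4\Bigl(\sum_{e}\bigl(a\,c_{ee}+b_e^2\bigr)\Bigr)^{1/2}\epsilon+\frac14\Bigl(\sum_e b_e^2\Bigr)^{1/2}\epsilon+3n_v\epsilon+\log|\mathcal{D}(\epsilon)|
\]
(up to the precise grouping used in \eqref{eq:upper_bound_final}), together with the genuinely "smoothness" part
\[
\mathcal{S}_0=\sum_e\bigl(a c_{ee}^2+b_e c_{ee}\bigr)+4\sum_{e\neq e'}\bigl(a c_{ee'}^2+b_e b_{e'} c_{ee'}+4 b_e c_{ee'}\bigr)
\]
(again up to absolute constants), where $a=\|f\|_\infty$.

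My plan is to bound $\mathcal{S}_0$ term by term. I will use the derivative bounds already established in the proof of Theorem \ref{normal_consistency_2} for the energy function $f=f_H+\log(N(x)/n)$. These bounds hold with probability tending to one, on the event $|\lambda|\le M$ from the remark on boundedness of $\lambda$. On that event they read
\[
a=O(m^2B),\qquad b_e=O(B),\qquad c_{ee'}=O(B/m).
\]
Here the bound on $c_{ee'}$ for the $\log(N/n)$ part uses Lemma \ref{weak_law_2der} together with the corresponding bound for $\log p_\beta$.

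The counting then goes as follows, recalling $n_v\asymp m^2$. The diagonal contributions $\sum_e(a c_{ee}^2+b_e c_{ee})$ are $O(m^2\cdot m^2B\cdot B^2/m^2)=O(m^2B^3)$. Naively, the off-diagonal sum over $O(m^4)$ pairs with $c_{ee'}=O(B/m)$ would give $O(m^3)$ per unit of $a$, which is far too big. So the key step is to exploit sparsity of the Hessian: I will split pairs $(e,e')$ into those sharing a vertex and disjoint pairs.

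For subgraph-count Hamiltonians, such as $f_H$ and $f_{ERGM}$, Lemma 5.1 of \cite{chatterjee2016nonlinear} gives the following bounds. For edges sharing a vertex, $c_{ee'}=O(B/m)$, and there are $O(m^3)$ such pairs. For disjoint edges, $c_{ee'}=O(B/m^2)$, and there are $O(m^4)$ such pairs. With these, $\sum_{e\ne e'}c_{ee'}^2=O(m^3\cdot m^{-2}+m^4\cdot m^{-4})B^2=O(mB^2)$ and $\sum_{e\ne e'}c_{ee'}=O(m^2B)$. Plugging in gives $a\sum c_{ee'}^2=O(m^3B^3)$ and $\sum b_eb_{e'}c_{ee'}=O(m^2B^3)$.

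To reach the claimed normalized order, I would rather normalize as in \cite{chatterjee2016nonlinear}: $\mathcal{S}$ enters via $\sqrt{\cdot}$ after an $\epsilon$-optimization. Their Theorem 1.7 yields the contribution $O(B^2 m^{3/2})$. After dividing by $n_v\asymp m^2$ this is $O(B^2m^{-1/2})$, matching the $C B^2 m^{-1/2}$ term in Lemma \ref{lem:partition_func_conv_gen}. I will follow their computation verbatim with our constants.

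The main obstacle is the empirical part $\log(N(x)/n)$. Its multilinear extension is not a subgraph count, so the refined disjoint-edge bound $c_{ee'}=O(1/m^2)$ is not automatic. I would handle it by comparison. Write $\log(N(x)/n)=f_{ERGM}(x)+\eta_n(x)$, where $\eta_n$ is the multilinear extension of the error $\eta_n(G)$ from Lemma \ref{weak_law}.

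On the ERGM part the sparsity bounds hold. For $\eta_n$, I will strengthen the union-bound argument of Lemma \ref{weak_law_2der}: since $\binom{m}{2}=o(\log n)$, Chebyshev plus a union bound gives $\max_G|\eta_n(G)|\le m^{-3}$ with probability tending to one. The four-point second difference of $\eta_n$ is then $O(m^{-3})$ uniformly. Summing over all $O(m^4)$ pairs gives a contribution $O(m\cdot a)$ to the first off-diagonal sum, and similarly negligible contributions elsewhere. This keeps $\mathcal{S}_0/n_v$ at order $O(m^{-1/2})$ on an event of probability tending to one, which is the claim.
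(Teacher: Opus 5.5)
Your counting matches the paper's: $a=O(m^2B)$, $b_e=O(B)$, and the vertex-sharing/disjoint dichotomy for $c_{ee'}$, giving $a\sum_{e\neq e'}c_{ee'}^2=O(m^3B^3)$ and $\sum b_eb_{e'}c_{ee'}=O(m^2B^3)$. The gap is the formula you feed these into. In the bound the paper cites (Theorem 1.6 of \cite{chatterjee2016nonlinear}) the smoothness term is
\[
4\Bigl(\sum_e\bigl(ac_{ee}+b_e^2\bigr)+\tfrac14\sum_{e,e'}\bigl(ac_{ee'}^2+b_eb_{e'}c_{ee'}+4b_ec_{ee'}\bigr)\Bigr)^{1/2}+\tfrac14\Bigl(\sum_e b_e^2\Bigr)^{1/2}\Bigl(\sum_e c_{ee}^2\Bigr)^{1/2}+3\sum_e c_{ee}+\log 2 ,
\]
and it contains no $\epsilon$. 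The $\epsilon$-dependent terms and $\log|\mathcal{D}(\epsilon)|$ in \eqref{eq:upper_bound_final} form the separate complexity term. Optimizing $\epsilon$ there produces the $B^{8/5}m^{-1/5}(\log m)^{1/5}$ part of Lemma \ref{lem:partition_func_conv_gen}; it does not produce a square root over the Hessian sums.

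Your $\mathcal{S}_0$ has no square root. So your own bounds only give $\mathcal{S}_0=O(m^3B^3)$, i.e.\ $\mathcal{S}_0/n_v=O(mB^3)$, which does not tend to zero. The closing sentence of your proposal therefore does not follow from what precedes it, and deferring to ``their computation verbatim'' hides exactly this point. With the correct form the root is built in:
\begin{itemize}
\item Your counts give the first summand as $O(m^{3/2}B^{3/2})$. You also need $\sum_e ac_{ee}=O(m^3B^2)$ inside the root; you had attached it to $\epsilon$.
\item The remaining pieces are $O(mB^2)$ and $O(mB)$.
\end{itemize}
Dividing by $n_v\asymp m^2$ then gives the lemma, exactly as in the paper.

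Once this is repaired, your handling of $\log(N(x)/n)$ improves on the paper. The paper simply asserts the disjoint-pair bound $c_{ee'}=O(B/m^2)$ for all of $f$. Yet Lemma \ref{weak_law_2der} only bounds the empirical correction by $\epsilon/m$ uniformly over pairs. Plugged into $a\sum_{e,e'}c_{ee'}^2$, that gives $O(\epsilon^2Bm^4)$, hence only $O(\epsilon\sqrt{B})$ after the root and division by $n_v$: this is $o_P(1)$, not $O(m^{-1/2})$.

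Your decomposition $\log(N/n)=f_{ERGM}+\eta_n$ with $\max_G|\eta_n(G)|\le m^{-3}$ closes this gap, for two reasons. First, Chebyshev plus a union bound works because $2^{\binom m2}/\min_G p_{\beta}(G)\le e^{O(m^2)}=n^{o(1)}$ under $\binom m2=o(\log n)$. Second, mixed partials of a multilinear extension are convex combinations of four-point differences at vertices, so $c^{\eta}_{ee'}=O(m^{-3})$ uniformly on $[0,1]^{n_v}$. The resulting contributions are negligible under the root: $O(B)$ to $a\sum c_{ee'}^2$ (your ``$O(m\cdot a)$'' is a harmless overestimate) and $O(mB^2)$ to $\sum b_eb_{e'}c_{ee'}$.
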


\begin{proof}

We will invoke Theorem 1.6 from Chatterjee \& Dembo \cite{chatterjee2016nonlinear} (henceforth NLLD). Let $f: [0,1]^{n_v} \to \mathbb{R}$ be a function that is twice continuously differentiable in $(0,1)^{n_v}$, such that $f$ and its first and second order derivatives extend continuously to the boundary. Define the following quantities:
\begin{itemize}
    \item $a := \|f\|_\infty$, the supremum norm of the function.
    \item $b_e := \|\partial f / \partial x_e\|_\infty$ for any edge $e$.
    \item $c_{ee'} := \|\partial^2 f / (\partial x_e \partial x_{e'})\|_\infty$ for any edges $e$ and $e'$.
    \item $I(x) := \sum_{e}^{} (x_e \log x_e + (1-x_e)\log(1-x_e))$ is the entropy function for $x \in [0,1]^{{m \choose 2}}$.
\end{itemize}
Theorem 1.6 of NLLD provides an upper bound for the free energy $F = \log \sum_{x \in \{0,1\}^{n_v}} e^{f(x)}$:
\begin{equation}
F \le \sup_{x \in [0,1]^{n_v}} (f(x) - I(x)) + \text{complexity term} + \text{smoothness term}
\end{equation}
The smoothness term, which we denote $S_{\text{NLLD}}$, is given by:
\begin{align*}
    S_{\text{NLLD}} = & \quad 4 \left( \sum_{e}^{} (ac_{ee} + b_e^2) + \frac{1}{4} \sum_{e,e'}^{} (ac_{ee'}^2 + b_e b_{e'} c_{ee'} + 4b_e c_{ee}) \right)^{1/2} \tag{$S_1$}\\
    & + \frac{1}{4} \left( \sum_{e}^{} b_e^2 \right)^{1/2} \left( \sum_{e}^{} c_{ee}^2 \right)^{1/2} \tag{$S_2$} \\
    & + 3 \sum_{e}^{} c_{ee} + \log 2 \tag{$S_3$}
\end{align*}
Our term $\mathcal{S}$ in formula \eqref{eq:upper_bound_final} corresponds to this $S_{\text{NLLD}}$.

We apply the theorem to the following function main text:
\begin{equation}
f(x) = -\frac{\lambda}{m^{v(H)-2}}H(x) + \log\left(\frac{N(x)}{n}\right)
\end{equation}
Here, $x = (x_e)_{e \in E_m}$ is the vector of edge indicators for a graph on $m$ vertices. The number of variables is $n_v = \Theta(m^2)$. The derivative bounds are:
\begin{itemize}
    \item $a = \|f\|_\infty = O(m^2 B)$, where $B = 1 + C_H|\lambda| + \sum_{k=1}^K |\beta_k| C_k$ is an $O(1)$ constant.
    \item $b_e = \|\partial f / \partial x_e\|_\infty = O(B)$. 
    \item $c_{ee'} = \|\partial^2 f / (\partial x_e \partial x_{e'})\|_\infty = O(B/m)$. 
\end{itemize}
The crucial property that generalizes is the \textbf{sparsity of the second derivative interactions}. The term $c_{ee'}$ is \(O(B/m)\) if edges $e$ and $e'$ share a vertex, which happens in at most \(O(m^3)\) ways, and it is \(O(B/m^2)\) if they do not share a vertex, which happens in at most \(O(m^4)\) ways. This sparsity is the key to the final scaling.

\subsubsection*{Calculation of the Smoothness Term Components}
We now calculate each component of $S_{\text{NLLD}}$ using these scalings. The indices $e,e'$ now run over the $n_v = O(m^2)$ edges.

\textbf{Component $S_1$}: This is the dominant term. The sum under the square root has several parts:
\begin{itemize}
    \item $\sum_{e}^{} (ac_{ee} + b_e^2)$: Each term is $O(m^2 B \cdot B/m) + O(B^2) = O(mB^2)$. The sum over $n_v$ terms is $O(n_v \cdot mB^2) = O(m^3 B^2)$.
    \item $\sum_{e,e'}^{} ac_{ee'}^2$: The vertex sharing pairs \((e,e')\) contribute $O(m^3)$ terms. Each term is $O(m^2 B \cdot (B/m)^2) = O(B^3)$. The remaining pairs \((e,e')\) contribute $O(m^4)$ terms. Each term is $O(m^2 B \cdot (B/m^2)^2) = O(B^3/m^2)$. The sum is $O(m^3 \cdot B^3) + O(m^4 \cdot B^3/m^2) = O(m^3 B^3)$.
    \item $\sum_{e,e'}^{} b_e b_{e'} c_{ee'}$: The vertex sharing pairs contribute $O(m^3)$ terms. Each term is $O(B \cdot B \cdot B/m) = O(B^3/m)$. The remaining pairs contribute $O(m^4)$ terms. Each term is $O(B \cdot B \cdot B/m^2) = O(B^3/m^2)$. The sum is $O(m^3 \cdot B^3/m) + O(m^4 \cdot B^3/m^2) = O(m^2 B^3)$.
    \item $\sum_{e,e'}^{} 4b_e c_{ee} = n_v \sum_{e}^{} 4b_e c_{ee}$: The inner sum is $O(n_v \cdot B \cdot B/m) = O(m^2 \cdot B^2/m) = O(m B^2)$. The total is $O(m^2 \cdot mB^2) = O(m^3 B^2)$.
\end{itemize}
The sum under the square root is $O(m^3B^2 + m^3B^3 + m^2B^3) = O(m^3B^3)$ assuming $B \ge 1$. Thus, the entire $S_1$ component is $O((m^3 B^3)^{1/2}) = O(m^{3/2} B^{3/2})$.

\textbf{Component $S_2$}: $\frac{1}{4} \left( \sum_{e}^{} b_e^2 \right)^{1/2} \left( \sum_{e}^{} c_{ee}^2 \right)^{1/2}$
\begin{itemize}
    \item The first square root is $(O(n_v B^2))^{1/2} = (O(m^2 B^2))^{1/2} = O(mB)$.
    \item The second square root is $(O(n_v (B/m)^2))^{1/2} = (O(m^2 \cdot B^2/m^2))^{1/2} = O(B)$.
\end{itemize}
Thus, component $S_2$ is $O(mB \cdot B) = O(m B^2)$.

\textbf{Component $S_3$}: $3 \sum_{e}^{} c_{ee} + \log 2$
The sum has $n_v$ terms, each of size $O(B/m)$. The sum is $O(n_v \cdot B/m) = O(m^2 \cdot B/m) = O(mB)$.

\subsubsection*{Total Smoothness Term and Normalization}
Combining the components gives the total smoothness term:
$$ S_{\text{NLLD}} = O(m^{3/2} B^{3/2}) + O(m B^2) + O(mB) $$
For large $m$ and $B \ge 1$, the dominant term is $S_1 = O(m^{3/2} B^{3/2})$. The main text normalizes the log-partition function by $m^2$, which corresponds to normalizing error terms by $m^2$. The relevant normalization factor for the smoothness term from NLLD is the number of variables, $n_v = \binom{m}{2} = O(m^2)$.
$$ \frac{S_{\text{NLLD}}}{n_v} = \frac{O(m^{3/2} B^{3/2})}{O(m^2)} = O(m^{3/2 - 2} B^{3/2}) = O(m^{-1/2} B^{3/2}) $$
Since $B$ is an $O(1)$ constant with respect to $m$, we arrive at the final scaling:
$$ \frac{S_{\text{NLLD}}}{n_v} = O(m^{-1/2}) $$

\end{proof}

\begin{proof}[Proof of Lemma \ref{lambda_cont}]
Let $h_0>0$ be a constant and define $\mu(\lambda)=h_0 e^{\lambda}$. Let
$Z_{\mu(\lambda)} \sim \mathrm{Poisson}(\mu(\lambda))$. Define the deterministic functions
\[
f(\lambda):=\mathbb{E}\Big[(Z_{\mu(\lambda)}-h_0)^2e^{-\lambda(Z_{\mu(\lambda)}-h_0)}\Big],
\qquad
g(\lambda):=\mathbf{Var}\Big((Z_{\mu(\lambda)}-h_0)e^{-\lambda(Z_{\mu(\lambda)}-h_0)}\Big).
\] As a first step we will prove that the functions $f(\lambda)$ and $g(\lambda)$ are continuous.\\

\textbf{Continuity of $f(\lambda)$}\\

We may rewrite
\[
f(\lambda)
= e^{\lambda h_0}\,\mathbb{E}\Big[(Z-h_0)^2 e^{-\lambda Z}\Big],
\qquad Z\sim \mathrm{Poisson}(\mu(\lambda)).
\]
Using the Poisson moment generating function we want to compute
\[
\mathbb{E}[(Z-h_0)^2 e^{-\lambda Z}]
= \mathbb{E}[Z^2 e^{-\lambda Z}]
-2h_0 \mathbb{E}[Z e^{-\lambda Z}]
+ h_0^2 \mathbb{E}[e^{-\lambda Z}].
\]
For a Poisson$(\mu)$ random variable we have
\[
\mathbb{E}[e^{-\lambda Z}]
= \exp\!\big(\mu(e^{-\lambda}-1)\big).
\]
Differentiating with respect to $\lambda$ yields
\[
\mathbb{E}[Z e^{-\lambda Z}]
= \mu e^{-\lambda}
\exp\!\big(\mu(e^{-\lambda}-1)\big),
\]
and a second differentiation gives
\[
\mathbb{E}[Z^2 e^{-\lambda Z}]
= \big(\mu e^{-\lambda}+\mu^2 e^{-2\lambda}\big)
\exp\!\big(\mu(e^{-\lambda}-1)\big).
\]
Substituting these expressions (with $\mu=\mu(\lambda)$) into the
previous expansion, we obtain

\[\mathbb{E}[(Z-h_0)^2 e^{-\lambda Z}]
= \exp\!\big(\mu(\lambda)(e^{-\lambda}-1)\big)
\Big[
\mu(\lambda)e^{-\lambda}
+ \mu(\lambda)^2 e^{-2\lambda}
-2h_0 \mu(\lambda)e^{-\lambda}
+ h_0^2
\Big].\]
Consequently,
\[
f(\lambda)
= e^{\lambda h_0}
\exp\!\big(\mu(\lambda)(e^{-\lambda}-1)\big)
\Big[
\mu(\lambda)e^{-\lambda}
+ \mu(\lambda)^2 e^{-2\lambda}
-2h_0 \mu(\lambda)e^{-\lambda}
+ h_0^2
\Big].
\]

Finally, since the map $\lambda\mapsto \mu(\lambda)=h_0 e^{\lambda}$ is continuous, it is easy to observe that $f(\lambda)$ is continuous in $\lambda$.\\ 

\textbf{Continuity of $g(\lambda)$}\\

Defining $Y_\lambda := (Z_{\mu(\lambda)}-h_0)\,e^{-\lambda(Z_{\mu(\lambda)}-h_0)}$, we will show that the map $
\lambda \mapsto \mathrm{Var}(Y_\lambda)
$
is continuous. It suffices to prove continuity of $\lambda\mapsto \mathbb{E}[Y_\lambda]$
and $\lambda\mapsto \mathbb{E}[Y_\lambda^2]$. We have
\[
\mathbb{E}[Y_\lambda]
=\mathbb{E}\Big[(Z-h_0)e^{-\lambda(Z-h_0)}\Big]
= e^{\lambda h_0}\,\mathbb{E}\Big[(Z-h_0)e^{-\lambda Z}\Big],
\qquad Z\sim \mathrm{Poisson}(\mu(\lambda)).
\]
Expanding gives
\[
\mathbb{E}\Big[(Z-h_0)e^{-\lambda Z}\Big]
=\mathbb{E}[Z e^{-\lambda Z}] - h_0\,\mathbb{E}[e^{-\lambda Z}].
\]
Using
\[
\mathbb{E}[e^{-\lambda Z}]=\exp\!\big(\mu(e^{-\lambda}-1)\big),
\qquad
\mathbb{E}[Z e^{-\lambda Z}]
=\mu e^{-\lambda}\exp\!\big(\mu(e^{-\lambda}-1)\big),
\]
one obtains
\[
\mathbb{E}[Y_\lambda]
= e^{\lambda h_0}\exp\!\big(\mu(\lambda)(e^{-\lambda}-1)\big)\Big(\mu(\lambda)e^{-\lambda}-h_0\Big).
\]
It is immediate that
$\lambda\mapsto \mathbb{E}[Y_\lambda]$ is continuous. Next note that
\[
Y_\lambda^2 = (Z-h_0)^2 e^{-2\lambda(Z-h_0)}
= e^{2\lambda h_0}\,(Z-h_0)^2 e^{-2\lambda Z},
\]
so that
\[
\mathbb{E}[Y_\lambda^2]
= 
e^{2\lambda h_0}\mathbb{E}[(Z-h_0)^2 e^{-2\lambda Z}]
=e^{2\lambda h_0}(\mathbb{E}[Z^2 e^{-2\lambda Z}]
-2h_0\,\mathbb{E}[Z e^{-2\lambda Z}]
+h_0^2\,\mathbb{E}[e^{-2\lambda Z}]).
\]
Using the Poisson moment generating function with $t=-2\lambda$ gives
\[
\mathbb{E}[e^{-2\lambda Z}]
=\exp\!\big(\mu(e^{-2\lambda}-1)\big),
\quad
\mathbb{E}[Z e^{-2\lambda Z}]
=\mu e^{-2\lambda}\exp\!\big(\mu(e^{-2\lambda}-1)\big),
\]
and
\[
\mathbb{E}[Z^2 e^{-2\lambda Z}]
=\big(\mu e^{-2\lambda}+\mu^2 e^{-4\lambda}\big)\exp\!\big(\mu(e^{-2\lambda}-1)\big).
\]
Therefore, with $\mu=\mu(\lambda)$,
\begin{align*}
\mathbb{E}[Y_\lambda^2]
&= e^{2\lambda h_0}\exp\!\big(\mu(\lambda)(e^{-2\lambda}-1)\big)
\Big[
\mu(\lambda)e^{-2\lambda}
+\mu(\lambda)^2 e^{-4\lambda}
-2h_0\mu(\lambda)e^{-2\lambda}
+h_0^2
\Big].
\end{align*}
It follows that $\lambda\mapsto \mathbb{E}[Y_\lambda^2]$ is continuous.
Since $\lambda\mapsto \mathbb{E}[Y_\lambda]$ and $\lambda\mapsto \mathbb{E}[Y_\lambda^2]$
are continuous
\[
\lambda\mapsto \mathrm{Var}(Y_\lambda)
=\mathbb{E}[Y_\lambda^2]-\big(\mathbb{E}[Y_\lambda]\big)^2
\]
is continuous.\\

\textbf{Convergence of the estimators of $f(\lambda)$ and $g(\lambda)$}\\

Note that conditioning on $\hat\lambda$, the quantity $\hat\mu=\mu(\hat\lambda)$ is fixed, and
$Z_{\hat\mu}\mid \hat\lambda \sim \mathrm{Poisson}(\hat\mu)$ by definition. Therefore,
\[
\mathbb{E}\!\left[(Z_{\hat{\mu}}-h_0)^2e^{-\hat{\lambda}(Z_{\hat{\mu}}-h_0)}\mid \hat{\lambda}\right]
= f(\hat\lambda),
\]
and similarly
\[
\mathbf{Var}\!\left[(Z_{\hat{\mu}}-h_0)e^{-\hat{\lambda}(Z_{\hat{\mu}}-h_0)}\mid \hat{\lambda}\right]
= g(\hat\lambda).
\]
Since $\hat\lambda\xrightarrow{P}\lambda^{\circ}$ and $f$ is continuous, the continuous mapping theorem yields
\[
f(\hat\lambda)\xrightarrow{P} f(\lambda^{\circ})
=\mathbb{E}\Big[(Z_{\mu}-h_0)^2e^{-\lambda^{\circ}(Z_{\mu}-h_0)}\Big],
\]
where $\mu=\mu(\lambda^{\circ})$. Likewise, continuity of $g$ implies
\[
g(\hat\lambda)\xrightarrow{P} g(\lambda^{\circ})
=\mathbf{Var}\Big((Z_{\mu}-h_0)e^{-\lambda^{\circ}(Z_{\mu}-h_0)}\Big).
\]

\end{proof}

\begin{proof}[Proof of Lemma \ref{two_sample_asymptotics_lemma}]
Let $\Delta_n := (\hat{\theta}_1 - \hat{\theta}_2) - (\theta_1 - \theta_2)$.
Define
\[
U_n := a_{n_1}(\hat{\theta}_1 - \theta_1),
\qquad 
V_n := b_{n_2}(\hat{\theta}_2 - \theta_2).
\]
By assumption, $U_n \xrightarrow{d} N(0, \sigma_1^2)$ and $V_n \xrightarrow{d} N(0, \sigma_2^2)$. 
Since the samples are independent, $U_n$ and $V_n$ are independent for each $n$. Consider the linear combination
\[
a_{n_1}\Delta_n = U_n - r_n V_n,
\qquad \text{where } r_n = \frac{a_{n_1}}{b_{n_2}}.
\]
Let $\varphi_{U_n}(t) = \mathbb{E}[e^{itU_n}]$ and $\varphi_{V_n}(t) = \mathbb{E}[e^{itV_n}]$ denote the characteristic functions. Independence gives
\[
\varphi_{a_{n_1}\Delta_n}(t)
= \mathbb{E}[e^{it(U_n - r_n V_n)}]
= \varphi_{U_n}(t)\, \varphi_{V_n}(-r_n t).
\]
By the marginal central limit theorems and the convergence $r_n \to \rho$,
\[
\varphi_{U_n}(t) \to e^{-\frac{1}{2}\sigma_1^2 t^2},
\qquad 
\varphi_{V_n}(-r_n t) \to e^{-\frac{1}{2}\sigma_2^2 \rho^2 t^2}.
\]
Hence
\[
\varphi_{a_{n_1}\Delta_n}(t)
\to 
\exp\!\left[-\frac{1}{2}(\sigma_1^2 + \rho^2\sigma_2^2)t^2\right],
\]
which is the characteristic function of $N(0,\, \sigma_1^2 + \rho^2\sigma_2^2)$. 
By L\'evy's continuity theorem,
\[
a_{n_1}\Delta_n \xrightarrow{d} N\!\left(0,\, \sigma_1^2 + \rho^2\sigma_2^2\right).
\]
Equivalently, letting
\[
V_n^\ast := \frac{\sigma_1^2}{a_{n_1}^2} + \frac{\sigma_2^2}{b_{n_2}^2},
\]
we have
\[
\frac{\Delta_n}{\sqrt{V_n^\ast}} \xrightarrow{d} N(0,1).
\]


\end{proof}

\begin{proof}[Proof of Lemma \ref{density}]
For a finite partition \(P\) of \([0,1]\), let \(\mathcal{A} = \sigma (P\times P)\) be the sigma algebra generated by \(P\times P\). Now given an \(h\in W\), consider \(h_{\mathcal{A}} = \mathbb{E}[h\mid\mathcal{A}]\), where the randomness is wrt the Lebesgue measure on \([0,1]^2\). Since $I$ is convex, Jensen's inequality yields
$\E\big[I(h)\mid\mathcal{A}\big]\ge I\big(\E[h\mid\mathcal{A}]\big)=I(h_{\mathcal{A}})$.
Integrating both sides gives $I(h)\ge I(h_{\mathcal{A}})$.
 For any finite simple graph \(H\), the map $h\mapsto t(H,h)$ is continuous wrt the cut metric.
By the Frieze--Kannan weak regularity lemma, we can find a sequence of finite partitions \(P_n\) such that for the corresponding sigma algebras \(\mathcal{A}_n\) have the property that \(h_{\mathcal{A}_n}\rightarrow h\) wrt the cut metric.
Putting these together, we get
\begin{align*} 
F(h_{\mathcal{A}_n}) &= \sum_{k=1}^K \beta_k\,t(T_k,h_{\mathcal{A}_n})\;-\;\frac{1}{2}I(h_{\mathcal{A}_n})\\
& \geq \sum_{k=1}^K \beta_k\,t(T_k,h_{\mathcal{A}_n})\;-\;\frac{1}{2}I(h)
\end{align*}
Hence, \(\limsup F(h_{\mathcal{A}_n}) \geq F(h)\), so that \(F(h)\) is upper bounded by the supremum over step-functions.
\end{proof}

\section{Existence and Uniqueness of roots \label{sec:root_unicity}}

\subsection{Conditions for existence and uniqueness of roots}

We begin by stating the two lemmas required to establish the existence and uniqueness of the root for the moment-generating function equations arising in the maximum entropy framework.

\begin{lemma}\label{uniqueness_1}
Let \(X\) be a real valued random variable such that \(\mathbb{P}(X\neq0)>0\) , and let its Laplace transform exist in an open interval \(I\). Define the function \(f(\lambda) = \mathbb{E}[Xe^{-\lambda X}]\) on \(I\). Then, \(\exists\) at most one \(\lambda^{*}\in I\) such that \(f(\lambda^{*})=0\). Furthermore, if such a \(\lambda^{*}\) exists, it is the unique minimizer of the convex function \(M(\lambda) = \mathbb{E}[e^{-\lambda X}]\) on \(I\).
\end{lemma}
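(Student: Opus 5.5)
The plan is to show that $M(\lambda)=\mathbb{E}[e^{-\lambda X}]$ is a strictly convex, differentiable function on the open interval $I$ with $M'(\lambda)=-f(\lambda)$. Once this is in place, both claims follow from elementary convex analysis. The zeros of $f$ are exactly the critical points of $M$. A strictly convex differentiable function has at most one critical point, and any critical point is its unique global minimizer on $I$.

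The first step is differentiation under the expectation. Fix $\lambda\in I$ and pick $\delta>0$ with $[\lambda-2\delta,\lambda+2\delta]\subset I$. For $|s-\lambda|\le\delta$ we have the pointwise bounds
\[
|X|e^{-sX}\le \delta^{-1}\bigl(e^{-(\lambda-2\delta)X}+e^{-(\lambda+2\delta)X}\bigr),
\qquad
X^2e^{-sX}\le C_\delta\bigl(e^{-(\lambda-2\delta)X}+e^{-(\lambda+2\delta)X}\bigr).
\]
These follow from $|x|e^{-\delta|x|}\le \delta^{-1}$ and $x^2e^{-\delta|x|}\le C_\delta$, after splitting on the sign of $X$. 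The dominating functions are integrable because the Laplace transform is finite at $\lambda\pm2\delta$. Dominated convergence then gives that $M$ is $C^2$ on $I$, with $M'(\lambda)=-\mathbb{E}[Xe^{-\lambda X}]=-f(\lambda)$ and $M''(\lambda)=\mathbb{E}[X^2e^{-\lambda X}]$. In particular $f$ is well defined and finite on $I$.

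The second step is strict convexity. Since $\mathbb{P}(X\neq0)>0$ and $e^{-\lambda X}>0$ everywhere, the integrand $X^2e^{-\lambda X}$ is nonnegative and strictly positive on an event of positive probability. Hence $M''(\lambda)>0$ for every $\lambda\in I$. It follows that $f=-M'$ is strictly decreasing on $I$, so it has at most one zero $\lambda^*$. Moreover, if $f(\lambda^*)=0$, then $M'(\lambda^*)=0$ and $M'$ is strictly increasing, so $M$ decreases strictly on $I\cap(-\infty,\lambda^*]$ and increases strictly on $I\cap[\lambda^*,\infty)$. Therefore $\lambda^*$ is the unique minimizer of $M$ on $I$.

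The only step needing any care is the justification of differentiating under $\mathbb{E}$ via the domination above; everything else is routine. Convexity of $M$ itself (the lemma calls $M$ convex) is also immediate from the convexity of $\lambda\mapsto e^{-\lambda x}$ for each fixed $x$.
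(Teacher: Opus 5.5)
Your proposal is correct and follows essentially the paper's argument. Both differentiate under the expectation to get $M'=-f$ and $M''(\lambda)=\mathbb{E}[X^2e^{-\lambda X}]>0$, so $f$ is strictly decreasing with at most one zero, and that zero is the unique minimizer of $M$. Your domination bound at $\lambda\pm2\delta$ also gives a rigorous justification for interchanging derivative and expectation, a step the paper only asserts.
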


\begin{proof}
    Under the assumption that the Laplace transform exists on \(I\), we can differentiate under the expectation:
    \[
    f'(\lambda) = \frac{d}{d\lambda}\mathbb{E}[Xe^{-\lambda X}] = \mathbb{E}[-X^2e^{-\lambda X}]
    \]
    Since \(\mathbb{P}(X\neq 0)>0\), the term \(X^2e^{-\lambda X}\) is strictly positive with non-zero probability. Therefore, \(\mathbb{E}[X^2e^{-\lambda X}]>0\), which implies \(f'(\lambda)<0\) for all \(\lambda \in I\). Hence \(f(\lambda)\) has at most one root. Since \(M'(\lambda) = - f(\lambda)\), this means that if such a \(\lambda^{*}\) exists, it must be the unique minimizer of the convex function \(M(\lambda)\).
\end{proof}

\begin{lemma}\label{existence_1}
    Let \(X\) be a discrete random variable with finite support \(\mathcal{X} = \{x_1,\ldots , x_k\}\). Define \(f(\lambda) = \mathbb{E}[Xe^{-\lambda X}]\). A solution \(\lambda^{*}\in\mathbb{R}\) to \(f(\lambda^{*}) = 0\) exists if and only if:
    \begin{itemize}
        \item \(\min(\mathcal{X}) < 0 < \max(\mathcal{X})\), OR
        \item \(\mathbb{P}(X=0) = 1\) (i.e., \(X\) is the zero constant).
    \end{itemize}
\end{lemma}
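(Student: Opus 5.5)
We will study $f(\lambda)=\mathbb{E}[Xe^{-\lambda X}]=\sum_{j=1}^k x_j p_j e^{-\lambda x_j}$, where $p_j=\mathbb{P}(X=x_j)>0$. Because the support is finite, $f$ is finite and smooth on all of $\mathbb{R}$. Lemma~\ref{uniqueness_1} applies with $I=\mathbb{R}$ whenever $X\not\equiv 0$, so $f$ is then strictly decreasing. The existence question therefore comes down to comparing the limits of $f$ at $\pm\infty$ with $0$.

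\emph{Sufficiency.} If $\mathbb{P}(X=0)=1$, then $f\equiv 0$ and every $\lambda^*$ is a root. Now suppose $x_{\min}:=\min\mathcal{X}<0<x_{\max}:=\max\mathcal{X}$. As $\lambda\to+\infty$, the term with the most negative exponent $-\lambda x_j$ is the one with the smallest $x_j$, and it dominates:
\[
f(\lambda)=e^{-\lambda x_{\min}}\Bigl(x_{\min}p_{\min}+\sum_{x_j>x_{\min}}x_jp_je^{-\lambda(x_j-x_{\min})}\Bigr).
\]
The bracket tends to $x_{\min}p_{\min}<0$, so $f(\lambda)<0$ for large $\lambda$. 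Symmetrically, as $\lambda\to-\infty$ the term with $x_{\max}$ dominates, and $f(\lambda)$ has the sign of $x_{\max}p_{\max}>0$. Since $f$ is continuous, the intermediate value theorem gives a root.

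\emph{Necessity.} Suppose $X\not\equiv 0$ and the strict sign-change condition fails. Then either all $x_j\ge 0$ or all $x_j\le 0$.
\begin{itemize}
\item If all $x_j\ge 0$, every term $x_jp_je^{-\lambda x_j}$ is nonnegative. At least one $x_j>0$ has $p_j>0$, so that term is strictly positive and $f(\lambda)>0$ for all $\lambda$.
\item If all $x_j\le 0$, the same argument with signs reversed gives $f(\lambda)<0$ for all $\lambda$.
\end{itemize}
In both cases there is no root, which proves the converse.

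The only delicate step is the dominant-term asymptotics at $\pm\infty$, and factoring out $e^{-\lambda x_{\min}}$ (resp.\ $e^{-\lambda x_{\max}}$) as above handles it cleanly. One convention should be stated explicitly: $\mathcal{X}$ is taken to be the support, so every $p_j>0$. This is what allows the extreme values to carry positive mass.
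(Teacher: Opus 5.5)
Your proof is correct and follows the paper's argument essentially step for step. Sufficiency uses the same factorization by $e^{-\lambda x_{\min}}$ and $e^{-\lambda x_{\max}}$ to get opposite signs at $\pm\infty$ together with the intermediate value theorem, and necessity uses the same termwise sign argument; your appeal to Lemma~\ref{uniqueness_1} for strict monotonicity is not needed for existence but does no harm.
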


\begin{proof}
    We first prove \textbf{sufficiency}.
    
    \textit{Case 1: \(\mathbb{P}(X=0)=1\).} If \(X \equiv 0\), then \(f(\lambda) = \mathbb{E}[0 \cdot e^{-\lambda \cdot 0}] = 0\) for all \(\lambda \in \mathbb{R}\). Thus, any \(\lambda^* \in \mathbb{R}\) is a root.

    \textit{Case 2: \(\min(\mathcal{X}) < 0 < \max(\mathcal{X})\).} 
    Let \(x_{\min} = \min(\mathcal{X})\) and \(x_{\max} = \max(\mathcal{X})\). By assumption, \(x_{\min} < 0\) and \(x_{\max} > 0\). 
    The function \(f(\lambda) = \sum_{x \in \mathcal{X}} \mathbb{P}(X=x) x e^{-\lambda x}\) is a finite sum of continuous functions, hence \(f\) is continuous on \(\mathbb{R}\).
    
    As \(\lambda \to \infty\), we factor out the term with the smallest exponent:
    \[ f(\lambda) = e^{-\lambda x_{\min}} \left[ \mathbb{P}(X=x_{\min})x_{\min} + \sum_{x > x_{\min}} \mathbb{P}(X=x)x e^{-\lambda(x - x_{\min})} \right] \]
    Since \(x - x_{\min} > 0\) for all terms in the summation, \(e^{-\lambda(x - x_{\min})} \to 0\) as \(\lambda \to \infty\). Because \(x_{\min} < 0\) and \(\mathbb{P}(X=x_{\min}) > 0\), the leading term inside the bracket is strictly negative. Thus, \(\lim_{\lambda \to \infty} f(\lambda) = -\infty\).

    As \(\lambda \to -\infty\), we factor out the term with the largest exponent:
    \[ f(\lambda) = e^{-\lambda x_{\max}} \left[ \mathbb{P}(X=x_{\max})x_{\max} + \sum_{x < x_{\max}} \mathbb{P}(X=x)x e^{-\lambda(x - x_{\max})} \right] \]
    Since \(x - x_{\max} < 0\) and \(\lambda \to -\infty\), the exponents \(-\lambda(x - x_{\max})\) tend to \(-\infty\), causing the sum to vanish. Because \(x_{\max} > 0\) and \(\mathbb{P}(X=x_{\max}) > 0\), the leading term is strictly positive. Thus, \(\lim_{\lambda \to -\infty} f(\lambda) = \infty\).
    
    By the Intermediate Value Theorem, since \(f\) is continuous and takes both positive and negative values on \(\mathbb{R}\), there exists at least one \(\lambda^* \in \mathbb{R}\) such that \(f(\lambda^*) = 0\).

    We now prove \textbf{necessity}.
    Suppose \(X\) is not almost surely zero. If \(\min(\mathcal{X}) \geq 0\), then every \(x \in \mathcal{X}\) is non-negative and at least one \(x > 0\). Then \(x e^{-\lambda x} \geq 0\) for all \(x, \lambda\), and \(\mathbb{E}[X e^{-\lambda X}] > 0\), so no root exists. Similarly, if \(\max(\mathcal{X}) \leq 0\), then \(x e^{-\lambda x} \leq 0\) for all \(x, \lambda\) with at least one \(x < 0\). Thus \(\mathbb{E}[X e^{-\lambda X}] < 0\), and no root exists. Therefore, the support must strictly straddle zero for a root to exist for non-constant \(X\).
\end{proof}

\subsection{Existence and uniqueness of roots in network models}

Let $\mathcal{G}_{n,1}, \dots, \mathcal{G}_{n,n}$ be $n$ independent and identically distributed realizations of random graphs on $m_n$ vertices. Let $h_{n,i} = \mathcal{T}(H, \mathcal{G}_{n,i}) - h_0$ denote observed statistic, where $h_0$ is chosen such that $\mathbb{E}[h_{n,i}] = 0$ under the null hypothesis.

We define an auxiliary discrete random variable $Y_n$ representing the empirical distribution of the subgraph counts. $Y_n$ takes values in the multiset $\mathcal{H}_n = \{h_{n,1}, \dots, h_{n,n}\}$ with uniform probability:
\[
\mathbb{P}(Y_n = h_{n,i}) = \frac{1}{n}, \quad \text{for } i=1,\dots,n.
\]
The estimating equation for the Lagrange multiplier $\hat{\lambda}_n$ takes different forms depending on the regime's normalization, but all can be cast in the form of an expectation over $Y_n$.

\subsubsection{Verification of Conditions}

To invoke Lemma \ref{uniqueness_1} and Lemma \ref{existence_1}, we must verify that the support of $Y_n$ strictly straddles zero with high probability as $n \to \infty$. That is, we define the event:
\[
E_n := \left\{ \min_{1 \le i \le n} h_{n,i} < 0 < \max_{1 \le i \le n} h_{n,i} \right\}.
\]
We justify that $\lim_{n \to \infty} \mathbb{P}(E_n) = 1$ under any hypothesis where $h_0$ is in the interior of the support of $H(G)$.

\subsubsection*{Regime 1: Fixed Number of Vertices ($m$ fixed)}
In this regime, the sample space of graphs is independent of $n$. The probability mass on either side of zero is a strictly positive constant:
\[
\pi_+ := \mathbb{P}(h_{n,1} > 0) > 0, \quad \pi_- := \mathbb{P}(h_{n,1} < 0) > 0.
\]
The probability that the sample fails to straddle zero is bounded by $(1-\pi_-)^n + (1-\pi_+)^n$, which vanishes as $n \to \infty$. Thus, $\mathbb{P}(E_n) \to 1$.

\subsubsection*{Regime 2: Sparse Regime ($m_n \to \infty$)}
Here, the uncentered subgraph count $H(\mathcal{G}_{n,i})$ converges in distribution to a Poisson random variable $Z$ with mean $c^{v(H)}/\lvert Aut(H)\rvert $, and $h_0 = c_0^{v(H)}/\lvert Aut(H)\rvert$. The variable $h_{n,i}$ behaves asymptotically as $Z - h_0$. Since a Poisson distribution with parameter $c^{v(H)}/\lvert Aut(H)\rvert $ is non-degenerate and supported on integers, it assigns positive probability to values strictly less than $h_0$ and strictly greater than $h_0$.
\[
\lim_{n \to \infty} \mathbb{P}(h_{n,1} > 0) = \mathbb{P}(Z > h_0) > 0, \quad \lim_{n \to \infty} \mathbb{P}(h_{n,1} < 0) = \mathbb{P}(Z < h_0) > 0.
\]
Following the same logic as the fixed case, $\mathbb{P}(E_n) \to 1$.

\subsubsection*{Regime 3: Dense Regime ($m_n \to \infty$)}
In the dense regime (e.g., Erd{\"o}s-R{\'e}nyi with constant $p$) we restrict ourselves to the subcritical case of the ferromagnetic ERGM. The Central Limit Theorem for subgraph counts states that the standardized variable converges to a Normal distribution:
\[
\frac{H(\mathcal{G}_{n,i})- \mathbb{E}[H(\mathcal{G}_{n,i})]}{\sqrt{\Var(H(\mathcal{G}_{n,i}))}} \xrightarrow{d} \mathcal{N}(0, 1).
\]
The limiting Normal distribution has support on the entire real line. Hence, the limiting probabilities of $h_{n,i} = H(\mathcal{G}_{n,i})-h_0$ being positive or negative are strictly positive:
\[
\lim_{n \to \infty} \mathbb{P}(h_{n,1} > 0) > 0, \quad \lim_{n \to \infty} \mathbb{P}(h_{n,1} < 0) > 0.
\]
Therefore, the probability that the maximum is positive and the minimum is negative approaches 1 as $n \to \infty$.

\subsubsection{Proof of Lemma \ref{exis_uniq}}

\begin{proof}

We now apply the lemmas to prove existence and uniqueness.\\

\begin{enumerate}
\item \textbf{Estimating Equations:}\\

\begin{itemize}
    \item \textbf{Fixed/Sparse Regime:} The equation is $\sum_{i=1}^n h_{n,i} e^{-\hat{\lambda}_n h_{n,i}} = 0$. This is equivalent to $\mathbb{E}[Y_n e^{-\hat{\lambda}_n Y_n}] = 0$.
    \item \textbf{Dense Regime:} The estimator $\hat{\lambda}_n$ is the critical point of $\frac{1}{n} \sum_{i=1}^n \exp\left(-\frac{\lambda}{m_n^{v(H)-2}} h_{n,i}\right)$. Differentiating with respect to $\lambda$, the defining equation is:
    \[
    \sum_{i=1}^n h_{n,i} e^{-\frac{\hat{\lambda}_n}{m_n^{v(H)-2}} h_{n,i}} = 0.
    \]
    By defining a scaled parameter $\tilde{\lambda} = \frac{\hat{\lambda}_n}{m_n^{v(H)-2}}$, this reduces to the form $\sum h_{n,i} e^{-\tilde{\lambda} h_{n,i}} = 0$, which is equivalent to $\mathbb{E}[Y_n e^{-\tilde{\lambda} Y_n}] = 0$.\\
\end{itemize}
\item \textbf{Existence of root:}\\

Assume the event $E_n$ holds. The support of the empirical variable $Y_n$ is $\mathcal{H}_n$. By definition of $E_n$, we have $\min(\mathcal{H}_n) < 0 < \max(\mathcal{H}_n)$. By Lemma \ref{existence_1}, this condition is necessary and sufficient for the existence of a real root to the equation $\mathbb{E}[Y_n e^{-\lambda Y_n}] = 0$ (and consequently for the scaled equation in the dense regime). Since $\lim_{n \to \infty} \mathbb{P}(E_n) = 1$, the root exists with probability approaching 1.

\item \textbf{Uniqueness of root}
Consider the function $f(\lambda) = \sum_{i=1}^n h_{n,i} e^{-\lambda h_{n,i}}$. Differentiating with respect to $\lambda$:
\[
f'(\lambda) = -\sum_{i=1}^n h_{n,i}^2 e^{-\lambda h_{n,i}}.
\]
Conditioned on $E_n$, the values $h_{n,i}$ are not all zero. Therefore, $h_{n,i}^2 > 0$ for at least some indices, implying $f'(\lambda) < 0$ for all $\lambda \in \mathbb{R}$. The function is strictly decreasing. By Lemma \ref{uniqueness_1}, if a root exists, it must be unique.

\end{enumerate}

Combining the asymptotic probability of $E_n$ with the deterministic results of Lemmas \ref{uniqueness_1} and \ref{existence_1} conditioned on $E_n$: with probability approaching 1 as $n \to \infty$, the Lagrange multiplier $\hat{\lambda}_n$ exists and is the unique solution to the estimating equation.
\end{proof}

%

\end{document}